\newcommand{\bb}[1]{\mathbb{#1}}
\newcommand{\pard}[2]{\frac{\partial #1}{\partial #2}}
\newcommand{\ho}{\left(\frac{d}{dt} -\Delta \right)}
\newcommand{\ddt}[1]{\frac{ d #1}{dt}}
\newcommand{\ip}[2]{\left \langle #1 , #2 \right\rangle}
\newcommand{\n}{\nabla}
\newcommand{\e}{\epsilon}
\renewcommand{\l}{\lambda}
\renewcommand{\d}{\delta}
\renewcommand{\a}{\alpha}
\newcommand{\ra}{\rightarrow}
\newcommand{\ov}{\overline}
\newcommand{\fr}[2]{\frac{#1}{#2}}
\newcommand{\veps}{\varepsilon}
\newcommand{\II}{I\!I}
\newcommand{\IIM}{I\!I}
\newcommand{\IIS}{\widetilde{I\!I}}
\newcommand{\up}[1]{{}^{#1}\!}
\newcommand{\CY}{\mathcal{Y}}
\newcommand{\CE}[3]{{C^{#1,#2}(#3)}}
\newcommand{\CP}[2]{{C^{#1;\frac{#1}{2}}(#2)}}
\newcommand{\CPN}[1]{{\Gamma^{#1;\frac{#1}{2}}_T}}
\newcommand{\CPNn}[1]{{#1;T}}
\newcommand{\CPNt}[2]{{\Gamma^{#1;\frac{#1}{2}}_{#2}}}
\newcommand{\CPNtn}[2]{{#1;#2}}
\newcommand{\CPNb}[1]{{\Gamma^{#1;\frac{#1}{2}}_{\partial, T}}}
\newcommand{\CPNbn}[1]{{#1;\partial;T}}
\newcommand{\CPNbtn}[2]{{#1;\partial; #2}}
\newcommand{\CEN}[2]{\CE{#1}{#2}{N\ov{M}_{0}}}
\newcommand{\CENn}[2]{{#1, #2}}
\renewcommand{\b}{\beta}
\newcommand{\vn}{\tilde{v}}
\newcommand{\np}{\widetilde{\n}}
\newcommand{\hatG}{G}
\newcommand{\Voo}{\nu_0}
\newcommand{\Aaa}{A^{\delta,\tau,\Theta}}
\newcommand{\p}{\partial}
\newcommand{\hide}[1]{}
\theoremstyle{plain}
\newtheorem{theorem}{Theorem}
\newtheorem{lemma}[theorem]{Lemma}
\newtheorem{proposition}[theorem]{Proposition}
\newtheorem{cor}[theorem]{Corollary}
\theoremstyle{definition}
\newtheorem{example}{Example}
\theoremstyle{remark}
\newtheorem{remark}[theorem]{Remark}
\newtheorem*{ack}{Acknowledgement}
\title{Lagrangian mean curvature flow with boundary}
\author{Christopher G. Evans}
\author{Ben Lambert}
\author{Albert Wood}
\address{Department of Mathematics, University College London, Gower Street, London, WC1E 6BT, United Kingdom}
\email{christopher.evans.13@ucl.ac.uk, b.lambert@ucl.ac.uk,  albert.wood.16@ucl.ac.uk}
\begin{document}
	
	\begin{abstract}
	We introduce Lagrangian mean curvature flow with boundary in Calabi--Yau manifolds by defining a natural mixed Dirichlet-Neumann boundary condition, and prove that under this flow, the Lagrangian condition is preserved. We also study in detail the flow of equivariant Lagrangian discs with boundary on the Lawlor neck and the self-shrinking Clifford torus, and demonstrate long-time existence and convergence of the flow in the first instance and of the rescaled flow in the second. 
	\end{abstract}
	
	\maketitle
	
	\section{Introduction}\label{sec-intro}
	
	The foundational result of Lagrangian mean curvature flow is that in Calabi--Yau manifolds, mean curvature flow preserves closed Lagrangian submanifolds (see the work of Smoczyk \cite{Smoczyk1996}). It is natural then to ask whether this can be generalised to submanifolds with boundary. Equivalently, what is a well-defined boundary condition for Lagrangian mean curvature flow? In this paper we answer this question, and show that the resulting flow exhibits good behaviour in some model situations.
	
	The Thomas--Yau conjecture \cite{Thomas2002} proposes that any graded Lagrangian $L^n$ in a Calabi--Yau manifold $\mathcal Y^{2n}$ satisfying a stability condition flows to the unique special Lagrangian in its Hamiltonian isotopy class. The counter-example of Neves \cite{Neves2013} makes it clear that singularities can occur in general, however these constructions are not almost-calibrated (and therefore not stable). Updated versions of the conjecture were presented by Joyce in \cite{joyce_2015}. Joyce suggests working in an isomorphism class of a conjectural enlarged version of the derived Fukaya category $D^b \mathscr{F} (M)$ rather than the Hamiltonian isotopy class of $L$. In particular, the standard derived Fukaya category (as developed by Fukaya--Oh--Ohta--Ono \cite{fukaya_oh_ohta_ono_2010} and Seidel \cite{seidel_2008}) should be expanded to include immersed and singular Lagrangians. 
	
	In order to work within this category, it is necessary to work with a larger class of Lagrangian mean curvature flows than have been previously considered. A full generalisation would include flows of Lagrangian networks (see for instance \cite{mantegazza_novaga_alessandra_schulze_2018} for a 1-dimensional version of this phenomenon). In this paper, we focus on one initial direction for this generalisation, namely by specifying a boundary condition for a Lagrangian mean curvature flow $L_t$ on another Lagrangian mean curvature flow $\Sigma_t$; this corresponds to the network case where one of the angles is $\pi$.

Boundary conditions which preserve the Lagrangian condition are exceptional; standard Dirichlet and Neumann conditions do not have this property. One might be tempted to consider instead boundary conditions on a potential function, but these are not natural on a geometric level. It is well known that there exists an angle function $\theta: L \to \bb{R} / 2\pi \bb Z$ for Lagrangian submanifolds $L$ of $\mathcal Y$ with the property that the mean curvature vector is given by $H = J\nabla \theta$. If two stationary special Lagrangians intersect, then their Lagrangian angles must differ by a constant - we extend this to create a geometrically natural mixed Dirichlet--Neumann boundary condition for flowing Lagrangian submanifolds.

	Although no work has been done on Lagrangian mean curvature flow with boundary conditions (other than curve-shortening flow), an alternative boundary condition has been studied by Butscher \cite{Butscher_2003}\cite{Butscher_2004} for the related elliptic case of special Lagrangians with boundary on a codimension 2 symplectic submanifold. Boundary conditions for codimension 1 mean curvature flow have been considered in a variety of contexts, for example  by Ecker \cite{EckerMinkowskiDBC}, Priwitzer \cite{PriwitzerDirichlet} and Thorpe \cite{ThorpeDirichlet} in the Dirichlet case, by Buckland \cite{Buckland}, Edelen \cite{EdelenBrakke}\cite{Edelen}, Huisken \cite{Huiskengraph}, Lambert \cite{LambertTorus}\cite{LambertConstruction}, Lira--Wanderley \cite{LiraWanderley}, Stahl \cite{Stahlsecond}\cite{Stahlfirst} and Wheeler \cite{Wheelerhalfspace}\cite{WheelerRotSym} in the Neumann case, and by Wheeler--Wheeler \cite{WheelersDirichletNeumann} in a mixed Dirichlet Neumann case.

	Consider a family of immersed compact-with-boundary Lagrangian submanifolds $F_t:L^n \rightarrow \mathcal{Y}$, and an immersed Lagrangian mean curvature flow $\Sigma_t$ in $\mathcal{Y}$ for $t \in [0,T_\Sigma)$. Denote $L_t := F_t(L^n)$, and suppose that $\partial L_t \subset \Sigma_t$; this may be thought of as $(n-1)$-Dirichlet boundary conditions for the mean curvature flow problem on $L_t$.  
For the final boundary condition, we fix the difference between the Lagrangian angles of $\Sigma_t$ and $L_t$ on $\partial L_t$. We now have a well-posed boundary value problem:
	\begin{equation}\label{IntroBVP}
	\begin{cases}
	\left(\frac{d}{dt} F(x,t)\right)^{NL} = H(x,t)& \text{for all }(x,t) \in L^n\times[0,T)\\
	F(x,0)=F_0(x)&\text{for all }x\in L^n \\
	\partial L_t\subset \Sigma_t & \text{for all }t\in[0,T)\\
	e^{i(\tilde \theta -  \theta)}(x,t) = ie^{i\alpha} & \text{for all } (x,t) \in \partial L^n \times [0,T),
	\end{cases}
	\end{equation} 
	where $NL$ is the normal bundle of $L$, $\theta$ and $\tilde \theta$ are the Lagrangian angles of $L$ and $\Sigma$ respectively, and $\alpha \in (-\pi/2,\pi/2)$ is a constant angle. In the case where $\Sigma_t$ and $L_t$ are zero-Maslov, the final condition may be written as $\tilde \theta - \theta = \alpha + \frac{\pi}{2}$. Our main theorem concerns existence and uniqueness of solutions to \eqref{IntroBVP}, as well as preservation of the Lagrangian condition.
	
	\begin{theorem}\label{Main}
		Let $\Sigma_t$ be a smooth oriented Lagrangian mean curvature flow and suppose that $L_0$ is an oriented smooth compact Lagrangian with boundary which satisfies the boundary conditions in (\ref{IntroBVP}). Then there exists a $T\in(0,T_\Sigma]$ such that a unique solution of (\ref{IntroBVP}) exists for $t\in[0,T)$ which is smooth for $t>0$. Furthermore, if $T<\infty$, at time $T$ at least one of the following hold:
		\begin{enumerate}[label=\alph*)]
			\item \textbf{Boundary flow curvature singularity:} $\sup_{\Sigma_t} |\II^\Sigma|^2 \ra \infty$ as $t\ra T$.
			\item \textbf{Flowing curvature singularity:} $\sup_{L_t} |\II|^2 \ra \infty$ as $t\ra T$.
			\item \textbf{Boundary injectivity singularity:} The boundary injectivity radius of $\partial L_t$ in $L_t$ converges to zero as $t\ra T$.
		\end{enumerate}Furthermore $F_t(L)$ is Lagrangian for all $t \in (0,T)$.
	\end{theorem}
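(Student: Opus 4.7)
The plan is to recast \eqref{IntroBVP} as a quasilinear parabolic mixed boundary value problem. Since the flow equation only prescribes the normal component of $\partial_t F$, the system is degenerate under tangential reparametrisations. I would fix a gauge by writing $L_t$ as a normal graph over $L_0$ in the interior and, near $\partial L$, using a tubular neighbourhood of $\Sigma_t$ to express $F_t$ as a graph whose $(n-1)$ components tangent to $\Sigma$ are Dirichlet data. The inclusion $\partial L_t \subset \Sigma_t$ then becomes a clean Dirichlet condition on these components, while the Lagrangian angle condition $\tilde\theta - \theta = \alpha + \pi/2$, extended to nearly-Lagrangian submanifolds via the phase of $\Omega|_L$, becomes a single scalar oblique/Neumann-type condition since $\theta$ depends only on $DF$.

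\textbf{Step 2: Short-time existence, uniqueness and continuation.} In this gauge the system is strictly parabolic, quasilinear, of second order, with $(n-1)$ Dirichlet plus one first-order boundary condition. The next step is to verify the Lopatinskii--Shapiro complementing condition on the linearisation, which reduces to checking that the angle condition is oblique with respect to $\partial L_t$ (the assumption $\alpha \in (-\pi/2,\pi/2)$ ensures non-tangency). Solonnikov's theory for parabolic initial-boundary value problems then yields a unique smooth short-time solution. For the continuation statement, if $T < T_\Sigma$ and none of (a)--(c) holds, then $|\II^\Sigma|$ and $|\II|$ are uniformly bounded and the boundary injectivity radius of $\partial L_t$ in $L_t$ is bounded below, so interior and boundary parabolic Schauder estimates, applied in the gauge of Step 1, yield uniform $C^{k,\gamma}$ bounds on $F_t$; this lets the flow be restarted past $T$, contradicting maximality.

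\textbf{Step 3: Preservation of the Lagrangian condition.} This is the geometric heart of the theorem. Following Smoczyk, the pullback $\beta_t := F_t^* \omega$ satisfies an evolution equation of the form $(\partial_t - \Delta)\beta = \beta \ast \II$, i.e.\ a linear zeroth-order perturbation of the Hodge heat equation. At $\partial L_t$, the inclusion $\partial L_t \subset \Sigma_t$ forces the purely $T\partial L$-tangential components of $\beta$ to vanish (since $T\partial L \subset T\Sigma$ and $\Sigma$ is Lagrangian), while differentiating the angle condition and combining with the identity $H = J\nabla\theta$ yields a Robin-type boundary condition on the remaining mixed tangent/conormal components of $\beta$. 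Since $\beta_0 \equiv 0$, a maximum principle or integrated energy argument on $|\beta|^2$ then gives $\beta_t \equiv 0$ for all $t \in [0,T)$.

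The main obstacle will be Step 3: identifying precisely the boundary condition on $\beta$ produced by the interaction of the (flowing) Dirichlet condition along $\Sigma_t$ with the nonlinear angle condition, and verifying that the resulting boundary problem is compatible with the maximum principle. In particular, zeroth-order terms coming from the second fundamental forms of $\partial L$ in $L$ and of $\Sigma$ in $\mathcal{Y}$ must enter with the correct sign for a Hopf-type argument; it is here that the specific mixed structure of the boundary conditions in \eqref{IntroBVP} plays its decisive role.
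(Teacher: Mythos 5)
Your proposal has the same overall architecture as the paper: graph the flow over $M_0$ (with a boundary collar adapted to $\Sigma_t$) to obtain a quasilinear parabolic system with $(n-1)$ Dirichlet plus one oblique condition, prove short-time existence and a continuation criterion, then follow Smoczyk by running a maximum principle on $|\omega|^2$ with a careful boundary analysis. You correctly flag the key difficulty as the boundary analysis of $\omega$. Two points, however, deserve attention.

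First, your Step 3 plans to ``differentiate the angle condition and combine with the identity $H = J\nabla\theta$,'' but the Lagrangian angle $\theta$ of $M_t$ (and a fortiori $H=J\nabla\theta$) is only defined when $M_t$ is Lagrangian --- exactly what you are trying to prove. You anticipated this in Step 1 by extending the angle condition ``via the phase of $\Upsilon|_L$,'' and the paper does precisely this: the Neumann condition is replaced by the purely metric/almost-complex condition $\cos\alpha\langle\nu,\mu\rangle - \sin\alpha\langle J\nu,\mu\rangle = 0$ on $\partial M_t$, valid for non-Lagrangian $M_t$. But then you cannot use $H = J\nabla\theta$ for $M_t$ anywhere in the preservation argument. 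The paper's boundary analysis (Lemmas \ref{Dirichletrewrite2}, \ref{Dirichlettime}, \ref{Neumannspace}, \ref{boundarymax}, \ref{boundaryomega}) only uses the Lagrangian angle identity for the boundary flow $\Sigma_t$, which is assumed Lagrangian by hypothesis. Be careful to restructure your Step 3 to avoid this circularity.

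Second, the boundary injectivity radius in alternative (c) is not merely a continuation-criterion artifact: it is structurally necessary for the maximum principle. To convert the ``Robin-type'' boundary estimate $\nabla_\mu|\omega|^2 \le C(|\II|, |\II^\Sigma|)\, |\omega|^2$ into a genuine no-boundary-maximum statement, one multiplies $|\omega|^2$ by a barrier $e^{A\rho}$ where $\rho$ is a smoothing of the intrinsic distance to $\partial M_t$ with $\nabla_\mu\rho = -1$ and $(\partial_t-\Delta)\rho$ bounded (Lemma \ref{rho}). The smoothness and the heat-operator bound on $\rho$ require the collar to be non-degenerate, i.e.\ a lower bound on $\operatorname{inj}_{\partial M_t}$. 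Without such a barrier, a Hopf-type argument alone does not close the estimate, because the Robin inequality holds only at a boundary maximum with $|\omega|<\tfrac12$. Finally, on short-time existence: invoking Solonnikov/Lopatinskii--Shapiro directly is optimistic; the paper notes these mixed Dirichlet--Neumann conditions over a nontrivial normal bundle are not well covered in the literature, and instead runs a Newton iteration/contraction-mapping scheme with hand-built Schauder estimates (Section \ref{STEsec} and Appendix \ref{app-para}). Your approach is morally the same but underestimates the work needed to verify the complementing condition in this setting.
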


	\begin{remark}
		Whilst a) and b) in Theorem \ref{Main} are standard singularities, the boundary injectivity singularity is new and a result of the flowing boundary condition.
	\end{remark} 
	 
	A priori, the Lagrangian angle is not well-defined for $L_t$ for $t>0$ since the mean curvature flow does not necessarily preserve the Lagrangian condition. We therefore generalise the Neumann boundary condition in equation \eqref{IntroBVP} to a statement that holds for any $n$-dimensional manifold $M$ intersecting along an $(n-1)$-dimensional manifold, see equation (\ref{MCFBC}) in Section \ref{BoundaryCond}. In the case $M_t=L_t$ is Lagrangian, (\ref{MCFBC}) and (\ref{IntroBVP}) are equivalent.
	
	Theorem \ref{Main} is proven in two parts. Firstly, in Section \ref{PresLag}, we show that a solution to (\ref{MCFBC}) with Lagrangian initial condition remains Lagrangian. If we denote by $\omega := \overline \omega|_L$ the restriction of the ambient K\"ahler form to $M_t$, then by a careful analysis of the boundary condition we are able to apply a maximum principle to estimate the rate of increase of $|\omega|^2$ in terms of its initial value. Since the initial condition is Lagrangian, this implies that $|\omega|^2$ is identically zero. For the case of a Lagrangian $L$ without boundary, this was shown by Smoczyk in \cite{Smoczyk1996}.
	
	We postpone the proof of short-time existence  and uniqueness for (\ref{MCFBC}) to Section \ref{STEsec}, see Theorem \ref{STE}. The mixed Dirichlet--Neumann boundary conditions are not well covered in the literature and so we provide a full exposition.
	
\begin{figure}[t]
	\centering
	\begin{subfigure}[b]{0.44\linewidth}
		\includegraphics[width=\linewidth]{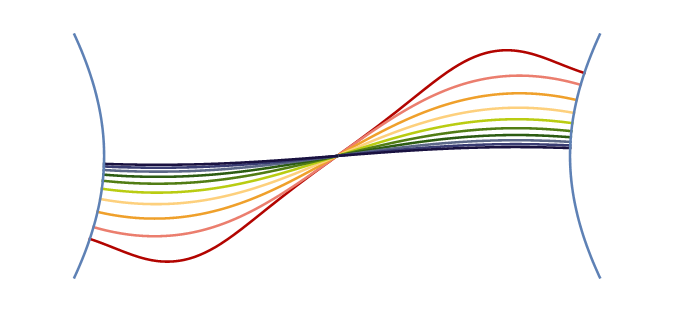}
		\caption{An example of a LMCF with boundary on the Lawlor neck, $\alpha = 0$.}
		\label{fig-lawlorcase1}
	\end{subfigure}
	\begin{subfigure}[b]{0.02\linewidth}
		$\,$
	\end{subfigure}
	\begin{subfigure}[b]{0.44\linewidth}
		\includegraphics[width=\linewidth]{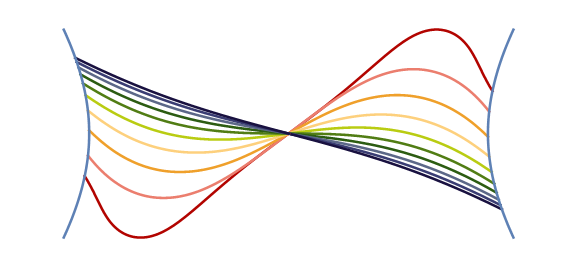}
		\caption{An example of a LMCF with boundary on the Lawlor neck, $\alpha = 0.8$.}
		\label{fig-lawlorcase2}
	\end{subfigure} \caption{}
	\label{fig-lawlor}
\end{figure}

\begin{figure}[t]
	\centering
	\begin{subfigure}[b]{0.42\linewidth}
		\includegraphics[width=\linewidth]{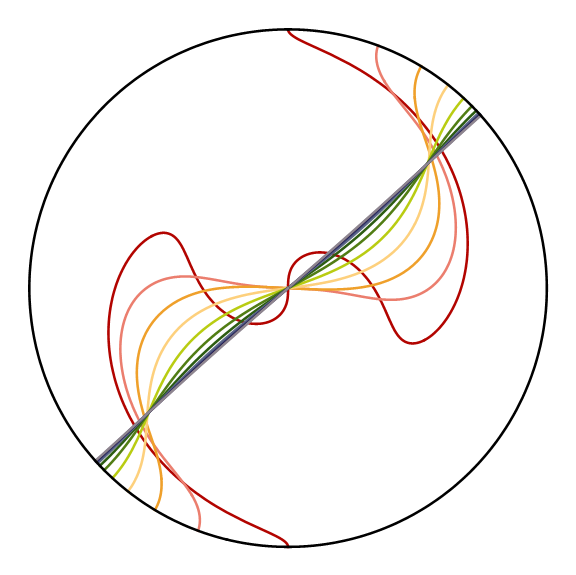}
		\caption{An example of rescaled LMCF with boundary on the Clifford torus, $\alpha = 0$.}
		\label{fig-cliffordcase1}
	\end{subfigure}
	\begin{subfigure}[b]{0.06\linewidth}
		$\,$
	\end{subfigure}
	\begin{subfigure}[b]{0.42\linewidth}
		\includegraphics[width=\linewidth]{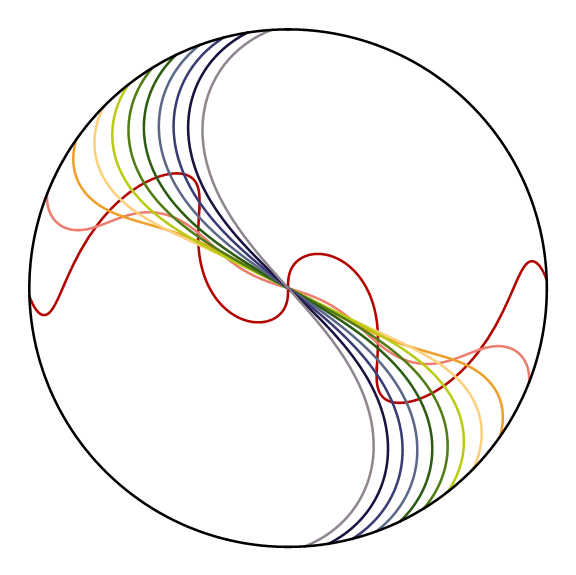}
		\caption{An example of rescaled LMCF with boundary on the Clifford torus, $\alpha = -\frac{2\pi}{5}$.}
		\label{fig-cliffordcase2}
	\end{subfigure} \caption{}\label{fig-Clifford}
\end{figure}
	
	To illustrate the behaviour of the flow, in Section \ref{Examples} we examine the particular case of $S^1$-equivariant Lagrangian submanifolds of $\mathbb{C}^2$; this assumption reduces the PDE problem (\ref{IntroBVP}) to a codimension $1$ flow of the profile curve in $\mathbb{C}$, allowing for easier analysis. Such flows have been studied for ordinary LMCF - see for example \cite{Evans2018}, \cite{Groh2007}, \cite{savas-halilaj_smoczyk_2019} and \cite{Wood}. 
	
	One natural choice of boundary manifold in this setting is the Lawlor neck $\Sigma_{\operatorname{Law}}$ (see Example \ref{example:Lawlor} and Figure \ref{fig-lawlor}).
	It is the only non-flat equivariant special (minimal) Lagrangian in $\mathbb{C}^2$, and is therefore static under the mean curvature flow; this makes it a good choice of boundary manifold for our flow. We prove that any solution to \eqref{IntroBVP} satisfying the almost-calibrated condition (defined in Section \ref{sec-prelim}) with boundary on the static Lawlor neck exists for all time and converges smoothly to a special Lagrangian. A similar result for the boundaryless case was proven in \cite{Wood}, in which it was shown that equivariant Lagrangian planes flowing by mean curvature satisfying the almost-calibrated condition do not form finite-time singularities.
	
	\begin{theorem}\label{LawlorThm}
		Let $F_0$ be an almost-calibrated $S^1$-equivariant Lagrangian embedding of the disc $D^2$ into $\mathbb{C}^2$  
with boundary on the static Lawlor neck, $\Sigma_{\text{Law}}$, 
		 such that the Lagragian angle of $L_0$, $\theta_0$, satisfies $\theta_0|_{\p L_0}=-\alpha$. Then there exists a unique, immortal solution to the LMCF problem (\ref{IntroBVP}), and it converges smoothly in infinite time to a special Lagrangian disc.
	\end{theorem}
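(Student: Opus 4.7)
The plan is to use the $S^1$-equivariance to reduce the LMCF problem to a one-dimensional curve flow in a half-plane of $\mathbb{C}$, and then to rule out each of the three singularity types from Theorem \ref{Main} using the almost-calibrated hypothesis and maximum-principle arguments.

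First I would exploit the equivariance: both $L_t$ and $\Sigma_{\text{Law}}$ descend to profile curves $\gamma_t$ and $\gamma_{\text{Law}}$ in an appropriate half-plane $\mathcal{H}\subset\mathbb{C}$, and the boundary value problem \eqref{IntroBVP} becomes a quasilinear parabolic equation for the embedded arc $\gamma_t$ with one endpoint on the axis of symmetry (corresponding to the $S^1$-fixed centre of $D^2$) and the other endpoint free on the fixed curve $\gamma_{\text{Law}}$, meeting it at a fixed angle encoded by $\alpha$. The Lagrangian angle $\theta$ of $L_t$ then descends to a function on $\gamma_t$ satisfying $(\partial_t - \Delta)\theta = 0$ in the interior; at the moving endpoint it is pinned to $-\alpha$ via $\tilde\theta - \theta = \pi/2 + \alpha$ and the fact that $\tilde\theta$ is constant on the special Lagrangian $\Sigma_{\text{Law}}$, while at the axis endpoint it is forced to take a specific value by smoothness. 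A straightforward maximum principle then preserves $|\theta|\le \pi/2 - \varepsilon$ for some $\varepsilon>0$, so almost-calibration persists for all $t < T$.

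Next I would invoke the trichotomy in Theorem \ref{Main}. Case (a) is immediate because $\Sigma_{\text{Law}}$ is special Lagrangian, hence static and smooth for all time. For case (b), a parabolic rescaling at a sequence of curvature maxima produces either an interior almost-calibrated equivariant ancient solution or a boundary ancient solution whose $\Sigma$-profile is flat, since $\gamma_{\text{Law}}$ rescales to a line through the origin. Huisken monotonicity combined with the preserved angle bound forces any tangent flow to be a self-shrinker, and the classification of almost-calibrated equivariant self-shrinkers in $\mathbb{C}^2$ obtained in \cite{Wood} shows these must be planes through the origin, contradicting the curvature blow-up. Case (c), the boundary injectivity singularity, reduces in the equivariant setting to excluding collision of the two endpoints of $\gamma_t$, or equivalently folding of $\gamma_t$ onto itself. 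I would control this by a maximum-principle comparison on the signed distance along $\gamma_{\text{Law}}$ combined with the preserved sign of $\theta + \alpha$, which prevents the free endpoint from retracting past the initial position.

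Having excluded all three singularity types, the flow is immortal with uniformly bounded geometry, and long-time uniqueness follows by iterating the short-time uniqueness of Theorem \ref{Main}. For the convergence statement, the standard identity $\frac{d}{dt}\int_{L_t}|H|^2\,d\mu \leq 0$, with boundary terms vanishing because $\theta$ is Dirichlet on $\partial L_t$, yields $\int_0^\infty \int_{L_t}|\nabla\theta|^2\,d\mu\,dt < \infty$, so any subsequential limit along $t_k\to\infty$ is a special Lagrangian disc satisfying the same boundary data; uniqueness of equivariant almost-calibrated special Lagrangian discs meeting $\gamma_{\text{Law}}$ at the prescribed angle then upgrades subsequential to full smooth convergence. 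I expect the main obstacle to be case (c), since it is the genuinely new singularity type arising from the flowing boundary condition, and controlling it requires a combination of the almost-calibrated hypothesis with a careful comparison against the specific geometry of $\gamma_{\text{Law}}$, neither of which individually suffices.
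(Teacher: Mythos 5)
Your high-level outline (reduce to the profile curve, preserve the almost-calibrated bound via the maximum principle, rule out each singularity type, then show convergence) matches the paper's strategy in broad strokes, but several load-bearing steps are missing or not clearly correct, and the paper in fact proceeds somewhat differently on the key points.

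The central tool the paper uses that is absent from your sketch is the \emph{graph parametrisation over the Lawlor foliation} $Y(s,\phi)=(s\cosh\phi,s\sinh\phi)$ together with the uniform bound of Lemma~\ref{lem-bound}: the almost-calibrated condition forces $\arg(\gamma)$ and hence the graph function $v$ into a compact range, and the formula $sv'=\tan(\theta)/\cosh(2v)-\tanh(2v)$ then gives $C^1$ control on any annulus. This graphicality is what makes the reduced PDE \eqref{eq-graphmcf} uniformly parabolic away from the origin (hence ruling out interior singularities away from $0$ by Schauder theory) \emph{and} is what quietly eliminates your case (c): a curve that remains a graph over the foliation with a uniform $C^1$ bound cannot fold back onto itself, so the boundary injectivity radius cannot degenerate. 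Your proposal instead proposes a ``maximum-principle comparison on the signed distance along $\gamma_{\text{Law}}$'', which is not a well-defined comparison in this setting and in any case misreads what the boundary injectivity singularity is (it concerns the intrinsic collar near $\partial L_t$, not the boundary endpoint ``retracting past its initial position'').

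For the singularity at the origin, your idea of rescaling and invoking a self-shrinker classification is in the right neighbourhood, but two gaps remain. First, the monotonicity formula for flows with boundary \eqref{monotonicity} carries boundary integrals that must be shown to vanish under Type~I rescaling; this is precisely the content of Lemma~\ref{lem-nev} and needs the Lawlor boundary to recede to infinity under the rescaling together with the $C^3$ estimates at $\partial L_t$ coming from the graph equation. Second, the paper does not go via a classification of self-shrinkers at all: it uses the decay of $\int|H|^2+|x^\perp|^2$ and the $C^1$ graph bounds to show the localised Gaussian density converges to~$1$, and then applies White's local regularity theorem directly. Your route through \cite{Wood} could in principle be made to work, but you would still need the boundary-term control, which you do not address.

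Finally, the convergence argument you propose, namely $\frac{d}{dt}\int_{L_t}|H|^2 \le 0$ with ``boundary terms vanishing because $\theta$ is Dirichlet'', is not correct as stated. The Dirichlet condition $\theta|_{\partial L_t}=-\alpha$ kills tangential derivatives of $\theta$ at the boundary, but not $\nabla_\mu\theta$, so $|H|^2=|\nabla\theta|^2$ does not vanish on $\partial L_t$ and the evolution of $\int|H|^2$ picks up boundary terms that do not obviously have a sign. The paper instead exploits the conservation law $\frac{d}{dt}\int_{L_t}\cos\theta = 0$ (Corollary~\ref{integralid} with $f=\cos$, using the specific cancellation $f'-f\tan\alpha=0$ at $\theta=-\alpha$) and then applies the Michael--Simon Sobolev inequality to $(\theta+\alpha)^p$, which genuinely has compact support because $\theta+\alpha=0$ on $\partial L_t$. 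This yields exponential decay of $\int(\theta+\alpha)^2$ and integrability of $\int|H|^2 e^{ct/2}$, from which full smooth convergence to the unique equivariant special Lagrangian disc follows directly, without needing a separate uniqueness-of-limits step.

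In short: your skeleton is the right shape, but the graphicality lemma that drives everything, the boundary contribution to the monotonicity formula, and a correct conservation law for the convergence step are all missing, and the proposed treatment of case (c) would not go through.
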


Another natural choice of boundary manifold is the Clifford torus (see Example \ref{example:Clifford} and Figure \ref{fig-Clifford}). 
The symmetry of the Clifford torus is preserved under mean curvature flow, so it is a self-shrinking solution, and is static under the rescaled flow (defined in Section \ref{sec-cliff}). Here, the condition $\theta - 2\arg(\gamma) \in (-\tfrac{\pi}{2} + \veps, \, \tfrac{\pi}{2} - \veps)$ is a natural preserved condition to consider in place of the almost-calibrated condition, as $\theta - 2\arg(\gamma)$ always vanishes on the boundary. Given this condition, we show a long-time existence and convergence result for the rescaled flow in the $\alpha = 0$ case, as depicted in Figure \ref{fig-cliffordcase1}.
	
\begin{theorem}\label{CliffThm}
		Let $\overline F_0: D \rightarrow \mathbb{C}$ be an $S^1$-equivariant Lagrangian embedding of a disc $D$, with boundary on the \emph{Clifford torus}, $\Sigma_{\text{Cliff}}$.
		Assume that its Lagragian angle $\theta_0$ satisfies
		\[ \theta_0(s) - 2\arg(\overline\gamma_0(s)) \, \in \, (-\tfrac{\pi}{2} + \veps, \, \tfrac{\pi}{2} - \veps)\]
		for some $\veps > 0$, and that $\theta_0 - 2\arg(\gamma_0) = 0$ on $\p L_0$.
		Then there exists a unique, eternal solution to the rescaled LMCF problem \eqref{thm-cliff} (corresponding to (\ref{IntroBVP}) with $\alpha=0$), which converges smoothly in infinite time to a special Lagrangian disc.
	\end{theorem}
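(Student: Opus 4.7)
The plan is to reduce \eqref{IntroBVP} to a problem on a profile curve via $S^1$-equivariance, then use the maximum principle on the preserved quantity $\psi:=\theta-2\arg\gamma$ to rule out singularities and to drive smooth convergence. Writing $L_t$ as the orbit of a profile curve $\gamma_t:[0,1]\to\mathbb{C}$, the rescaled LMCF becomes a quasilinear parabolic equation for $\gamma_t$. The Dirichlet condition $\p L_t\subset\Sigma_{\text{Cliff}}$ reduces to $\gamma_t(1)$ lying on the profile circle of the Clifford torus, while the Neumann condition with $\alpha=0$ reduces to the geometric angle condition $\psi=0$ at that endpoint. Short-time existence together with the singularity trichotomy (a)--(c) is supplied by Theorem \ref{Main}, suitably reinterpreted in the rescaled setting.

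The central observation is that $\psi$ satisfies a linear heat-type equation along $L_t$ (with, under rescaling, a benign lower-order contribution), and the boundary condition forces $\psi\equiv 0$ on $\p L_t$. The parabolic maximum principle therefore propagates the initial strip bound $\psi\in(-\tfrac{\pi}{2}+\veps,\tfrac{\pi}{2}-\veps)$ for all time, giving a uniform positive lower bound on $v:=\cos\psi$. All subsequent estimates flow from this. Alternative (a) is immediate as $\Sigma_{\text{Cliff}}$ is static. For (b), the uniform positivity of $v$, combined with the rescaled Huisken monotonicity and a Brakke-type regularity argument (or equivalently a direct Bernstein estimate exploiting the one-dimensional profile-curve setting), yields a uniform bound on $|\II|$; standard parabolic bootstrapping then upgrades this to uniform $C^k$ bounds for every $k$. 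Alternative (c) is excluded because $\psi=0$ on $\p L_t$ together with equivariance forces $\gamma_t$ to meet the Clifford profile circle transversely at a definite angle bounded away from zero, precluding boundary collapse.

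Having excluded finite-time singularities, the flow is immortal with uniform smooth bounds, and therefore subsequentially converges in $C^\infty$. The rescaled Huisken monotone quantity is a Lyapunov function whose critical points are self-shrinkers; under our boundary and angle constraints these self-shrinkers are precisely the straight-line profiles through the origin meeting the Clifford profile circle, i.e., flat special Lagrangian discs. The main obstacle is upgrading subsequential convergence to full smooth convergence to a unique limit: one expects this from a Lojasiewicz--Simon inequality at the (finite-dimensional, symmetry-reduced) set of special Lagrangian limits, or more directly in this equivariant setting by proving exponential decay of $\psi$ (and its derivatives) via the maximum principle applied to $e^{\mu t}\psi$ for a suitable $\mu>0$, which then pins down $\gamma_t$ in $C^\infty$.
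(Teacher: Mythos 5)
Your outline matches the paper's strategy at the macroscopic level: reduce to the profile curve via equivariance, observe that $\psi:=\theta-2\arg\gamma$ satisfies a heat-type equation and preserves the strip bound, then use Huisken monotonicity (boundary version) and White regularity to control curvature and get subsequential convergence to a straight-line profile. The preservation argument via the maximum principle applied to $\sin\psi$, the characterisation of static rescaled solutions as straight lines, and the use of density estimates near the origin are all present in the paper in essentially the form you describe.

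However, the last paragraph of your proposal contains a genuine gap. You acknowledge that upgrading subsequential to full smooth convergence is ``the main obstacle,'' but the two proposed resolutions are not established and, as stated, would not suffice. First, the evolution equation $(\partial_t-\Delta)\psi = -2\sin(2\psi)/r^2$ has a zero-order coefficient that is unbounded at the origin, so applying the maximum principle to $e^{\mu t}\psi$ over a time-dependent domain is not a routine matter and is not carried out; the paper instead obtains uniform curvature bounds through a Type~II blow-up argument (rescaling at the curvature maxima, using the $L^2$ decay of $\psi$ against the backward heat kernel to show the Type~II limit is a straight line, contradicting $\max|\hat A|=1$). Second, and more fundamentally, exponential decay of $\psi$ alone does not ``pin down $\gamma_t$ in $C^\infty$'': the set of straight-line profiles through the origin is a one-parameter family (parametrised by the constant argument $\phi$), and $\psi\to 0$ does not select a member of that family. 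The paper resolves the uniqueness of the limit via the barrier Lemma~\ref{lem-cliffbarrierargument}, which confines $\phi_t$ to a shrinking interval once the angle and $\phi$ are controlled at a single time slice; nothing analogous appears in your proposal. A Lojasiewicz--Simon inequality for this boundary value problem is also not available off the shelf and would need to be proved. You should either carry out the Type~II argument and the barrier argument (the paper's route), or genuinely prove the spectral-gap decay including control of the $1/r^2$ term and then supply a separate argument identifying the limiting $\phi_\infty$.

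A smaller point: your exclusion of alternative (c) by ``transversality at the boundary'' is geometrically reasonable but incomplete as stated — one also needs curvature and higher-order control up to the boundary, which in the paper follows from the uniformly parabolic graph equation in the radial parametrisation and Schauder estimates, not merely from the angle condition.
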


In the case of the Clifford torus, numerical evidence suggests that a rescaled solution of $\eqref{IntroBVP}$ with $\alpha \neq 0$ exists for all time and converges to a unique rotating soliton - see Figure \ref{fig-cliffordcase2}.

	\begin{ack}
		The authors would like to thank Jason Lotay and Felix Schulze for many useful discussions, Oliver Schn\"urer for useful comments and Dominic Joyce for suggesting this line of research.  	
		
		The first two authors were supported by Leverhulme Trust Research Project Grant RPG-2016-174. The final author was supported by the Engineering and Physical Sciences Research Council [EP/L015234/1], The EPSRC Centre for Doctoral Training in Geometry and Number Theory (The London School of Geometry and Number Theory), University College London.
	\end{ack}

\section{Preliminaries}\label{sec-prelim}

A K\"ahler manifold $(\mathcal Y^{2n}, \bar g, \bar \omega, J)$ is said to be a \textit{Calabi--Yau} manifold if it is Ricci-flat. On such a manifold, there exists an everywhere non-zero holomorphic $n$-form $\Upsilon$ on $\mathcal Y$ such that $\operatorname{Re}(\Upsilon)$ is a calibration. 

An $n$-dimensional submanifold $F:L^n \to \mathcal Y$ is then called \textit{Lagrangian} if $\omega := F^* \bar \omega =0$. It is well-known that
\[\Upsilon|_L = e^{i \theta} \operatorname{vol}_L,\]
for some multi-valued function $\theta: L \to \mathbb R / 2 \pi \mathbb Z$ called the Lagrangian angle. Lagrangian submanifolds have the additional property that the almost-complex structure $J$ is an isometry between the tangent and normal bundles of $L$, and this isomorphism leads to the remarkable fact that the mean curvature $H$ of $L$ is described by the Lagrangian angle: 
\begin{equation}
\label{H = dtheta} H = J \nabla \theta.
\end{equation} 
If $\theta$ is constant, then $L$ is minimal since it is calibrated by $\operatorname{Re}(e^{i\theta}  \Upsilon)$. Such minimal Lagrangians are known as \emph{special Lagrangians}. Furthermore, (\ref{H = dtheta}) implies that deforming a Lagrangian in the direction of its mean curvature is a Hamiltonian deformation, and raises the possibility that mean curvature flow preserves the Lagrangian condition. In \cite{Smoczyk1996}, Smoczyk applied the parabolic maximum principle to $|\omega|^2$, concluding that if $L_t$ is a mean curvature flow with $L_0$ a closed Lagrangian submanifold, then $L_t$ is Lagrangian for all time.

If $\theta$ is a single-valued function on $L$ then $L$ is called \textit{zero-Maslov}, and if furthermore the condition
\[ \cos(\theta) > \veps > 0 \]
holds, it is called \emph{almost-calibrated}. Since under the mean curvature flow, $\theta$ satisfies the heat equation
\[\frac{d}{dt} \theta =  \Delta \theta,\]
locally, this implies that both almost-calibrated and zero-Maslov are preserved classes under mean curvature flow (without boundary).

A particular class of Lagrangian submanifolds which we shall investigate further in Section \ref{Examples} is that of \emph{equivariant} Lagrangians in $\mathbb{C}^2$. If we consider $\mathcal Y = \mathbb C^2$ with the standard K\"ahler structure, then $\mathcal Y$ is Calabi--Yau with $\Upsilon = dz_1 \wedge \cdots \wedge dz_n$. A Lagrangian $L \subset \mathbb{C}^2$ is said to be \emph{equivariant} if there exists a profile curve on a one-dimensional manifold $U$,
\[ \gamma(s) \, := \, \left( x(s),y(s) \right) \in \mathbb{C}, \]
such that the Lagrangian can be parametrised as
\begin{align*}
L &: U \times S^1 \rightarrow \mathbb{C}^2\\
L(s,\psi) \, &= \, \left( x(s)  +i y(s) \right)(\cos(\psi), \sin(\psi)) \in \mathbb{C}^2.
\end{align*}
In fact, if the submanifold can be parametrised in this way, then it must be a Lagrangian submanifold. Mean curvature flow of equivariant submanifolds is particularly nice as it can be reduced to the study of the equivariant flow of the profile curve $\gamma$, given by
\begin{equation}
\frac{\p \gamma}{\p t} \, = \, k - \frac{\gamma^\perp}{|\gamma|^2}, \label{eq-equi}
\end{equation}
where $k$ is the curvature vector of the profile curve. Note that the profile curve is symmetric across the origin by the equivariance. Two important examples of equivariant Lagrangians are the following:
\begin{example}\label{example:Lawlor} The Lawlor neck, $\Sigma_{\operatorname{Law}}\subset\bb{C}^2$, is an equivariant special Lagrangian, whose profile curve is a hyperbola,
\[ \sigma_{\operatorname{Law}}: \mathbb{R} \ra \mathbb{C}, \quad \quad \sigma_{\operatorname{Law}}(s) \, := \, (\cosh(s), \sinh(s)). \]
We note that in our definition, the Lawlor neck has constant Lagrangian angle equal to $\frac{\pi}2$.
\end{example}

\begin{example}\label{example:Clifford} The Clifford torus, $\Sigma_{\operatorname{Cliff}}\subset\bb{C}^2$, is an equivariant surface whose profile curve is a circle or radius 2,
\[ \sigma_{\operatorname{Cliff}}: S^1 \ra \mathbb{C}, \quad \quad \sigma_{\operatorname{Cliff}}(s) \, := \, (2\cos(s), 2\sin(s)). \]
A short calculation indicates that the Clifford torus satisfies the mean curvature flow self shrinker equation.
\end{example}
The Lagrangian angle is particularly simple for equivariant Lagrangians $L$ away from the origin:
\begin{equation}\label{EqLangle}\theta = (n-1)\arg \gamma + \arg \gamma',
\end{equation}
note it does not depend on the spherical parameter $\alpha$ but only the parameter along the profile curve.

\subsection{Notation and Standard Facts} We employ the following notational conventions throughout this paper. $M_t$ will always be a mean curvature flow with boundary on a Lagrangian mean curvature flow $\Sigma_t$, all in a Calabi--Yau manifold $\CY$. We shall write $L_t = M_t$ only when we have proven the Lagrangian condition is preserved. We shall frequently suppress the subscript $t$ when the meaning is clear. We distinguish between quantities on each by diacritical marks: for instance, the ambient connection on $\CY$ is $\overline \nabla$, the induced connection on $M$ or $L$ is $\nabla$, and the induced connection on $\Sigma$ is $\widetilde \nabla$. We extend this convention in the natural way to other quantities such as the second fundamental form and the mean curvature. For any submanifold $Z \in \CY$, $p\in Z$ and a general vector $V\in T_p \CY$ we will denote orthogonal projection of $V$ onto the tangent space and normal space of $Z$ by $V^{TZ}$ and $V^{NZ}$ respectively. Finally, throughout we will use the Einstein summation convention, where we assume that lower case Roman letters sum $1\leq i,j,k,\ldots\leq n$ and upper case Roman letters sum $1\leq I,K,L, \ldots\leq n-1$.

We also include here for convenience a few basic definitions from differential geometry. Given tangent vector fields $X$ and $Y$ on $M$ we define the second fundamental form of $M$ by
\[\II(X,Y) = \left(\ov \n_X Y\right)^{NM}\ .\]
We note that since $\Sigma_t$ is Lagrangian as above we have that 
\begin{equation}\label{eq-2ffswap}
\ip{\IIS(X,Y)}{JZ} = \ip{\IIS(X,Z)}{JY},
\end{equation}
where $X,Y,Z \in T \Sigma_t$. 

Let $\mu$ be the outward pointing unit vector to $\partial M$. For $p\in\partial M$ let $\gamma_p(s)$ be the unit speed geodesic starting at $p\in\partial M$ with tangent vector $-\mu(p)$. We define the \textit{boundary injectivity radius} to be
\[\text{inj}_{\partial M} = \frac 1 2\min\left\{\l > 0 \,\,\big|\,\, \exists p \in\partial M \text{ such that } \gamma_p((0,\l))\subset M,  \text{ but }  \gamma(\l)\subset \partial M\right\}\ .  \]
If $M$ is compact then $\text{inj}_{\partial M}>0$ and in this case $\text{inj}_{\partial M}$ coincides with the maximal collar region such that the distance to the boundary function is smooth.

\section{The Boundary Condition}\label{BoundaryCond}

Let $\Sigma_t^n$, $t\in [0,T)$ be a Lagrangian mean curvature flow in $\CY^{2n}$. In this section, we generalise (\ref{IntroBVP}) to a boundary problem that holds for any $M_t^n$, not necessarily Lagrangian, with $\partial M_t \subset \Sigma_t$.

Suppose that $M$ satisfies the Dirichlet boundary condition above. This implies that at any point $p\in \partial M$, there exists tangent vectors $e_1, \ldots, e_{n-1}$ of $T_p\partial M$, $\mu\in T_pM$ and $\nu \in T_p\Sigma$ so that $\{e_1, \ldots, e_{n-1}, \mu\}$ is an orthonormal basis of $T_pM$ and $\{e_1, \ldots, e_{n-1}, \nu\}$ is an orthonormal basis of $T_p \Sigma$.

Since $\Sigma$ is Lagrangian,  $\mu$ is of the form
\begin{equation}\label{mu eqn}\mu = \tau+\ip{\nu}{\mu}\nu+\ip{J\nu}{\mu}J\nu,\end{equation}
where $\tau=\tau^IJe_I\in \text{span}\{Je_1, \ldots, Je_{n-1}\}$, and this yields that the Calabi--Yau form $\Upsilon$ relative to $T_p \Sigma$ restricted to $T_p M$ is
\[\Upsilon|_{T_p M} = \Upsilon(e_1, \ldots,e_{n-1}, \mu) = \det \begin{pmatrix}
I&i\tau^I\\
0&\ip{\nu}{\mu}+i\ip{J\nu}{\mu}
\end{pmatrix} = \ip{\nu}{\mu}+i\ip{J\nu}{\mu},
\]
where we note that this complex number has modulus 1 if and only if the tangent space of $M$ is Lagrangian at $p$. We extend the boundary condition in (\ref{IntroBVP}) by simply assuming that the argument of this complex number is constant, that is we impose that there exists a constant $\alpha\in(-\frac \pi 2,\frac \pi 2)$ so that
\[\ip{\nu}{\mu} = \tan \alpha \ip{J\nu}{\mu}\ .\]
If both $\Sigma_t$ and $M_t$ are Lagrangian manifolds this corresponds to a phase difference of $ie^{i\alpha}$ or $ie^{i\alpha}=e^{i(\tilde \theta- \theta)}$.

\begin{remark}
	Although it is beyond the scope of this paper, we believe that an analogous boundary condition could be defined in the non-Ricci-flat setting since we have only used the existence of a \textit{relative} Calabi--Yau form. Hence the results of this paper should be applicable with some modification to Lagrangian mean curvature flows in general K\"ahler--Einstein manifolds. 
\end{remark}

Let $F:M^n\times[0,T)\ra\CY$ be a one parameter family of immersions, and write $M_t = F(M, t)$. We define a reparametrised mean curvature flow as follows:
\begin{equation}
\label{MCFBC}
\begin{cases}
\left(\ddt{} F(x,t)\right)^{NM} = H(x,t)& \text{for all }(x,t) \in M\times[0,T)\\
F(x,0)=F_0(x)&\text{for all }x\in M\\
\partial M_t\subset \Sigma_t & \text{for all }t\in[0,T)\\
\cos{\a}\ip{\nu}{\mu}-\sin\a \ip{J\nu}{\mu}=0 &\text{for all } (x,t)\in \partial M \times[0,T)
\end{cases}
\end{equation}
Note that (\ref{MCFBC}) is exactly (\ref{IntroBVP}) when $F_t(M)$ is Lagrangian.

\subsection{Linear Algebra}\label{linalg}
From now on, we assume that $M$ satisfies the boundary conditions in (\ref{MCFBC}). Following the notation in Section \ref{BoundaryCond}, we recall that at a boundary point we have
\[T_p\Sigma = \text{span}\{e_1, \ldots, e_{n-1}, \nu\}\ ,\]
and, as this tangent space is Lagrangian, 
\[N_p\Sigma = \text{span}\{Je_1, \ldots, Je_{n-1}, J\nu\}\ .\]
We recall that
\[T_p M=\text{span}\{e_1, \ldots, e_{n-1}, \ip{\nu}{\mu} \nu +\ip{J\nu}{\mu}J\nu+\tau\}\]
where $\tau\in JT_p\partial M$. We note that
\[N_pM =  \text{span}\{f_1, \ldots, f_n\} \ ,
\]
where for $1\leq I\leq n-1$
\[f_I = Je_I - \ip{Je_I}{\mu}\mu\ , \quad \quad f_n=-\ip{J\nu}{\mu}\nu+\ip{\nu}{\mu} J\nu\ ;\]
this is no longer an orthonormal basis. This yields an a inner product matrix 
\[
G_{ij}=\ip{f_i}{f_j} = 
 \begin{pmatrix}
  \delta_{IJ} - \tau^I\tau^J & 0\\
  0& 1-|\tau|^2
 \end{pmatrix}
\]
where we write $\tau^I = \ip{\tau}{Je_I} = \ip{\mu}{Je_I}$ . This has inverse
\[
G^{ij}=
 \begin{pmatrix}
  \delta_{IJ} + \frac{\tau^I\tau^J}{1-|\tau|^2} & 0\\
  0&\frac{1}{1-|\tau|^2}
 \end{pmatrix}\ .
\]
We may write 
 \begin{align*}
  \mu&= \mu^{N\Sigma} +\ip{\nu}{\mu}\nu, \qquad
  \nu= \nu^{NM} +\ip{\nu}{\mu}\mu\ .
 \end{align*}
Substituting back into the last terms and rearranging yields
 \begin{align}
  \mu&= \frac{1}{1-\ip{\nu}{\mu}^2}\left[\mu^{N\Sigma} +\ip{\nu}{\mu}\nu^{NM}\right]\label{muinperps}\\
  \nu&= \frac{1}{1-\ip{\nu}{\mu}^2}\left[\nu^{NM} +\ip{\nu}{\mu}\mu^{N\Sigma}\right]\ .\label{nuinperps}
 \end{align}
We have that $\tau=\tau^I Je_I\in N_p\Sigma$. We have that 
\begin{equation}\tau^{NM} = \ip{\tau}{f_i}G^{ij}f_j = \ip{\tau}{f_I}G^{IJ}f_J = (1-|\tau|^2)\tau_IG^{IJ}f_J=\tau^Jf_J = \tau - |\tau|^2\mu
\label{tildemu}
\end{equation}
\begin{equation}
\nu^{NM} = \ip{\nu}{f_i}G^{ij}f_j = -\ip{\nu}{\mu}\tau_IG^{IJ}f_J-\frac{\ip{J\nu}{\mu}}{1-|\tau|^2}f_n=-\frac{\ip{\nu}{\mu}}{1-|\tau|^2}\tau^{NM}-\frac{\ip{J\nu}{\mu}}{1-|\tau|^2}f_n\label{nuasf}
\end{equation}
In the following we will assume that the vectors $e_1, \ldots, e_{n-1}$, $\mu$ and $\nu$ are extended locally to a neighbourhood in $U\subset\partial M_t$ of $p$  so that at every $q\in U$, $\{e_1,\ldots,e_{n-1},\mu\}$ is an orthonormal basis of $T_qM$ and $\{e_1,\ldots,e_{n-1},\nu\}$ is an orthonormal basis of $T_q\Sigma$.

\subsection{Derivatives of the Boundary Conditions}

In this section, we provide identities that arise by differentiating the boundary conditions.

\subsubsection{Dirichlet boundary space derivatives}
We now use the Dirichlet condition to compare first order boundary derivatives.
\begin{lemma}
\label{Dirichletrewrite2}
Suppose that $\Sigma$ is Lagrangian, and $M$ is a $n$-dimensional submanifold with boundary $\partial M\subset \Sigma$. At a point $p\in\partial M$, we have that for any $X,Y \in T_p \partial M$,
\begin{equation*}
\frac{\ip{J\nu}{\mu}^2}{1-\ip{\nu}{\mu}^2}\ip{\IIS_{XY}}{\tau}=\ip{\IIM_{XY}}{\tau} 
 + \frac{|\tau|^2}{1-\ip{\nu}{\mu}^2}\left[\ip{J\nu}{\mu}\ip{\IIS_{XY}}{J\nu } +\ip{\nu}{\mu}\ip{\IIM_{XY}}{\nu}\right]\ .
\end{equation*}
\end{lemma}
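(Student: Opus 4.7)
The strategy is to leverage that $\partial M \subset M \cap \Sigma$: given $X, Y \in T_p \partial M$, I would extend them smoothly on $\partial M$ and apply the Gauss formula with respect to both $M$ and $\Sigma$ to the ambient covariant derivative $\overline\nabla_X Y$. Since the induced Levi-Civita connection on $\partial M$ is intrinsic, the tangent-to-$\partial M$ components cancel in the difference of these two expressions, leaving the key vector identity
\[ \IIS(X,Y) - \IIM(X,Y) \, = \, \ip{\overline\nabla_X Y}{\mu}\,\mu - \ip{\overline\nabla_X Y}{\nu}\,\nu. \]

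From here I would extract scalar relations by testing against $\tau$ and $J\nu$. Since $\tau \in N\Sigma$ one has $\ip{\tau}{\nu} = 0$, and using the decomposition $\mu = \tau + \ip{\nu}{\mu}\nu + \ip{J\nu}{\mu}J\nu$ from \eqref{mu eqn} one has $\ip{\tau}{\mu} = |\tau|^2$. This gives
\[ |\tau|^2 \ip{\overline\nabla_X Y}{\mu} \, = \, \ip{\IIS(X,Y)}{\tau} - \ip{\IIM(X,Y)}{\tau}. \]
The $J\nu$-test uses $\ip{J\nu}{\nu} = 0$ and produces
\[ \ip{J\nu}{\mu}\ip{\overline\nabla_X Y}{\mu} \, = \, \ip{\IIS(X,Y)}{J\nu} - \ip{\IIM(X,Y)}{J\nu}. \]
Eliminating $\ip{\overline\nabla_X Y}{\mu}$ between these two leaves the term $\ip{\IIM(X,Y)}{J\nu}$, which does not appear in the target formula.

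The main obstacle is therefore to eliminate $\ip{\IIM(X,Y)}{J\nu}$. In the generic case $\ip{J\nu}{\mu} \neq 0$, I would invert the decomposition of $\mu$ to write
\[ J\nu \, = \, \frac{1}{\ip{J\nu}{\mu}}\bigl(\mu - \tau - \ip{\nu}{\mu}\,\nu\bigr). \]
Taking the inner product with $\IIM(X,Y)$ and using $\ip{\IIM(X,Y)}{\mu} = 0$ (because $\mu \in TM$) then yields
\[ \ip{J\nu}{\mu}\,\ip{\IIM(X,Y)}{J\nu} \, = \, -\ip{\IIM(X,Y)}{\tau} - \ip{\nu}{\mu}\ip{\IIM(X,Y)}{\nu}. \]
Substituting this into the $J\nu$-equation, using the $\tau$-equation to express $\ip{\overline\nabla_X Y}{\mu}$, clearing denominators and finally applying the normalisation $|\tau|^2 + \ip{\nu}{\mu}^2 + \ip{J\nu}{\mu}^2 = |\mu|^2 = 1$ reduces the result to the stated identity. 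The degenerate cases $|\tau| = 0$ and $\ip{J\nu}{\mu} = 0$ cause both sides to vanish trivially and can be handled separately or by continuity.
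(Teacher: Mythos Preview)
Your proof is correct and arrives at the identity by a somewhat cleaner route than the paper. Both arguments rest on the same underlying observation, namely that $X,Y\in T_p\partial M$ allows one to decompose $\overline\nabla_X Y$ with respect to both $M$ and $\Sigma$, but the bookkeeping differs.

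The paper works component-wise: it expands $\overline\nabla_X Y$ in the bases $\{e_I,\nu,Je_I,J\nu\}$ and $\{e_I,\mu,f_i\}$, takes the inner product with each $Je_I$, and then contracts with $\tau^I$. To evaluate $\ip{\overline\nabla_X Y}{\mu}$ it invokes the decomposition \eqref{muinperps}, which expresses $\mu$ as a combination of $\mu^{N\Sigma}$ and $\nu^{NM}$; this produces the $\ip{\IIS_{XY}}{\mu}$ and $\ip{\IIM_{XY}}{\nu}$ terms directly, and the final simplification uses \eqref{mod mu}.

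You instead isolate the vector identity $\IIS(X,Y)-\IIM(X,Y)=\ip{\overline\nabla_X Y}{\mu}\mu-\ip{\overline\nabla_X Y}{\nu}\nu$ at the outset and test it against $\tau$ and $J\nu$. The extra step of inverting \eqref{mu eqn} to express $J\nu$ in terms of $\mu,\tau,\nu$ lets you eliminate $\ip{\IIM_{XY}}{J\nu}$ via $\ip{\IIM_{XY}}{\mu}=0$, bypassing the need for the $f_I$ basis, the inverse metric $G^{IJ}$, and formula \eqref{muinperps}. The resulting algebra is shorter, and your handling of the degenerate cases $|\tau|=0$ or $\ip{J\nu}{\mu}=0$ by continuity is fine since both sides are polynomial in these quantities.
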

\begin{proof}
We may write $\ov\n_X Y$ in two ways, namely
\begin{flalign*}
\ov\n_X Y &=\ip{\IIS_{XY}}{Je^I}Je_I + \ip{\IIS_{XY}}{J\nu}J\nu +\ip{\ov\n_X Y}{e^I}e_I+\ip{\ov\n_X Y}{\nu}\nu\\
&=\ip{\IIM_{XY}}{f_i}G^{ik}f_k +\ip{\ov\n_X Y}{e^I}e_I+\ip{\ov\n_X Y}{\mu}\mu
\end{flalign*}
where the $f_i$ are the basis of $N_pM$ as above. Taking an inner product with $Je_I$, this equality yields
\begin{equation}\ip{\IIS_{XY}}{Je_I} = \ip{\IIM_{XY}}{f_I}+\ip{\ov\n_XY}{\mu}\tau^I\ . 
\label{Dfirst}
\end{equation}	
Due to equation (\ref{muinperps}), 
\begin{flalign}
\ip{\ov\n_XY}{\mu} &= \frac{1}{1-\ip{\nu}{\mu}^2}\left[\ip{\ov\n_XY}{\mu^{N\Sigma} +\ip{\nu}{\mu}\nu^{NM}}\right]\nonumber \\
&=\frac{1}{1-\ip{\nu}{\mu}^2}\left[\ip{\IIS_{XY}}{\mu}+\ip{\nu}{\mu}\ip{\IIM_{XY}}{\nu}\right] \ .\label{partialLcurv}
\end{flalign}
Equation (\ref{Dfirst}) now yields
\begin{flalign*}
\ip{\IIS_{XY}}{Je_I}-\frac{\tau^I}{1-\ip{\nu}{\mu}^2}\ip{\IIS_{XY}}{\mu} = \ip{\IIM_{XY}}{f_I}+\frac{\ip{\nu}{\mu}\tau^I}{1-\ip{\nu}{\mu}^2}\ip{\IIM_{XY}}{\nu}\ .
\end{flalign*}	
Multiplying  by $\tau^I$ and summing, we have that (using (\ref{tildemu}))
\[\ip{\IIS_{XY}}{\tau} - \frac{|\tau|^2}{1-\ip{\nu}{\mu}^2}\ip{\IIS_{XY}}{\mu^{N\Sigma}}  = \ip{\IIM_{XY}}{\tau} + \frac{|\tau|^2\ip{\nu}{\mu}}{1-\ip{\nu}{\mu}^2}\ip{\IIM_{XY}}{\nu}.\]
By (\ref{mu eqn}), we have that 
\begin{equation}\label{mod mu} 1-\ip{\nu}{\mu}^2 - |\tau|^2 -\ip{J\nu}{\mu}^2=0\end{equation}
and hence
\begin{flalign*}
\tau - \frac{|\tau|^2}{1-\ip{\nu}{\mu}^2}(\mu - \ip{\nu}{\mu}\nu)&=\frac{\ip{J\nu}{\mu}^2}{1-\ip{\nu}{\mu}^2}\tau- \frac{\ip{J\nu}{\mu}|\tau|^2}{1-\ip{\nu}{\mu}^2}J\nu.
\end{flalign*}
Thus we conclude
\begin{align*}
\frac{\ip{J\nu}{\mu}^2}{1-\ip{\nu}{\mu}^2}\ip{\IIS_{XY}}{\tau}=\ip{\IIM_{XY}}{\tau} + \frac{|\tau|^2}{1-\ip{\nu}{\mu}^2}\left[\ip{J\nu}{\mu}\ip{\IIS_{XY}}{J\nu } +\ip{\nu}{\mu}\ip{\IIM_{XY}}{\nu}\right]\ .
\end{align*}
\end{proof}

\subsubsection{Dirichlet boundary time derivatives}
We now consider time derivatives: 
\begin{lemma}\label{Dirichlettime}
Let $\Sigma_t$ be a smooth solution of LMCF and $M_t$ satisfies (\ref{MCFBC}). Suppose that $\partial M_t\subset \Sigma_t$ for all $t\geq 0$, then for all $t>0$, 
\[\ip{H- \widetilde H}{\tau}\ip{J\nu}{\mu} = \ip{H-\widetilde H}{J\nu}|\tau|^2 \ .\]
\end{lemma}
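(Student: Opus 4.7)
The plan is to differentiate the Dirichlet boundary condition $\partial M_t \subset \Sigma_t$ in time and extract the two required scalar identities. Since $\Sigma_t$ is a smooth LMCF, we can parametrise $\Sigma$ as $\widetilde F(\cdot,t)$ so that $(\widetilde F_t)^{N\Sigma} = \widetilde H$; then any smoothly varying curve $p(t)\in\Sigma_t$ satisfies $\dot p(t) - \widetilde H(p(t),t) \in T_{p(t)}\Sigma_t$. Applying this to $p(t) = F(x,t)$ for $x\in\partial M$ gives $\ddt F - \widetilde H \in T\Sigma_t$ along $\partial M_t$. On the other hand, the reparametrised flow equation (first line of \eqref{MCFBC}) lets us write $\ddt F = H + W$ for some $W \in TM_t$. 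Combining these two observations yields
\[(H - \widetilde H) + W \;\in\; T\Sigma_t.\]

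Projecting this identity onto $N\Sigma_t$ and using $\widetilde H \in N\Sigma_t$ gives
\[H^{N\Sigma} - \widetilde H + W^{N\Sigma} = 0.\]
I would then expand $W^{N\Sigma}$ using the orthonormal basis $\{e_1,\ldots,e_{n-1},\mu\}$ of $T_pM$ from Section \ref{linalg}. Since $e_I \in T\Sigma$, only the $\mu$-component of $W$ contributes to the $N\Sigma$-projection, and recalling $\mu = \tau + \ip{\nu}{\mu}\nu + \ip{J\nu}{\mu}J\nu$ together with $\nu \in T\Sigma$, we obtain
\[W^{N\Sigma} = \ip{W}{\mu}\,\mu^{N\Sigma} = \ip{W}{\mu}\bigl(\tau + \ip{J\nu}{\mu}J\nu\bigr).\]

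Finally, I take inner products of the projected identity against $\tau$ and against $J\nu$, both of which lie in $N\Sigma$ and are mutually orthogonal since $\tau = \tau^I Je_I$ and $\ip{Je_I}{J\nu}=\ip{e_I}{\nu}=0$. This yields
\begin{align*}
\ip{H - \widetilde H}{\tau} &= -\ip{W}{\mu}\,|\tau|^2,\\
\ip{H - \widetilde H}{J\nu} &= -\ip{W}{\mu}\,\ip{J\nu}{\mu}.
\end{align*}
Multiplying the first equation by $\ip{J\nu}{\mu}$ and the second by $|\tau|^2$ produces identical right-hand sides, which is exactly the claimed identity.

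The main obstacle is the very first step: converting the pointwise condition $F(\cdot,t)|_{\partial M} \in \Sigma_t$ into the infinitesimal relation $\ddt F - \widetilde H \in T\Sigma_t$, which requires one to recognise that, since only the normal velocity of $\Sigma_t$ is geometrically meaningful, any tangential reparametrisation of $\Sigma$ drops out once we project onto $N\Sigma$. Once this is established, the remainder is a short linear-algebra computation in the frame set up in Section \ref{linalg}, and the Neumann boundary condition of \eqref{MCFBC} is not even needed.
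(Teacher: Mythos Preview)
Your proof is correct and follows essentially the same approach as the paper: both differentiate the Dirichlet condition in time to obtain $H^{N\Sigma}-\widetilde H = -\ip{W}{\mu}\,\mu^{N\Sigma}$ (the paper writes $P^\mu$ for your $\ip{W}{\mu}$), then pair against $\tau$ and $J\nu$ to eliminate the unknown scalar. The only difference is presentational---the paper computes the $Je_I$ and $J\nu$ components separately before recognising the vector identity, while you identify $W^{N\Sigma}=\ip{W}{\mu}\mu^{N\Sigma}$ directly.
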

\begin{proof}
We consider a point $p(t)=F(p^1(t),\ldots,p^n(t),t)$ such that $p$ stays in $\Sigma_t$ (such a point exists by assumption). Then we must have that
\[\widetilde H=\left(\ddt{p}\right)^{N\Sigma} 
= \left(P+H\right)^{N_p\Sigma}\]
where $P=\pard{p^i}{t}X_i$ is a tangent vector to $M$. Fixing $t$ and writing $P=P^Ie_I +P^\mu \mu$ we see that 
\begin{align*} 
\ip{\widetilde H}{Je_I}&=\tau^IP^\mu +\ip{Je^I}{H}, \qquad
\ip{\widetilde H}{J\nu}=\ip{J\nu}{\mu} P^\mu+\ip{J\nu}{H}.
\end{align*}
This is equivalent to the statement that 
\[H^{N\Sigma}-\widetilde H = -P^\mu\left[\tau + \ip{J\nu}{\mu}J\nu \right]\ .\]
We also see that 
\[\ip{H-\widetilde H}{\tau}=-P^\mu|\tau|^2, \qquad \ip{H-\widetilde H}{J\nu} = -P^\mu \ip{J\nu}{\mu}\]
which yields the claim.
\end{proof}

\subsubsection{Neumann boundary condition space derivatives}
We will see that at a point $p\in \partial M$ such that the Neumann boundary condition holds and $\frac{1}{2}>|\omega|^2(p) = \underset{q\in\partial M}\max |\omega|^2(q)$ we have that
\[\n_I \ip{\nu}{\mu} =0 = \n_I\ip{J\nu}{\mu}\ .\]
We will now investigate the implications of these equalities.

\begin{lemma}\label{Neumannspace}
 Suppose that at some $p\in\partial M$
 \[\n_I \ip{\nu}{\mu} = 0 .\]
 Then
 \[\ip{\nu}{\IIM_{I\mu}}+\ip{\mu}{\IIS_{I\nu}} =0  .\]
\end{lemma}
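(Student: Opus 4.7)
My plan is to expand the hypothesis $\n_I \ip{\nu}{\mu} = 0$ using metric compatibility of the ambient connection, and then split each resulting covariant derivative into tangential and normal parts adapted to the correct submanifold.

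First, since $\mu, \nu$ have been extended to a neighbourhood $U\subset \partial M$ of $p$ and the ambient metric $\bar g$ is parallel for $\ov\n$, we have
\[\n_I \ip{\nu}{\mu} = \ip{\ov\n_{e_I}\nu}{\mu}+\ip{\nu}{\ov\n_{e_I}\mu}.\]
I will treat the two terms in a symmetric way.

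For $\ov\n_{e_I}\nu$: note that $e_I \in T\partial M \subset T\Sigma$ and $\nu$ is a tangential vector field on $\Sigma$, so
\[\ov\n_{e_I}\nu = \widetilde\n_{e_I}\nu + \IIS(e_I,\nu),\]
with $\widetilde\n_{e_I}\nu \in T\Sigma$ and $\IIS(e_I,\nu)\in N\Sigma$. Because $\{e_1,\ldots,e_{n-1},\mu\}$ is an orthonormal basis of $T_qM$ for $q\in U$, we have $\ip{\mu}{e_I}=0$, so the $T\Sigma$-component of $\mu$ is exactly $\ip{\nu}{\mu}\nu$. Combining this with $|\nu|^2=1$ (which forces $\ip{\widetilde\n_{e_I}\nu}{\nu}=0$) kills the tangential contribution, and I obtain
\[\ip{\ov\n_{e_I}\nu}{\mu}=\ip{\IIS_{I\nu}}{\mu}.\]

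For $\ov\n_{e_I}\mu$: the vector $\mu$ is a unit tangent vector field on $M$, so
\[\ov\n_{e_I}\mu = \n_{e_I}\mu + \IIM(e_I,\mu),\]
and by analogous reasoning (using $\ip{\nu}{e_I}=0$ from the $T\Sigma$ orthonormal basis, and $|\mu|^2=1$), the $TM$-component of $\nu$ is $\ip{\nu}{\mu}\mu$, and this pairs with $\n_{e_I}\mu$ trivially, giving
\[\ip{\nu}{\ov\n_{e_I}\mu}=\ip{\nu}{\IIM_{I\mu}}.\]

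Substituting both identities into the expansion of $\n_I\ip{\nu}{\mu}$ and using the hypothesis yields the claim. The only subtlety is carefully justifying the tangential/normal decompositions of $\mu$ and $\nu$ relative to the two submanifolds, for which the key facts are the orthogonality relations $\ip{\mu}{e_I}=\ip{\nu}{e_I}=0$ built into the choice of frame in Section \ref{linalg}; no additional input from the boundary condition is required.
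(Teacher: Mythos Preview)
Your proof is correct, and it is in fact a cleaner route than the one in the paper. The paper expands $\ov\n_I\nu$ and $\ov\n_I\mu$ by first rewriting $\nu$ and $\mu$ via the identities \eqref{muinperps}--\eqref{nuinperps} in terms of $\nu^{NM}$ and $\mu^{N\Sigma}$, and then has to check that the resulting cross terms $\ip{\ov\n_I\mu^{N\Sigma}}{\mu}+\ip{\ov\n_I\nu^{NM}}{\nu}$ vanish by computing $|\mu^{N\Sigma}|^2$, $|\nu^{NM}|^2$, and $\ip{\mu^{N\Sigma}}{\nu^{NM}}$ as functions of $\ip{\nu}{\mu}$ and using the hypothesis a second time. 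You instead apply the Gauss decomposition directly to $\ov\n_I\nu$ (relative to $\Sigma$) and $\ov\n_I\mu$ (relative to $M$), and kill the tangential contributions in one step using $\mu^{T\Sigma}=\ip{\nu}{\mu}\nu$, $\nu^{TM}=\ip{\nu}{\mu}\mu$, together with $|\nu|^2=|\mu|^2=1$. The two approaches are logically equivalent, but yours avoids the auxiliary identities entirely and makes transparent that the only inputs are the frame orthogonality relations and the unit length of $\mu,\nu$.
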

\begin{proof}
 Using $\n_I\ip{\nu}{\mu}=0$, we have
 \[0=\ip{\ov\n_I\nu }{\mu}+\ip{\ov\n_I\mu }{\nu}\]
 and so using equations (\ref{muinperps}) and (\ref{nuinperps})
 \begin{flalign*}
 0&=\ip{\ov\n_I \nu^{NM} +\ip{\nu}{\mu}\ov\n_I\mu^{N\Sigma} }{\mu}+\ip{\ov\n_I\mu^{N\Sigma} +\ip{\nu}{\mu}\ov\n_I\nu^{NM} }{\nu}\\
 &=-\ip{\nu}{\IIM_{I\mu}}-\ip{\mu}{\IIS_{I\nu}}+\ip{\nu}{\mu}\left[\ip{\ov\n_I\mu^{N\Sigma} }{\mu}+\ip{\ov\n_I\nu^{NM} }{\nu}\right]\\
  &=-\ip{\nu}{\IIM_{I\mu}}-\ip{\mu}{\IIS_{I\nu}}\\&\qquad+\frac{\ip{\nu}{\mu}}{1-\ip{\nu}{\mu}^2}\left[\ip{\ov\n_I\mu^{N\Sigma} }{\mu^{N\Sigma} +\ip{\nu}{\mu}\nu^{NM}}+\ip{\ov\n_I\nu^{NM} }{\nu^{NM} +\ip{\nu}{\mu}\mu^{N\Sigma}}\right]\\
  &=-\ip{\nu}{\IIM_{I\mu}}-\ip{\mu}{\IIS_{I\nu}}\\&\qquad+\frac{\ip{\nu}{\mu}}{1-\ip{\nu}{\mu}^2}\left[\frac{1}{2}(\n_I\left|\mu^{N\Sigma}\right|^2+\n_I\left|\nu^{NM}\right|^2) +\ip{\nu}{\mu}\left(\n_I \ip{\mu^{N\Sigma}}{\nu^{NM}}\right)\right]
 \end{flalign*}
However, we see that
\[\left|\mu^{N\Sigma}\right|^2 = 1-\ip{\nu}{\mu}^2  = \left|\nu^{NM}\right|^2\]
and
\[\ip{\mu^{N\Sigma}}{\nu^{NM}}=\ip{\mu - \ip{\nu}{\mu}\nu}{\nu - \ip{\nu}{\mu}\mu}=\ip{\nu}{\mu}^2-\ip{\nu}{\mu}\]
and so the square bracket vanishes.
\end{proof}

\begin{lemma}\label{boundarymax}
 Suppose that at $p\in\partial M$ we have that
 \[0=\n_I\ip{\nu}{\mu}=\n_I \ip{J\nu}{\mu} .\]
 Then 
 \begin{flalign*}0&=\ip{\IIM_{I\mu}}{J\nu}-\ip{\IIS_{I\nu}}{J\mu}\\
  &\qquad\qquad+\frac{1}{1-\ip{\nu}{\mu}^2}\left[\ip{\IIM_{I\sigma}}{\nu}+\ip{\nu}{\mu}\ip{\IIS_{I\sigma}}{\mu}\right]\ ,
  \end{flalign*}
  where we define $\sigma := J\tau$ to simplify notation.
\end{lemma}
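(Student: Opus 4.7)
The plan is to differentiate $\ip{J\nu}{\mu}$ tangentially at $p$ and reorganise the resulting identity so that the ``extra'' term produced by the tangential part of $\ov\n_I\nu$ (absent in the analogous Lemma \ref{Neumannspace}) is rewritten as the combination of second fundamental forms appearing in the conclusion.

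Since $\ov\n J = 0$, the hypothesis $\n_I\ip{J\nu}{\mu} = 0$ gives
\[0 = \ip{J\ov\n_I\nu}{\mu} + \ip{J\nu}{\ov\n_I\mu}.\]
I would expand each term via the Gauss formulas $\ov\n_I\nu = (\ov\n_I\nu)^{T\Sigma} + \IIS_{I\nu}$ and $\ov\n_I\mu = (\ov\n_I\mu)^{TM} + \IIM_{I\mu}$, and use that $\ip{e_K}{\mu} = 0 = \ip{e_K}{\nu}$ are identities along $\partial M$ to reduce the tangential pieces to $-h^\Sigma_{IK}e_K$ and $-h^M_{IK}e_K$ respectively, where $h^\Sigma_{IK} := \ip{\ov\n_I e_K}{\nu}$ and $h^M_{IK} := \ip{\ov\n_I e_K}{\mu}$. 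Together with $\ip{Je_K}{\mu} = \tau^K$ and $\ip{J\nu}{e_K} = 0$ (the latter since $J\nu \in N\Sigma$ and $e_K \in T\Sigma$), this reduces the identity to
\[ \ip{\IIM_{I\mu}}{J\nu} - \ip{\IIS_{I\nu}}{J\mu} = h^\Sigma_{IK}\tau^K = \ip{\sigma}{\ov\n_I\nu},\]
where the last equality uses $\sigma = J\tau = -\tau^K e_K$.

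The remaining task is to re-express $\ip{\sigma}{\ov\n_I\nu}$ in the form appearing on the right-hand side of the conclusion. Since $\sigma \in T\partial M$, the identities $\ip{\sigma}{\nu} = 0 = \ip{\sigma}{\mu}$ hold along $\partial M$; differentiating them and splitting $\ov\n_I \sigma$ into $TM + NM$ (respectively $T\Sigma + N\Sigma$) parts, while using $\nu = \nu^{NM} + \ip{\nu}{\mu}\mu$ and $\mu = \mu^{N\Sigma} + \ip{\nu}{\mu}\nu$, yields the linear system
\begin{align*}
\ip{\IIM_{I\sigma}}{\nu} &= -\ip{\sigma}{\ov\n_I\nu} + \ip{\nu}{\mu}\ip{\sigma}{\ov\n_I\mu},\\
\ip{\IIS_{I\sigma}}{\mu} &= -\ip{\sigma}{\ov\n_I\mu} + \ip{\nu}{\mu}\ip{\sigma}{\ov\n_I\nu}.
\end{align*}
A direct $2 \times 2$ solve then gives $\ip{\sigma}{\ov\n_I\nu} = -(1-\ip{\nu}{\mu}^2)^{-1}\bigl[\ip{\IIM_{I\sigma}}{\nu} + \ip{\nu}{\mu}\ip{\IIS_{I\sigma}}{\mu}\bigr]$, and substituting into the previous display produces the claim.

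The main obstacle is the bookkeeping between the two tangent/normal splittings (relative to $M$ and $\Sigma$): one must recognise that the term $h^\Sigma_{IK}\tau^K$ produced by the tangential part of $\ov\n_I\nu$ — which has no analogue in Lemma \ref{Neumannspace} because the factor $\tau^K = \ip{Je_K}{\mu}$ requires an essential $J$ — is precisely what the linear combination of $\IIM_{I\sigma}$ and $\IIS_{I\sigma}$ weighted by $1$ and $\ip{\nu}{\mu}$, and divided by $1-\ip{\nu}{\mu}^2$, is designed to recover.
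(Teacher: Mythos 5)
Your proof is correct and follows essentially the same route as the paper's: differentiate the Neumann condition, peel off the second fundamental form terms to isolate the residual tangential piece $\ip{\sigma}{\ov\n_I\nu}$, and then express that residual as the stated combination of $\IIM_{I\sigma}$ and $\IIS_{I\sigma}$. The only genuine variation is in the last step: where the paper invokes the pre-established identity \eqref{nuinperps} to write $\nu = \tfrac{1}{1-\ip{\nu}{\mu}^2}\bigl[\nu^{NM}+\ip{\nu}{\mu}\mu^{N\Sigma}\bigr]$ and differentiates it directly against $\sigma$, you re-derive the same relation from scratch by differentiating $\ip{\sigma}{\nu}=0$ and $\ip{\sigma}{\mu}=0$ and solving a $2\times2$ system — a near-identical calculation in different clothing, with the same content as \eqref{nuinperps}.
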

\begin{proof}
We expand the statement $\n_I \ip{J\nu}{\mu}=0$. We first note that
 \begin{flalign*}
  \ip{ \ov \n_I \mu}{J\nu}&=\ip{ \ov \n_I \mu}{(J\nu)^{NM}+(J\nu)^{TM}}\\
  &=\ip{\IIM_{I\mu}}{J\nu}+\ip{J\nu}{\mu}\ip{\ov\n_I \mu}{\mu}\\
  &=\ip{\IIM_{I\mu}}{J\nu}
 \end{flalign*}
 as $|\mu|^2=1$. We also calculate that
 \begin{flalign*}
  \ip{\ov\n_I \nu}{J\mu}&=\ip{\ov\n_I \nu}{(J\mu)^{N\Sigma}+(J\mu)^{T\Sigma}}\\
  &=\ip{\IIS_{I \nu}}{J\mu}+\ip{\ov \n_I \nu}{\sigma - \ip{J\nu}{\mu}\nu}\\
  &=\ip{\IIS_{I\nu}}{J\mu}+\ip{\ov \n_I \nu}{\sigma}\\
  &=\ip{\IIS_{I \nu}}{J\mu}+\frac{1}{1-\ip{\nu}{\mu}^2}\ip{\ov \n_I \left(\nu^{NM} +\ip{\nu}{\mu}\mu^{N\Sigma}\right)}{\sigma}\\
  &=\ip{\IIS_{I\nu}}{J\mu}-\frac{1}{1-\ip{\nu}{\mu}^2}\left[\ip{\IIM_{I\sigma}}{\nu}+\ip{\nu}{\mu}\ip{\IIS_{I\sigma}}{\mu}\right]
 \end{flalign*}
 Putting these together we have that
 \begin{flalign*}
  \n_I \ip{J\nu}{\mu}&=\ip{J\nu}{\ov\n_I\mu} - \ip{\ov\n_I\nu}{J\mu}\\
  &=\ip{\IIM_{I\mu}}{J\nu}-\ip{\IIS_{I \nu}}{J\mu} +\frac{1}{1-\ip{\nu}{\mu}^2}\left[\ip{\IIM_{I\sigma}}{\nu}+\ip{\nu}{\mu}\ip{\IIS_{I\sigma}}{\mu}\right]\ .
 \end{flalign*}
\end{proof}

\section{Preservation of the Lagrangian Condition}\label{PresLag}

In this section, we prove the Lagrangian condition is preserved assuming existence of the flow (see Section \ref{STEsec}).

\begin{theorem}\label{LMCFBdry}
	Let $\Sigma_t$ be a smooth Lagrangian mean curvature flow. Suppose $M_t$ is a solution of (\ref{MCFBC}) with $M_0$ Lagrangian and $\operatorname{inj}(\p M_t) > \delta > 0$, for $ t \in [0,T)$. Then $M_t$ is Lagrangian for all $t \in [0, T)$. 
\end{theorem}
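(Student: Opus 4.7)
The plan is to extend Smoczyk's closed-manifold argument for the preservation of the Lagrangian condition to the boundary setting by combining a parabolic maximum principle in the interior with a Hopf boundary lemma argument. Let $\omega := F_t^*\bar\omega$ denote the pullback of the ambient K\"ahler form to $M_t$. Since $M_0$ is Lagrangian, $|\omega|^2 \equiv 0$ at $t=0$, so the claim reduces to showing that $|\omega|^2$ cannot develop. Smoczyk's original interior computation, which uses only the mean curvature flow equation and the K\"ahler structure on $\CY$ (and not that $M_t$ itself is Lagrangian), yields a pointwise inequality of the form
\[ \ho |\omega|^2 \leq C\,|\II|^2\, |\omega|^2 \qquad \text{on } M_t \setminus \p M_t. \]
Setting $f := e^{-Kt}|\omega|^2$ for $K$ larger than $C\sup|\II|^2$ on the time interval in question, $f$ becomes a subsolution of the heat equation on $M_t \setminus \p M_t$.

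The heart of the argument is the boundary analysis. Working in the adapted frame $\{e_1,\ldots,e_{n-1},\mu\}$ on $T_pM$ from Section \ref{linalg}, a direct calculation shows that $|\omega|^2|_{\p M_t}$ is a positive multiple of $|\tau|^2 = 1 - \ip{\nu}{\mu}^2 - \ip{J\nu}{\mu}^2$, using identity (\ref{mod mu}). Suppose for contradiction that $f$ attains a positive spatial maximum at a point $p \in \p M_{t_0}$. The boundary condition $\cos\a\ip{\nu}{\mu} - \sin\a\ip{J\nu}{\mu} = 0$ holds identically along $\p M_t$, so tangential differentiation gives $\cos\a\,\n_I\ip{\nu}{\mu} = \sin\a\,\n_I\ip{J\nu}{\mu}$. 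Combined with the first-order necessary condition $\n_I|\tau|^2 = 0$ at the maximum of $|\tau|^2$ over $\p M_{t_0}$, I would deduce that $\n_I\ip{\nu}{\mu} = \n_I\ip{J\nu}{\mu} = 0$ at $p$, provided $|\omega|^2(p) > 0$.

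With both Neumann-type tangential derivatives vanishing, Lemmas \ref{Neumannspace} and \ref{boundarymax} furnish algebraic relations between components of $\IIM$ and $\IIS$ at $p$. Combining these with the Dirichlet-derivative identity of Lemma \ref{Dirichletrewrite2} and the time-derivative identity of Lemma \ref{Dirichlettime}, I would rewrite $\p_\mu|\omega|^2(p)$ as a sum of terms whose combined sign is non-positive. This contradicts the Hopf boundary lemma applied to the subsolution $f$, so the spatial maximum of $f$ must be attained either in the interior (where the interior maximum principle forces it to agree with the initial value) or at $t=0$; either way $f \leq 0$, so $|\omega|^2 \equiv 0$, proving $M_t$ is Lagrangian.

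The main obstacle is extracting the correct sign for $\p_\mu|\omega|^2$ at the alleged boundary maximum: each of the three boundary lemmas produces only partial cancellations, and it is the combined identity that delivers the required non-positivity. A secondary technical point is that the inverse metric $G^{ij}$ from Section \ref{linalg} degenerates as $|\tau|^2 \to 1$, so one should first establish the bound on a short time interval where $|\omega|^2 < \tfrac{1}{2}$ (guaranteed by continuity from $|\omega|^2(\cdot,0) \equiv 0$) and then bootstrap; the boundary injectivity hypothesis $\operatorname{inj}(\p M_t) > \delta$ ensures that the adapted frame and distance-to-boundary function extend smoothly to a uniform collar throughout $[0,T)$, which is what makes the Hopf lemma applicable on the evolving domain.
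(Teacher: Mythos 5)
There is a genuine gap in the boundary step. The combined output of Lemmas \ref{Dirichletrewrite2}, \ref{Dirichlettime}, \ref{Neumannspace}, \ref{boundarymax} at a boundary maximum is \emph{not} that $\n_\mu|\omega|^2$ is non-positive: it is the identity of Lemma \ref{boundaryomega}, namely that $\n_\mu|\omega|^2$ equals $2|\omega|^2$ times a bracket involving signed contractions of $\IIM$ and $\IIS$ against $\nu, J\nu, \tau, \sigma$. None of those curvature contractions has a sign, so the bracket has indeterminate sign, and the only useful conclusion is the bound $|\n_\mu|\omega|^2| \leq C(C_M+C_\Sigma)\,|\omega|^2$. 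Your proposed test function $f = e^{-Kt}|\omega|^2$ satisfies $\n_\mu f = e^{-Kt}\n_\mu|\omega|^2$, which can be strictly positive at a boundary maximum, so the Hopf lemma contradiction does not close: Hopf gives $\n_\mu f \geq 0$ (strict if positive maximum), and your estimate does not exclude that.

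The missing ingredient is a spatial weight that decays off the boundary. The paper constructs $\rho$, a smoothing of the intrinsic distance to $\p M_t$ (Lemma \ref{rho}), with $\n_\mu\rho = -1$ on $\p M_t$ and bounded heat operator on the collar. Then for $f = |\omega|^2 e^{A\rho - Bt}$ one gets at a boundary point $\n_\mu f = e^{A\rho-Bt}\bigl(\n_\mu|\omega|^2 - A|\omega|^2\bigr) \leq e^{A\rho-Bt}\bigl(C(C_M+C_\Sigma) - A\bigr)|\omega|^2$, which \emph{is} strictly negative once $A > C(C_M+C_\Sigma)$ and $|\omega|^2(p) > 0$; the interior inequality then closes via the maximum principle after choosing $B$ large. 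You noticed that the boundary injectivity hypothesis should enter ``to make the Hopf lemma applicable'' but did not actually incorporate the distance-to-boundary weight into $f$ — that weight, not the Hopf lemma alone, is what converts the proportionality $\n_\mu|\omega|^2 \lesssim |\omega|^2$ into a contradiction. Without it, the argument does not go through.
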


In preparation for this proof, we calculate some important quantities using the coordinate system introduced in Section \ref{BoundaryCond}. Using the Neumann boundary condition of \eqref{MCFBC}, 
\begin{equation}
    \cos \a \ip{\nu}{\mu} -\sin \a \ip{J\nu}{\mu}=0\ \label{eq-bdry},
\end{equation}
it follows from \eqref{mu eqn} that we may write $\mu$ as
\begin{flalign}
\mu &=\frac{\ip{J\nu}{\mu}}{\cos \a}\left(\sin \a  \nu +\cos \a J\nu\right) + \tau, \label{eq-mutrig}
\end{flalign}
and from \eqref{mod mu} that we may write $|\tau|^2$ as
\begin{flalign*}
|\tau|^2 &= 1-\frac{\ip{J\nu}{\mu}^2}{\cos^2\a} =1-\frac{\ip{\nu}{\mu}^2}{\sin^2\a}.
\end{flalign*}

Let $\omega$ be the restriction of $\overline \omega$ to $M$. We wish to consider $|\omega|^2=\omega_{ij} \omega^{ij}$ where $\omega_{ij}=\ip{X_i}{JX_j}$. Calculating on the boundary in the basis $\{e_1,\dots,e_{n-1},\mu\}$ of Section \ref{linalg} we have that 
\[\omega = \begin{pmatrix}
                 0&\begin{matrix}
                    \tau^1\\
                    \vdots\\
                    \tau^{n-1}
                   \end{matrix}\\
                   -\tau^1 \ldots -\tau^{n-1} & 0
                \end{pmatrix}
\]
and so at the boundary
\begin{equation} \label{eq-omegatau}
|\omega|^2 = 2|\tau|^2\ = 2 - \frac{2\ip{J\nu}{\mu}^2}{\cos^2(\alpha)}.
\end{equation}
As a result if $|\omega|^2<\frac{1}{2}$ at a boundary point then
\begin{equation}\label{RandomIneq}\frac{\ip{J\nu}{\mu}^2}{\cos^2\a}>\frac 3 4 \ ,\end{equation}
and so at such a point, since $\nu^{NM} = \nu - \ip{\nu}{\mu}\mu$,
\[\left|\nu^{NM}\right|^2 =\left|\mu^{N\Sigma}\right|^2 =  1-\ip{\nu}{\mu}^2 = |\tau|^2 +\ip{J\nu}{\mu}^2>\frac 3 4\cos^2\a>0\ .\]
Finally,
\[\n_k \omega_{ij} = \ip{\IIM_{ik}}{JX_j} - \ip{\IIM_{jk}}{JX_i},\]
and so, remembering $\sigma = J\tau$,
\begin{flalign*}
\n_\mu |\omega|^2 &= 2\left[\ip{\IIM_{i\mu}}{JX_j} - \ip{\IIM_{j\mu}}{JX_i}\right]\omega^{ij} \\
&=2\ip{\IIM_{I\mu}}{J\mu}\ip{e_I}{J\mu}+2\ip{\IIM_{\mu\mu}}{Je_I}\ip{\mu}{Je_I} -  2\ip{\IIM_{I\mu}}{J\mu}\ip{\mu}{Je_I} - 2\ip{\IIM_{\mu\mu}}{Je_I}\ip{e_I}{J\mu}\\
&=4\ip{\IIM_{I\mu}}{J\mu} \ip{e_I}{J\mu}+4\ip{\IIM_{\mu\mu}}{Je_I}\ip{\mu}{Je_I}\\
&=4\ip{\IIM_{\sigma\mu}}{J\mu} +4\ip{\IIM_{\mu\mu}}{\tau}\ .
\end{flalign*}

We now prove the key estimate to prove Theorem \ref{LMCFBdry}.
\begin{lemma}\label{boundaryomega}
Let $p$ be a boundary maximum of $|\omega|^2$ where $|\omega|<\frac 1 2$ and suppose that $\Sigma$ satisfies LMCF. Then we have that
\begin{flalign*}
\n_\mu |\omega|^2&=  2|\omega|^2 \bigg [ -\tan^2\a +\frac{1-\ip{\nu}{\mu}^2}{\cos^2\a\ip{J\nu}{\mu}}\ip{H-\widetilde{H}}{J\nu}+\frac{1}{\cos^2\a}\ip{\widetilde{H}}{\tau}\\
&\qquad\qquad+ \frac{1}{\cos^2\a}\left[\ip{J\nu}{\mu}\ip{\IIS^I_{I}}{J\nu } 
 +\ip{\nu}{\mu}\ip{\IIM^I_{I}}{\nu}\right]\\
 &\qquad\qquad- \, \frac{\tan \a \ip{J\nu}{\mu}}{|\sigma|^2\left(1-\ip{\nu}{\mu}^2\right)}\left[\ip{\IIM_{\sigma \sigma}}{\nu}+\ip{\nu}{\mu}\ip{\IIS_{\sigma \sigma}}{\mu}\right]- \, \frac{\ip{J\nu}{\mu}^2}{|\sigma|^2} \ip{\IIS_{\sigma \sigma}}{J\nu} \bigg ].
\end{flalign*}
and in particular, if $|\IIM|_p|<C_M$, $|\IIS|_p|<C_\Sigma$ then there exists a constant $C=C(n, \alpha)$ so that 
\[\n_\mu|\omega|^2 = C(C_M+C_\Sigma)|\omega|^2\ .\]
\end{lemma}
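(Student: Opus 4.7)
The starting point is the formula
\[\n_\mu|\omega|^2 \, = \, 4\ip{\IIM_{\sigma\mu}}{J\mu} + 4\ip{\IIM_{\mu\mu}}{\tau},\]
derived immediately before the lemma. My plan is to rewrite each of the two summands systematically by applying the four boundary identities established earlier in the section: Lemmas \ref{Dirichletrewrite2}, \ref{Dirichlettime}, \ref{Neumannspace} and \ref{boundarymax}. The key observation allowing application of the last two is that at a boundary maximum of $|\omega|^2$, the identity \eqref{eq-omegatau} together with \eqref{eq-bdry} forces $\n_I\ip{\nu}{\mu}=0=\n_I\ip{J\nu}{\mu}$ for every $I$. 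Throughout the calculation I will use the trigonometric normalisations $\ip{\nu}{\mu}=\tan\a\,\ip{J\nu}{\mu}$ and $|\tau|^2=1-\ip{J\nu}{\mu}^2/\cos^2\a$ that follow from \eqref{eq-bdry} and \eqref{eq-omegatau}, together with the symmetry identity \eqref{eq-2ffswap}.

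For the term $4\ip{\IIM_{\mu\mu}}{\tau}$ I would split $\IIM_{\mu\mu}=H-\IIM^I_{I}$, replace $\ip{H}{\tau}$ by $\ip{\widetilde H}{\tau}+\frac{|\tau|^2}{\ip{J\nu}{\mu}}\ip{H-\widetilde H}{J\nu}$ using Lemma \ref{Dirichlettime}, and replace $\ip{\IIM^I_{I}}{\tau}$ using Lemma \ref{Dirichletrewrite2} summed over $X=Y=e_I$, which produces the $\ip{\IIS^I_{I}}{J\nu}$ and $\ip{\IIM^I_{I}}{\nu}$ pieces (together with a leftover $\ip{\IIS^I_{I}}{\tau}$ term that will cancel). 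For the term $4\ip{\IIM_{\sigma\mu}}{J\mu}$, I note that $\sigma\in T\partial M$ so Lemmas \ref{Neumannspace} and \ref{boundarymax} apply with $X=\sigma$, yielding $\ip{\IIM_{\sigma\mu}}{\nu}=-\ip{\IIS_{\sigma\nu}}{\mu}$ and an explicit expression for $\ip{\IIM_{\sigma\mu}}{J\nu}$. Combined with the decomposition $(J\mu)^{NM}=\ip{\nu}{\mu}J\nu-\ip{J\nu}{\mu}\nu$ (computed from \eqref{eq-mutrig}), this reduces
\[\ip{\IIM_{\sigma\mu}}{J\mu} \, = \, \ip{\nu}{\mu}\ip{\IIM_{\sigma\mu}}{J\nu} - \ip{J\nu}{\mu}\ip{\IIM_{\sigma\mu}}{\nu}\]
to an expression in $\ip{\IIS_{\sigma\sigma}}{J\nu}$, $\ip{\IIM_{\sigma\sigma}}{\nu}$ and $\ip{\IIS_{\sigma\sigma}}{\mu}$ after using \eqref{eq-2ffswap} to rewrite cross-terms such as $\ip{\IIS_{\sigma\nu}}{\tau}=-\ip{\IIS_{\sigma\sigma}}{J\nu}$ and the analogous identities for $\ip{\IIS_{\sigma\nu}}{\mu}, \ip{\IIS_{\sigma\nu}}{J\mu}$. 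Collecting everything, extracting the common factor $|\omega|^2=2|\sigma|^2$ (which absorbs the $\tau^I\tau^J$-style contractions), and applying the trigonometric identities together with \eqref{mod mu} to simplify the coefficients should reproduce the stated identity; the isolated $-\tan^2\a$ term is expected to emerge from the trigonometric collapse that eliminates $\ip{\nu}{\mu}^2$ in favour of $\tan^2\a\,\ip{J\nu}{\mu}^2$.

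For the final bound, the hypothesis $|\omega|^2<\tfrac12$ together with \eqref{eq-omegatau} gives the lower bound $\ip{J\nu}{\mu}^2/\cos^2\a>3/4$ recorded in \eqref{RandomIneq}, so that $\ip{J\nu}{\mu}$, $1-\ip{\nu}{\mu}^2$ and $|\sigma|^2$ are all bounded below by constants depending only on $\a$. Every coefficient appearing in the displayed identity is therefore bounded in terms of $n$ and $\a$, while every curvature factor is bounded by $C_M$ or $C_\Sigma$ by hypothesis, immediately yielding the claimed bound $\n_\mu|\omega|^2=C(n,\a)(C_M+C_\Sigma)|\omega|^2$. The main obstacle I foresee is purely organisational: each application of the boundary lemmas generates several terms with $\ip{\nu}{\mu}$ and $\ip{J\nu}{\mu}$ in both numerators and denominators, and reproducing the precise combinatorial form of the claimed identity, in particular the $-\tan^2\a$ term with no curvature factor, requires meticulous simultaneous use of \eqref{mod mu}, \eqref{eq-bdry} and the symmetry \eqref{eq-2ffswap} at every step.
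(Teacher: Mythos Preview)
Your proposal is correct and follows essentially the same route as the paper: you invoke the same four boundary lemmas (Lemmas \ref{Dirichletrewrite2}, \ref{Dirichlettime}, \ref{Neumannspace}, \ref{boundarymax}) at the same places, use the same decomposition $J\mu=\ip{\nu}{\mu}J\nu-\ip{J\nu}{\mu}\nu+\sigma$, and rely on \eqref{eq-2ffswap} to shuffle $\IIS$-entries. The only organisational difference is that the paper processes $\ip{\IIM_{\sigma\mu}}{J\mu}$ first and lets that calculation generate a $\ip{\IIM_{\mu\mu}}{\tau}$-term whose coefficient then combines with the original one to produce $-\tan^2\a\,|\tau|^2$, whereas you attack $\ip{\IIM_{\mu\mu}}{\tau}$ directly; these are the same identities read in opposite directions.

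One slip in your final paragraph: $|\sigma|^2=|\tau|^2=\tfrac12|\omega|^2$ is \emph{not} bounded below by a constant depending only on $\a$ --- it can be arbitrarily small. The reason the $1/|\sigma|^2$ terms in the displayed identity are nevertheless harmless is that $\IIM_{\sigma\sigma}$ and $\IIS_{\sigma\sigma}$ are bilinear in $\sigma$, so $|\sigma|^{-2}\ip{\IIM_{\sigma\sigma}}{\,\cdot\,}$ is bounded by $|\IIM|$; the factor $|\sigma|^2$ cancels rather than being bounded away from zero. With that correction your argument for the final estimate goes through.
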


\begin{proof}

We first prove that
\begin{equation}
    0 \, = \, \n_I \ip{\nu}{\mu} \, = \, \n_I \ip{J\nu}{\mu} \label{eq-bdryderiv};
\end{equation} 
this will allow us to apply Lemmas \ref{Neumannspace} and \ref{boundarymax}.
By \eqref{eq-omegatau}, $p$ is a boundary maximum of $|\tau|^2$, and so
\begin{flalign*}
0\,=\,\frac 1 2\n_I |\tau|^2 \, &= \, -\ip{\nu}{\mu}\n_I\ip{\nu}{\mu}-\ip{J\nu}{\mu}\n_I\ip{J\nu}{\mu} \\
&= -\frac{\ip{J\nu}{\mu}}{\cos \a}\left[\sin \a \, \n_I\ip{\nu}{\mu}+\cos \a \, \n_I\ip{J\nu}{\mu}\right].
\end{flalign*}
By (\ref{RandomIneq}) we have
\[ \sin \a \, \n_I\ip{\nu}{\mu}+\cos \a \, \n_I\ip{J\nu}{\mu} = 0, \]
and differentiating (\ref{eq-bdry}) yields
\[ \cos\a \,\n_I\ip{\nu}{\mu} - \sin\a \, \n_I \ip{J\nu}{\mu}= 0.\]
These together imply equation (\ref{eq-bdryderiv}).

 We now wish to estimate $\frac{1}{4}\n_\mu |\omega|^2 = \ip{\IIM_{\sigma \mu}}{J\mu}+\ip{\IIM_{\mu \mu}}{\tau}$ at the boundary in terms of $|\omega|^2$ or equivalently $|\tau|^2 = |\sigma|^2$. 
 
Using (\ref{eq-bdry}) and Lemmas \ref{Neumannspace} and \ref{boundarymax}:
\begin{flalign*}
\ip{\IIM_{\sigma \mu}}{J\mu} &= \ip{\IIM_{\sigma \mu}}{-\ip{J\nu}{\mu}\nu +\ip{\nu}{\mu}J\nu}\\
& = \frac{\ip{J\nu}{\mu}}{\cos \a}\ip{\IIM_{\sigma \mu}}{-\cos \a \, \nu +\sin \a \, J\nu}\\
&= \frac{\ip{J\nu}{\mu}}{\cos \a}\ip{\IIS_{\sigma \nu}}{\cos \a \, \mu +\sin \a \,J\mu} \,-\,\frac{\tan \a \ip{J\nu}{\mu}}{1-\ip{\nu}{\mu}^2}\left[\Big\langle\IIM_{\sigma \sigma},\nu\Big\rangle +\ip{\nu}{\mu}\ip{\IIS_{\sigma \sigma}}{\mu}\right].
\end{flalign*}
We may extract a $|\tau|^2$ from the second of these terms, so working with the first term:
\begin{flalign*}
\frac{\ip{J\nu}{\mu}}{\cos \a}&\ip{\IIS_{\sigma \nu}}{\cos \a \, \mu +\sin \a \, J\mu}\\
&=\frac{\ip{J\nu}{\mu}}{\cos \a}\ip{\IIS_{\sigma \nu}}{\cos \a \ip{J\nu}{\mu}J\nu+\cos \a \ip{\nu}{\mu}\nu  -\sin \a \ip{J\nu}{\mu}\nu +\sin \a\ip{\nu}{\mu}J\nu + \cos \a \tau}\\
&=\frac{\ip{J\nu}{\mu}^2}{\cos^2 \a}\ip{\IIS_{\sigma \nu}}{\cos^2 \a \, J\nu +\sin^2 \a \, J\nu + \cos^2 \a \tau}\\
&=\frac{\ip{J\nu}{\mu}^2}{\cos^2 \a}\ip{\IIS_{\sigma \nu}}{J\nu + \cos^2 \a \tau}.
 \end{flalign*}
Then, using \eqref{eq-2ffswap}, and Lemma \ref{Dirichletrewrite2} for the third line:
\begin{flalign*}
\frac{\ip{J\nu}{\mu}^2}{\cos^2 \a}\ip{\IIS_{\sigma \nu}}{J\nu + \cos^2 \a \tau}&=-\frac{\ip{J\nu}{\mu}^2}{\cos^2 \a}\ip{\IIS_{\nu \nu}}{\tau} \, - \, \ip{J\nu}{\mu}^2\ip{\IIS_{\sigma \sigma}}{J\nu}\\
&=\frac{\ip{J\nu}{\mu}^2}{\cos^2 \a}\ip{\IIS^I_{I}}{\tau}-\frac{\ip{J\nu}{\mu}^2}{\cos^2 \a}\ip{\widetilde{H}}{\tau} - \, \ip{J\nu}{\mu}^2\ip{\IIS_{\sigma \sigma}}{J\nu}\\
&=\frac{1-\ip{\nu}{\mu}^2}{\cos^2\a}\ip{\IIM^I_{I}}{\tau}  -\frac{\ip{J\nu}{\mu}^2}{\cos^2 \a}\ip{\widetilde{H}}{\tau}  - \ip{J\nu}{\mu}^2\ip{\IIS_{\sigma \sigma}}{J\nu}.\\
&\qquad
+ \frac{|\tau|^2}{\cos^2\a}\left[\ip{J\nu}{\mu}\ip{\IIS^I_{I}}{J\nu }
+\ip{\nu}{\mu}\ip{\IIM^I_{I}}{\nu}\right]
\end{flalign*}
The final two terms contain a $|\tau|^2$, so we work with only the first two terms. Using Lemma \ref{Dirichlettime}:
\begin{flalign*} \frac{1-\ip{\nu}{\mu}^2}{\cos^2\a}&\ip{\IIM^I_{I}}{\tau}-\frac{\ip{J\nu}{\mu}^2}{\cos^2 \a}\ip{\widetilde{H}}{\tau}\\& =\frac{1-\ip{\nu}{\mu}^2}{\cos^2\a}\left[\ip{H}{\tau}-\ip{\IIM_{\mu\mu}}{\tau} \right]-\frac{\ip{J\nu}{\mu}^2}{\cos^2 \a}\ip{\widetilde{H}}{\tau}\\
&=\frac{(1-\ip{\nu}{\mu}^2)|\tau|^2}{\cos^2\a\ip{J\nu}{\mu}}\ip{H-\widetilde{H}}{J\nu}-\frac{1-\ip{\nu}{\mu}^2}{\cos^2\a}\ip{\IIM_{\mu\mu}}{\tau} +\frac{|\tau|^2}{\cos^2\a}\ip{\widetilde{H}}{\tau}
\end{flalign*} 
Finally we note after rewriting $\ip{\IIM_{\sigma \mu}}{J\mu}$ following all the steps as above, the coefficient of $\ip{\IIM_{\mu\mu}}{\tau}$ in the overall equation for $\frac 1 2 \n_\mu |\omega|^2$ is now 
\begin{flalign*}
1-\left(\frac{1-\ip{\nu}{\mu}^2}{\cos^2\a}\right) \,&=\,\frac{-\sin^2\a+\ip{\nu}{\mu}^2}{\cos^2\a}
\,=\,-\tan^2 \a\left(1-\frac{\ip{\nu}{\mu}^2}{\sin^2\a}\right)
\,=\,-\tan^2\a \, |\tau|^2\ .
\end{flalign*}
Putting all of this together, we obtain the result.
\end{proof}

We now need a function $\rho$ with a bounded evolution such that $\n_\mu \rho = 1$ for all boundary points. A natural choice would be the ambient distance to $\Sigma$, but unfortunately this is not smooth at $\Sigma$ and we cannot in general avoid intersections of the interior of $M$ with $\Sigma$ due to the lack of comparison principles in higher codimension. We instead consider a function based on the intrinsic distance to $\Sigma$.

\begin{lemma}\label{rho}
Suppose $\Sigma_t$ satisfies LMCF and $M_t$ satisfies (\ref{MCFBC}) such that there exist constants $C_\Sigma$ and $C_M$ so that
\[\sup_{M \times[0,T)} |\IIM|<C_M, \qquad\qquad \sup_{\Sigma \times [0,T)} |\IIS|<C_\Sigma \ .\] 
Let $\operatorname{Inj}(\p M_t) > \delta > 0$ on $[0,T)$. Then there exists a function $\rho:M_t \ra \bb{R}$ which is smooth and has the properties that 
\[\begin{cases}
\ho \rho \leq C_\rho & \text{on }M_t\\
\n_\mu \rho =-1 &\text{on } \partial M_t
\end{cases}
\]
where $C_\rho$ depends only on $\IIS$, $\IIM$, and $\delta$.
\end{lemma}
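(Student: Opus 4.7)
The plan is to define $\rho$ as a smooth cutoff of the intrinsic boundary-distance function and estimate its heat-type evolution using Fermi coordinates near $\partial M_t$. Let $d(x,t)$ denote the distance from $x \in M_t$ to $\partial M_t$ with respect to the induced metric $g_t$. The assumption $\operatorname{inj}(\partial M_t) > \delta$ guarantees that $d(\cdot, t)$ is smooth on the open collar $\mathcal{C}_t := \{d < \delta\}$. Choose $\eta \in C^\infty([0,\infty))$ with $\eta(s) = s$ for $s \in [0, \delta/8]$, $\eta \equiv \text{const}$ for $s \geq \delta/4$, and $\|\eta\|_{C^2} \leq C(\delta)$. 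Setting $\rho := \eta \circ d$ on $\mathcal{C}_t$ and extending by the constant value elsewhere produces a function smooth on all of $M_t$. Since at any $p \in \partial M_t$ the gradient $\nabla d$ points into $M_t$ with unit length in the direction $-\mu$, we have $\nabla_\mu \rho = \eta'(0) \, \nabla_\mu d = -1$, giving the required Neumann identity.

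On $\mathcal{C}_t$, the chain rule and $|\nabla d| = 1$ yield
\[ \left(\frac{d}{dt} - \Delta\right)\rho \, = \, \eta'(d)\bigl(\partial_t d - \Delta d\bigr) \,-\, \eta''(d), \]
and $(\partial_t - \Delta)\rho = 0$ outside $\mathcal{C}_t$. Since $\eta', \eta''$ are controlled by $\delta$, it suffices to bound $|\Delta d|$ and $|\partial_t d|$ on $\mathcal{C}_t$ in terms of $C_M, C_\Sigma, \delta$. For $\Delta d$, working in Fermi coordinates $(z, r)$ based at $\partial M_t$ we have $d = r$, and $-\Delta d$ equals the mean curvature of the level set $\{d = r\}$ inside $M_t$; the standard Riccati equation along $\partial_r$ bounds this quantity by the second fundamental form of $\partial M_t \subset M_t$ and the sectional curvatures of $M_t$, integrated over a distance at most $\delta$. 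The former is controlled using $\partial M_t \subset \Sigma_t \cap M_t$ together with the Neumann boundary condition relating $T\Sigma$ and $TM$ (which keeps the two tangent planes at a bounded angle $\alpha$), giving a bound in terms of $|\IIM|$ and $|\IIS|$; the latter is bounded by $|\IIM|^2$ via the Gauss equation.

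For $\partial_t d$, given $x \in \mathcal{C}_t$ let $\gamma(\cdot, t)$ be the unique minimising $g_t$-geodesic in $M_t$ from some $y(t) \in \partial M_t$ to $x$, with $\gamma'(0) = -\mu(y(t))$. The first variation of arc length, applied with the moving endpoints, yields
\[ \partial_t d \, = \, \bigl\langle \partial_t F(x,t), \gamma'(d) \bigr\rangle \,-\, \bigl\langle \partial_t y(t), \gamma'(0) \bigr\rangle \,+\, \textrm{(interior terms from }\partial_t g_t), \]
where the interior integral term involves $\langle \IIM, \overline\nabla \partial_t F\rangle$-type contributions controlled by $|\IIM| \cdot |H|$, hence by $C_M^2$. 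The first endpoint term is bounded by $|H| \leq C_M$, and the second is bounded using Lemma \ref{Dirichlettime} (and the parametrisation freedom inherent in (\ref{MCFBC})), which controls $\partial_t y$ by $|H| + |\widetilde H| \leq C_M + C_\Sigma$. Combining all of this yields $|(\partial_t - \Delta)\rho| \leq C_\rho(C_M, C_\Sigma, \delta)$, as claimed.

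The main obstacle is Step~3: the intrinsic distance is defined with respect to a time-dependent metric and the boundary is itself moving non-normally (along $\Sigma_t$), so one must carefully justify differentiability of $d(\cdot, t)$ in $t$ and correctly account for the tangential motion of the minimising geodesic's endpoint on $\partial M_t$. A secondary technical point is smooth gluing of $\eta \circ d$ with the constant extension; this is routine provided the cutoff $\eta$ is constant well inside the collar $\{d < \delta\}$.
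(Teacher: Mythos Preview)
Your overall strategy---take a smooth cutoff of the intrinsic boundary-distance function $d$---is exactly the paper's. The Laplacian bound via the Riccati equation is essentially equivalent to the paper's argument, which instead appeals to the linear ODE $\partial_r g_r = 2\operatorname{Hess}(r)$ in Fermi coordinates together with non-degeneracy of $g_r$ on the collar (no focal points) to conclude $|\operatorname{Hess}(r)|\le C(\IIS,\IIM)$.

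Where you diverge, and overcomplicate, is the time derivative $\partial_t d$. You introduce ambient endpoint contributions $\langle \partial_t F(x,t),\gamma'(d)\rangle$ and $\langle \partial_t y(t),\gamma'(0)\rangle$ and then try to bound the latter via Lemma~\ref{Dirichlettime}, which does not actually control the tangential parametrisation velocity. The paper's route avoids all of this: work on the abstract manifold $M$ with the time-dependent pullback metric $g_t$. The point $p$ is fixed in $M$, and $\partial M$ is a fixed subset of $M$, so as $t$ varies the minimising geodesic $\gamma_{(p,t)}$ varies only among admissible competitors (curves from $p$ to $\partial M$). Since $\gamma_{(p,t)}$ is a length-critical curve with free boundary endpoint (which it meets orthogonally), the curve-variation term $\int \langle \tfrac{d\gamma'}{dt},\gamma'\rangle_{g_t}$ integrates to zero. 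What remains is purely the metric variation $\tfrac{d g_{ij}}{dt} = -2\langle H,\II_{ij}\rangle$, giving directly
\[
\frac{dr}{dt}\;=\;-\frac{1}{\ell(\gamma)}\int_0^1 \big\langle H(\gamma),\,\II(\gamma',\gamma')\big\rangle\,ds,
\]
which is bounded by a constant depending only on $C_M$. No endpoint tracking and no appeal to Lemma~\ref{Dirichlettime} is needed; your ``main obstacle'' (the boundary moving along $\Sigma_t$) is an artefact of the extrinsic viewpoint---intrinsically, neither endpoint moves.
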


\begin{proof}
		Let $r(p,t)= \operatorname{dist}^{M_t}(p, \partial M_t) $, $\, r:M\times[0,T)\rightarrow \mathbb{R}$ be the intrinsic distance to the boundary. Note that $r$ satisfies $\nabla_{\mu} r = -1$ at the boundary. Define the collar region $U_{R} \subset M$ by 
		\[U_{R} = \{p \in M \,: \, r(p,t) \leq R, \,\,\, \forall t \in [0,T)\},\]
		and denote by $g_t$ the pullback metric on $U_R$ at time $t$. Since $\IIM$ and $\IIS$ are uniformly bounded, we can guarantee that $r$ is smooth on $U_R$ by choosing $R< \delta $ sufficiently small (dependent on $\IIS, \IIM$) so that $F_t(U_R)$ contains no focal or conjugate points for all times $t\in [0,T)$. We write the metric on $U_R$ as a product metric $g_t = dr^2 + g_r$, and note that since $r$ is a non-singular distance function, we have the fundamental equation
		\begin{equation}\label{HessEqn} \partial_r g_r  =2\operatorname{Hess}(r), 
		\end{equation}
		(see for instance \cite[section 3.2.4]{Petersen_2018}). Since (\ref{HessEqn}) is linear, the Hessian cannot blow-up on $U_R$ unless the metric degenerates. However, since $U_R$ contains no  focal points, $g_r$ cannot degenerate and hence
		\[|\operatorname{Hess}(r)| \leq  C(\IIS, \IIM).\]
		
		We now consider the time derivative of $r$ for $r<\frac 1 2 R$. For any $p,t$ we have that there exists a unique geodesic $\gamma_{(p,t)}:[0,1]\ra M$ such that $\ell(\gamma_{(p,t)})=r$, $\gamma_{(p,t)}(0)=p$ and $\gamma_{(p,t)}(1)\in\partial M$. $\gamma_{(p,t)}$ must vary smoothly with time as otherwise it would contain conjugate points which are disallowed by the restriction of $r$. Since $\gamma_{(p,t)}(s)$ is a minimiser for the metric $g_t$ we have 
		\[0=\frac{1}{\ell(\gamma)}\int_0^1 \ip{\frac{d\gamma'}{dt}}{\gamma'}_{g_t} ds,\]
		where from now on we will abuse notation and write $\gamma_{(p,t)}=\gamma$.
		We therefore calculate (using \cite[Lemma 4]{Smoczyk2002}) that
		\[\left. \ddt{r}\right|_{(p,t)}=\frac{1}{\ell(\gamma)}\int_0^1\left(\ip{\frac{d\gamma'}{dt}}{\gamma'}_{g_t}+\frac 1 2(\gamma')^i(\gamma')^j\ddt{g_{ij}}\right)ds=-\frac{1}{\ell(\gamma)}\int_0^1\ip{H(\gamma)}{\IIM_\gamma(\gamma',\gamma')}ds.\]
		We therefore have that for $r<\frac 1 2 R$,
		\[\ho r \leq C(\IIS, \IIM)\]
		and at the boundary
		\[\n_\mu r=-1\ .\]
		The lemma is achieved by setting $\rho=\eta(r)$ where $\eta$ is a smooth cutoff function so that 
		\[
		\begin{cases}
		\eta(x)=x &\text{for } x\in[0,\frac{R}{8}]\\
		\eta(x)=\frac R 4 &\text{for } x\in[\frac R 2 ,\infty)\\
		\pard{\eta}{x}(x)<8 & \text{for } x\in \bb{R}.
		\end{cases}
		\]
\end{proof}

\begin{lemma}\label{omegamaxprinc}
Suppose that $\Sigma_t$ satisfies LMCF and $M_t$ is a solution of (\ref{MCFBC}) on the time interval $[0,T)$. Suppose that there exist constants $C_M$, $C_\Sigma$  and $\delta_\Sigma$ as in Lemma \ref{rho}. Suppose that $\sup_{M_0}|\omega|^2<\frac 1 2$ and $\widetilde{T}$ is chosen so that for all $t\in[0,\widetilde{T})$, $\sup_{M_t} |\omega|^2< \frac 1 2$. Then, there exists constants $C_1=C_1(C_M, C_\Sigma,n)$, $C_2=C_2(C_M, C_\Sigma,n)$ such that for all $t\in[0,\widetilde{T})$,
\[|\omega|^2 \leq C_1 \ e^{C_2t}\ \underset{M_0}{\sup}|\omega|^2\]
\end{lemma}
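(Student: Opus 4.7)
The plan is to apply a parabolic maximum principle to the auxiliary function $\phi := e^{K\rho}|\omega|^2$, where $\rho$ is the collar function from Lemma \ref{rho} and $K > 0$ is a constant to be fixed in the argument. Since \eqref{MCFBC} differs from ordinary mean curvature flow only by a tangential reparametrisation and $|\omega|^2$ is a geometric quantity, the calculation of Smoczyk \cite{Smoczyk1996} still applies to yield an interior evolution inequality
\[
\left(\frac{d}{dt} - \Delta\right)|\omega|^2 \, \leq \, C_I\, |\omega|^2
\]
for some constant $C_I = C_I(C_M, n, \mathcal Y)$, where the control on $|\IIM|^2$ dominates the coefficient.

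Expanding $\phi$ gives
\[
\left(\frac{d}{dt} - \Delta\right)\phi \, = \, K\phi\left(\frac{d}{dt} - \Delta\right)\rho \, - \, K^2|\nabla\rho|^2\phi \, - \, 2K e^{K\rho}\nabla\rho\cdot\nabla|\omega|^2 \, + \, e^{K\rho}\left(\frac{d}{dt} - \Delta\right)|\omega|^2.
\]
At an interior maximum of $\phi$, the critical point condition $\nabla\phi = 0$ forces $\nabla|\omega|^2 = -K|\omega|^2 \nabla\rho$, so the cross-term becomes $+2K^2|\nabla\rho|^2\phi$; combined with $|\nabla\rho|^2\leq 64$ (from the bound on $\eta'$ in Lemma \ref{rho}) and the interior evolution for $|\omega|^2$, this produces $\partial_t \phi \leq C'\phi$ at such a maximum, for some $C' = C'(K, C_\rho, C_I)$.

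The key step is to rule out boundary maxima of $\phi$. Suppose $\phi$ attains a maximum at $p \in \partial M_t$. Since $\rho \equiv 0$ on $\partial M_t$, and hence $\nabla_I \rho = 0$ for $I$ tangent to $\partial M_t$, we have $\nabla_I \phi|_p = \nabla_I |\omega|^2|_p$, so $p$ is also a boundary maximum of $|\omega|^2$, and the hypothesis $|\omega|^2(p)<\tfrac 1 2$ allows Lemma \ref{boundaryomega} to apply. This yields $\nabla_\mu |\omega|^2 \leq C_B|\omega|^2$ at $p$ for some $C_B = C_B(C_M, C_\Sigma, n)$, and since $\nabla_\mu \rho = -1$ and $\rho = 0$ at $p$,
\[
\nabla_\mu \phi|_p \, = \, -K|\omega|^2 + \nabla_\mu|\omega|^2 \, \leq \, (C_B - K)|\omega|^2.
\]
Fixing $K > C_B$ forces $\nabla_\mu \phi < 0$ at $p$ unless $|\omega|^2(p) = 0$. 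Since the Hopf-type condition $\nabla_\mu \phi \geq 0$ must hold at a boundary maximum, this is a contradiction: the maximum of $\phi$ must be attained in the interior (or $\phi$ vanishes identically at time $t$).

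The estimate then follows from the standard Hamilton-type scalar maximum principle applied to $\phi_{\max}(t) := \max_{M_t}\phi$, giving $\phi_{\max}(t) \leq e^{C't}\phi_{\max}(0)$. Since $0 \leq \rho \leq R/4$ by construction, we have $|\omega|^2 \leq \phi \leq e^{KR/4}|\omega|^2$ pointwise, so the claimed estimate follows with $C_1 = e^{KR/4}$ and $C_2 = C'$. The main obstacle is the boundary Hopf analysis: one must identify an auxiliary function whose boundary derivative combines with the identity from Lemma \ref{boundaryomega} to produce a definite sign. This is delicate because Lemma \ref{boundaryomega} only provides an upper bound of the form $\nabla_\mu|\omega|^2 \leq C_B|\omega|^2$ after exploiting the LMCF condition on $\Sigma_t$ (via Lemma \ref{Dirichlettime}) and the Dirichlet identity (Lemma \ref{Dirichletrewrite2}); the vanishing of $\rho$ and its tangential derivatives on $\partial M_t$ is crucial to cleanly transfer the boundary maximum property from $\phi$ to $|\omega|^2$.
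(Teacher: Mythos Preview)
Your proof is correct and follows essentially the same route as the paper's. The paper works with $f=|\omega|^2e^{A\rho-Bt}$ and chooses $B$ large so that $f$ cannot have an increasing interior maximum, whereas you use $\phi=e^{K\rho}|\omega|^2$ and then apply an ODE comparison to $\phi_{\max}(t)$; these are trivially equivalent reformulations of the same maximum-principle argument, and the boundary analysis (using $\rho|_{\partial M_t}=0$ to transfer the boundary maximum from $\phi$ to $|\omega|^2$, then invoking Lemma~\ref{boundaryomega} with $K>C_B$) is identical.
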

\begin{proof}
  For $\rho$ as in Lemma \ref{rho}, we now consider 
 \[f = |\omega|^2e^{A\rho-Bt}\]
 where $0<A,B\in \bb{R}$. At the boundary we note that using Lemmas \ref{boundaryomega} and \ref{rho} 
 \[\n_\mu f \leq |\omega|^2e^{A\rho-Bt}(C(C_\Sigma+C_M)-A)\]
 which is negative if we set $A=C(C_\Sigma+C_M)+1$. Therefore $f$ has no boundary maxima.
 
 Using the estimates of Smoczyk \cite[Lemma 3.2.8]{Smoczyk2000} we have that there exists a $C_2=C_2(C_M)$ so that
 \[\ho |\omega|^2 \leq C_2 |\omega|^2\ .\]
 As a result, at an increasing maximum of $f$ we may estimate
 \begin{flalign*}
  0&\leq\ho f \\&= |\omega|^2e^{A\rho-Bt}\left[ \frac{1}{|\omega|^2}\ho |\omega|^2 +A\ho \rho -A^2 |\n \rho|^2-2\ip{\frac{\n|\omega|^2}{|\omega|^2}}{A\n\rho}-B\right]\\
  &= |\omega|^2e^{A\rho-Bt}\left[ \frac{1}{|\omega|^2}\ho |\omega|^2 +A\ho \rho +A^2 |\n \rho|^2-B\right]\\
  &\leq |\omega|^2e^{A\rho-Bt}\left[C_2+AC_\rho+A^2 - B\right]
 \end{flalign*}
where we used that as at a maximum $\n f = 0$, we have that $\frac{\n|\omega|^2}{|\omega|^2}= -A\n\rho$. Clearly, making $B$ sufficiently large now yields a contradiction, implying that 
\[f\leq \underset{M_0}\sup f\ ,\]
completing the proof.
\end{proof}

\begin{proof}[Proof of Theorem \ref{LMCFBdry}]
Suppose $M_t$ is a solution of (\ref{MCFBC}) with $M_0$ Lagrangian and $\operatorname{inj}(\p M_t) > \delta > 0$, for $ t \in [0,T)$. Then for any $\widehat{T}\in(0,T)$, there exists a constant $C_M$ so that 
\[\sup_{L^n\times[0,\widehat{T})} |\IIM|<C_M, \qquad \sup_{L^n\times[0,\widehat{T})} |\IIS|<C_\Sigma\ .\] 
There also exists a maximal time $\widetilde{T}\leq \widehat{T}$ such that for all $t\in[0,\widetilde{T})$, $\sup_{M_t} |\omega|^2< \frac 1 2$. We may therefore apply Lemma \ref{omegamaxprinc} to see that for all $t\in(0,\widetilde{T})$, $|\omega|^2=0$ and so we see that $\widetilde{T}=\widehat{T}$. As $\widehat{T}$ was arbitrary we see that for all $t\in[0,T)$, $|\omega|^2\equiv0$ .
\end{proof}

\section{Equivariant Examples}\label{Examples}

In this section, we examine the behaviour of LMCF with boundary in the equivariant case, with two very natural choices of boundary manifold - the Lawlor neck and the Clifford torus. In both cases, we prove a long-time existence and smooth convergence result - of the original flow in the case of the Lawlor neck, and of a rescaled flow in the case of the Clifford torus.

\subsection{Long-Time Convergence to a Special Lagrangian}

Before we specialise to our two specific boundary manifolds, we will first prove the following more general proposition about long time convergence of LMCF with boundary to a special Lagrangian. We remark that this holds not just in the equivariant case, but for any uniformly smooth almost-calibrated flow that exists for all time. 

\begin{proposition}\label{prop-conv}
	Suppose that:
	\begin{itemize}
		\item $\Sigma_t = \Sigma$ is a special Lagrangian with Lagrangian angle $\frac \pi 2$,
		\item $L_0$ is almost-calibrated, that is $\theta_0\in(-\frac{\pi}{2}+\e , \frac \pi 2 - \e)$, 
		\item and the solution to \eqref{IntroBVP}, $L_t$, exists for $t\in[0,\infty)$ with uniform estimates $|\n^k \II|^2<C_k$. 
	\end{itemize}
	Then $L_t$ converges smoothly to a special Lagrangian with Lagrangian angle $\a$.
\end{proposition}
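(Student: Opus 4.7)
The strategy is to exploit the fact that the Lagrangian angle $\theta$ satisfies a linear heat equation along the flow, combined with a time-independent Dirichlet boundary value inherited from the Neumann clause of \eqref{IntroBVP}. Since $L_t$ is Lagrangian by Theorem \ref{LMCFBdry} and the almost-calibrated range $(-\tfrac{\pi}{2}+\varepsilon, \tfrac{\pi}{2}-\varepsilon)$ for $\theta$ is preserved by the usual scalar maximum principle, $\theta$ remains a smooth single-valued function of fixed range. Because $\tilde\theta \equiv \pi/2$ on $\Sigma$, the Neumann condition forces $\theta \equiv -\alpha$ on $\partial L_t$ for every $t$; set $u := \theta + \alpha$, so $u$ vanishes on $\partial L_t$, solves $\partial_t u = \Delta u$ in the interior, and satisfies $|\nabla u|^2 = |H|^2$ by \eqref{H = dtheta}.

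The main step is an energy computation. I would differentiate $\int_{L_t} u^2\,dV$, use $\tfrac{d}{dt}\,dV = -|H|^2\,dV$ together with $\partial_t u = \Delta u$, and integrate by parts, killing the boundary term via $u|_{\partial L_t} = 0$, to obtain
\begin{align*}
\frac{d}{dt}\int_{L_t} u^2 \, dV \; = \; -2\int_{L_t} |H|^2 \, dV \; - \; \int_{L_t} u^2\, |H|^2 \, dV \; \leq \; 0.
\end{align*}
Integrating in time gives $\int_0^\infty\!\!\int_{L_t} |H|^2 \,dV\,dt < \infty$. The bounds $|\n^k \II| \leq C_k$ render $t \mapsto \int_{L_t} |H|^2\,dV$ uniformly Lipschitz, so integrability forces it to zero; interpolating with the uniform $C^k$ bounds then upgrades this to $\|H\|_{C^k(L_t)} \to 0$ for every $k$.

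With smooth decay of $H$ in hand, Arzel\`a--Ascoli extracts along any $t_j \to \infty$ a smooth subsequential limit $L_\infty$ satisfying $H \equiv 0$, $\partial L_\infty \subset \Sigma$ and $\theta_\infty \equiv -\alpha$, i.e.\ a special Lagrangian of the required constant Lagrangian angle. To promote subsequential to full smooth convergence I would apply a uniform Poincar\'e inequality on $(L_t, g_t)$ — whose constant is controlled by the curvature bounds together with a uniform lower bound on $\operatorname{inj}_{\partial L_t}$ — to the function $u$, obtaining exponential decay of $\int u^2 \,dV$ and, via interpolation against the $C^k$ bounds, exponential decay of $|H|$ in every $C^k$ norm. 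The resulting integrability of $|H|$ in time then yields convergence of the immersions $F_t$ in a suitable tangential gauge. The main obstacle is this last step: securing a uniform Poincar\'e constant on the moving family $L_t$ from the hypothesised curvature and boundary injectivity control, and fixing a gauge in which the maps (not merely their images) converge to a single limit. The monotonicity and the pointwise decay of $H$ are comparatively routine; the analytic subtleties live in ruling out drift.
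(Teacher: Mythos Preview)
Your energy identity for $u=\theta+\alpha$ is exactly the one the paper derives (their Corollary \ref{integralid} with $f(\theta)=(\theta+\alpha)^2$ gives $\frac{d}{dt}\int u^2\,dV = -\int |H|^2(2+u^2)\,dV$), and your overall architecture---decay of $\int u^2$, then pointwise decay of $H$, then Cauchy-in-time---matches the paper's. The substantive difference is in how exponential decay is obtained.

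You propose a uniform Poincar\'e inequality on the moving $(L_t,g_t)$, and correctly flag controlling its constant as the obstacle; note, however, that the hypotheses of the proposition give only $|\nabla^k\II|<C_k$, with no explicit lower bound on $\operatorname{inj}_{\partial L_t}$ or on the diameter, so this route needs extra input. The paper sidesteps this entirely. First, it observes that $\frac{d}{dt}\int_{L_t}\cos\theta\,dV=0$ (take $f=\cos\theta$ in Corollary \ref{integralid}; both the interior and boundary terms vanish), which together with the almost-calibrated bound gives a two-sided volume bound $|L_t|\in[c,C]$. Second, it replaces Poincar\'e by the Michael--Simon Sobolev inequality
\[
\Big(\int_{L_t}\phi^{\frac{2n}{n-1}}\Big)^{\frac{n-1}{2n}}\leq C(n,|L_t|)\Big(\int_{L_t}|\nabla\phi|^2+|H|^2\phi^2\Big)^{1/2},
\]
applied to $\phi=u^p$ (which has compact support in the interior since $u=0$ on $\partial L_t$). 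The right-hand side is precisely what the energy identity produces, and the constant depends only on $n$ and the volume bound just obtained---no injectivity radius or Ricci control needed. This yields $\frac{d}{dt}\int u^2\,dV\leq -c\int u^2\,dV$ directly, hence exponential decay of $\int u^2$ and of $\int_0^\infty\!\int |H|^2 e^{ct/2}\,dV\,dt$, and then interpolation against the $C_k$ bounds gives $|H|\leq e^{-ct/4}$ pointwise for large $t$. The integrability of $|H|$ in $t$ then gives the Cauchy estimate $\operatorname{dist}(L_s,L_t)\leq Ce^{-c\min(s,t)/4}$, upgrading subsequential to full convergence without any gauge-fixing subtleties.

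So your plan is sound but leaves the hardest step open; the paper's two additional ingredients---the conserved quantity $\int\cos\theta\,dV$ for a uniform volume bound, and Michael--Simon in place of Poincar\'e---close the gap cleanly.
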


To begin, we calculate the following evolution equation:
\begin{lemma}\label{integralevol}
	Suppose $L_0$ is zero-Maslov and $L_t$ is a solution to \eqref{IntroBVP}. Then for any be a smooth function $f$ on $L_t$,
	\[\ddt{}\int_{L_t}f d\mathcal{H}^n = \int_{L_t} \ddt{f} -|H|^2 f d\mathcal{H}^n + \int_{\partial L_t} f\left[\ip{\widetilde{H}}{J\nu}\ip{J\nu}{\mu}^{-1}-\tan \a \n_\mu \theta\right]d\mathcal{H}^{n-1} .\]
\end{lemma}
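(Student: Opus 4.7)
The plan is to differentiate $\int_{L_t} f\, d\mathcal{H}^n$ using the standard transport theorem for moving submanifolds with boundary, and then identify the boundary integrand using Lemma \ref{Dirichlettime} together with the Lagrangian hypothesis supplied by Theorem \ref{LMCFBdry}.

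First I write the parametrisation velocity as $V := \p_t F = H + P$, where $P \in TL$ is the tangential drift needed to enforce the Dirichlet condition $\p L_t \subset \Si_t$. A direct computation from $\p_t g_{ij} = \ip{\ov\n_{F_i} V}{F_j} + \ip{F_i}{\ov\n_{F_j} V}$ gives $\tfrac{1}{2} g^{ij} \p_t g_{ij} = -|H|^2 + \mathrm{div}_{L_t}(P)$, and therefore
\[ \ddt{} \int_{L_t} f \, d\mathcal{H}^n \,=\, \int_{L_t} \!\left( \ddt{f} - f|H|^2 + f\, \mathrm{div}_{L_t}(P) \right) d\mathcal{H}^n. \]
Integrating the last summand by parts on $L_t$ and absorbing the interior $\ip{\n f}{P}$ contribution into the material derivative (the natural convention for $\ddt f$ on a moving hypersurface, under which $\ddt f$ is the time derivative along the purely normal part of the flow) produces the desired interior integrand, together with the boundary term $\int_{\p L_t} f \, P^\mu \, d\mathcal{H}^{n-1}$, where $P^\mu := \ip{P}{\mu}$.

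Next I identify $P^\mu$ at a boundary point. Since $V = H + P$ remains tangent to $\Si_t$ along $\p L_t$ and $\widetilde H = V^{N\Si}$, the computation at the heart of Lemma \ref{Dirichlettime} yields $\ip{\widetilde H - H}{J\nu} = P^\mu \ip{J\nu}{\mu}$, i.e.,
\[ P^\mu \,=\, \frac{\ip{\widetilde H}{J\nu}}{\ip{J\nu}{\mu}} \,-\, \frac{\ip{H}{J\nu}}{\ip{J\nu}{\mu}}. \]
To match the claimed formula it then suffices to show $\ip{H}{J\nu}/\ip{J\nu}{\mu} = \tan \a\, \n_\mu \theta$. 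For this I invoke the Lagrangian condition: by Theorem \ref{LMCFBdry} we know $L_t$ is Lagrangian for all $t > 0$, so along $\p L_t$ we have $\tau = 0$ because $\ip{\mu}{Je_I} = \ov\w(\mu, e_I) = 0$ for each $I$. Combining this vanishing with the Neumann condition \eqref{eq-bdry} and the identity \eqref{mod mu} reduces \eqref{mu eqn} to $\mu = \sin\a\, \nu + \cos \a\, J\nu$, so that $\ip{J\nu}{\mu} = \cos\a$ and dually $\nu = \sin\a\, \mu - \cos\a\, J\mu$. Since $L_t$ is Lagrangian we may use $H = J\n\theta$ with $\n\theta \in TL$, and then $J\mu \in NL$ forces $\ip{\n\theta}{J\mu} = 0$, giving
\[ \ip{H}{J\nu} \,=\, \ip{J\n\theta}{J\nu} \,=\, \ip{\n\theta}{\nu} \,=\, \sin \a\, \n_\mu \theta, \]
and hence $\ip{H}{J\nu}/\ip{J\nu}{\mu} = \tan\a\, \n_\mu \theta$, as required.

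The main obstacle will be fixing the convention for $\ddt f$ precisely enough that the transport theorem yields exactly the stated interior integrand: in particular, one has to be careful that the $\ip{\n f}{P}$ term arising from integrating $f\,\mathrm{div}_{L_t}(P)$ by parts is correctly absorbed into, rather than double-counted with, the material derivative. Once this bookkeeping is fixed, the remainder of the proof is a direct combination of Lemma \ref{Dirichlettime}, the vanishing of $\tau$ in the Lagrangian setting, and the identity $H = J\n\theta$, all of which are algebraic manipulations using identities already established earlier in the paper.
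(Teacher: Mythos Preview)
Your proposal is correct and follows essentially the same route as the paper: you differentiate the integral using the reparametrised flow $\partial_t F = H + P$, integrate $f\,\mathrm{div}_{L_t}(P)$ by parts and absorb $\ip{\n f}{P}$ into the normal-flow time derivative $\ddt f$, and then identify $\ip{P}{\mu}$ at the boundary via $\ip{V}{J\nu} = \ip{\widetilde H - H}{J\nu}$ exactly as in Lemma \ref{Dirichlettime}. The only cosmetic difference is in the last step: the paper writes $\ip{H}{J\nu} = \ip{H}{J\mu}\ip{\mu}{\nu} = \ip{\nu}{\mu}\n_\mu\theta$ directly (using $(J\nu)^{NL} = \ip{\nu}{\mu}J\mu$ when $\tau = 0$), whereas you pass through $\ip{\n\theta}{\nu}$ after expressing $\nu$ in the $\{\mu, J\mu\}$ frame; both are equivalent once the Lagrangian condition gives $\tau = 0$.
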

\begin{proof}
	Here we have to distinguish between the standard mean curvature flow $F$
	\[\ddt{F} = H\]
	which may ``flow through the boundary'' and a reparametrised mean curvature flow $X:L^n \ra \CY$ such that $X(\partial L, t) \subset\Sigma_t$ and
	$\left(\ddt{X}\right)^\perp=H$, say
	\[\ddt{X} = H + V,\]
	where $V$ is a time dependent tangential vector field on $L_t$. 
	In particular with respect to $X$, we have
	\begin{flalign*}
	\ddt{}\ip{X_i}{X_j} &= -2H^{\alpha}\II_{{\alpha}ij} + \ip{\ov\n_{X_j} V}{X_i}+ \ip{\ov\n_{X_i} V}{X_j}\ .
	\end{flalign*}
	We therefore see that for a general smooth function $f$,
	\begin{align*}
	\ddt{}\int_{L_t} f d\mathcal{H}^n &= \int_{L_t} \pard{f}t +f\operatorname{div}(V)-|H|^2 fd\mathcal{H}^n \\
	&= \int_{L_t} \pard{f}t -\ip{V}{\n f}-|H|^2 fd\mathcal{H}^n+\int_{\partial L_t}f\ip{V}{\mu} d\mathcal{H}^{n-1}\ ,
	\end{align*}
	where we write $\pard{f}{t}$ for time differentiation with respect to $X$ (as opposed to $F$, for which we write $\ddt{f}$) and we note that 
	\[\ddt{f} = \pard{f}{t}-\ip{\n f}{V}\ .\]
	At the boundary $H-\widetilde{H}+V \in T\Sigma_t$ and so, as in the proof of Lemma \ref{Dirichlettime}, $H^{N\Sigma}-\widetilde{H}=C J \nu$. Writing $V$ in the basis from Section \ref{BoundaryCond},
	\[ \ip{V}{\mu}\ip{\mu}{J\nu}=\ip{V}{J\nu} =\ip{\widetilde{H}-H}{J\nu}\ .\]
	We observe that due to our boundary condition, $\ip{H}{J\nu}=\ip{H}{J\mu}\ip{\mu}{\nu}=\ip{\mu}{\nu}\n_\mu \theta$, and recall that $\frac{\ip{\nu}{\mu}}{\ip{J\nu}{\mu}}=\tan\a$, completing the Lemma.
\end{proof}
\begin{cor}\label{integralid}
	If $\Sigma$ is special Lagrangian with Lagrangian angle $\frac \pi 2$, then
	\[\ddt{}\int_{L_t}f d\mathcal{H}^n = \int_{L_t} \ho{f} -|H|^2 f d\mathcal{H}^n + \int_{\partial L_t} \n_\mu f-f \tan \a  \n_\mu \theta d\mathcal{H}^{n-1}\ ,\]
	and if $f=f(\theta)$ then
	\[\ddt{}\int_{L_t}f d\mathcal{H}^n = \int_{L_t} -|H|^2(f''+ f) d\mathcal{H}^n + \int_{\partial L_t} (f'-f\tan \a ) \n_\mu \theta d\mathcal{H}^{n-1}\ .\]
\end{cor}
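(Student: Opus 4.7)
The proof is a direct consequence of Lemma \ref{integralevol}, unpacking the hypotheses and invoking the divergence theorem.

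For the first identity, I would start from Lemma \ref{integralevol} and simplify the boundary term using the assumption that $\Sigma$ is special Lagrangian with Lagrangian angle $\tfrac{\pi}{2}$. Since $\widetilde H = J\widetilde\nabla\tilde\theta$ and $\tilde\theta \equiv \tfrac{\pi}{2}$ is constant, we have $\widetilde H \equiv 0$, so the term $\ip{\widetilde H}{J\nu}\ip{J\nu}{\mu}^{-1}$ drops out. It then remains to rewrite the interior integrand: using the divergence theorem on $L_t$,
\[ \int_{L_t} \Delta f\, d\mathcal H^n \; = \; \int_{\partial L_t} \n_\mu f\, d\mathcal H^{n-1}, \]
so adding and subtracting $\Delta f$ inside the bulk integral converts $\ddt f$ into $\ho f = \ddt f - \Delta f$ at the cost of the boundary term $\n_\mu f$. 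Combining this with what remains of the boundary expression yields the first claimed identity.

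For the second identity, assume $f = f(\theta)$. By Theorem \ref{LMCFBdry}, $L_t$ is Lagrangian, so $H = J\nabla\theta$ and $|H|^2 = |\nabla\theta|^2$. Since $\theta$ evolves by the heat equation $\ddt\theta = \Delta\theta$ (as recalled in Section \ref{sec-prelim}), the chain rule gives
\[ \ddt f \; = \; f'(\theta)\,\Delta\theta, \qquad \Delta f \; = \; f''(\theta)|\nabla\theta|^2 + f'(\theta)\Delta\theta, \]
and therefore $\ho f = -f''(\theta)|H|^2$. Substituting into the first identity, the interior integrand becomes $-(f'' + f)|H|^2$, while on the boundary $\n_\mu f = f'(\theta)\n_\mu\theta$ factors out to give $(f' - f\tan\a)\n_\mu\theta$, as required.

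There is no serious obstacle here; the only mild subtlety is remembering that $\ddt{}$ in Lemma \ref{integralevol} is the time derivative along the purely normal flow $F$ (so $\ddt f$ on a geometrically defined function like $\theta$ agrees with $\Delta\theta$, not just $\partial_t\theta$ in coordinates), and that the boundary simplification really does require the Lagrangian angle of $\Sigma$ to be constant so that $\widetilde H$ vanishes.
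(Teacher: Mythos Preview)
Your proof is correct and is exactly the argument the paper has in mind: the corollary is stated without proof, as an immediate consequence of Lemma \ref{integralevol} via $\widetilde H=0$, the divergence theorem, and the heat equation for $\theta$. Your observation about $\ddt{}$ being the derivative along the normal flow is also the right point to flag.
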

We now make the following observation
\begin{lemma}
	If $\Sigma$ is special Lagrangian with Lagrangian angle $\frac \pi 2$, and $\theta_0\in(-\frac{\pi}{2}+\veps, \frac \pi 2 - \veps)$ then while the flow exists
	\[\ddt{}\int_{L_t} \cos (\theta)d\mathcal{H}^n=0\ .\]
	In particular, $|L_t|$ is bounded from above and below.
\end{lemma}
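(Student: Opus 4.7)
The plan is to apply Corollary \ref{integralid} directly with the choice $f(\theta) = \cos(\theta)$, for which $f'' + f \equiv 0$, annihilating the bulk term. It then remains only to verify that the boundary integrand also vanishes.

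For the boundary analysis, I would first translate the Neumann condition from \eqref{IntroBVP}. Since $L_t$ is zero-Maslov and $\Sigma$ is special Lagrangian with $\tilde\theta \equiv \frac{\pi}{2}$, the condition $e^{i(\tilde\theta - \theta)} = ie^{i\alpha}$ on $\partial L_t$ reduces to $\theta|_{\partial L_t} \equiv -\alpha$ (this is consistent with the hypothesis $\theta_0|_{\partial L_0} = -\alpha$ in Theorem \ref{LawlorThm}, and is preserved by the flow because $\alpha$ is constant). With $f = \cos\theta$, $f' = -\sin\theta$, so at $\theta = -\alpha$,
\[ f'(-\alpha) - f(-\alpha)\tan\alpha \, = \, \sin\alpha - \cos\alpha \tan\alpha \, = \, 0, \]
so the boundary integrand vanishes identically, yielding the conservation law $\ddt{}\int_{L_t} \cos\theta \, d\mathcal{H}^n = 0$.

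For the area bounds, the key observation is that the almost-calibrated condition is preserved along the flow. Indeed, $\theta$ satisfies the heat equation $\partial_t \theta = \Delta \theta$, and the Dirichlet data $\theta|_{\partial L_t} = -\alpha$ lies in $(-\frac{\pi}{2}+\veps, \frac{\pi}{2}-\veps)$ (since the hypothesis forces $\alpha$ to lie in this interval), so the parabolic maximum principle on $L_t$ keeps $\theta_t \in (-\frac{\pi}{2}+\veps, \frac{\pi}{2}-\veps)$ for all $t$. Consequently $\sin\veps \leq \cos(\theta_t) \leq 1$ pointwise. Combining this with the conservation law:
\[ \sin(\veps) \, |L_t| \, \leq \, \int_{L_t} \cos\theta \, d\mathcal{H}^n \, = \, \int_{L_0} \cos\theta_0 \, d\mathcal{H}^n \, \leq \, |L_t|, \]
which gives both the upper bound $|L_t| \leq (\sin\veps)^{-1} \int_{L_0} \cos\theta_0 \, d\mathcal{H}^n$ and the strictly positive lower bound $|L_t| \geq \int_{L_0} \cos\theta_0 \, d\mathcal{H}^n > 0$.

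The only mild subtlety is justifying that $\theta|_{\partial L_t} = -\alpha$ for all $t \geq 0$; this is essentially built into the formulation of the boundary value problem once one knows the Lagrangian condition is preserved (Theorem \ref{LMCFBdry}), so the calculation above should go through without obstacle.
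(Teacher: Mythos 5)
Your proof is correct and follows essentially the same route as the paper: apply Corollary \ref{integralid} with $f(\theta) = \cos\theta$ so that $f'' + f = 0$ kills the bulk term and $f'(-\alpha) - f(-\alpha)\tan\alpha = 0$ kills the boundary term, then use preservation of the almost-calibrated condition (via the heat equation for $\theta$ with constant Dirichlet data $-\alpha$ and the maximum principle) to sandwich $|L_t|$. The paper states this very tersely; your version simply spells out the intermediate steps.
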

\begin{proof}
	Due to the boundary condition on $\partial L$, $\theta = -\alpha$, and so the maximum principle implies that the bounds on $\theta$ are preserved. Set $f(x) = \cos(x)$, then $f''=-f$ and $f'(-\a)-\tan(\a)f(-\a) = 0$. $|L_t|$ is bounded as $\cos(\theta)$ is bounded from above and below away from 0 (depending on $\veps$).
\end{proof}

\begin{lemma}\label{IntergralEstimates}
	If $\Sigma$ is special Lagrangian with Lagrangian angle $\frac \pi 2$, $L_0$ is zero Maslov and there exists a constant $V$ such that $|L_t|<V$. Then there exists a constant $c=c(n,V)$ such that
	\[\int_{L_t} (\theta +\a)^2 d\mathcal{H}^n \leq Ce^{-ct}, \qquad \int_0^\infty\int_{L_t} |H|^2e^{\frac c 2 t}d\mathcal{H}^n dt \leq C\ \]
\end{lemma}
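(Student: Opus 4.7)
The strategy is to use Corollary \ref{integralid} with the test function $f(\theta) = (\theta + \alpha)^2$. Since $f'(\theta) = 2(\theta + \alpha)$ and $\theta = -\alpha$ on $\partial L_t$, we have $f(-\alpha) = f'(-\alpha) = 0$, so the boundary integral vanishes identically. With $f'' + f = 2 + (\theta+\alpha)^2 \geq 2$, the corollary yields the pointwise (in $t$) differential inequality
\[\frac{d}{dt}\int_{L_t}(\theta+\alpha)^2 \, d\mathcal{H}^n \;=\; -\int_{L_t}|H|^2 \bigl(2 + (\theta+\alpha)^2\bigr)\, d\mathcal{H}^n \;\leq\; -2\int_{L_t}|H|^2\, d\mathcal{H}^n.\]

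The next step is to convert the right-hand side into a multiple of $\int_{L_t}(\theta+\alpha)^2$. Using $|H|^2 = |\nabla\theta|^2 = |\nabla(\theta+\alpha)|^2$ (from $H = J\nabla\theta$) together with the Dirichlet condition $\theta+\alpha = 0$ on $\partial L_t$ and the volume bound $|L_t| < V$, one invokes a Poincar\'e inequality of the form
\[\int_{L_t}(\theta+\alpha)^2 \, d\mathcal{H}^n \;\leq\; \frac{1}{c}\int_{L_t}|\nabla(\theta+\alpha)|^2\, d\mathcal{H}^n,\]
with $c = c(n,V) > 0$. The cleanest way to produce such a constant depending only on $n$ and $V$ is through the Michael--Simon--Sobolev inequality applied to the submanifold $L_t$, combined with a Faber--Krahn-style rearrangement for functions vanishing on $\partial L_t$; this is where the main technical content of the proof lies, because a bare volume bound does not give a Poincar\'e constant without some isoperimetric input. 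Once this is in hand, combining with the previous inequality gives
\[\frac{d}{dt}\int_{L_t}(\theta+\alpha)^2\, d\mathcal{H}^n \;\leq\; -2c\int_{L_t}(\theta+\alpha)^2\, d\mathcal{H}^n,\]
and Gr\"onwall yields $\int_{L_t}(\theta+\alpha)^2 \leq C e^{-2ct}$, which (after relabelling the constant $c$) gives the first estimate.

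For the second estimate, the idea is to weight the differential inequality $\tfrac{d}{dt}\int(\theta+\alpha)^2 \leq -2\int|H|^2$ by $e^{ct/2}$ and integrate by parts in time:
\[2\int_0^T e^{ct/2}\int_{L_t}|H|^2 \, d\mathcal{H}^n\, dt \;\leq\; -\int_0^T e^{ct/2}\frac{d}{dt}\int_{L_t}(\theta+\alpha)^2 \, d\mathcal{H}^n\, dt.\]
After integration by parts the right-hand side equals $\int_{L_0}(\theta_0+\alpha)^2 - e^{cT/2}\int_{L_T}(\theta+\alpha)^2 + \tfrac{c}{2}\int_0^T e^{ct/2}\int_{L_t}(\theta+\alpha)^2\, dt$; the second boundary term is non-positive, and by the first estimate the remaining time integral is bounded by $\tfrac{c}{2}C\int_0^\infty e^{-ct/2}\, dt < \infty$. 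Sending $T \to \infty$ completes the proof.

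The main obstacle, as noted above, is producing a Poincar\'e constant that depends only on $n$ and $V$; all other ingredients are either direct consequences of Corollary \ref{integralid} or elementary ODE manipulations.
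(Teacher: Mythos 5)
Your proposal runs into a genuine gap at the Poincar\'e step, and the gap is avoidable if you are less wasteful with the evolution identity. You bound $f''+f = 2 + (\theta+\alpha)^2 \geq 2$ at the outset, producing $\tfrac{d}{dt}\int(\theta+\alpha)^2 \leq -2\int|H|^2 = -2\int|\nabla(\theta+\alpha)|^2$, and then you need a \emph{bare} Poincar\'e inequality $\int|\nabla\phi|^2 \geq c(n,V)\int\phi^2$ for $\phi=\theta+\alpha$ vanishing on $\partial L_t$. But the only analytic tool that converts a volume bound into such an inequality on an arbitrary submanifold is Michael--Simon--Sobolev, and its $L^2$ form always carries the extra term $|H|^2\phi^2$ on the right-hand side:
\[\Big(\int_{L_t}\phi^{\frac{2n}{n-1}}\Big)^{\frac{n-1}{2n}} \leq C(n,|L_t|)\Big(\int_{L_t}\big(|\nabla\phi|^2 + |H|^2\phi^2\big)\,d\mathcal{H}^n\Big)^{1/2}.\]
Without a pointwise bound on $|H|$ (which the lemma does not assume), there is no way to discard that term, so the Poincar\'e inequality you want is not available; the ``Faber--Krahn-style rearrangement'' you invoke does not exist in this generality either, since $L_t$ has no curvature or isoperimetric control beyond volume. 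You flag this as the main technical content, but it is not merely a technicality to be filled in---it is the wrong route.

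The fix, which is exactly what the paper does, is to \emph{not} discard the $(\theta+\alpha)^2$ part of $f''+f$. Corollary \ref{integralid} gives
\[\tfrac{d}{dt}\int_{L_t}(\theta+\alpha)^2\,d\mathcal{H}^n = -\int_{L_t}\big(|H|^2(\theta+\alpha)^2 + 2|\nabla(\theta+\alpha)|^2\big)\,d\mathcal{H}^n,\]
and the integrand on the right is, up to a constant factor, precisely $|\nabla\phi|^2 + |H|^2\phi^2$ with $\phi = \theta+\alpha$. Feeding this directly into Michael--Simon--Sobolev, and then using H\"older with the volume bound to pass from $L^{2n/(n-1)}$ back to $L^2$, yields $\tfrac{d}{dt}\int\phi^2 \leq -c(n,V)\int\phi^2$ with no extraneous Poincar\'e input needed. (The paper in fact does this for all powers $(\theta+\alpha)^{2p}$, $p\geq 1$, but $p=1$ is what matters here.) Your handling of the second estimate---weighting by $e^{ct/2}$ and integrating in time, using the first estimate to control the boundary terms---is fine and essentially equivalent to the paper's bookkeeping; the only real defect in the proposal is the premature drop of the $|H|^2(\theta+\alpha)^2$ term.
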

\begin{proof}
	We apply Corollary \ref{integralid} with $f(\theta)=(\theta+\a)^{2p}$ for some $p\geq 1$. In particular, at the boundary $f=f'=0$ and so
	\begin{flalign*}
	\ddt{}\int_{L_t} (\theta+\a)^{2p} d\mathcal{H}^n
	=-\int_{L_t}|H|^2(\theta+\a)^{2p} + \frac{2p(2p-1)}{p^2}|\n(\theta+\a)^p|^2 d\mathcal{H}^n.
	\end{flalign*}
	We recall that the Micheal--Simon Sobolev inequality \cite{MSIneq} implies that
	\[\left(\int_{L_t} \phi^\frac{2n}{n-1}\right)^\frac{n-1}{2n} \leq C(n, |L_t|)\sqrt{\int_{L_t} |\n \phi|^2+|H|^2|\phi|^2 d\mathcal{H}^n}, \]
	and we note that as $\theta+\a$ is zero on $\partial L_t$, it is a function of compact support on the interior of $L_t$ and this theorem applies to $\phi=(\theta+\a)^p$ for all $p\geq 1$ (alternatively see \cite[Lemma 1.1]{Gerhardt}).
	
	\hide{
		This follows from the standard Micheal Simon inequality : Put $\phi^p$ in place of $\phi$ to get
		\begin{align*}
		\left(\int_{L_t} \phi^\frac{pn}{n-1}\right)^\frac{p(n-1)}{pn}&=\left(\int_{L_t} \phi^\frac{pn}{n-1}\right)^\frac{n-1}{n}\\
		& \leq C(n)\int_{L_t} |\n \phi|\phi^{p-1}+|H\phi||\phi|^{p-1}d\mathcal{H}^n \\
		&\leq C(n,p, |L_t|)\left(\int_{L_t}\phi^p d\operatorname{vol}\right)^\frac{p-1}{p}\left(\int_{L_t}|\n\phi|^p+|\phi|^pd\mathcal{H}^n\right)^\frac 1 p\\
		&\leq C(n,p, |L_t|)\left(\int_{L_t}\phi^\frac{np}{n-1} d\mathcal{H}^n\right)^\frac{(p-1)(n-1)}{pn}\left(\int_{L_t}|\n\phi|^p+|\phi|^pd\mathcal{H}^n\right)^\frac 1 p
		\end{align*}
		so
		\[\left(\int_{L_t} \phi^\frac{pn}{n-1}\right)^\frac{(n-1)}{pn}\leq C(n,p, |L_t|)\left(\int_{L_t}|\n\phi|^p+|\phi|^pd\mathcal{H}^n\right)^\frac 1 p\]
	}
	We see that by choosing $\phi = (\theta+\a)^{p}$ then
	\begin{flalign*}
	\ddt{}\int_{L_t} (\theta+\a)^{2p} d\mathcal{H}^n &\leq -\tilde{c}(n, |L_t|)\left(\int_{L_t}\left[(\theta+\a)^{2p}\right]^\frac{n}{n-1}d\mathcal{H}^n\right)^\frac{n-1}n \\
	&\leq -c(n, |L_t|)\int_{L_t}(\theta+\a)^{2p}d\mathcal{H}^n,
	\end{flalign*}
	and so
	\[\ddt{}\int_{L_t} (\theta+\a)^{2p}e^{ct} d\mathcal{H}^n\leq 0.\]
	
	Repeating the above for $p=1$, but only using half the possible exponent in $t$ we have
	\begin{flalign*}
	\ddt{}\int_{L_t} (\theta+\a)^{2}e^{\frac{c}{2}t} d\mathcal{H}^n &\leq -\frac 1 2 e^{\frac{c}{2}t}\int_{L_t} |H|^2(\theta + \alpha)^2 + 2|\n\theta|^2 d\mathcal{H}^n \leq -\frac 1 2 e^{\frac{c}{2}t}\int_{L_t}2|H|^2 d\mathcal{H}^n\ .
	\end{flalign*}
	Integrating implies the final claim.
\end{proof}

\textit{Proof of Proposition \ref{prop-conv}.} Due to Lemma \ref{IntergralEstimates} and the above regularity assumptions, there exists a $T>0$ such that for all $t>T$, $|H|<e^{-\frac{c}4t}$. This bounds the normal velocity of the parametrisation $F$, and as a result we see that for $s,t>T$, $\operatorname{dist}(L_s,L_t)< \frac{4}{c}e^{-\frac{c}{4}\operatorname{min}\{s,t\}}$. Clearly, as $t\ra\infty$, $H\ra 0$, and so we see that $L_t$ converges to a special Lagrangian, first subsequentially by Arzela--Ascoli, then uniformly by the above, then smoothly by interpolation.
\qed

\subsection{The Lawlor Neck}

Our first example is an LMCF with boundary on the Lawlor neck, which has constant Lagrangian angle $\tilde\theta = \frac{\pi}{2}$. It follows that the boundary condition of (\ref{IntroBVP}) is equivalent to 
\[ \theta\big|_{\p L} \, = \, -\alpha. \]
We prove the following long-time existence result.

\begin{theorem}
	Let $L_0$ be an $S^1$-equivariant Lagrangian embedding of the disc $D^2$ into $\mathbb{C}^2$ with Lagrangian angle $\theta_0$ satisfying
	\[ \theta_0(s) \, \in \, (-\tfrac{\pi}{2}+ \veps, \, \tfrac{\pi}{2}- \veps)\]
	for some $\veps > 0$, with boundary on the Lawlor neck with profile curve $\sigma_{\operatorname{Law}} = \{(\pm\cosh(\phi), \sinh(\phi)) : \phi \in \mathbb{R}\},$ and with $\theta_0|_{\p L_0}=-\alpha$ (as in Figures \ref{fig-lawlorcase1} and \ref{fig-lawlorcase2}).
	Then there exists a unique, immortal solution to the LMCF problem:
	\begin{align}
	\begin{cases}
	\left(\ddt{} F(x,t)\right)^{NM} = H(x,t)& \text{for all }(x,t) \in D\times[t_0,\infty)\\
	F(x,t_0)=L_0(x)&\text{for all }x\in D\\
	\partial L_t\subset \Sigma_{\operatorname{Law}} & \text{for all }t\in[t_0,\infty)\\
	\theta_t|_{\p L_t}=-\alpha &\text{for all } (x,t)\in \p D \times[t_0,\infty),
	\end{cases} \label{eq-equiflow}
	\end{align}
	and it converges smoothly in infinite time to the disc with profile curve $\gamma^{\infty}(s) = (s,s\tan(\frac{-\alpha}{2}))$.
\end{theorem}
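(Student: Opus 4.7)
The plan is to reduce the statement to Proposition \ref{prop-conv}, whose hypotheses require only long-time existence of the flow together with uniform smooth curvature estimates $|\n^k\II|\le C_k$. By Theorem \ref{Main}, long-time existence reduces to ruling out singularity types (a)--(c); case (a) is immediate since $\Sigma_{\operatorname{Law}}$ is static with uniformly bounded $|\IIS|$, so the problem reduces to proving a uniform bound on $|\II|$ on $L_t$, with higher derivatives then following from parabolic bootstrapping, which in turn controls the boundary injectivity radius, ruling out (c). The $S^1$-equivariance reduces \eqref{eq-equiflow} to the planar curve flow \eqref{eq-equi} for the profile curve $\gamma(\cdot,t)$, with endpoints on $\sigma_{\operatorname{Law}}$ and boundary angle $\arg\gamma+\arg\gamma'=-\a$.

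\textbf{Preservation of almost-calibration.} Since $\widetilde\theta\equiv\tfrac{\pi}{2}$ on $\Sigma_{\operatorname{Law}}$, the mixed boundary condition in \eqref{IntroBVP} reduces on the Lagrangian $L_t$ to the Dirichlet condition $\theta|_{\p L_t}=-\a$. Along the flow $\theta$ satisfies $\partial_t\theta=\Delta\theta$, and by compatibility both the initial and boundary data lie in $(-\tfrac{\pi}{2}+\veps,\tfrac{\pi}{2}-\veps)$, so the parabolic maximum principle preserves this interval. In particular $\cos\theta\ge\veps'>0$ for all $t$ in the maximal existence interval.

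\textbf{Curvature estimate, long-time existence, and convergence.} For the interior bound on $|\II|$, the plan is to adapt the equivariant boundaryless analysis of \cite{Wood}: using the almost-calibrated bound, derive an evolution inequality of the form $\ho\bigl(|\II|^2\cos^{-q}\theta\bigr)\le C\,|\II|^2\cos^{-q}\theta$ for some $q>0$, giving an $L^\infty$ estimate via the parabolic maximum principle. The boundary contribution would be handled by expanding $\n_\mu\bigl(|\II|^2\cos^{-q}\theta\bigr)$ at a boundary maximum using the identities in Section \ref{BoundaryCond} (notably Lemmas \ref{Dirichletrewrite2}--\ref{boundarymax}) together with the fact that $\Sigma_{\operatorname{Law}}$ has bounded geometry; this shows that no interior estimate can be invalidated at the boundary. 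Standard Schauder theory then yields uniform $C^{k}$ bounds up to $\p L_t$, ruling out singularities (b) and (c) and giving long-time existence. Proposition \ref{prop-conv} then furnishes smooth convergence of $L_t$ to an equivariant special Lagrangian disc; by the equivariant angle formula \eqref{EqLangle} and the boundary condition $\theta=-\a$, the limit must be a straight line through the origin with $\arg\gamma=-\a/2$, and matching with $\sigma_{\operatorname{Law}}$ pins down the unique limiting profile $\gamma^\infty(s)=(s,s\tan(-\a/2))$.

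\textbf{Main obstacle.} The hardest step is the interior curvature estimate near the origin, where the drift $-\gamma^\perp/|\gamma|^2$ in \eqref{eq-equi} is singular; this is exactly where almost-calibration is essential in \cite{Wood}, and the boundary adaptation must preserve that delicate balance. A secondary technical difficulty is that the boundary expansion of $\n_\mu(|\II|^2\cos^{-q}\theta)$ involves the rather intricate identities of Section \ref{BoundaryCond}, and the exponent $q$ must be chosen carefully so that the interior and boundary estimates close simultaneously.
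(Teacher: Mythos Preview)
Your overall strategy---reduce to Proposition~\ref{prop-conv} by ruling out curvature blow-up, with almost-calibration preserved by the Dirichlet heat equation for $\theta$---matches the paper. However, the route you propose for the curvature estimate is \emph{not} the one the paper takes, and your sketch has real gaps.

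\textbf{What the paper actually does.} The paper does not attempt a maximum-principle estimate on $|\II|^2\cos^{-q}\theta$. Instead it introduces a graphical parametrisation over the Lawlor foliation $Y(s,\phi)=(s\cosh\phi,\,s\sinh\phi)$, writing $\gamma_t(s)=Y(s,v_t(s))$. The almost-calibrated condition, via the identity $sv'=\tan\theta/\cosh(2v)-\tanh(2v)$ and a barrier argument for $\arg\gamma$, yields uniform $C^1$ bounds on $v$ on any annulus; Schauder estimates on the resulting quasilinear PDE then give smooth bounds away from the origin. At the origin the equation degenerates, so the paper uses a Type~I rescaling, Huisken monotonicity with boundary (equation~\eqref{monotonicity}), and an $L^2$ estimate on $v'$ (Lemma~\ref{lem-nev}) to show the Gaussian density of the rescalings is close to $1$; White's local regularity theorem then rules out the singularity. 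For the uniform-in-time bounds needed in Proposition~\ref{prop-conv}, the paper runs White regularity once more, now using the $L^2$ decay of $\theta+\alpha$ from Lemma~\ref{IntergralEstimates}.

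\textbf{Gaps in your proposal.} First, the evolution inequality $\ho(|\II|^2\cos^{-q}\theta)\le C\,|\II|^2\cos^{-q}\theta$ is asserted, not derived; the singular drift $-\gamma^\perp/|\gamma|^2$ enters $|\II|^2$ through the equivariance and there is no reason to expect a clean closed inequality at the origin---this is precisely why the paper abandons PDE methods there in favour of blow-up analysis. Second, even if such an inequality held, it would give at best an \emph{exponentially growing} bound $|\II|^2\le Ce^{Ct}$, which rules out finite-time blow-up but does \emph{not} give the uniform $|\n^k\II|\le C_k$ required by Proposition~\ref{prop-conv}; you would still need a separate argument for large $t$. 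Third, the claim that curvature bounds automatically control the boundary injectivity radius is not justified in general; in the paper this is a consequence of the graphical structure (Lemma~\ref{lem-bound}), which prevents the profile curve from folding back on itself.
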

\begin{remark}
	The `almost-calibrated' condition $\theta_0 \in (-\tfrac{\pi}{2}+ \veps, \, \tfrac{\pi}{2}- \veps)$  is necessary, as there exist Lagrangian discs which are not almost-calibrated but which form a finite-time singularity under the flow - see \cite{Neves2013} for an example.
	
	If the profile curve $\gamma_t$ does not pass through the origin, i.e. if the topology of the flow is not a disc, then a finite-time singularity will form. For example one can prove using the barriers of this section that any curve that does not initially pass through the origin must approach the origin as $t \rightarrow \infty$, and therefore by the equivariance the curvature $|A|^2$ must blow up.
\end{remark} 

\subsubsection{Parametrisation}

For simplicity, we work throughout with the profile curves of our flow and the boundary manifold, and we will work with the following parametrisation for the profile curve. Consider the foliation
\[ Y(s, \phi) \, := \, (s \cosh(\phi), s \sinh(\phi) ) \]
and graphs of the form
\begin{align}
\gamma_t(s) \, &= \, Y(s, v_t(s)) \, = \, (s \cosh(v_t(s)), \, s \sinh(v_t(s))) \label{eq-parametrisation2}\\
\gamma'_t(s) \, &= \, (\cosh(v_t(s)) + sv'_t(s)\sinh(v_t(s)), \, \sinh(v_t(s)) + sv'_t(s)\cosh(v_t(s))).\notag
\end{align}
In this parametrisation, the problem (\ref{eq-equiflow}) is reduced to the following boundary value problem:
\begin{equation}
\begin{cases}\label{eq-graphmcf}
\frac{\p v}{\p t} \, = \, \frac{v'' + 2s^{-1}v' - s(v')^3}{|\gamma'|^2} \, + \, \frac{v'}{s \cosh(2v)}  &\text{for } \, s \in [-1,1], \,\,t \geq t_0,\\
v(s,t_0) = v_0 &\text{for } \, s \in [-1,1], \\
sv'(s,t) \, = \, \frac{\tan(-\alpha)}{\cosh(2v(s,t))}-\tanh(2v(s,t)) &\text{for } \, s \in \{-1,1\}, \,\, t \geq t_0.
\end{cases}
\end{equation}

Note that this PDE problem is uniformly parabolic away from the origin, if we can bound $|\gamma'|$ and $|\gamma| = s\cosh(2v)$. We must also show that this parametrisation is valid for our problem.

\subsubsection{The Lagrangian Angle and $C^1$ Bounds}

The Lagrangian angle for an equivariant LMCF is given by
\[ \theta(s) \, = \, \arg(\gamma) + \arg(\gamma').  \]
It is an important quantity, because on the interior of the abstract manifold it has very simple evolution equations:
\begin{equation} \fr{\p \theta}{\p t} \, = \, \Delta\theta, \quad \quad \fr{\p (\theta)^2}{\p t} \, = \, -2|H|^2 + \Delta (\theta)^2. \label{eq-langlevol}
\end{equation}
\begin{lemma}
	A solution of (\ref{eq-equiflow}) on $[t_0,T)$ which satisfies $\theta \in (-\tfrac{\pi}{2}+ \veps, \, \tfrac{\pi}{2}- \veps)$ at the initial time, satisfies this condition for all $t \in [t_0,T)$.
\end{lemma}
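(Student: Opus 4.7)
The plan is to use the parabolic maximum principle applied directly to the evolution equation $\frac{\partial \theta}{\partial t} = \Delta \theta$ from \eqref{eq-langlevol}, combined with the Dirichlet boundary condition $\theta|_{\partial L_t} = -\alpha$ coming from \eqref{eq-equiflow}. Since the Lagrangian condition is preserved by Theorem \ref{LMCFBdry}, the Lagrangian angle $\theta$ is well-defined on $L_t$ for all $t \in [t_0,T)$, and on the interior it evolves by the (time-dependent, but uniformly parabolic on compact subdomains) heat equation \eqref{eq-langlevol}. The boundary value $-\alpha$ is constant in $t$ and, because $\theta_0|_{\partial L_0} = -\alpha$ and $\theta_0$ takes values in $(-\tfrac{\pi}{2}+\varepsilon,\tfrac{\pi}{2}-\varepsilon)$, we automatically have $-\alpha \in (-\tfrac{\pi}{2}+\varepsilon,\tfrac{\pi}{2}-\varepsilon)$.

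First I would record these three ingredients: (i) the interior evolution $(\partial_t - \Delta)\theta = 0$; (ii) the Dirichlet boundary condition $\theta = -\alpha$ on $\partial L_t$; and (iii) the compatibility $-\alpha \in (-\tfrac{\pi}{2}+\varepsilon,\tfrac{\pi}{2}-\varepsilon)$. Then for any fixed $\tau \in (t_0,T)$, I would apply the standard parabolic maximum principle on the compact space-time cylinder $\bigsqcup_{t\in[t_0,\tau]} L_t$: introducing the auxiliary function $f_\delta := \theta - \delta(t-t_0)$ for $\delta > 0$, any interior space-time maximum of $f_\delta$ would give $\partial_t f_\delta \geq 0$ and $\Delta f_\delta \leq 0$, contradicting $(\partial_t - \Delta)f_\delta = -\delta < 0$; hence the maximum is attained either at $t=t_0$ or on $\partial L_t$. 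Letting $\delta \to 0$ gives
\[
\sup_{L_t} \theta \;\leq\; \max\!\Big(\sup_{L_0} \theta,\, -\alpha\Big) \;\leq\; \tfrac{\pi}{2} - \varepsilon,
\]
and the symmetric argument applied to $-\theta$ yields $\inf_{L_t} \theta \geq -\tfrac{\pi}{2}+\varepsilon$. Since $\tau < T$ was arbitrary, the bound is preserved on $[t_0,T)$.

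There is essentially no obstacle here: the only point requiring care is that $\theta$ really is a single-valued smooth function on each $L_t$ (so that evolution \eqref{eq-langlevol} and the Dirichlet condition make sense together), but this follows because the initial disc is zero-Maslov (indeed almost-calibrated) and this property is preserved by the flow. The argument is a direct Dirichlet-problem application of the parabolic maximum principle and requires no delicate Hopf/Neumann-boundary analysis — contrast this with the more subtle maximum principle argument for $|\omega|^2$ in Section \ref{PresLag}, where a Neumann-type boundary term had to be absorbed.
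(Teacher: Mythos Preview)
Your proposal is correct and follows essentially the same approach as the paper: note that $\theta$ satisfies the Dirichlet problem for the heat equation with boundary value $-\alpha \in (-\tfrac{\pi}{2}+\varepsilon,\tfrac{\pi}{2}-\varepsilon)$, and apply the parabolic maximum principle. The paper's proof is the same two-line argument, just stated more tersely without the auxiliary function $f_\delta$ or the explicit observation that $-\alpha$ lies in the interval.
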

\begin{proof}
	The boundary conditions on our flow are $\theta\big|_{\p L} = -\alpha$. Therefore by (\ref{eq-langlevol}), $\theta$ solves the Dirichlet problem for the heat equation on the abstract manifold, and by the parabolic maximum principle must be bounded by its initial values.
\end{proof}
We will now show that our flow may be parametrised using the parametrisation (\ref{eq-parametrisation2}) for as long as the flow exists, and derive $C^1$ bounds on the graph function $v$ away from the origin. Certainly it may be parametrised in this way on a small ball $B$ around the origin, since at the origin we have the identity
\[ \theta = 2\arg(\gamma'),\] 
and so it follows from the almost-calibrated condition for $\theta$ that, on $B$, the curve intersects the Lawlor neck foliation $Y(s,\phi)$ transversely. On this ball $B$,
\begin{align}
\theta(s) \, &= \arg(\gamma) + \arg(\gamma') \, = \, \arg(\gamma \gamma') \notag \\
&= \, \arg \left( s \, + \, i \left( s \sinh(2v) + s^2 v' \cosh(2v) \right) \right) \notag\\
\implies \tan(\theta) \, &= \, \sinh(2v) + sv' \cosh(2v) \notag \\
\implies sv' \, &= \, \frac{\tan(\theta)}{\cosh(2v)} \, - \, \tanh(2v) \label{eq-v'bound} \\
\implies |\gamma'(s)| \, &\leq \, \left(1 + s|v'|\right)(\cosh(v) + |\sinh(v)|) \notag\\
& \leq \left(1 + \frac{|\tan(\theta)|}{\cosh(2v)} \, + \, |\tanh(2v)|\right)(\cosh(v) + |\sinh(v)|). \label{eq-gamma'bound}
\end{align}
This will give us a uniform $C^1$ bound for $v$ on any annulus centred at the origin, if we can parametrise globally in this way, and bound the function $v$.
\begin{lemma}\label{lem-bound}
	Let $\gamma$ be the profile curve of an equivariant Lagrangian submanifold $L \subset \mathbb{C}^2$ with boundary on the Lawlor neck, satisfying $\theta \in (-\frac{\pi}{2}+\veps,\frac{\pi}{2}-\veps)$. Then one connected component of the curve $\gamma\setminus\{O\}$ is parametrisable using the parametrisation \eqref{eq-parametrisation2}, and satisfies
	\begin{align*}
	\arg(\gamma) \, &\in \, (-\tfrac{\pi}{4} + \tfrac{\veps}{2}, \, \tfrac{\pi}{4} - \tfrac{\veps}{2}), \\
	v \, &\in \, (-V,V),
	\end{align*}
	for $V = \tanh^{-1}(\tan(\tfrac{\pi}{4} - \tfrac{\veps}{2}) ) \, < \, \infty$.
	The other connected component satisfies analogous bounds.
\end{lemma}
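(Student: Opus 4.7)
The plan is to focus on one connected component $\gamma^+$ of $\gamma \setminus \{O\}$; the other is handled identically by the equivariant symmetry $\gamma \mapsto -\gamma$. Parametrise $\gamma^+$ by arc length $t \in [0,L]$, with the Lawlor neck boundary point at $t = L$. The two key tools are the Lagrangian angle identity $\theta = \arg(\gamma) + \arg(\gamma')$ from \eqref{EqLangle}, together with the hypothesis $\theta \in (-\tfrac{\pi}{2}+\veps,\tfrac{\pi}{2}-\veps)$; note in particular that $\arg(\gamma(t)) \to \arg(\gamma'(0)) = \tfrac{1}{2}\theta(0)$ as $t \to 0^+$, a value which lies strictly inside the target angle interval.

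First I would establish the bound $\arg(\gamma) \in (-\tfrac{\pi}{4}+\tfrac{\veps}{2},\tfrac{\pi}{4}-\tfrac{\veps}{2})$ on $\gamma^+$ via a first-crossing argument based on
\[ \tfrac{d}{dt}\arg(\gamma) \,=\, \tfrac{|\gamma'|}{|\gamma|}\,\sin\bigl(\arg(\gamma') - \arg(\gamma)\bigr). \]
If $t_0 \in (0,L]$ were the first time $\arg(\gamma(t_0)) = \tfrac{\pi}{4}-\tfrac{\veps}{2}$, then $\theta(t_0) < \tfrac{\pi}{2}-\veps$ would force $\arg(\gamma'(t_0)) < \arg(\gamma(t_0))$ strictly, while $\theta(t_0) > -\tfrac{\pi}{2}+\veps$ would keep $\arg(\gamma') - \arg(\gamma)$ strictly greater than $-\pi + 2\veps$. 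Hence the difference lies in $(-\pi,0)$, so $\tfrac{d}{dt}\arg(\gamma)(t_0) < 0$ strictly, contradicting the first-crossing condition (and noting that at $t_0 = L$ the value $\theta = -\alpha$ also satisfies the relevant bounds by the hypothesis). The symmetric lower bound is identical.

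To see that $\gamma^+$ is a graph over the foliation $Y$, define $S(t) := \sqrt{x(t)^2 - y(t)^2}$, well-defined by the preceding angle bound. A short computation gives
\[ S\,S' \,=\, \operatorname{Re}(\gamma\gamma') \,=\, |\gamma|\,|\gamma'|\cos\theta \,>\, 0, \]
so $S$ is strictly increasing from $S(0) = 0$ to $S(L) = 1$ (the defining equation of $\Sigma_{\operatorname{Law}}$), hence a homeomorphism onto $[0,1]$. This gives the desired parametrisation $\gamma(S) = (S\cosh v(S), S\sinh v(S))$ with $v(S) = \tanh^{-1}(y(S)/x(S))$, and the identity $\arg(\gamma(S)) = \arctan(\tanh v(S))$ immediately translates the angle bound into $v \in (-V,V)$ with $V = \tanh^{-1}(\tan(\tfrac{\pi}{4}-\tfrac{\veps}{2}))$.

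The main delicate step is the first-crossing computation: the upper bound on $\theta$ is what produces the strict inequality $\arg(\gamma') < \arg(\gamma)$, while the lower bound on $\theta$ is what keeps the angular difference strictly above $-\pi$ so that $\sin$ remains negative. Both ingredients are essential, and almost-calibration provides exactly the slack needed. Once the angle bound is in hand, the monotonicity of $S$ falls out almost for free from the reformulation $SS' = |\gamma||\gamma'|\cos\theta$ of the Lagrangian angle identity.
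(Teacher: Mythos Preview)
Your proof is correct and follows essentially the paper's strategy for the $\arg(\gamma)$ bound: the paper argues that at a first crossing $\arg(\gamma(s_0)) = \tfrac{\pi}{4} - \tfrac{\veps}{2}$ one must have $\arg(\gamma'(s_0)) \geq \tfrac{\pi}{4} - \tfrac{\veps}{2}$, whence $\theta(s_0) \geq \tfrac{\pi}{2} - \veps$. Your derivative computation via $\tfrac{d}{dt}\arg(\gamma) = \tfrac{|\gamma'|}{|\gamma|}\sin(\arg(\gamma')-\arg(\gamma))$ is the analytic version of the same step, and is in fact a bit more careful in that you explicitly use the lower bound on $\theta$ to keep the angular difference in $(-\pi,0)$.

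Where you genuinely diverge from the paper is in establishing the parametrisability over the foliation $Y$. The paper works inside the graph parametrisation from the outset: it derives the identity $sv' = \tfrac{\tan\theta}{\cosh(2v)} - \tanh(2v)$, combines it with the $v$-bound to control $v'$ on annuli, and then argues by continuation that the parametrisation extends to all of $(0,1]$. Your route is more direct: you stay in arc length, observe that $S = \sqrt{x^2-y^2}$ is well-defined by the angle bound, and compute $SS' = \operatorname{Re}(\gamma\gamma') = |\gamma||\gamma'|\cos\theta > 0$ from almost-calibration, so $S$ is a strictly increasing bijection onto $[0,1]$. This avoids the mild circularity of invoking the $v'$-formula before one knows the graph parametrisation is globally valid, and it makes transparent that almost-calibration is exactly what forces the profile curve to be transverse to the foliation. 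The paper's route, on the other hand, yields the explicit $C^1$ bound \eqref{eq-v'bound} as a by-product, which is used immediately afterwards; your approach recovers it only after reparametrising.
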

\begin{proof}
	At the origin, we must have $\arg(\gamma'(0)) \in (-\tfrac{\pi}{4} + \tfrac{\veps}{2}, \, \tfrac{\pi}{4} - \tfrac{\veps}{2})$ (for one choice of orientation) by the bound on $\theta$, therefore for small s the curve is parametrisable by \eqref{eq-parametrisation2}, and the first bound holds. 
	If there was some smallest $s_0$ such that
	\[\arg(\gamma(s_0)) = \tfrac{\pi}{4} - \tfrac{\veps}{2},\]
	that at this point,
	\begin{align*}
	\arg(\gamma'(s_0)) &\geq \tfrac{\pi}{4} - \tfrac{\veps}{2}
	\implies \theta(s_0) \, \geq \, \tfrac{\pi}{2} - \veps
	\end{align*}
	which is a contradiction. An identical argument works for the lower bound, and so the first statement is proven.
	
	For the second, note that in the foliation  $Y(s, \phi) = (s\cosh(\phi), s\sinh(\phi))$, the line of constant argument $\alpha$ satisfies
	\begin{align*}
	\tan(\alpha) \, &= \, \frac{\sinh(\phi)}{\cosh(\phi)} \, = \, \tanh(\phi) 	\implies \phi \, = \, \tanh^{-1}(\tan(\alpha)),
	\end{align*}
	therefore lines of constant angle are equivalent to lines of constant $\phi$, with the above correspondence. The first bound then implies the second, for as long as the parametrisation is valid.
	Finally, this bound on $v$, along with (\ref{eq-v'bound}), proves that $v'$ is bounded on any annulus - therefore the parametrisation is valid for all $s>0$. The other half of the curve $\gamma$ is a reflection of the first in the origin, by the equivariance, and so analogous results hold.
\end{proof}
Using this lemma, (\ref{eq-gamma'bound}) implies that $|\gamma'(s)|<C_1$, for some uniform constant $C_1$. We can use this to derive the following density bound on small balls, which will be useful later:
\begin{align*}
\int_{B_\delta \cap {\gamma_t}} d\mathcal{H}^1
\leq \, \int_{-\delta}^{\delta} |{\gamma}'(s)| ds \leq 2\delta C_1.
\end{align*}

\subsubsection{Long-Time Existence}

Using the mean curvature flow equation \eqref{eq-graphmcf}, and the $C^1$ bounds we just derived, we can now prove long-time existence.
\begin{lemma}\label{lem-nofintimesing}
	A finite-time singularity for a solution of (\ref{eq-equiflow}) cannot occur.
\end{lemma}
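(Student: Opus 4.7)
The plan is to argue by contradiction: assume some finite $T < \infty$ is the maximal existence time, and rule out each of the three singularity alternatives provided by Theorem \ref{Main}.

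Type (a) is immediate: $\Sigma_{\operatorname{Law}}$ is a smooth static special Lagrangian, hence $\sup_{\Sigma_t}|\IIS|^2$ is a fixed finite constant and cannot blow up. Type (c) I would handle using the uniform $C^1$ bound on $v$ already established. By Lemma \ref{lem-bound}, $|v_t(\pm 1)| < V$ uniformly, so the boundary circles $\partial L_t$ stay in a fixed compact annulus of $\mathbb{C}^2$ bounded away from the origin, and the bound $|\gamma'_t| \leq C_1$ ensures that the intrinsic metric on $L_t$ in a neighbourhood of $\partial L_t$ is comparable with uniform constants to the Euclidean one on $\Sigma_{\operatorname{Law}}$; since $\Sigma_{\operatorname{Law}}$ has smooth bounded geometry, no collapse of the boundary injectivity radius is possible.

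The main work is then to rule out alternative (b). The $C^1$ bounds on the profile curve, together with equivariance, yield a uniform Euclidean area density bound of the form $\mathcal{H}^2(L_t \cap B_\delta(x)) \leq C\delta^2$, obtained by integrating $|\gamma'|$ against the $S^1$ fiber of radius at most $\delta$. Away from the axis of symmetry, the graphical PDE \eqref{eq-graphmcf} for $v$ is uniformly parabolic: the $C^1$ bound controls $|\gamma'|^2$ from above, and the factor $|\gamma|=|s|\cosh(2v)$ is bounded below on $\{|s|\geq s_0\}$. Standard interior parabolic Krylov--Safonov and Schauder estimates then give $C^{k,\alpha}$ bounds on $v$ on any annulus $s_0 \leq |s| \leq 1$, up to the Lawlor boundary (using boundary Schauder estimates with the smooth nonlinear Neumann-type condition in \eqref{eq-graphmcf}). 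Hence $|\II|^2$ cannot blow up away from the origin.

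The remaining, and hardest, step is to rule out curvature blow-up at the axis of symmetry $s=0$. The plan is a parabolic blow-up argument in the style of Wood \cite{Wood}: if $|\II|^2(x_i,t_i) \to \infty$ with $x_i \to 0$ and $t_i \to T$, rescale so that curvature equals one at the distinguished point. The almost-calibrated condition $\theta \in (-\tfrac{\pi}{2}+\veps, \tfrac{\pi}{2}-\veps)$ is scale-invariant and passes to the limit; the Euclidean density bound above gives subsequential convergence to a smooth eternal LMCF in $\mathbb{C}^2$ through $0$. As the Lawlor boundary scales out to a flat plane (the boundary contribution being the same as in \cite{Wood}), the tangent flow lives in $\mathbb{C}^2$ with no boundary constraint at the origin, and Neves' classification of tangent flows for almost-calibrated zero-Maslov Lagrangian MCF forces it to be a static special Lagrangian plane of Lagrangian angle in $(-\tfrac{\pi}{2}+\veps, \tfrac{\pi}{2}-\veps)$. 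This plane has zero curvature, contradicting the normalization $|\II| = 1$ at the rescaled point. The main obstacle I anticipate is making this equivariant blow-up rigorous at the singular axis $s=0$, but since the boundary is smooth and static and sits away from the axis, the interior argument of \cite{Wood} adapts essentially verbatim.
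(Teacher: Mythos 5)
Your overall structure (dismissing alternatives (a), (c) and the away-from-origin part of (b), then blowing up at the axis of symmetry) is sensible, but the key classification step at the origin has a genuine gap, and the route is different from the paper's.

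You perform a \emph{Type II} rescaling (normalising $|\II|=1$ at the distinguished points) and then appeal to ``Neves' classification of tangent flows'' to conclude the smooth eternal limit is a flat special Lagrangian plane. These two things don't match: Neves' classification concerns \emph{Type I} tangent flows, i.e.\ \emph{varifold} limits of the parabolic rescalings around a fixed spacetime point, and it identifies them only as finite unions of special Lagrangian cones (possibly singular, possibly with multiplicity) --- it is not a statement about smooth eternal solutions with bounded curvature. Applied to the Type II limit, it gives you nothing directly. Even granting the conflation, the statement ``an eternal almost-calibrated equivariant LMCF through the origin with bounded curvature must be flat'' is not a black-box fact; in the Clifford torus section of this very paper (and in Wood's paper \cite{Wood}) the analogous conclusion is \emph{derived} by an additional Huisken-monotonicity argument showing the Lagrangian angle of the Type II limit is constant, relating the Type II rescalings back to Type I rescalings and passing the $L^2$-decay of $\theta$ through the scaling. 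That step is precisely what is missing from your sketch, and it is the heart of the matter.

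The paper takes a shorter route that avoids classification altogether. It works directly with \emph{Type I} rescalings $L^i_t=\lambda^i L_{(\lambda^i)^{-2}t}$ around $(O,0)$. Lemma \ref{lem-nev} (a consequence of the monotonicity formula with boundary applied to $1$ and to $\theta^2$, combined with the fact that $\partial L^i_t$ diverges to infinity) gives $\int_a^b \int_{L^i_t\cap B_R}(|H|^2+|x^\perp|^2)\to 0$. In the graph parametrisation this forces $\int(v_t')^2\to 0$, hence $v\to\bar v$ uniformly, and an explicit coarea computation then shows the localised Gaussian density $\Theta^\rho(L^i,X,\tfrac1{\sqrt2}r)$ can be made $<1+\veps$ uniformly on a parabolic ball. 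White's local regularity theorem then bounds $|A|$, contradicting the assumed singularity. Note that this density-one argument is cleaner than any blow-up classification and is also well adapted to the boundary version of the monotonicity formula that the paper develops. If you wish to pursue a Type II argument, you would need to reproduce Wood's monotonicity step rather than cite Neves, and you would also have to address whether the axis of symmetry survives the rescaling (i.e.\ whether $A_i|x_i|$ stays bounded), which changes whether the limit is equivariant or a general plane.

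Finally, a smaller quibble: your disposal of alternative (c) from Theorem \ref{Main} is not quite how the paper proceeds. The paper proves Lemma \ref{lem-nofintimesing} by directly contradicting singularity formation via curvature estimates and White regularity; it does not need to enumerate the three alternatives of Theorem \ref{Main}, because once $|A|$ and all its derivatives are bounded on $[0,T)$, Proposition \ref{ExistsOne} lets the flow continue. Your $C^1$ bound on $v$ does control the geometry near the static boundary, so (c) can be argued along your lines, but you should note that boundary Schauder estimates are what supply the smooth control there, just as the paper obtains via the graphical PDE \eqref{eq-graphmcf}.
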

\begin{proof}
	By (\ref{eq-gamma'bound}), the mean curvature flow equation (\ref{eq-graphmcf}) is uniformly parabolic on any annulus centred at the origin. Therefore, Schauder estimates give a bound on all curvatures for as long as the flow exists, and so a singularity cannot occur away from the origin.
	
	Unfortunately, the equation (\ref{eq-graphmcf}) degenerates at the origin, so this case must be dealt with separately. Assume that a singularity occurs at the origin at time $0$, and let $L^i_t$, $\gamma^i_t$ be the type I rescalings of the rotated flow and their profile curves around this singularity with factor $\lambda^i$, defined by
	\[ L^i_t \, := \, \lambda^i L_{(\lambda^i)^{-2}t}. \]
	We will show that the density of $\gamma^i_t$ converges to 1, and then White's local regularity theorem will imply that the curvatures are bounded, contradicting the assumption of a singularity at $(O,0)$.
	\begin{lemma}\label{lem-nev} Let $L^i_t$ be a sequence of rescalings of an equivariant LMCF $L_t \subset \mathbb{C}^2$ around the spacetime point $(O,0)$. Assume that $\p L^i_t \rightarrow \infty$ as $i \rightarrow \infty$, uniformly on the time interval $[t_0,0)$, and assume also that the flow is uniformly bounded in $C^3$ on $\p L_t$.
		
		Then for any $a<b<0$ and $R>0$,
		\[ \lim_{i\rightarrow \infty} \int_a^b \int_{L^i_t \cap B_R} \left( |H|^2 + |x^\perp|^2 \right) d\mathcal{H}^2 \, = \, 0.\]
	\end{lemma}
	\begin{proof}
		We need the following version of Huisken's monotonicity formula, which holds for flows $M_t^n$ with boundary. For a spacetime point $X:=(x_0,t_0)$,
		\begin{multline} \frac{\p}{\p t} \int_{M_t} f \Phi_X \, d\mathcal{H}^n \, = \, \int_{M_t} \Phi_X \left( \frac{\p f}{\p t} - \Delta^{M}f - f \left| H + \frac{(x- x_0)^\perp}{2(t_0 - t)} \right|^2 \right) d\mathcal{H}^n \, \\ + \, \int_{\p M_t} \Phi_X \ip{f \frac{x-x_0}{2(t_0-t)} + \nabla^Mf}{\nu} d\mathcal{H}^{n-1}. \label{monotonicity}
		\end{multline}
		This formula is derived the same way as the standard monotonicity formula, but there are extra boundary terms from use of the divergence theorem. 
		Using (\ref{eq-langlevol}),(\ref{monotonicity}) and denoting by $\Phi$ the monotonicity kernel centred at $(O,0)$, 
		\begin{align}
		\frac{\p}{\p t} \int_{L^i_t} \Phi d\mathcal{H}^2 \, &= \, -\int_{L^i_t} \Phi \left| H - \frac{x^\perp}{2t} \right|^2 \, d\mathcal{H}^2 \, + \, \int_{\p L^i_t} \Phi \ip{-\frac{x}{2t}}{\nu} d\mathcal{H}^{1}, \label{eq-evol1}\\
		\frac{\p}{\p t} \int_{L^i_t} (\theta^i_t)^2 \Phi d\mathcal{H}^2\, &= \, \int_{L^i_t} \Phi \left( - 2|H|^2 - (\theta^i_t)^2 \left| H - \frac{x^\perp}{2t} \right|^2 \right) \, d\mathcal{H}^2 \, \notag \\ &\qquad \qquad +  \, \int_{\p L^i_t} \Phi \ip{-(\theta^i_t)^2\frac{x}{2t} + \nabla^M(\theta^i_t)^2}{\nu} d\mathcal{H}^{1}. \notag
		\end{align}
		Therefore, 
		\begin{align*}
		&\lim_{i\rightarrow \infty} 2 \int_a^b \int_{L^i_t} |H|^2 \Phi d\mathcal{H}^2 dt \\ 
		&\leq \lim_{i\rightarrow \infty} \left( \int_{L^i_a} (\theta^i_a)^2 \Phi d\mathcal{H}^2 - \int_{L^i_b} (\theta^i_b)^2 \Phi d\mathcal{H}^2 + \int_a^b \int_{\p L^i_t} \Phi \ip{-(\theta^i_t)^2\frac{x^\perp}{2t} + \nabla^M(\theta^i_t)^2}{\nu} d\mathcal{H}^{1} dt \right).
		\end{align*}
		The boundary $\p L^i_t$ is a circle, radius $d^i(t)>\mu^i$ for $\mu^i \rightarrow \infty$ independent of $t$, and circumference $2\pi d^i(t)$. Additionally, the Lagrangian angle and its derivative are bounded on $\p L^i_t$ by the assumed $C^3$ bound, so we can estimate the last integral using a constant $C$ depending only on this bound. Using this, and relating the first two integrals to the original flow by scaling invariance of the heat kernel,
		\begin{align*}
		&\lim_{i\rightarrow \infty} 2 \int_a^b \int_{L^i_t} |H|^2 \Phi d\mathcal{H}^2 dt \\
		&\leq \lim_{i\rightarrow \infty} \left( \int_{L_{(\lambda^i)^{-2}a}} (\theta_{(\lambda^i)^{-2}a})^2 \Phi d\mathcal{H}^2 - \int_{L_{(\lambda^i)^{-2}b}} (\theta_{(\lambda^i)^{-2}b})^2 \Phi d\mathcal{H}^2 + C\int_a^b 2\pi d^i(t) e^\frac{-d^i(t)^2}{2t} (d^i(t) + 1) dt \right).
		\end{align*}
		This limit is equal to 0, since by Huisken monotonicity with boundary (\ref{monotonicity}) the first two terms cancel in the limit and by assumption $d^i(t) \rightarrow \infty$. It can similarly be shown using (\ref{eq-evol1}) that
		\begin{align*}\lim_{i\rightarrow \infty} \int_a^b \int_{L^i_t} \left| H - \frac{x^\perp}{2t} \right|^2 \Phi d\mathcal{H}^2 dt = 0,
		\end{align*}
		and since on $B_R{\times}[a,b]$ we can estimate $\Phi$ from below, these together imply the result.
	\end{proof}
	We now continue with the proof. Note that Schauder estimates applied to the graph equation (\ref{eq-graphmcf}) imply that our flow has uniformly bounded curvatures at the boundary, and since the Lawlor neck is static, it diverges to infinity under any sequence of rescalings - therefore Lemma \ref{lem-nev} may be applied.	Consider the set 
	\[ K := \{ (s \cosh(v),s \sinh(v) ) | s \in [-R,R], v \in [-V,V]\};\]
	 $K$ must contain $\gamma_t \cap (B_R{\setminus}B_\delta
	 )$ for any $t$. The set $K$ is itself contained in a larger ball, $B_{\tilde R}$, and on this ball we can apply Lemma \ref{lem-nev} to show that, for almost all $t$,
	\begin{align*}
	\int_{\gamma^i_t \cap B_{\tilde{R}}} |\gamma^\perp|^2 d\mathcal{H}^1 \rightarrow 0
	\end{align*}
	as $i \rightarrow \infty$ (where we suppress the superscript $i$ for readability). Therefore,
	\begin{align*}
	\int_{\gamma^i_t \cap B_{\tilde{R}}} |\gamma^\perp|^2 d\mathcal{H}^1 \, &\geq \, 2\int_\delta^R \frac{s^4(v_t')^2}{|\gamma_t'|}ds \,\geq\, \frac{2\delta^4}{C_1} \int_{\delta}^R(v_t')^2 ds \rightarrow 0. 
	\end{align*}
	It follows by H\"older's inequality that $v \rightarrow \overline v \in \mathbb{R}$ uniformly as $i \rightarrow \infty$, and that $v^{-1}(\gamma^i_t \cap B_R) \rightarrow \left[ -\frac{R}{\sqrt{\cosh(2\overline v)}},\frac{R}{\sqrt{\cosh(2\overline v)}}\right]$. Now fixing $r>0$ and using a localised heat kernel $\Phi^{\rho}$ supported in $B_R$, we use this $L^2$ estimate and the co-area formula to calculate the localised Gaussian density:
	\begin{align*}
	\lim_{i \rightarrow \infty} &\Theta^{\rho}(L^i,0,r) \, = \, \lim_{i \rightarrow \infty}\int_{L^i_{-r^2}} \Phi^{\rho} d\mathcal{H}^2\\
	&= \, \lim_{i \rightarrow \infty} \int_\delta^{\frac{R}{\sqrt{\cosh(2\overline v)}}} \Phi^{\rho}(\gamma^i, -r^2) 2\pi |\gamma||\gamma'| ds \, + \, \lim_{i \rightarrow \infty} \int_0^\delta \Phi^{\rho}(\gamma^i(s), -r^2) 2\pi |\gamma||\gamma'| ds \\
	&\leq \, \lim_{i \rightarrow \infty} \int_{\delta}^{\frac{R}{\sqrt{\cosh(2\overline v)}}} \Phi^{\rho}(\gamma^i, -r^2) 2\pi s \sqrt{\cosh(2v)} \sqrt{(1 + s^2(v')^2)\cosh(2v) + s(v')\sinh(2v)} ds \, + \, C\delta \\
	&\leq \, \int_0^{\frac{R}{\sqrt{\cosh(2\overline v)}}} 2\pi s\cosh(2\overline v) \,\Phi^{\rho}(s\cosh(\overline v) + is \sinh(\overline v), -r^2)\, ds \, + \, C\delta \,\\
	&= \int_0^R 2\pi \sigma \Phi^{\rho}(\sigma, -r^2) \, d\sigma \, + \, C\delta\\
	&= \int_{D_R} \Phi^{\rho}(\cdot, -r^2) d\mathcal{H}^2 \, + \, C\delta \quad = \quad 1+ C\delta,
	\end{align*}
	 for $D_R := \{(s\cos(\psi),s\sin(\psi)) \in \mathbb{C}^2 \,\,|\,\, s<R, \,\, \psi \in [0,2\pi] \}$, where the last line follows from the fact that $\Phi^{\rho}$ is normalised to integrate to $1$ over a plane. $\Theta^{\rho}(L^i,0,r)$ can therefore be made as close to 1 as desired, by choosing $\delta$ sufficiently small and $i$ sufficiently large.
	 
	 More generally, we are able to bound the density $\Theta^\rho \left( L^i,X,\frac{1}{\sqrt{2}}r \right) $ for all $(x_0,r_0) \in P \left( O,\frac{1}{\sqrt{2}}r \right) = B_r(O)\times \left(-\frac{1}{2}r^2,0 \right]$. Using the monotonicity formula (\ref{monotonicity}),
	\begin{align*}
	\Theta^{\rho} \left(L^i, (x_0,r_0), \frac{1}{\sqrt{2}}r \right) \, = \, \int_{L^i_{r_0-\frac{1}{2}r^2}}\Phi^{\rho}_{(x_0,r_0)}(\cdot,r_0-\tfrac{1}{2}r^2) d\mathcal{H}^2 \, \leq \, \int_{L^i_{-r^2}}\Phi^{\rho}_{(x_0,r_0)}(\cdot,-r^2) d\mathcal{H}^2,
	\end{align*}
	and by a very similar calculation to the above we can choose $i$ large so that this is less than $1 + \veps$. It follows by White's local regularity theorem that $|A|$ and its derivatives are bounded uniformly in the parabolic ball $P(O,\tfrac{r}{8})$. This is a contradiction, and so no singularity can occur.
\end{proof}

\subsubsection{Smooth Convergence to the Disc}

We now prove that the profile curve $\gamma$ converges smoothly in infinite time to the real axis.

\begin{theorem}
	Any solution to (\ref{eq-equiflow}) is immortal, and converges smoothly in infinite time to the real axis.
\end{theorem}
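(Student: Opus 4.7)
The plan is to apply Proposition \ref{prop-conv}, which yields smooth convergence to a special Lagrangian provided we have long-time existence (established in Lemma \ref{lem-nofintimesing}) together with uniform higher-order curvature bounds $|\n^k \II|^2 < C_k$ on $[0,\infty)$. The main outstanding task is thus to promote the finite-time regularity already proven to genuinely uniform-in-time estimates; once this is done, identifying the limit is a short exercise using equivariance.

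Away from the origin, uniform bounds are essentially automatic. On any region $\{|s| \geq \delta\}$ the $C^1$-bound of Lemma \ref{lem-bound} together with the estimate \eqref{eq-gamma'bound} makes the graph equation \eqref{eq-graphmcf} uniformly parabolic with bounded coefficients. Standard parabolic Schauder estimates then yield uniform-in-time $C^k$-bounds on $v$, and thus uniform bounds on all derivatives of $\II$, on $\{|s| \geq \delta\}$.

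The hard part, and the main obstacle, is ruling out curvature concentration at the origin as $t \to \infty$. I would argue by contradiction: suppose $\lambda_i := |\II|(p_i, t_i) \to \infty$ along a sequence with $t_i \to \infty$; by the annular regularity above, $p_i$ must accumulate at $O$. Rescale by $\lambda_i$ in space (shifting $(p_i, t_i)$ to $(O, 0)$) and by $\lambda_i^{-2}$ in time to obtain a sequence of rescaled flows $L^i_t$ on a fixed interval $[-T, 0]$. The Lawlor neck, being static and non-compact, scales to infinity, so $\p L^i_t \to \infty$ uniformly in $t$; moreover the annular Schauder estimates above provide exactly the uniform $C^3$-control on $\p L^i_t$ required to invoke Lemma \ref{lem-nev}. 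Proceeding as in the proof of Lemma \ref{lem-nofintimesing}, the monotonicity formula \eqref{monotonicity} together with the $\theta^2$-evolution \eqref{eq-langlevol} force $\int_a^b \int_{L^i_t} |H|^2 \Phi \, d\mathcal{H}^2 dt \to 0$, and then the $C^1$-bound on $v$ forces the parabolic Gaussian densities at spacetime points near $(O, 0)$ to converge to $1$ (via the density computation carried out at the end of the proof of Lemma \ref{lem-nofintimesing}). White's local regularity theorem then supplies a uniform curvature bound in a parabolic ball around $(O, 0)$ in the rescaled flow, contradicting $|\II|(O, 0) = 1$.

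With uniform smooth bounds in hand, and noting that the almost-calibrated condition is preserved by the maximum principle applied to \eqref{eq-langlevol} (since the boundary value $\theta|_{\p L} = -\alpha$ lies in $(-\tfrac{\pi}{2}+\veps, \tfrac{\pi}{2}-\veps)$), Proposition \ref{prop-conv} applies and gives smooth convergence to a special Lagrangian $L_\infty$ of constant Lagrangian angle $-\alpha$. To identify $L_\infty$: by smooth convergence from an embedded disc it is an $S^1$-equivariant special Lagrangian disc with boundary on $\Sigma_{\operatorname{Law}}$, so its profile curve $\gamma_\infty$ passes through the origin. The Lagrangian-angle formula \eqref{EqLangle} forces $\arg(\gamma_\infty) + \arg(\gamma_\infty') \equiv -\alpha$ along the curve, and evaluating at $O$ gives $2\arg(\gamma_\infty'(0)) = -\alpha$. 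Combining these shows $\gamma_\infty'$ is the constant vector $e^{-i\alpha/2}$, so $\gamma_\infty$ is the straight line $\gamma^\infty(s) = (s, s\tan(-\tfrac{\alpha}{2}))$, as claimed.
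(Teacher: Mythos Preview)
Your overall strategy---annular Schauder estimates plus curvature control at the origin, feeding into Proposition~\ref{prop-conv}---matches the paper, and your identification of the limiting disc is fine. The gap is in the origin estimate.

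Lemma~\ref{lem-nev} and the density computation following it are proved for rescalings about a \emph{fixed} singular spacetime point: the crucial step is that
\[
\int_{L_{(\lambda^i)^{-2}a}}\theta^2\Phi_{(O,0)}\,d\mathcal H^2-\int_{L_{(\lambda^i)^{-2}b}}\theta^2\Phi_{(O,0)}\,d\mathcal H^2\longrightarrow 0
\]
because both times tend to the common limit $0$, at which the monotone quantity $t\mapsto\int_{L_t}\theta^2\Phi_{(O,0)}$ (kernel fixed once and for all) converges. In your blowup the kernel is centred at $(p_i,t_i)$ with $t_i\to\infty$, so there is no common limit to appeal to. You can still recover $\int_a^b\!\int_{\hat L^i_s}|H|^2\Phi\,d\mathcal H^2\,ds\to 0$ by replacing $\theta^2$ with $(\theta+\alpha)^2$ (then the boundary term vanishes identically and the value at $t=0$ is $\leq C|L_0|/t_i\to 0$). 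But the density computation in Lemma~\ref{lem-nofintimesing} actually uses $\int|x^\perp|^2\to 0$, which comes from the $f\equiv 1$ monotonicity; for that you would need the difference of Gaussian densities $\int_{L_{t_i+\lambda_i^{-2}a}}\Phi_{(p_i,t_i)}-\int_{L_{t_i+\lambda_i^{-2}b}}\Phi_{(p_i,t_i)}$ to vanish, and with a moving centre this does not follow.

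The paper avoids blowing up. It bounds $\int_{L_{t-r^2}}\Phi^\rho_{(x,t)}$ at a \emph{fixed} scale $r$ directly, splitting into $B_\delta$ and $B_\delta^c$. On $B_\delta$ the scale-invariant bound $|\gamma'|\leq C_1$ gives area $\leq C\delta^2$, hence contribution $\leq C\delta^2/r^2$. On $B_\delta^c$ the annular Schauder estimates together with the decay $\int_{L_t}(\theta+\alpha)^2\leq Ce^{-ct}$ from Lemma~\ref{IntergralEstimates} (which needs only $|L_t|$ bounded, already established) force smooth convergence to the flat disc there, so this piece contributes $\leq 1$ for large $t$. White regularity then gives the uniform curvature bound near the origin for all large $t$. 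This is shorter and sidesteps the issue above; if you want to keep a blowup argument, you would need a genuine Type~II point-picking and a classification of the resulting ancient $H\equiv 0$ limit.
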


\begin{proof}
	The $C^1$ bound (\ref{eq-gamma'bound}) implies that our graphical mean curvature flow equation (\ref{eq-graphmcf}) is uniformly parabolic, and so Schauder estimates give bounds on all curvatures on any annulus. In order to apply Proposition \ref{prop-conv}, it is left to show that we have uniform curvature bounds near the origin - for this we use White regularity. Fix $r>0$, then for all $\delta$,
	\begin{align*}
	\int_{L_{t-r^2}} \Phi^\rho_{(x,t)} d\mathcal{H}^2 \, &= \, \int_{L_{t-r^2}\cap B_{\delta}}\Phi^\rho_{(x,t)}d\mathcal{H}^2 \, + \, \int_{L_{t-r^2}\cap B_{\delta}^c}\Phi^\rho_{(x,t)}d\mathcal{H}^2 \,
	 \leq \, \delta^2 C\, + \, \int_{L_{t-r^2}\cap B_{\delta}^c}\Phi^\rho_{(x,t)}d\mathcal{H}^2.
	\end{align*}
	Therefore for any $\veps$, we may take $\delta$ sufficiently small such that
	\[ \int_{L_{t-r^2}} \Phi^\rho_{(x,t)} d\mathcal{H}^2 \leq \int_{L_{t-r^2}\cap B_{\delta}^c}\Phi^\rho_{(x,t)}d\mathcal{H}^2 + \veps.\]
	By smooth convergence to the disc outside $B_{\veps}$, we may take $t$ sufficiently large such that the integral in the last line is less than $1$ (the localised kernel $\Phi^\rho$ has the property that it integrates to $1$ on a hyperplane). In general then, for any $\veps$ we may take $t$ sufficiently large such that
	\[ 	\int_{L_{t-r^2}} \Phi^\rho_{(x,t)} d\mathcal{H}^2 \, \, \leq \, 1 + \veps, \]
	locally uniformly in $x$ and $t$. But now White's regularity theorem gives us a uniform bound on $|A|^2$ and its higher derivatives. This implies that our flow converges smoothly to a special Lagrangian by Proposition \ref{prop-conv}, that must be equivariant and must pass through the origin. There is only one submanifold with these properties that also intersects the Lawlor neck - an equivariant disc - and so we are done.
\end{proof}

\subsection{The Clifford Torus}\label{sec-cliff}

Our second example concerns equivariant discs $L$ (profile curve $\gamma$) with boundary on the Clifford torus. The Lagrangian angle of the Clifford torus $\Sigma$ with profile curve $\sigma$ is given by
\[ \tilde\theta \, = \, \frac{\pi}{2} + 2\arg(\sigma), \]
and therefore the boundary condition of (\ref{IntroBVP}) becomes
\[ \theta \big|_{\p L} - 2\arg(\gamma) \, = \, -\alpha. \]
As before, we restrict to the $\alpha=0$ case, which corresponds to the profile curves meeting orthogonally at the boundary.

The Clifford torus is slightly more complicated to work with than the Lawlor neck, as it is not a static solution to MCF. However it is a self-similarly shrinking solution, with profile curve
\[ \sigma_t = \left(\sqrt{-4t}\cos(s), \sqrt{-4t}\sin(s) \right), \]
on the time interval $[t_0,0)$. It is then natural to perform the rescaling
\begin{align*}
\overline\Sigma_\tau &:= \frac{1}{\sqrt{-t}} \Sigma_t \big|_{t=-e^{-\tau}} \\
\implies \overline\sigma_\tau \, &= \, (2\cos(s), 2\sin(s))
\end{align*}
which is a static solution to the rescaled MCF equation
\[ \left(\frac{\p \overline F}{\p \tau}\right)^\perp \, = \, H \, + \, \frac{\overline F^\perp}{2},\]
on the time interval $[\tau_0, \infty) = [-\log(-t_0), \infty)$. Applying this rescaling also to our LMCF with boundary means we are working with a static boundary manifold, albeit with a different PDE problem.

In this section, we will prove that the rescaled flow is immortal and converges in infinite time to a flat equivariant disc. In terms of the original flow, this means that no singularity occurs before the final time $0$, and any sequence of parabolic rescalings centred at the singular spacetime point $(O,0)$ converges to a flat equivariant disc. This is a self-similarly shrinking solution to LMCF with boundary, so this result is analogous to the general result of ordinary MCF that Type I blowups are self-similarly shrinking solutions. 

Throughout this section we will work with both the rescaled flow, denoted $\overline L_\tau$ with profile curve $\overline \gamma_\tau$, and the original flow, denoted $L_t$ with profile curve $\gamma_t$. For reference, the rescaled flow for the profile curve is given by
\begin{equation} \label{eq-rescaledequi}
\left(\frac{\p \overline\gamma}{\p \tau}\right)^\perp = k - \frac{\overline\gamma^\perp}{|\overline \gamma|^2} + \frac{\overline\gamma^\perp}{2}.
\end{equation}

\begin{theorem}\label{thm-cliff}
	Let $\overline L_0: D \rightarrow \mathbb{C}$ be an $S^1$-equivariant Lagrangian embedding of a disc $D$, with boundary on the Clifford torus 
	\[\Sigma_{\operatorname{Cliff}} := \{2e^{i\phi}(\cos(\psi),\sin(\psi))\in \mathbb{C}^2 : \phi,\psi \in [0,2\pi)\},\]
	and let $\overline \gamma_0:[-2,2]: \mathbb{C}$ be its profile curve in $\mathbb{C}$. 
	Assume that its Lagragian angle $\theta_0$ satisfies
	\[ \theta_0(s) - 2\arg(\overline\gamma_0(s)) \, \in \, (-\tfrac{\pi}{2} + \veps, \, \tfrac{\pi}{2} - \veps)\]
	for some $\veps > 0$.
	Then there exists a unique, eternal solution to the rescaled LMCF problem:
	\begin{align}
	\begin{cases}
	\left(\frac{ \p}{\p \tau}\overline F(x,\tau)\right)^{NM} = H(x,\tau) \, + \, \frac{\overline F(x,\tau)^\perp}{2} & \text{for all }(x,\tau) \in D\times[\tau_0,\infty)\\
	\overline F(x,\tau_0)= \overline L_0(x)&\text{for all }x\in D\\
	\partial \overline L_\tau\subset \Sigma_{\operatorname{Cliff}} & \text{for all }\tau\in[\tau_0,\infty)\\
	\theta_\tau|_{\p \overline L_\tau} - 2\arg(\overline \gamma_0)=0 &\text{for all } (x,\tau)\in \p D \times[\tau_0,\infty),
	\end{cases} \label{eq-equiflowcliff}
	\end{align}
	which converges in smoothly in infinite time to a flat disc.
\end{theorem}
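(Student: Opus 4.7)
The plan is to mirror the Lawlor neck argument with modifications appropriate for the rescaled flow, in which the Clifford torus is static. Parametrize the profile curve as $\overline\gamma_\tau(s) = (s\cos v_\tau(s), s\sin v_\tau(s))$, using the foliation of the disc of radius $2$ by rays through the origin. The boundary $s = \pm 2$ then automatically lies on $|\overline\gamma| = 2$, and \eqref{EqLangle} gives the identity $\theta - 2\arg(\overline\gamma) = \arctan(sv')$; the hypothesis becomes $|sv'| \le \tan(\tfrac{\pi}{2} - \veps)$ and the condition $\alpha = 0$ becomes the Neumann condition $v'(\pm 2, \tau) = 0$. In this parametrization the rescaled-flow graph equation for $v$ is uniformly parabolic on every annulus $\{\delta \le |s| \le 2\}$. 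I would show that $f := \theta - 2\arg(\overline\gamma)$ satisfies a drift heat equation under the rescaled flow --- the $-2\arg(\overline\gamma)$ subtraction is designed precisely so that the drift generated by rescaling is cancelled --- and since $f = 0$ on $\p \overline L_\tau$, the parabolic maximum principle preserves $|f| \le \tfrac{\pi}{2} - \veps$; as in Lemma \ref{lem-bound}, this gives a uniform $C^1$ bound on $v$ away from the origin.

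\emph{Long-time existence.} Schauder theory applied to the interior PDE controls all higher derivatives on annuli, so singularities can accumulate only at the origin. Suppose for contradiction that the rescaled flow develops a singularity at some finite $\tau_\ast$; equivalently, the unrescaled flow blows up at some $(O, t_\ast)$ with $t_\ast < 0$. Parabolically rescale around $(O, t_\ast)$ and apply the monotonicity-with-boundary formula \eqref{monotonicity} together with the evolution of $f^2$ (which replaces $\theta^2$ in the proof of Lemma \ref{lem-nev}). Since the Clifford self-shrinker boundary recedes to infinity under any rescaling centred strictly before its extinction time, the boundary terms vanish in the limit, yielding $L^2$ vanishing of $|H - \overline x^\perp/(2t)|^2$ on compact sets. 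The $C^1$ bound and the coarea computation of Lemma \ref{lem-nofintimesing} then give localised Gaussian density arbitrarily close to $1$, and White's local regularity contradicts the assumed singularity.

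\emph{Smooth convergence.} Adapt Proposition \ref{prop-conv} to the rescaled setting by using $f$ as the natural Lagrangian-angle replacement and the shrinker weight $e^{-|\overline F|^2/4}$ in place of ordinary Hausdorff measure. A direct computation mirroring Lemma \ref{integralevol} should give, for smooth $h$,
\begin{equation*}
\frac{d}{d\tau} \int_{\overline L_\tau} h(f)\, e^{-|\overline F|^2/4}\, d\mathcal{H}^2 \, = \, \int_{\overline L_\tau}\left( -\bigl|H + \tfrac{\overline F^\perp}{2}\bigr|^2 h(f) - (h''(f) + h(f))\, |\n f|^2 \right) e^{-|\overline F|^2/4}\, d\mathcal{H}^2,
\end{equation*}
with vanishing boundary contribution thanks to $f|_{\p \overline L_\tau} = 0$ at $\alpha = 0$. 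Taking $h(f) = \cos(f)$ bounds the weighted area, and $h(f) = f^{2p}$ combined with the Michael--Simon Sobolev inequality as in Lemma \ref{IntergralEstimates} yields exponential $L^2$ decay of $f$ and integrability of $\int_{\tau_0}^\infty \int_{\overline L_\tau} |H + \overline F^\perp/2|^2 d\mathcal{H}^2 d\tau$. Combined with the uniform curvature estimates from the previous step, Arzel\`a--Ascoli and interpolation yield smooth convergence to an equivariant rescaled-stationary Lagrangian disc through the origin meeting the Clifford torus orthogonally --- and the only such surface is a flat disc.

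\emph{Main obstacle.} The hardest step is ruling out the finite-time singularity at the origin: one must simultaneously handle the degeneration of the ray parametrization, the boundary contributions in the monotonicity formula, and the coarea computation, all of which have to cooperate to push the localised Gaussian density below $1 + \veps$. The convergence step is relatively routine once the correctly weighted integral identities are set up, tracking the drift term carefully so that the boundary contribution still vanishes at $\alpha = 0$.
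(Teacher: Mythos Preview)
Your parametrisation and long-time existence plan match the paper closely: the radial graph $\overline\gamma(r)=re^{i\phi(r)}$, the preserved quantity $f=\theta-2\arg(\overline\gamma)=\arctan(r\phi')$, the Neumann condition $\phi'(\pm2)=0$, Schauder estimates on annuli, and the White-regularity contradiction at the origin via Lemma~\ref{lem-nev} and the density computation of Lemma~\ref{lem-nofintimesing} are essentially what the paper does. One correction: $f$ does \emph{not} satisfy a pure drift heat equation. The paper computes (for the unrescaled flow, equation~\eqref{eq-thetaalphaevol}) $(\partial_t-\Delta)(\theta-2\phi)=-\tfrac{2}{r^2}\sin\!\bigl(2(\theta-2\phi)\bigr)$, and it is this good-sign zero-order term, together with $f=0$ on $\partial\overline L_\tau$ and at the origin, that closes the maximum principle.

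The convergence step has genuine gaps. Your displayed weighted identity cannot hold as written: in Corollary~\ref{integralid} the collapse to $-(h''+h)|H|^2$ relies on $|\nabla\theta|^2=|H|^2$, but here $|\nabla f|^2\ne\bigl|H+\tfrac12\overline F^\perp\bigr|^2$ (on the profile curve, $f_s=\kappa+\langle\gamma,\nu\rangle/|\gamma|^2$ while the rescaled speed is $\kappa-\langle\gamma,\nu\rangle/|\gamma|^2+\tfrac12\langle\gamma,\nu\rangle$), so the $h''$ and $h$ terms carry different quadratic forms and the Michael--Simon route does not go through. The extra $\sin$ term also spoils the $h=\cos f$ computation for a weighted-area bound (it contributes $+\tfrac{4}{r^2}\sin^2(f)\cos(f)$). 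Most seriously, you invoke ``uniform curvature estimates from the previous step'' to run Arzel\`a--Ascoli, but the long-time existence argument only excludes finite-$\tau$ blow-up; it gives no uniform control near the origin as $\tau\to\infty$. Finally, you do not address the rotational non-uniqueness of the limiting flat disc.

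The paper's route for convergence is quite different. First, Huisken monotonicity with weight $(\theta-2\phi)^2$ (equation~\eqref{eq-cliffmonotonicity}) gives \emph{subsequential} convergence of Type~I rescalings to a self-shrinker with $f\equiv0$, hence a flat disc by Lemma~\ref{lem-cliffbarriers}. Uniform curvature bounds are then obtained by a separate Type~II blowup: assuming $|A|$ unbounded one extracts an eternal limit with $\max|A|=1$, transfers the $L^2$ vanishing of $f$ from the Type~I to the Type~II scale, and concludes the limit is a straight line---a contradiction. Full (not merely subsequential) convergence then follows from the barrier Lemma~\ref{lem-cliffbarrierargument}, which traps $\phi$ between static straight-line solutions once $\phi$ and $\theta$ are nearly constant on some subsequence.
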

\begin{remark}
	Note that here, we demand the condition $\theta_0(s) - 2\arg(\gamma_0(s)) \, \in \, (-\tfrac{\pi}{2} + \veps, \, \tfrac{\pi}{2} - \veps)$ in place of the almost-calibrated condition of the Lawlor neck case. This is more natural, as not only is this always satisfied at the boundary, but it is also equivalent to graphicality in a radial parametrisation, as will be shown in the next section.
	
	If we work with a different boundary condition, $\alpha \neq 0$ (corresponding to a different fixed angle between the profile curves), numerical evidence suggests that we still have long-time existence, and the flow converges to a rotating soliton of the rescaled LMCF with boundary problem; see Figures \ref{fig-cliffordcase1} and \ref{fig-cliffordcase2}.
\end{remark}

\subsubsection{Radial Parametrisation}

We will work throughout with the radial parametrisation of the rescaled profile curve:
\begin{align} 
\overline\gamma:[-2,2]\rightarrow \mathbb{C}, \quad \overline\gamma(r) &:= re^{i\phi(r)} \notag \\
\implies \overline\gamma'(r) \, &= \, (1+ir\phi')e^{i\phi}  \notag \\
\implies \overline\gamma''(r) \, &= \, (-r(\phi')^2 +i(2\phi' + r\phi''))e^{i\phi}. \label{eq-paramsrad}
\end{align}
Writing $\nu := \frac{i\overline\gamma'}{|\overline\gamma'|}$, the mean curvature is given by:
\begin{align*}
H \, &= \, k - \frac{\overline\gamma^\perp}{|\overline\gamma|^2}
= \, \frac{(\overline\gamma'')^\perp}{|\overline\gamma'|^2} - \frac{\overline\gamma^\perp}{|\overline\gamma|^2} \\
&= \left( \frac{r\phi'' + r^2 (\phi')^3 + 2\phi'}{|\overline\gamma'|^3} \, + \, \frac{\phi'}{|\overline\gamma'|} \right)\nu,
\end{align*}
and therefore in this parametrisation, the problem (\ref{eq-equiflowcliff}) becomes
\begin{align} \label{eq-rescaledmcfpara}
\begin{cases}
r \frac{\p \phi}{\p t} \, = \, \frac{r\phi'' + r^2 (\phi')^3 + 2\phi'}{1 + r^2(\phi')^2} + \phi' - \frac{r^2 \phi'}{2} &\text{for } r \in [-2,2], \,\, \tau \geq \tau_0, \\
\phi(r,\tau) = \phi_0 &\text{for } r \in [-2,2],\\
\phi'(r,\tau) = 0 &\text{for } r \in \{-2,2\}, \,\, \tau \geq \tau_0.
\end{cases}
\end{align}
\begin{lemma} \label{lem-cliffbarriers}
	In the above parametrisation, the only static solutions to the rescaled LMCF with boundary (\ref{eq-equiflowcliff}) are straight lines through the origin, with $\phi = \phi_0$.
\end{lemma}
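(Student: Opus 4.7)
The plan is to reduce the static ODE to a first-order ODE in the reduced angle $v := \arctan(r\phi')$ (which equals $\theta - 2\arg(\gamma)$ for $r>0$) and then invoke uniqueness of ODE solutions forced by the boundary data.

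Setting $\p_\tau \phi = 0$ in \eqref{eq-rescaledmcfpara}, I use the identity
\[ \frac{d}{dr}\arctan(r\phi') \;=\; \frac{\phi' + r\phi''}{1 + r^2(\phi')^2} \]
to rewrite the first term on the right of \eqref{eq-rescaledmcfpara} as $\frac{d}{dr}\arctan(r\phi') + \phi'$. A short algebraic cleanup then collapses the static equation to
\[ v' + \phi'\left(2 - \frac{r^2}{2}\right) \;=\; 0. \]
Since $\tan(v) = r\phi'$, for $r\neq 0$ one has $\phi' = \tan(v)/r$, and substitution yields the separable first-order ODE
\[ v' \;+\; \frac{(4-r^2)\tan(v)}{2r} \;=\; 0, \]
valid on each connected component of $[-2,2]\setminus\{0\}$.

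The boundary condition $\phi'(\pm 2) = 0$ from \eqref{eq-rescaledmcfpara} translates to $v(\pm 2) = 0$. The constant function $v \equiv 0$ is a solution to both the ODE and the initial condition at $r = 2$, and the right-hand side is smooth in $v$ and continuous in $r$ on $(0,2]$, so Picard--Lindel\"of gives $v \equiv 0$ on $(0,2]$. The identical argument on $[-2,0)$ gives $v \equiv 0$ there. Hence $\phi' \equiv 0$ away from the origin, so $\phi$ is constant on each half; by continuity of the smooth profile curve at the origin, these two constants must agree, giving $\phi \equiv \phi_0$ on $[-2,2]$, i.e.\ a straight line through the origin.

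The main obstacle is the singular point at $r = 0$, where the $1/r$ coefficient in the reduced ODE blocks a direct uniqueness argument across the origin. The remedy is exactly to split the interval at $r = 0$, run Picard--Lindel\"of on each half, and then reconnect the two constants using the smoothness of $\gamma$ at the origin (equivalently, using the equivariant symmetry $\phi(-r) = \phi(r)$). I should also note in passing that smoothness of $\phi$ on the closed interval automatically keeps $|v| < \pi/2$, so the use of $\tan(v)$ and the form of the ODE cause no further issue.
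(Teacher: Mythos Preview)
Your proof is correct and follows essentially the same route as the paper: both reduce the second-order static equation to a first-order ODE via the substitution $\lambda = r\phi'$ (the paper) or $v = \arctan(r\phi')$ (you), then appeal to uniqueness with the boundary data $\lambda = v = 0$ at $r = \pm 2$. Your equation $v' + \tfrac{(4-r^2)\tan v}{2r} = 0$ is exactly the paper's $\lambda' + (\lambda+\lambda^3)\big(\tfrac{2}{r} - \tfrac{r}{2}\big) = 0$ under $\lambda = \tan v$, and you are slightly more explicit than the paper about handling the singularity at $r=0$ by treating the two half-intervals separately and matching by continuity.
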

\begin{proof}
	Using (\ref{eq-rescaledmcfpara}),
	\begin{align*}
	H + \frac{\overline F^\perp}{2} = 0 \, &\iff \, r\phi'' + 3\phi' + 2r^2(\phi')^3 - \frac{r^2\phi'}{2} - \frac{r^4(\phi')^3}{2} = 0 \\
	&\iff \frac{d\lambda}{dr} + (\lambda + \lambda^3)\left(\frac{2}{r} - \frac{r}{2}\right) = 0
	\end{align*}
	away from $r=0$, for $\lambda = r\phi'$. This ODE, along with the boundary condition $\lambda = 0$, has the unique solution $\lambda=0$, which implies that our static solution is a straight line.
\end{proof}

\subsubsection{$C^1$-bounds on the Graph Function}

The important thing about this parametrisation is that our assumed condition on the Lagrangian angle corresponds to graphicality and gradient bounds for $\phi$.
\begin{lemma}
	Assume that $\overline F_\tau$ is a solution to (\ref{eq-equiflowcliff}) on $[\tau_0,T)$, such that at time $\tau_0$, 
	\begin{equation} \label{eq-anglecondition}
	\theta_t - 2\arg{(\overline \gamma_\tau)} \in \left( -\frac{\pi}{2} + \veps, \frac{\pi}{2} - \veps \right).
	\end{equation}
	Then for all $\tau \in [\tau_0,T)$:
	\begin{itemize}
		\item The condition (\ref{eq-anglecondition}) holds,
		\item The flow can be radially parametrised as $\overline\gamma_\tau(r) = re^{i\phi\tau(r)}$,
		\item In this parametrisation, there exists a constant $C_2$ such that $|r\phi'_\tau| \leq C_2$. Therefore $|\overline\gamma'|$ is uniformly bounded, and $\phi\tau'$ is uniformly bounded on any annulus centred at the origin.
	\end{itemize}
\end{lemma}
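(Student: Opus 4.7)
My plan is to recast the assumed Lagrangian angle bound as an equivalent $C^1$-bound on the graph function $\phi$ and then apply a scalar parabolic maximum principle. In the radial parametrisation \eqref{eq-paramsrad}, the equivariant Lagrangian angle formula \eqref{EqLangle} becomes $\theta = \arg(\overline\gamma) + \arg(\overline\gamma') = 2\phi + \arctan(r\phi')$, so that
\[ \Theta := \theta - 2\arg(\overline\gamma) = \arctan(\lambda), \quad \lambda := r\phi', \]
and the condition \eqref{eq-anglecondition} is precisely equivalent to the pointwise bound $|\lambda| \le \tan(\tfrac{\pi}{2} - \veps) =: C_2$. By the equivariance $\overline\gamma(-r) = -\overline\gamma(r)$, the function $\phi$ is even in $r$, so $\lambda$ and $\Theta$ are odd with $\Theta(0,\tau) = 0$; together with $\phi'|_{r = \pm 2} = 0$ from \eqref{eq-rescaledmcfpara}, this gives Dirichlet data $\Theta = 0$ at $r \in \{-2, 0, 2\}$.

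The key computation will be an evolution equation for $\Theta$. Differentiating the scalar PDE \eqref{eq-rescaledmcfpara} for $\phi$ in $r$, multiplying by $r$ to produce an equation for $\lambda$, and converting via $\Theta' = \lambda'/(1+\lambda^2)$ should yield an equation of the schematic form
\[ \frac{\p \Theta}{\p \tau} = \frac{\Theta''}{1+\lambda^2} + b(r,\lambda)\,\Theta' - \frac{4\lambda}{r^2(1+\lambda^2)} \]
for some drift coefficient $b$; the apparent singularity at $r=0$ is removable by the oddness of $\lambda$ (near the origin $\lambda = \phi''(0)\,r^2 + O(r^4)$). At a positive interior maximum of $\Theta$ one has $\Theta' = 0$, $\Theta'' \le 0$ and $\lambda = \tan\Theta > 0$, so both surviving terms are non-positive and $\p_\tau\Theta \le 0$; the negative interior minimum case is symmetric. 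Combined with the Dirichlet data at $r \in \{-2, 0, 2\}$, the parabolic maximum principle then forces $\|\Theta(\cdot,\tau)\|_{L^\infty}$ to be non-increasing. I would run this argument on the maximal time interval where the radial parametrisation is valid (on which \eqref{eq-rescaledmcfpara} is uniformly parabolic) and close up with the standard open-closed continuity argument to conclude that the bound $|\Theta| < \tfrac{\pi}{2} - \veps$ persists on all of $[\tau_0, T)$.

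Granted the preserved angle bound, graphicality follows (since $|\Theta| < \tfrac{\pi}{2}$ forces $\overline\gamma'$ to have non-vanishing radial component), and the stated $C^1$ estimates are immediate: $|r\phi'| = |\tan\Theta| \le \cot\veps =: C_2$, $|\overline\gamma'|^2 = 1 + \lambda^2 \le \csc^2\veps$, and $|\phi'| \le (\cot\veps)/r_0$ on any annulus $|r| \ge r_0 > 0$. The main obstacle I anticipate is the bookkeeping in deriving the evolution equation for $\Theta$ from \eqref{eq-rescaledmcfpara} and, in particular, identifying and verifying that the zeroth-order source has the favourable sign $-4\lambda/(r^2(1+\lambda^2))$ at interior extrema. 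A secondary technical point is checking the regularity of this source at the origin: each individual term in \eqref{eq-rescaledmcfpara} has a singularity at $r=0$, but working with the angular combination $\Theta$ exploits the $S^1$-symmetry so that these cancel and the maximum principle can be applied uniformly on the closed interval $[-2,2]$.
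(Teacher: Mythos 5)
Your proposal is essentially correct and follows the same structural outline as the paper: convert the angle hypothesis into a pointwise bound on $\lambda = r\phi'$ via $\theta - 2\arg(\overline\gamma) = \arctan(r\phi')$, establish that $\Theta := \theta - 2\phi$ vanishes at $r\in\{-2,0,2\}$, derive an evolution equation for $\Theta$ whose reaction term has the favourable sign, and run a maximum-principle plus open-closed argument. The difference is one of bookkeeping rather than substance: the paper works in the arclength parametrisation of the \emph{unrescaled} flow, computes $(\p_t - \Delta)(\theta - 2\phi) = -2\sin(2(\theta-2\phi))/r^2$ intrinsically, and applies the weak maximum principle to the auxiliary quantity $\sin(\theta-2\phi)$; you instead differentiate the extrinsic radial PDE \eqref{eq-rescaledmcfpara} directly. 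Your anticipated form of the $\Theta$-equation is in fact exactly right: one finds
\begin{equation*}
\frac{\p \Theta}{\p \tau} \;=\; \frac{\Theta''}{1+\lambda^2} \;+\; \Bigl(\tfrac{2}{r} - \tfrac{r}{2} - \tfrac{1}{r(1+\lambda^2)}\Bigr)\Theta' \;-\; \frac{4\lambda}{r^2(1+\lambda^2)},
\end{equation*}
and $-4\lambda/(r^2(1+\lambda^2)) = -(4/r^2)\sin\Theta\cos\Theta$, consistent with the paper's reaction term. So at a positive interior maximum (away from $r=0$, which is forced by $\Theta(0)\equiv 0$), both $\Theta''/(1+\lambda^2)$ and the reaction term are non-positive, as you claim. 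One small correction to the parity discussion: by the antipodal equivariance $\overline\gamma(-r) = -\overline\gamma(r)$, the function $\phi$ is \emph{even}, $\phi'$ is odd, and therefore $\lambda = r\phi'$ and $\Theta = \arctan\lambda$ are also \emph{even}, not odd as you wrote. Fortunately the only fact you actually use is $\Theta(0,\tau) = 0$, which follows correctly from $\phi'(0) = 0$ (a consequence of $\phi'$ being odd), and your Taylor expansion $\lambda = \phi''(0)r^2 + O(r^4)$ is also correct. The upshot is that your plan works as written, and the approach via the radial PDE arguably makes the removable singularity at the origin more transparent than the paper's arclength computation.
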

\begin{proof}
	If we parametrise the initial profile curve $\overline\gamma_{0}$ by arclength, then it may be written in polar coordinates as
	\begin{align}
	\overline\gamma_{0}(s) \, &= \,  r(s)e^{i\phi(s)}, \quad \quad
	\overline\gamma_{0}'(s) \, = \, (r' + ir\phi')e^{i\phi}. \label{eq-paramsarc}
	\end{align}
	Therefore the Lagrangian angle of $\gamma_0$ may be expressed as
	\[ \theta(s) = 2\phi + \tan^{-1}\left(\frac{r\phi'}{r'}\right). \]
	Note that at the origin, we must have $r'>0$. Since $|\overline\gamma'| = \sqrt{(r')^2 + r^2 (\phi')^2}=1$, $|r\phi'|$ and $|r'|$ are bounded from above, and so (\ref{eq-anglecondition}) corresponds to a positive lower bound on $r'$. 
	This allows us to reparametrise as $\overline\gamma(r) = re^{i\phi}$, and in this parametrisation, 
	\[ \theta(r) = 2\phi + \tan^{-1}(r\phi'),\]
	therefore the condition (\ref{eq-anglecondition}) corresponds to a uniform upper bound on $|r\phi'|$.\\
	
	It is left to prove that (\ref{eq-anglecondition}) is preserved; we start by calculating the evolution equation of $\theta - 2\phi$. Working with the arclength parametrisation of the original unrescaled flow, $\gamma(s) = r(s)e^{i\phi(s)}$, the metric and Laplacian on the manifold are given by
	\begin{align*}
	g \, = \, ds\otimes ds + r^2 d\beta \otimes d\beta, \quad \quad \Delta f \, = \, \frac{1}{|g|}\p_i \left( |g| g^{ij} \p_j f \right) = \frac{\p^2 f}{\p s^2} \, + \, \frac{1}{r^2} \frac{\p^2 f}{\p \beta^2} \, + \,  \frac{\ip{\gamma'}{\gamma}}{r^2} \frac{\p f}{\p s},
	\end{align*}
	where $\beta$ is the coordinate of the $S^1$-equivariance. If $f$ is an equivariant function, as $\theta$ and $\phi$ both are, then the middle term vanishes. Now, writing $\nu := i\gamma'$, it follows from (\ref{eq-paramsarc}) that 
	\begin{align*}
	\frac{\p \phi}{\p s} \, &= \, -\frac{\ip{\gamma}{\nu}}{r^2}, \quad \quad
	\frac{\p^2 \phi}{\p s^2} \, = \, -\frac{\ip{\gamma}{i\gamma''}}{r^2} + 2\frac{\ip{\gamma'}{\gamma} \ip{\gamma}{\nu}}{r^4}
	\, = \, \frac{\ip{i\gamma}{k}}{r^2} + 2\frac{\ip{\gamma'}{ \gamma}}{r^2}\frac{\ip{\gamma}{\nu}}{r^2} ,
	\end{align*}
	and using the standard equivariant MCF equation,
	\begin{align*} \frac{\p \gamma}{\p t} \, = \, k - \frac{\gamma^\perp}{r^2},  \quad \implies \quad
	\frac{\p \phi}{\p t} \, &= \, \ip{\frac{i\gamma}{r^2}}{\frac{\p \gamma}{\p t}} \, = \, \frac{\ip{i\gamma}{k}}{r^2} - \frac{\ip{\gamma'}{ \gamma}}{r^2}\frac{\ip{\gamma}{\nu}}{r^2}.
	\end{align*}
	Additionally, under this flow the Lagrangian angle satisfies the heat equation
	\[\left(\frac{\p}{\p t} - \Delta \right) \theta \, = \, 0. \]
	Putting this all together, we arrive at the evolution equation:
	\begin{align*} 
	\left( \frac{\p}{\p t} - \Delta \right)(\theta - 2\phi) \, &= \, 2\left( -\frac{\p}{\p t} + \frac{\p^2}{\p s^2} + \frac{\ip{\gamma'}{\gamma}}{r^2} \frac{\p}{\p s} \right)\phi
	\, = \, 4\frac{\ip{\gamma'}{\gamma}}{r^2}\frac{\ip{\gamma}{\nu}}{r^2}.
	\end{align*}
	Now, remembering that $\theta - 2\phi$ = $\arg(\gamma') - \arg(\gamma)$, it follows that
	\begin{align}
	\cos(\theta- 2\phi) \, &= \cos(\arg(\gamma') - \arg(\gamma))\, = \, \frac{\ip{\gamma'}{\gamma}}{r} , \notag\\
	\sin(\theta - 2\phi) \, &= \, \cos\left(\arg(\gamma') - \arg(\gamma) - \frac{\pi}{2}\right) \, = \, -\frac{\ip{\gamma}{\nu}}{r}, \notag \\
	\implies \left( \frac{\p}{\p t} - \Delta \right)(\theta - 2\phi) \, &= \, -2\frac{\sin \left(2(\theta-2\phi)\right)}{r^2}. \label{eq-thetaalphaevol}
	\end{align}
	Therefore, 
	\begin{align*}
	\left( \frac{\p}{\p t} - \Delta \right)\sin(\theta - 2\phi) \,&= \, \cos \left(\theta-2\phi\right) \left( \frac{\p}{\p t} - \Delta \right)(\theta-2\phi) + \sin \left(\theta-2\phi\right) \ip{\frac{\p (\theta - 2\phi)}{\p s}}{\frac{\p (\theta - 2\phi)}{\p s}} \\
	&= -\frac{4}{r^2}\sin(\theta-2\phi)\cos^2(\theta-2\phi) + \frac{ \sin(\theta-2\phi)}{\cos^2(\theta-2\phi)}\left| \nabla \sin(\theta-2\phi) \right|^2.
	\end{align*}
	Now for a contradiction, assume that at some point $p \in \gamma_t$, we have an increasing maximum of $\theta-2\phi$ (and of $\sin(\theta-2\phi)$) that is larger than $\frac{\pi}{2}-\veps$. Since this function is zero on the boundary and at the origin, it must occur at some interior point away from the origin. Then at this point, it is valid to parametrise by arclength and use standard (normal) mean curvature flow, so that the above calculation is valid. The weak maximum principle, applied in the cases of a positive maximum or negative minimum, then provides a contradiction.
\end{proof}
Finally, using simple barriers we also obtain uniform $C^0$ estimates on the function $\phi$.
\begin{lemma}\label{lem-cliffbarrierargument}
	Let $\overline\gamma$ be a radially parametrised solution to (\ref{eq-equiflowcliff}) on the time interval $[t_0,T)$, which satisfies $\phi_{t_1} \in [\phi_-, \phi_+]$, $\theta_{t_1} \in [\theta_-, \theta_+]$ for some $t_1 \in [t_0,T)$. Define 
	\[ A_- := \min \left \{\frac{\theta_-}{2}, \phi_- \right \}, \quad \quad  A_+ := \max \left \{\frac{\theta_+}{2}, \phi_+\right \}. \] Then for all $t \in [t_1,T)$, $\phi_{t} \in [A_-, A_+]$.
	\end{lemma}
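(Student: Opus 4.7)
The plan is to apply a coupled weak maximum principle to the pair $\phi$ and $\theta/2$ simultaneously, linked by two geometric identities. Evaluating the expression $\theta = 2\phi + \tan^{-1}(r\phi')$ at $r=0$ yields $\phi(0,\tau)=\theta(0,\tau)/2$, which transfers control of $\phi$ at the degenerate point of the radial parametrisation to $\theta$. At $r=\pm 2$, the Lagrangian angle boundary condition in \eqref{eq-equiflowcliff} reads $\theta=2\phi$, and combined with the same identity this forces the Neumann condition $\phi'(\pm 2,\tau)=0$ already appearing in \eqref{eq-rescaledmcfpara}.

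Define $\Phi(\tau):=\max_{r\in[-2,2]}\max\{\phi(r,\tau),\,\theta(r,\tau)/2\}$ and similarly $\Psi(\tau)$ using minima. I would show $\Phi$ is non-increasing and $\Psi$ non-decreasing on $[t_1,T)$, which immediately gives $\phi\leq\Phi(\tau)\leq\Phi(t_1)\leq A_+$ and $\phi\geq A_-$. Fix $\tau$ and suppose the outer maximum is attained at $r_*$. If $\Phi=\phi(r_*)$ with $r_*\in(-2,0)\cup(0,2)$, then $\phi'(r_*)=0$ and $\phi''(r_*)\leq 0$; substituting into \eqref{eq-rescaledmcfpara} collapses the right-hand side to $r_*\phi''(r_*)$, hence $\phi_\tau\leq 0$. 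If $\Phi=\phi(\pm 2)$, the Neumann condition $\phi'=0$ together with a one-sided Taylor expansion at the boundary spatial maximum gives $\phi''(\pm 2)\leq 0$, and again $\phi_\tau\leq 0$. If $\Phi=\phi(0)$, the identity $\phi(0,\tau)=\theta(0,\tau)/2$ reduces this to the $\theta/2$ case. For $\theta/2$: since the Lagrangian angle is scale-invariant and material points are preserved under rescaling, the unrescaled heat equation $\theta_t=\Delta_g\theta$ translates into $\theta_\tau=\overline\Delta\theta$ on the interior of the disc (any apparent extra term from the tangential rescaling velocity $\overline F^T/2$ is cancelled against the normal drift $\overline F^\perp/2$). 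Hence at any interior spatial maximum of $\theta$ we have $\theta_\tau\leq 0$; at $r=\pm 2$ the identity $\theta/2=\phi$ reduces to the boundary case for $\phi$. A standard envelope argument then converts these pointwise inequalities into the monotonicity of $\Phi$.

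The main obstacle is the degenerate point $r=0$: equation \eqref{eq-rescaledmcfpara} vanishes identically on both sides there, so a maximum principle applied to $\phi$ alone cannot rule out a new maximum forming precisely at the origin. The identity $\phi(0,\cdot)=\theta(0,\cdot)/2$ resolves this by coupling $\phi$ to $\theta$, for which the heat equation provides a genuine non-degenerate maximum principle on the entire interior $(-2,2)$. The Neumann boundary at $r=\pm 2$ is handled directly via the standard boundary maximum principle, without recourse to $\theta$. The argument for the lower bound $\phi\geq A_-$ is entirely symmetric, replacing $\max$ with $\min$ and reversing all inequalities.
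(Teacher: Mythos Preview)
Your approach is correct and rests on exactly the same two observations as the paper's proof: the identity $\phi(0,\tau)=\theta(0,\tau)/2$ (which handles the degenerate point of the radial PDE) and $\theta(\pm 2,\tau)=2\phi(\pm 2,\tau)$ (which returns control to $\phi$ at the boundary via the Neumann condition). The paper packages this as a contradiction argument: assume $\phi$ first hits $A_+ + \delta$, use the strong maximum principle for $\phi$ away from the origin by comparison with the static constant solution, and if the new maximum is at the origin, use the heat equation for $\theta$ to push it out to the spatial boundary where the Hopf lemma with $\phi'(\pm 2)=0$ gives the contradiction. Your monotone-envelope formulation $\Phi(\tau)=\max\{\phi,\theta/2\}$ is a clean repackaging of the same mechanism.

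One minor correction: the parenthetical that $\theta_\tau=\overline\Delta\theta$ holds because ``the tangential rescaling velocity $\overline F^T/2$ is cancelled against the normal drift $\overline F^\perp/2$'' is not the right reason. The correct justification is simply the chain rule: since the Lagrangian angle is scale-invariant one has $\overline\theta(p,\tau)=\theta(p,-e^{-\tau})$, and since $\overline g=e^{\tau}g$ one has $\overline\Delta=e^{-\tau}\Delta_g$; combining with $dt/d\tau=e^{-\tau}$ and $\theta_t=\Delta_g\theta$ yields the rescaled heat equation directly, with no cancellation involved. In any other parametrisation one picks up only a first-order transport term, which is harmless for the maximum principle in any case.
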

\begin{proof}
	We only prove that $\phi_t \leq A_+$, since the $A_-$ case is identical. For a contradiction, assume that there exist $\delta$ and a first time $t_{\delta} \in (t_1, T)$ such that
	\[ \max_{L_{t_\delta}} \, = \, A_\delta := A_+ + \delta. \]
	Then using the radial parametrisation, if this maximum is achieved on $[-2,2]\setminus \{0\}$, we may use the strong parabolic maximum principle applied to the boundary value problem (\ref{eq-rescaledmcfpara}), comparing with the static solution $\tilde\phi \equiv A_\delta$. This implies that locally in space and time $\phi \equiv A_\delta$, which is a contradiction.
	
	On the other hand, if this maximum is achieved at the origin $r=0$, then since $\theta - 2\phi = 0$ at this point, $\theta_{t_\delta}(0)=2\phi_{t_\delta}(0) = 2A_\delta$, which is larger than the maximum of $\theta_{t_1}$. Since $\theta$ satisfies a heat equation on the abstract disc, it follows by the parabolic maximum principle and the fact that $\theta-2\phi = 0$ on the boundary that we must have 
	\[ \theta_{t_\delta}(-2) = \theta_{t_\delta}(2) = 2A_\delta \, \implies \phi_{t_\delta}(-2) = \phi_{t_\delta}(2) = A_\delta. \]
	But now as before we may apply the maximum principle at the boundary to $\phi$ to derive a contradiction.
\end{proof}
\subsubsection{Long-Time Existence}
We now prove long-time existence for our rescaled flow, in a very similar way to the Lawlor neck case.
\begin{lemma}
	A finite time singularity for a solution of (\ref{eq-equiflowcliff}) cannot occur.
\end{lemma}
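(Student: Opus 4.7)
The proof mirrors the strategy of Lemma \ref{lem-nofintimesing} in the Lawlor neck case, with modifications to handle the boundary on a (rescaled) shrinking Clifford torus.

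First, the radial parametrisation $\overline\gamma_\tau(r) = re^{i\phi(r,\tau)}$ remains valid throughout the flow by the preceding results, and provides a uniform bound $|r\phi'| \leq C_2$. Away from the origin, this makes the rescaled graphical equation \eqref{eq-rescaledmcfpara} uniformly parabolic, so Schauder estimates yield uniform bounds on all derivatives on any annulus $\{r \in [\delta, 2]\}$. Therefore a singularity of the rescaled flow can only form at the origin.

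Next I would translate back to the un-rescaled LMCF via $t = -e^{-\tau}$: a singularity of $\overline L_\tau$ at $\tau = T < \infty$ corresponds to a singularity of $L_t$ at the finite time $t^\ast := -e^{-T} \in (t_0, 0)$. By equivariance, the singular point is $(O, t^\ast)$. I would then perform Type I parabolic rescalings
\[ L^i_\tau := \lambda^i L_{t^\ast + (\lambda^i)^{-2}\tau}, \qquad \lambda^i \to \infty. \]
The boundary $\p L^i_\tau$ lies on the rescaled Clifford torus, whose profile radius equals $\sqrt{-4((\lambda^i)^2 t^\ast + \tau)}$; because $t^\ast < 0$, this diverges to infinity uniformly in $\tau$ on any compact interval, and the intrinsic geometry of the rescaled boundary flattens, giving uniform $C^3$ control.

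With this divergence established, the analogue of Lemma \ref{lem-nev} applies (with the Lawlor neck replaced by the Clifford torus), yielding
\[ \int_a^b \int_{L^i_\tau \cap B_R} \left(|H|^2 + |x^\perp|^2\right) d\mathcal{H}^2\, d\tau \, \to \, 0 \]
for all $a<b<0$ and $R>0$. Using the polar identity $|\gamma^\perp|^2 = r^4(\phi')^2/|\gamma'|^2$, this forces $\phi \to \overline\phi \in \mathbb{R}$ in $L^2$, which combined with the uniform $C^1$ control upgrades via H\"older to uniform convergence on any annulus around the origin. A co-area computation identical to that of the Lawlor case then shows that the localised Gaussian density $\Theta^{\rho}(L^i, X, r)$ converges to $1$ uniformly for $X$ in a parabolic neighbourhood of $(O, t^\ast)$. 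White's local regularity theorem then gives uniform bounds on $|A|$ and its derivatives near $(O, t^\ast)$, contradicting the singularity.

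The main technical obstacle is verifying the analogue of Lemma \ref{lem-nev}: unlike the Lawlor case, here the Lagrangian angle is not constant on the boundary but satisfies $\theta = 2\arg(\gamma)$ on $\Sigma_{\mathrm{Cliff}}$, so $\nabla \theta$ along $\p L_t$ is nonzero. However, $\theta$ and $\nabla \theta$ remain uniformly bounded on the rescaled Clifford torus (in fact $|\nabla\theta|$ decays like $1/\lambda^i$), and the boundary terms in Huisken's monotonicity \eqref{monotonicity} carry the factor $\Phi_X$ evaluated at points of norm $\gtrsim \sqrt{-4(\lambda^i)^2 t^\ast}$, which decays exponentially. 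Thus the boundary contributions vanish in the limit and the argument proceeds exactly as in Lemma \ref{lem-nev}.
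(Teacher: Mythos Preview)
Your proposal is correct and follows essentially the same approach as the paper: rule out singularities away from the origin via Schauder estimates on the uniformly parabolic graph equation \eqref{eq-rescaledmcfpara}, translate a putative origin singularity of the rescaled flow to a singularity of the unrescaled flow at some $t^\ast<0$, observe that under Type~I rescalings around $(O,t^\ast)$ the Clifford-torus boundary escapes to infinity, invoke Lemma~\ref{lem-nev}, and conclude via the density computation and White regularity exactly as in Lemma~\ref{lem-nofintimesing}. The paper's own argument is terser but identical in substance; your extra paragraph on why the boundary terms in the monotonicity formula still vanish (bounded $\theta$, $|\nabla\theta|$ scaling like $1/\lambda^i$, exponential decay of $\Phi_X$ at distance $\sim\lambda^i\sqrt{-t^\ast}$) is a welcome clarification that the paper simply absorbs into the phrase ``Lemma~\ref{lem-nev} applies''.
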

\begin{proof}
Note that a finite-time singularity of (\ref{eq-equiflowcliff}) corresponds to a singularity of the unrescaled flow before time $0$.

Working with the rescaled flow, we have shown that it is graphical and that the graph function $\phi$ satisfies the equation (\ref{eq-rescaledmcfpara}), which is uniformly parabolic away from the origin by the $C^1$ bounds of the last section. Therefore we have uniform bounds on all derivatives by parabolic Schauder estimates, and no singularity can occur away from the origin.

Just as before, we must deal with the origin separately. Assuming that a singularity of the original flow $L_t$ occurs before the final time $0$, the image of $\p L_t$ under any sequence of rescalings around this singularity will diverge to infinity, just as with the Lawlor neck (since at the time of the singularity, the Clifford torus is outside a neighbourhood of the origin). Therefore Lemma \ref{lem-nev} applies, and it follows that
\begin{align*}
\int_{\gamma_t^i \cap ( B_R\setminus B_\delta)} |\gamma^\perp|^2 d\mathcal{H}^1 \, = \, 2\int_\delta^R \frac{r^4 (\phi')^2}{|\gamma'|}dr \, \geq \, \frac{2\delta^4}{C_2} \int_\delta^R (\phi')^2 dr \rightarrow 0.
\end{align*}
In exactly the same way as in the proof of Lemma \ref{lem-nofintimesing}, this estimate gives us bounds on the densities, and White regularity implies smooth convergence of the rescalings. This is a contradiction to the assumption of singularity formation at $(O,0)$.
\end{proof}
\subsubsection{Subsequential Convergence to the Disc}
We now prove subsequential convergence to the disc, working with the original flow throughout. Take a sequence of rescalings $ L^i_t$ around the spacetime point $(O,0)$ with factors $\lambda_i \rightarrow \infty$. We may use the graphicality and smooth estimates from Schauder theory away from the origin to conclude that, subsequentially, the profile curves $\gamma^i_t$ converge to a limiting smooth graph on $A\times[a,b]$, where $A$ is any annulus centred at the origin. A diagonal argument gives a subsequence converging locally smoothly away from the origin to a limiting flow $ \gamma^\infty_t$, with limiting angle function $\phi^\infty_t$ well defined everywhere but the origin.

Using the boundary version of Huisken's monotonicity formula (\ref{monotonicity}) with $f = (\theta-2\phi)^2$, using the evolution equation (\ref{eq-thetaalphaevol}) and noting that $f=0$ and $\nabla f=0$ on the boundary gives the monotonicity formula:
\begin{align}
&\frac{\p}{\p t} \int_{L^i_t} f \Phi d\mathcal{H}^2 \notag \\\, &= \, \int_{ L^i_t} \Phi\left( \left(\frac{\p}{\p t} - \Delta\right)f - f\left| H - \frac{x^\perp}{2t}\right|^2 \right) d\mathcal{H}^2 \,
+ \, \int_{\p  L^i_t}\Phi \left\langle f\frac{x}{2t} + \nabla f, \nu \right\rangle d\mathcal{H}^{1}\notag\\ 
&= \int_{L^i_t} \Phi \left( 2(\theta-2\phi) \left( \frac{\p}{\p t}-\Delta\right)(\theta-2\phi) - 2\left(\frac{\p}{\p s}(\theta-2\phi)\right)^2 - (\theta - 2\phi)^2\left| H - \frac{x^\perp}{2t}\right|^2 \right) d\mathcal{H}^2\notag \\
&= \int_{L^i_t} \Phi\left( -\frac{4}{r^2}(\theta-2\phi)\sin(2(\theta-2\phi)) - 2\left(\frac{\p}{\p s}(\theta-2\phi)\right)^2 - (\theta - 2\phi)^2\left| H - \frac{x^\perp}{2t}\right|^2 \right) d\mathcal{H}^2. \label{eq-cliffmonotonicity}
\end{align}
Therefore, choosing $0<a<b$,
\begin{align*}
\lim_{i\rightarrow \infty} \int_a^b \int_{L^i_t}\Phi (\theta - 2\phi)^2\left| H - \frac{x^\perp}{2t}\right|^2 d\mathcal{H}^2 dt \, &\leq \, \lim_{i\rightarrow \infty} \left( \int_{L^i_a} f\Phi d\mathcal{H}^n - \int_{L^i_b} f\Phi d\mathcal{H}^2 \right) \\
&= \, \lim_{i\rightarrow \infty}\left( \int_{L_{(\lambda^i)^{-2}a}} f\Phi d\mathcal{H}^n - \int_{L_{(\lambda^i)^{-2}b}} f\Phi d\mathcal{H}^2 \right)\\
&= \, 0.
\end{align*}
This implies (by the locally smooth convergence) that $(\theta - 2\phi)^2\left| H - \frac{x^\perp}{2t}\right|^2 \equiv 0$ for the limiting manifold $L^\infty_t$, for any $t \in \mathbb{R}$. But if on an open subset we have $\theta-2\phi\equiv 0$, then the subset must be a part of a straight line through the origin. Therefore on this subset we also have $ \left| H - \frac{x^\perp}{2t} \right| \equiv 0$, and so $\gamma^\infty$ is a self-shrinker. By Lemma \ref{lem-cliffbarriers} the only option is a straight line through the origin; therefore $\phi^\infty = A$ for some constant $A \in \mathbb{R}$. Additionally, since we have smooth convergence on any annulus, we have the integral estimate
\begin{equation} \int_{L_t^i}\Phi (\theta - 2\phi)^2 d\mathcal{H}^2 \, \rightarrow \, 0. \quad \quad \mbox{as} \quad i \rightarrow \infty. \label{eq-theta2alphal2}
\end{equation}
This convergence of the rescalings corresponds to subsequential convergence in the rescaled flow. Taking any sequence $\tau_i$, and choosing $\lambda_i :=  e^{\frac{\tau_i}{2}}$:
\[ \overline L_{\tau_i} \, = \,e^{\frac{\tau_i}{2}} L_{-e^{-\tau_i}} \, = \, \lambda_i L_{-\lambda_i^{-2}} \, = \, L^i_{-1}.  \]
By the work above we know that, up to a subsequence, this converges smoothly away from the origin to a disc.

\subsubsection{Smooth Convergence to the Disc}

We have proven subsequential convergence to the disc, but we could still have different subsequences converging to different discs, and we also haven't shown that the curvature remains boudned at the origin. To solve these problems, we will demonstrate uniform curvature estimates via a Type II blowup argument. 

Assume that the curvature of the rescaled flow $|A|$ diverges to infinity as $\tau \rightarrow \infty$. Then we may find a sequence $\tau_i$ such that $\max_{\overline L_{\tau_i}}|\overline A_{\tau_i}| \rightarrow \infty$ as $i \rightarrow \infty$. In the unrescaled flow, this sequence corresponds to a sequence of times $t_i = -e^{-\tau_i}$, such that
\[ \sqrt{-2t_i}\,\max_{ L_{t_i}}| A_{t_i}| \rightarrow \infty;  \]
i.e. the singularity is a Type II singularity.

Passing to a subsequence we may ensure that the manifolds $\overline L_{\tau_i}$ converge smoothly to a disc on an annulus by the work of the previous section - therefore the curvature blowup must be uniformly away from the boundary. By standard theory of Type II blowups, we also know that we may choose a sequence of points $x_i$ such that the sequence 
\[ \hat L_t^{(x_i,t_i)} := A_i \left(L_{t_i + A_i^{-2}t} - x_i \right)  \]
converges locally smoothly to a limiting flow $\hat L^\infty_t$, where $A_i := \max_{L_{t_i}}| A_{t_i}|$. We may pick these points in $\mathbb{C} \times \{0\}$, and define the rescaled profile curve $\hat \gamma^i_t$ in the same say as above by considering $x_i$ to be an element of $\mathbb{C}$.

We now prove locally uniform convergence of $\theta-2\phi$ to 0 for the Type II rescalings $\hat L^{(x_i,t_i)}_t$. The argument is identical to that given in \cite{Wood}, in which more details are given.

\begin{lemma}
	For any bounded parabolic region $\Omega \times I \subset \mathbb{C}^2 \times \mathbb{R}$,
	\begin{equation} \theta-2\phi \rightarrow 0 \quad \text{as }\quad i \rightarrow \infty, \quad \mbox{uniformly in } \Omega\times I.  \label{eq-typeIIangle}
	\end{equation}
	Explicitly, for any $\veps > 0$, there exists $N \in \mathbb{N}$ such that for any $t \in I$, $\chi \in \Omega \cap \hat L^\infty_t$, and any sequence $\chi_i \in \Omega \cap \hat L^{(x_i,t_i)}_t$ converging to $\chi$,
	\[\theta^i_t - 2\phi^i_t \leq \veps, \]
	where $\phi^i_t(p)$ is the angle of the point $\hat \gamma^i_t(p)$ in the rescaled profile curve, relative to the image of the origin under the rescaling, $-A_ix_i$.
\end{lemma}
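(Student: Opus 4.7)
The strategy is a contradiction argument combining the subsolution property of $(\theta-2\phi)^2$ with an $L^2$-vanishing estimate derived from Huisken monotonicity centred at the Type II base points. Both $\theta$ and $\phi$ (the latter measured relative to $-A_i x_i$) are invariant under translation and parabolic rescaling, and a direct calculation shows that the evolution identity \eqref{eq-thetaalphaevol} is itself scale-invariant, so it pulls back unchanged to each Type II rescaling $\hat L^{(x_i,t_i)}_t$. Since $x\sin(2x)\ge 0$ for $|x|<\pi/2$ and the bound $|\theta-2\phi|<\pi/2-\veps$ is preserved from the argument of Lemma \ref{lem-cliffbarrierargument},
\begin{equation*}
\left(\tfrac{\p}{\p t}-\Delta\right)(\theta-2\phi)^2 \, = \, -\tfrac{4(\theta-2\phi)\sin(2(\theta-2\phi))}{\hat r^{2}} \, - \, 2|\nabla(\theta-2\phi)|^{2} \, \le \, 0,
\end{equation*}
where $\hat r$ denotes distance to the equivariance centre $-A_i x_i$. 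Thus $u_i := (\theta^i-2\phi^i)^2$ is a uniformly bounded nonnegative subsolution of the heat equation on each Type II rescaled flow.

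Next, I would assume for contradiction that \eqref{eq-typeIIangle} fails and extract a sequence $(\chi_i,s_i)$ with $\chi_i\to \chi\in\Omega\cap\hat L^\infty_t$ and $u_i(\chi_i,s_i)\ge \veps_0^{2}>0$. The locally smooth convergence of the Type II rescalings to $\hat L^\infty_t$ (from standard Type II blow-up theory combined with graphicality away from the origin) guarantees that the pointwise limit $u^\infty$ is strictly positive at $(\chi,t)$. To reach the contradiction I would produce $L^2$ decay of $u_i$ by applying the boundary version \eqref{monotonicity} of Huisken's monotonicity formula with $f=(\theta-2\phi)^2$ centred at the base points $(x_i,t_i)$ rather than at $(O,0)$. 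The bulk term is nonpositive by the subsolution property above, while the boundary integral is small because $\p L_t$ lies a fixed distance from $(O,0)$ and hence becomes uniformly far from the $x_i$ as in the proof of Lemma \ref{lem-nev}; integrating and rescaling then yields $\int_{\hat L^{(x_i,t_i)}_\tau} u_i\,\Phi\, d\mathcal{H}^2\to 0$ on bounded parabolic regions. A standard parabolic mean-value inequality for bounded subsolutions upgrades this to pointwise convergence $u^\infty\equiv 0$, contradicting the previous positivity and closing the argument.

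The main obstacle is controlling the limit behaviour of the equivariance centre $-A_i x_i$. By the subsequential smooth convergence on annuli proven in the previous subsection, the curvature-maximum points $\overline y_i \in \overline L_{\tau_i}$ must concentrate at the origin; whether $|A_i x_i|$ stays bounded or diverges depends on delicate rate comparisons between $A_i\sqrt{-t_i}$ and $|\overline y_i|$. This distinction only affects whether $\hat L^\infty_t$ retains a genuine $S^1$-equivariance about a finite point or instead becomes effectively translation-equivariant; in either case the subsolution inequality survives (with $\hat r$ either a finite-point distance or infinite, in which case the first-order term simply drops out), the boundary terms in the shifted monotonicity formula remain controlled, and the maximum-principle conclusion is unaffected, so the argument carries through uniformly in the choice of subsequence.
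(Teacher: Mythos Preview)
Your overall structure (subsolution property, then $L^2$ decay, then a parabolic mean-value step) is reasonable in spirit, but the $L^2$ decay step as written does not go through. Centring Huisken monotonicity at the Type II base points $(x_i,t_i)$---i.e.\ at $(0,0)$ in the rescaled frame---does \emph{not} yield $\int_{\hat L_\tau^{(x_i,t_i)}} u_i\,\Phi_{(0,0)}\,d\mathcal H^2\to 0$. By scale invariance this integral equals $\int_{L_{t_i+A_i^{-2}\tau}}(\theta-2\phi)^2\,\Phi_{(x_i,t_i)}\,d\mathcal H^2$, and since that weighted integral is monotone non-increasing, its limit as the evaluation time approaches $t_i$ is precisely the pointwise value $(\theta-2\phi)^2(x_i,t_i)$: the argument is circular. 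Monotonicity alone only forces the time-integrated \emph{bulk} terms (those involving $|H-x^\perp/(-2\tau)|^2$ and the gradient) to vanish, not the weighted integral of $u_i$ itself. Centring instead at the genuine singular point $(O,0)$ does give a quantity with limit zero (this is exactly \eqref{eq-theta2alphal2}), but in the Type II frame that kernel is centred at $(-A_ix_i,-A_i^2t_i)$ with $A_i^2|t_i|\to\infty$; it is so spread out that on a bounded $\Omega$ the resulting bound is too weak to feed into a local mean-value inequality.

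The paper avoids both problems by centring the Gaussian at the point $(\chi_i,\tau)$ where pointwise control is wanted, so that the limiting weighted integral \emph{is} the pointwise value $|\theta^i_\tau(\chi_i)-2\phi^i_\tau|^2$ and no mean-value step is needed. Monotonicity is then used once, to go backward to a single earlier time chosen so that, via an intermediate rescaling factor $\lambda_i=\tfrac12\min\{(-t_i)^{-1/4},\,|x_i|^{-1/2},\,A_i^{1/2}\}$, the integral can be rewritten as a Type I weighted integral over $L^{\lambda_i}_{-1}$ centred at $\bigl(\lambda_i(A_i^{-1}\chi_i+x_i),\,\lambda_i^2(A_i^{-2}\tau+t_i)\bigr)$, which tends uniformly to $(0,0)$. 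The already-established $L^2$ convergence \eqref{eq-theta2alphal2} then finishes directly. This choice of $\lambda_i$ also dissolves your concern about $|A_ix_i|$: by construction $\lambda_ix_i\to 0$ regardless of whether $A_ix_i$ remains bounded.
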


\begin{proof} Choosing
	\[ \lambda_i \, := \, \frac{1}{2}\min \left \{\frac{1}{\sqrt[4]{-t_i}}, \frac{1}{\sqrt{x_i}}, \sqrt{A_i} \right \}, \]
it is then possible to pick an $N$ such that for any $i>N$ and $\tau \in I$,
\[ (-t_iA_i^2)(1 + t_i^{-1}\lambda_i^{-2}) \geq \tau. \]
 It follows that
 \begin{align*}
 |\theta^i_\tau(\chi_i)-2\phi^i_\tau|^2 \, &= \, 
 \int_{\hat L_{\tau}^{(x_i,t_i)}} (\theta - 2\phi)^2 \, \Phi_{(\chi_i,\tau)} d\mathcal{H}^2 \notag \\
 &\leq \, \int_{\hat L_{(-t_iA_i^2)(1 + t_i^{-1}\lambda_i^{-2})}^{(x_i,t_i)}} (\theta - 2\phi)^2 \, \Phi_{(\chi_i,\tau)} d\mathcal{H}^2\\
 & = \, \int_{L_{-1}^{i}} (\theta - 2\phi)^2 \, \Phi_{\left( \lambda_i\left( A_i^{-1}\chi_i + x_i \right),\lambda_i^2\left( A_i^{-2}\tau + t_i\right)\right) } d\mathcal{H}^2,
 \end{align*}
where for the first inequality we use Huisken monotonicity (\ref{eq-cliffmonotonicity}), and in the second we use invariance of the kernel $\Phi$ to equate the integral over the Type II rescaling with an integral over the type I rescaling $L^i_{-1}$, centred at $(0,0)$ and with rescaling factor $\lambda_i$.
Then, since $\left( \lambda_i\left( A_i^{-1}\chi_i + x_i \right),\lambda_i^2\left( A_i^{-2}\tau + t_i\right)\right) \rightarrow (0,0)$ uniformly in $\Omega \times I$, and by the $L^2$ convergence (\ref{eq-theta2alphal2}), we may find $\tilde N \geq N$ such that for $i \geq \tilde N$,
\begin{align*}
|\theta^i_\tau(\chi_i)-2\phi^i_\tau|^2 \, &\leq \, \int_{L_{-1}^{i}} (\theta - 2\phi)^2 \, \Phi_{(0,0)} d\mathcal{H}^2  \, + \, \tfrac{\veps}{2}\, \leq \, \veps.
\end{align*}
\end{proof}
This lemma implies that the limiting profile curve $\hat \gamma^\infty_t$ is a straight line. However, this is a contradiction, as the Type II blowup satisfies $\max|\hat A|=1$ by construction.\\

Therefore, the rescaled flow $\overline L_\tau$ satisfies uniform curvature bounds, and so the subsequential convergence of $\overline L_{\tau_i}$ to a disc is in fact everywhere smooth. In particular, on passing to a subsequence their Lagrangian angles converge smoothly to a constant, as do their angle functions $\phi$. We may now apply Lemma \ref{lem-cliffbarrierargument} to conclude that the flow converges smoothly in $\tau$ to a Lagrangian disc, which proves Theorem \ref{thm-cliff}.

\section{Short-Time Existence}\label{STEsec}
\subsection{Statement}
In this section we prove the following theorem:

\begin{theorem}\label{STE}
	Let $\Sigma_t$ be a smooth oriented Lagrangian mean curvature flow, and let $M_0$ be an oriented smooth compact Lagrangian with boundary satisfying the boundary conditions in (\ref{MCFBC}). Then there exists a $T\in(0,\infty)$ such that a unique solution of (\ref{MCFBC}) exists for $t\in[0,T)$, and this solution is smooth for $t>0$. Furthermore, if we assume this $T$ is maximal, then at $T$ at least one of the following hold:
	\begin{enumerate}[label=\alph*)]
		\item \textbf{Boundary flow curvature singularity:} $\sup_{\Sigma_t} |\IIS|^2 \ra \infty$ as $t\ra T$.
		\item \textbf{Flowing curvature singularity:} $\sup_{M_t} |\II|^2 \ra \infty$ as $t\ra T$.
		\item \textbf{Boundary injectivity singularity:} The boundary injectivity radius of $\partial M_t$ in $M_t$ converges to zero as $t\ra T$.
	\end{enumerate}
\end{theorem}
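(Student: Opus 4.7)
The plan is to reformulate \eqref{MCFBC} as a boundary value problem for a quasilinear parabolic system on the fixed domain $M_0$ and then invoke the classical theory of mixed parabolic boundary problems. First, using a tubular neighbourhood of $F_0(M_0)$ in $\CY$ and local coordinates near $\partial M_0$ in which $\Sigma_t$ is a (time-independent) coordinate plane, I would write $F_t$ as a perturbation of $F_0$ parametrised by a vector field $U(x,t)$ along $F_0$. The tangential gauge freedom in the interior is fixed by requiring $U$ to be normal there; near $\partial M_0$ a tangential component is retained so that boundary points are free to slide along $\Sigma_t$. The Dirichlet condition $\partial M_t \subset \Sigma_t$ then becomes the vanishing of the components of $U$ transverse to $\Sigma$ on $\partial M_0$, while the Neumann-type condition of \eqref{MCFBC} becomes one additional oblique first-order boundary condition. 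The interior equation is a quasilinear parabolic system whose principal part is, to leading order, the Laplacian acting diagonally on the components of $U$.

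The technical core of the argument is to verify the Lopatinskii--Shapiro complementing condition for the linearisation of this mixed boundary problem. Freezing coefficients at a point $p \in \partial M_0$ and working in the frame $\{e_1,\dots,e_{n-1},\mu,\nu,Je_1,\dots,Je_{n-1},J\nu\}$ of Section \ref{linalg}, the Dirichlet conditions split off the components of $U$ transverse to $\Sigma$, leaving a single first-order oblique condition whose complementing determinant is a polynomial in $\alpha$ and $\ip{J\nu}{\mu}$. Since $M_0$ is Lagrangian we have $|\omega_0|^2=0$, so \eqref{eq-omegatau} gives $\ip{J\nu}{\mu}^2 = \cos^2\alpha$, and combined with $\alpha \in (-\pi/2, \pi/2)$ this makes the determinant nonzero. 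Solonnikov's theory for parabolic systems with mixed boundary conditions (as in Ladyzhenskaya--Solonnikov--Ural'tseva) then yields existence, uniqueness and Schauder estimates for the linearised problem in parabolic H\"older spaces $\CE{2+\beta}{1+\beta/2}{M_0\times[0,T]}$.

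A contraction mapping argument on a small ball around the initial data, together with the zero-order compatibility of \eqref{MCFBC} with $M_0$, then yields short-time existence and uniqueness of a solution in $\CE{2+\beta}{1+\beta/2}{M_0\times[0,T]}$. Parabolic bootstrapping applied iteratively to spatial and temporal difference quotients of the equation gives smoothness for $t>0$, with the higher-order compatibility conditions handled in the standard way to avoid singularities at $t=0$. For the maximal-time dichotomy, suppose $T<\infty$ is maximal and none of (a)--(c) holds. Uniform bounds on $|\IIS|$, $|\II|$ and $\operatorname{inj}(\partial M_t)$, together with Lemma \ref{rho} to control the collar geometry, let us re-run the localised graph reduction above on a uniform neighbourhood of $\partial M_t$; interior and boundary Schauder estimates then yield uniform $C^k$ bounds on $M_t$ for every $k$. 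These allow the flow to be extended smoothly past $T$, contradicting maximality. The principal difficulty throughout is the verification of the complementing condition: the combination of higher codimension with a mixed Dirichlet--Neumann system prevents direct appeal to scalar oblique-derivative theory, so the determinant computation must be carried out by hand using the adapted frame from Section \ref{linalg}.
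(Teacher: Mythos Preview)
Your overall strategy---graph reduction to a quasilinear parabolic system on $M_0$, linear theory, contraction mapping, then extension past $T$---matches the paper's. The substantive difference is in the linear step: you propose to verify the Lopatinskii--Shapiro complementing condition and invoke Solonnikov's general theory for parabolic systems, whereas the paper observes that in adapted boundary coordinates the boundary conditions \emph{decouple} into $n-1$ scalar Dirichlet conditions on $u^I$ and one scalar oblique Neumann condition on $u^n$ (their equation \eqref{nearboundary}). Standard scalar Schauder estimates can then be applied sequentially---first to the Dirichlet components, then to the Neumann component---to obtain the full estimate (Proposition \ref{linearsystemholder}). The decoupling is cleaner and sidesteps the determinant computation entirely; your route should also work, but you have only asserted the complementing determinant is nonzero, not computed it, and the authors remark explicitly that this mixed problem on a nontrivial normal bundle is not directly covered in the literature.

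There is a genuine gap concerning compatibility. To land in $C^{2+\beta,1+\beta/2}$ you need the \emph{first-order} compatibility condition on the Dirichlet part, not merely the zeroth: at $t=0$ the quantity $\tfrac{d}{dt}\bigl(u-\ip{u}{\nu_0}_0\nu_0\bigr)$, with $\tfrac{du}{dt}$ read off from the interior equation, must vanish on $\partial M_0$. This is not automatic from the boundary data; the paper proves it in Lemma \ref{Dirichletcompatibility} using that $M_0$ is Lagrangian, so that $H-\widetilde H$ has no $Je_I$ component thanks to the Neumann condition $\theta-\widetilde\theta=\text{const}$ on $\partial M_0$. Without this argument your contraction map does not take values in the right H\"older space, and the iteration does not close.

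A second, smaller gap is in the maximal-time step. Writing $M_t$ as a graph over $M_{T-\epsilon}$ near the boundary produces a system whose coefficients involve derivatives of $\II$, which are not assumed bounded under (a)--(c). The paper circumvents this by building a ``neutral'' Lagrangian piece $Q$ depending only on $\Sigma_{T-\epsilon}$ and the boundary angle, over which the coefficients of \eqref{parabolicsystembundle} are uniformly controlled purely in terms of $C_{\II}$ and $C_{\II^\Sigma}$; Schauder estimates over $Q$ then give the missing boundary regularity. Your phrase ``re-run the localised graph reduction'' does not explain what the base manifold is or why its geometry stays bounded.
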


\subsection{Diffeomorphism onto $NM_0$}\label{SectionDiffeo}
We require a diffeomorphism to pull back the mean curvature flow equations to a quasilinear parabolic equation on a time dependent section of the normal bundle of $M_0$.
\begin{proposition}\label{diffeoprop}
	Suppose that $\Sigma_t$ is a smooth flow of $n$-manifolds in $\CY$ and $M_0$ is a smooth $n$-manifold with boundary $\partial M_0\subset \Sigma$ satisfying the boundary conditions (\ref{MCFBC}). Then there exist constants $0<C_{Y}^u, T_Y$ and a mapping $Y:NM_0\times[0,T_Y)\ra \CY$ such that 
	\begin{enumerate}[label=\alph*)]
		\item If $u_0$ denotes the zero section of $NM_0$, then $Y(u_0,0) = M_0$.
		\item Let $K=\{u \in N M_0: |u|<C_Y^u\}$. Then $Y(\cdot,t)$ restricted to $K$ is a local diffeomorphism onto its image, for all $t \in [0,T_Y)$.
		\item $Y$ is smooth on $K\times[0,T_Y)$.
		\item Near any $p\in \partial M_0$, locally there exists a time independent vector field $\nu_0\in N\partial M_0\cap NM_0$ such that $Y(\l\nu_0, t)\in \Sigma_t$ for all $\l\in(-C_Y^u, C_Y^u)$, $t\in[0,T_Y)$. We may assume that $|DY(\nu_0, 0)|=1$ everywhere.\label{nu0property}
	\end{enumerate}
\end{proposition}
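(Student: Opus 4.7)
The plan is to build $Y$ by combining the ambient exponential map in the interior of $M_0$ with a boundary-adapted construction that uses the evolution of $\Sigma_t$. Away from $\p M_0$, I would simply set $Y(v,t) := \exp^\CY(v)$; this is time-independent, smooth, and a local diffeomorphism near the zero section by the tubular neighborhood theorem, so (a)-(c) are immediate and (d) is vacuous on this region.

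Near a given point $p \in \p M_0$, I would proceed as follows. Using the smoothness of $\Sigma_t$, write $\Sigma_t = \tilde F_t(\Sigma)$ for a smooth family of immersions $\tilde F: \Sigma \times [0, T_\Sigma) \to \CY$, and identify a neighborhood of $p$ in $\p M_0$ with a submanifold of $\Sigma$ at $t=0$. On this submanifold choose a smooth family of transverse curves $\chi_q : (-\veps, \veps) \to \Sigma$---for instance, unit-speed $\Sigma_0$-geodesics perpendicular to $\p M_0$, pulled back through $\tilde F_0$. Specify the local section $\nu_0$ of $NM_0|_{\p M_0}$ implicitly by the condition that $Y(\l \nu_0(q), t) := \tilde F_t(\chi_q(\l))$ should lie in $\Sigma_t$, which fixes $\nu_0(q)$ up to scaling once we demand that $DY(\nu_0, 0)$ equals the transverse unit tangent to $\chi_q$; this automatically gives $|DY(\nu_0, 0)| = 1$. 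To extend $Y$ to the remaining normal directions, decompose $v = \l\nu_0(q) + w$ with $w \perp \nu_0(q)$ in $N_qM_0$ and set
\[ Y(v, t) := \exp^\CY_{\tilde F_t(\chi_q(\l))}\bigl(P_{q,\l,t}(w)\bigr), \]
where $P_{q,\l,t}$ is an ambient parallel transport of $w$ along $\l \mapsto \tilde F_t(\chi_q(\l))$, starting from a fixed isomorphism at $t=\l=0$ onto the orthogonal complement of the tangent to $\chi_q$ in $T_q\CY$. This gives (d) locally near $p$ by construction.

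To produce a global $Y$, I would interpolate the interior and boundary constructions via a cutoff function $\eta(q)$ depending only on distance to $\p M_0$ in $M_0$, with $\eta \equiv 1$ on a tubular neighborhood of $\p M_0$ and $\eta \equiv 0$ outside a larger collar; the interpolation itself is carried out in ambient normal coordinates around $q$, where both maps reduce to linear perturbations of the identity and averaging is well-defined. At $(u_0, 0)$ the differential $DY$ is nondegenerate on $K$, so by compactness of $M_0$ and smoothness of $\tilde F$ the inverse function theorem yields the local diffeomorphism property uniformly for $t \in [0, T_Y)$ after shrinking $T_Y$ and $C_Y^u$, and smoothness (c) is inherited from $\tilde F$, the exponential map, and $\chi_q$. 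The main obstacle is maintaining property (d) through the interpolation: one must arrange that $\eta \equiv 1$ throughout the region traced out by the curves $\l \mapsto \tilde F_t(\chi_q(\l))$, $\l \in (-C_Y^u, C_Y^u)$, $t \in [0, T_Y)$. Since these curves stay inside a fixed neighborhood of $\p M_0$ for small $t$ and small $\l$ by smoothness of $\tilde F$ and compactness of $\p M_0$, this is achieved by choosing $T_Y$ and $C_Y^u$ sufficiently small, so that the boundary construction---and hence (d)---is untouched by the interpolation.
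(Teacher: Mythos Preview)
Your approach is sound and produces the required $Y$, but it takes a different route from the paper. The paper first asserts (without details) a map $Y_0:NM_0\to\CY$ at $t=0$ satisfying all four properties, then exploits that $\Sigma_t$ is Lagrangian to invoke a smooth family of Weinstein neighbourhoods $\beta_t:V_t\to W_t\subset T^*\Sigma$; on a collar $C\subset NM_0$ of the boundary it sets $Y(v,t):=\beta_t^{-1}(\beta_0(Y_0(v)))$, so that $\Sigma_0$ is carried to $\Sigma_t$ automatically, and then interpolates with the geodesic embedding away from $\partial M_0$. Your construction is more elementary and does not use the symplectic structure at all: you transport curves in the abstract $\Sigma$ forward by the flow parametrisation $\tilde F_t$, and fill out the remaining normal directions by ambient parallel transport plus the exponential map. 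This has the advantage of making the $t=0$ map explicit rather than assuming it, and of not needing the Lagrangian hypothesis on $\Sigma_t$; the Weinstein approach, by contrast, handles all normal directions in one stroke without the separate $\nu_0$/$w$ treatment.

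One small point to tighten: as written, your boundary construction is only specified on fibres $N_qM_0$ for $q\in\partial M_0$, yet your cutoff $\eta$ is required to equal $1$ on a full collar of $\partial M_0$ in $M_0$. Before you can interpolate, you must extend $\chi_q$, $\nu_0$ and $P_{q,\lambda,t}$ smoothly to base points $q$ in that collar (for instance by parallel transport along $M_0$-geodesics normal to $\partial M_0$). This is routine but should be said; the paper's formulation avoids this step because $Y_0$ and $\beta_t^{-1}\circ\beta_0$ are already defined on open sets.
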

\begin{proof}
	The boundary conditions immediately ensure that such a map exists for time $t=0$, that is to say there exists $Y_0:NM_0 \to \mathcal Y$ with the given properties. Since $\Sigma_t$ is Lagrangian, we can find Weinstein neighbourhoods of $\Sigma_t$, i.e. symplectomorphisms $\beta_t: V_t \to W_t$, where $V_t$ is a tubular neighbourhood of $\Sigma_t$ in $\mathcal Y$ and $W_t$ is a tubular neighbourhood of the zero section of $T^* \Sigma$ with the standard symplectic structure. Since $\Sigma_t$ has bounded geometry for sufficiently small $t$, the size of these neighbourhoods does not degenerate, and we can restrict each $\beta_t$ to some uniform neighbourhood $V \subset \cap_t V_t$. For a sufficiently small collar region $C \subset NM_0$ of the boundary $N\partial M_0 \cap NM_0$, $Y_0(C) \subset V$. Define $Y(v,t) = \beta_t^{-1}(\beta_0(Y_0(v)))$, for $v \in C$. Note that since $Y_0(\nu_0)$ is tangent to $\Sigma_0$ by assumption and $\beta_t^{-1}\circ \beta_0$ maps $\Sigma_0$ to $\Sigma_t$, $Y(\nu_0,t)$ is tangent to $\Sigma_t$. Extend $Y$ to a map from $NM_0$ by interpolating with the standard geodesic embedding of the normal bundle away from the boundary by some suitable cut-off function. 
\end{proof}

\subsection{Mean Curvature Flow as a Flow of Sections}\label{MCFassection}
We now write mean curvature flow in terms of a time dependent section of the normal bundle $u \in \Gamma(NM_0)$. Specifically, we use the parametrisation $X(x,t):=Y(u(x,t))$ and consider the PDE given by
\[\left(\ddt{{X}}\right)^{NM} = H\ .\]
In what follows, we denote by $\np$ the induced normal connection on $NM_0$, and write $\mu_0$ for the outward pointing unit vector tangent to $M_0$ and normal to $\partial M_0$.

\begin{proposition}\label{QLsystem}
	Suppose that we have time dependent diffeomorphisms $Y:NM_0\times[0,T) \ra \CY$ as in Proposition \ref{diffeoprop}. Then there exist constants $c_D, c_T>0$ such that our boundary value problem (\ref{MCFBC}) starting from $M_0$ for $t\in [0,c_T)$ is equivalent to finding a time dependent section of $u\in \Gamma(NM_0\times[0,c_T))$ satisfying
	\begin{equation}
	\begin{cases}
	\pard{u}t = \hatG^{ij}(x,t, u,\np u) \np^2_{ij} u + B(x,t, u, \np u) & \text{ on }M_0\times[0,c_T)\\
	u - \ip{u}{\Voo}_0\Voo=0 & \text{ on }\partial M_0\times[0,c_T)\\
	\ip{\np_{-\mu_0} u}{W(x,t,u,\np_\partial u)}_Y = R(x,t,u,\np_\partial u) & \text{ on }\partial M_0\times[0,c_T)\\
	u(\cdot, 0)=0&\label{parabolicsystembundle}
	\end{cases}
	\end{equation}
	where $W$ and $R$ depend on $\alpha, x, t, u$ and $\np_\partial u$ (where $\np_\partial u$ represents dependence on any derivative in directions tangential to $\partial M_0$, but not on $\np_{\mu_0} u$), $\ip{\cdot}{\cdot}_Y$ is an inner product depending on $x$ and $t$ and $\ip{\cdot}{\cdot}_0=\ip{\cdot}{\cdot}_Y|_{t=0}$.
	
	Furthermore, if
	\[|u|<c_D \quad\text{and}\quad |\np u|<c_D,\]
	then all coefficients depend smoothly on their entries, $\hatG^{ij}$ is uniformly positive definite and we have the uniform obliqueness condition
	\[\ip{\Voo}{W}_Y\geq \frac{1}{2}\cos\a\ .\]
\end{proposition}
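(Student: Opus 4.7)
The plan is to parametrise $X(x,t) := Y(u(x,t), t)$ and translate each line of \eqref{MCFBC} into the corresponding line of \eqref{parabolicsystembundle}, then verify the structural claims about $\hatG^{ij}$, $W$ and $R$.

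For the interior PDE, I would first expand
\[ \p_t X \, = \, (D_v Y)(\p_t u) \, + \, (\p_t Y)(u) \]
and decompose both sides with respect to $T\mathcal Y = TM_t \oplus NM_t$. Writing $X_i = \p_i X$, the induced metric $g_{ij} = \ip{X_i}{X_j}$ is a smooth function of $u$ and $\np u$ which reduces to the metric on $M_0$ at $u=0$. The mean curvature $H = g^{ij}(\ov\n_{X_i} X_j)^{NM_t}$ has principal part $g^{ij}\bigl(DY \cdot \np^2_{ij} u\bigr)^{NM_t}$, plus lower-order contributions depending on $u$, $\np u$ and derivatives of $Y$. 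The map $v \mapsto (DY \cdot v)^{NM_t}$ is a smooth isomorphism $NM_0 \to NM_t$ close to the identity for small $u, \np u$; inverting it and applying to $(\p_t X)^{NM_t} = H$ produces the stated quasilinear equation with $\hatG^{ij}$ a smooth deformation of $g^{ij}|_{u=0}$. Uniform positive definiteness of $\hatG^{ij}$ for $|u|,|\np u|<c_D$ is then inherited from that of the initial metric after shrinking $c_D$ and $c_T$.

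For the Dirichlet boundary condition, property (d) of Proposition \ref{diffeoprop} gives $Y(\lambda\Voo,t)\in\Sigma_t$ for $\lambda\in(-C_Y^u,C_Y^u)$, so $\p M_t = Y(u|_{\p M_0},t)\subset\Sigma_t$ is equivalent to $u|_{\p M_0}$ being parallel to $\Voo$, i.e.~$u - \ip{u}{\Voo}_0\Voo = 0$. For the remaining condition, the outward unit normal $\mu$ to $\p M_t$ in $M_t$ is produced by Gram--Schmidt applied to $\{X_I, X_{\mu_0}\}$, so it depends smoothly on $u$, $\np_\p u$ and $\np_{\mu_0} u$; the outward unit normal $\nu$ to $\p M_t$ in $\Sigma_t$ depends only on $u|_{\p M_0}$ and its tangential derivatives, since $\p M_t \subset \Sigma_t$ is already determined by $u|_{\p M_0}$. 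Substituting these expressions into $\cos\a\ip{\nu}{\mu}-\sin\a\ip{J\nu}{\mu}=0$ and isolating the $\np_{\mu_0}u$-dependence yields an equation of the form $\ip{\np_{-\mu_0}u}{W}_Y = R$, with $W$ and $R$ smooth by smoothness of $Y$ and of the orthogonalisation.

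For obliqueness, at $u = 0$ the initial boundary data gives $\mu = \mu_0$ and $\nu = DY(\Voo,0)$; differentiating the Neumann condition linearly in $\np_{\mu_0}u$ and using the decomposition \eqref{eq-mutrig} identifies the coefficient of the $\Voo$-direction as $\cos\a$, so $\ip{\Voo}{W}_Y = \cos\a$ at the initial data. Continuity in $(u,\np u)$ then gives $\ip{\Voo}{W}_Y \geq \tfrac12\cos\a$ throughout, after a further shrinkage of $c_D$ and $c_T$. The hardest part will be the bookkeeping of two competing tangential/normal splittings (one adapted to $M_t$, the other to $\Sigma_t$): one must check that the $\np_{\mu_0}u$-dependence enters only through the oblique inner product $\ip{\np_{-\mu_0}u}{W}_Y$ with no hidden $\np_{\mu_0}u$-dependence in $R$, and that the $\Voo$-component of $W$ is controlled below by $\cos\a$ rather than by some cancelling combination of the Dirichlet and Neumann data.
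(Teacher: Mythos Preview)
Your proposal is correct and follows essentially the same route as the paper. The only notable difference is one of explicitness: where you argue for obliqueness by continuity from the value $\cos\a$ at $u=0$, the paper writes down $W$ in closed form,
\[
W=\cos\a\,\Voo-\sin\a\,J\Voo-\cos\a\,\ip{\Voo}{\hat X_I}_Y g^{IJ}_\partial \hat X_J+\sin\a\,\ip{\Voo}{\hat X_I}_Y g^{IJ}_\partial J\hat X_J,
\]
and then computes $\ip{\Voo}{W}_Y=\cos\a\,|\Voo^{T\Sigma\cap N\partial M_0}|^2_Y$ directly, the $\sin\a$ terms dropping out because $\Sigma$ is Lagrangian; this also makes transparent that $R=\ip{\bar\mu}{W}_Y$ with $\bar\mu=\bar\mu(x,u)$, confirming your ``no hidden $\np_{\mu_0}u$'' concern.
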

\begin{proof}
	We consider \eqref{MCFBC} for $t<T_Y$, where $T_Y$ is as defined in Proposition \ref{diffeoprop}. For this proof, we write $\n$, $g$ and $\langle \cdot,\cdot \rangle$ for the induced connection, metric and inner product on $NM_0$, and write $\np$ for the induced normal connection. We define the time dependent metric $g^Y=Y^* \ov{g}$ on $NM_0$ to be the pullback of the metric on $\CY$ by the mapping $Y$ at time $t$. We also denote the associated inner products $\ip{\cdot}{\cdot}_Y$, and $\ip{\cdot}{\cdot}_0 := \ip{\cdot}{\cdot}_Y\big|_{t=0}$ (which we note is not the same as  $\ip{\cdot}{\cdot}$). We will work entirely in $NM_0$ with the pulled back metric $g^Y$ and the content of this proof is the calculation of the equation for $u$ induced by the reparametrised mean curvature flow equations.
	
	We write $\hat{X}:M_0\times[0,\delta)\ra NM_0$ given by $\hat{X}(p,t)=(p,u(p,t))$. We have
	\[\pard{\hat{X}}{x^i} = \pard{}{x^i}-\ip{u}{\up{0}\II_i^k}\pard{}{x^k}+ \np_iu\ ,\]
	(where $\up{0}\II$ is the second fundamental form of $M_0$) and so the induced metric on $M_t$ is
	\begin{flalign*}
	\hatG_{ij} &= \left(\delta_i^k-\ip{u}{\up{0}\II_i^k}\right)g^Y_{x^kx^l}\left(\delta_j^l-\ip{u}{\up{0}\II_j^l}\right)+ \left(\delta_i^k-\ip{u}{\up{0}\II_i^k}\right)g^Y_{x^ky^l}\np_j u^l\\
	&\qquad + \np_iu^k g^Y_{y^kx^l}\left(\delta_j^l-\ip{u}{\up{0}\II_j^l}\right)+\np_iu^k\np_ju^l g^Y_{y^ky^l},
	\end{flalign*}
	where we are taking standard coordinates on $NM_0$ so that the $\pard{}{x^i}$ are tangent vectors and $\pard{}{y^i}$ are normal vectors. At $t=0$, $u\equiv 0$ and so at this time $\hatG_{ij} = g^{M_0}_{ij}$. Therefore, by continuity there exist $0<c_D$, $0<c_T<T_Y$ such that if 
	\[|u|<c_D,\qquad  |\np u|<c_D, \qquad 0\leq t<c_T\ ,\]
	then $\hatG_{ij}>\frac 1 2 g^{M_0}_{ij}$, i.e. $\hatG_{ij}$ is uniformly positive definite. Similarly, for sufficiently small $c_D$, $c_T$ we may assume that
	\begin{equation}\sigma_{ij}:=\ip{\left(\pard{}{y^i}\right)^{NM_t}}{\left(\pard{}{y^j}\right)^{NM_t}}_{Y}>\frac{1}{2}g_{y^iy^j}^{NM_0}\ ,\label{goodnormal}
	\end{equation}
	where $v^{NM_t}$ indicates the normal part of the vector $v$ \emph{with respect to $g^Y$} on $NM_0$.
	Calculating with respect to $\n$, and denoting the Christoffel symbols of this connection by $\Gamma$, we have that 
	\begin{flalign*}
	\n_{\pard{\hat{X}}{x^i}}\pard{\hat{X}}{x^j}-\Gamma_{ij}^k\pard{\hat{X}}{x^k}
	&=\np^2_{ij}u-\ip{\np_ju}{\up{0}\II_i^k}\pard{}{x^k}-\ip{\np_iu}{\up{0}\II_j^k}\pard{}{x^k}\\
	&\qquad-\ip{u}{\np_j\up{0}\II_i^k}\pard{}{x^k}-\ip{u}{\up{0}\II_i^k}\up{0}\II_{jk}+\up{0}\II_{ij}.
	\end{flalign*}
	The difference between two connections is tensorial, and so we have that there exists a smooth time dependent tensor $T$ such that
	\begin{align*}
	\left(\n_{\pard{\hat{X}}{x^i}}\pard{\hat{X}}{x^j}-\Gamma_{ij}^k\pard{\hat{X}}{x^k}\right) &- \left(\up{Y}\ov \n_{\pard{\hat{X}}{x^i}}\pard{\hat{X}}{x^j}-\up{Y}\Gamma_{ij}^k\pard{\hat{X}}{x^k}\right)=T\left(\pard{\hat{X}}{x^i},\pard{\hat{X}}{x^j}\right)\ .
	\end{align*}
	We immediately see that
	\begin{align}\label{Curvinu}
	\up{M_t}\II_{ij} = \left(\np^2_{ij} u\right)^{NM_t} + \widetilde{B}_{ij}
	\end{align}
	where $\widetilde{B}_{ij}$ is a tensor depending on $x$, $t$, $u$, $\np u$. 
	
	Finally we have that reparametrised mean curvature flow is given by
	\[\left(\pard{u}{t}\right)^{NM_t} = \hatG^{ij}\left(\np^2_{ij} u\right)^{NM_t} + \hatG^{ij}\widetilde{B}_{ij}\]
	and so, using (\ref{goodnormal}), we have the claimed result:
	\[\pard{u}t = \hatG^{ij} \np^2_{ij} u + B(x, t, u, \np u).\]
	
	We now do the same for the boundary conditions. We recall that we have the normal vector field $\Voo\in N\partial M_0$, and write $\mu_0$ for the  outward pointing unit vector tangent to $M_0$ and normal to $\partial M_0$. By the construction of $Y$, $\partial M_t\subset \Sigma_t$ is equivalent to $u(p,t) = \l(p,t) \Voo$ for all $(p,t)\in\partial M_0\times[0,\delta_T)$. With respect to the metric $g_0:=g^Y|_{t=0}$ (and using property \ref{nu0property} in Proposition \ref{diffeoprop}) we have 
	\[0=u - \ip{\Voo}{u}_0\Voo\ .\]
	
	We define $\check{\mu} = \mu_0^i\pard{\hat{X}}{x^i}$, which in standard boundary coordinates gives $\check{\mu} = -\pard{\hat{X}}{x^n}$. We then define vectors $\nu$ and $\mu$ in the following way:
	\begin{align*}\widetilde{\nu} = \Voo - \ip{\Voo}{\hat{X}_J}_Yg^{IJ}_\partial\hat{X}_I \ , \qquad \widetilde{\mu} = \check\mu - \ip{\check\mu}{\hat{X}_J}_Yg^{IJ}_\partial \hat{X}_I\ ,\qquad\nu = |\widetilde{\nu}|_Y^{-1}\widetilde{\nu}, \qquad \mu = |\widetilde{\mu}|_Y^{-1}\widetilde{\mu}.
	\end{align*}
	 Here, as usual, $1\leq I \leq n-1$ are assumed to be local coordinates of the boundary of $\partial M_0$ and $g_{IJ}^\partial$ is the induced metric on $\partial M_t$ in these coordinates. Note that $\mu$ and $\nu$ correspond to the notation of Section \ref{BoundaryCond}. We now note that $g_{IJ}^\partial = \ip{{X}_I}{{X}_J} = \ip{\hat{X}_I}{\hat{X}_J}_Y$, which may be written explicitly (as with $\hatG$ above) as a function of $x$, $t$, $u$ and $\np_Iu$ but not $\np_{\mu_0} u$. We now rewrite the Neumann boundary condition
	 \[\cos \a \ip{\nu}{\mu} - \sin \a \ip{J\nu}{\mu}=0\]
	 in terms of $\hat X$. Denoting by $J$ the pulled back complex structure from $\mathcal{Y}$, and remembering that $\Sigma$ is Lagrangian, we calculate:
	\begin{align*}
	\ip{\nu}{\mu}_Y|\widetilde{\nu}|_Y|\widetilde{\mu}|_Y &= \ip{\check\mu}{\Voo}_Y - \ip{\Voo}{\hat{X}_I}_Yg^{IJ}_\partial \ip{\hat{X}_J}{\check\mu}_Y\\
	\ip{J\nu}{\mu}_Y|\widetilde{\nu}|_Y|\widetilde{\mu}|_Y &= \ip{\check\mu}{J\Voo}_Y - \ip{\Voo}{\hat{X}_I}_Yg^{IJ}_\partial \ip{J\hat{X}_J}{\check\mu}_Y\ ,\\
	\implies 0&=\ip{\check\mu}{\cos \a \Voo - \sin \a J\Voo - \cos \a \ip{\Voo }{\hat{X}_I}_Yg^{IJ}_\partial\hat{X}_J + \sin \a \ip{\Voo }{\hat{X}_I}_Yg^{IJ}_\partial J\hat{X}_J}_Y.
	\end{align*}
	If we then define $\overline{\mu} = \overline{\mu}(x,u)$, $W=W(x,t,u,\np_\partial u)$ by
	\begin{align*} 
	\overline{\mu} &:=\check{\mu} - \np_{\mu_0}u = \mu_0 - \ip{u}{\up{0}\II\left(\mu_0,\pard{}{x^y}\right)}_0g^{kl}_0 \pard{}{x^l }, \\
	W &:= \cos \a \Voo - \sin \a J\Voo - \cos \a \ip{\Voo }{\hat{X}_I}_Yg^{IJ}_\partial\hat{X}_J + \sin \a \ip{\Voo }{\hat{X}_I}_Yg^{IJ}_\partial J\hat{X}_J.
	\end{align*}
	The above boundary condition may now be written
	\[\ip{\np_{-\mu_0} u}{W}_Y = \ip{\overline{\mu}}{W}_Y=:R(x,t,u,\np_\partial u)\] 
	Finally, since $\Sigma$ is Lagrangian,
	\[\ip{\Voo}{W}_Y=\ip{\Voo}{\cos \a \Voo - \ip{\Voo}{\cos \a \hat{X}_I}_Yg^{IJ}_\partial \hat{X}_J}_Y = \cos\a \big|\Voo^{T\Sigma\cap N\partial M_0}\big|^2_Y\ ,\]
	where $\Voo^{T\Sigma\cap N\partial M_0}$ is the $g_Y$-orthogonal projection of $\Voo$ into ${T\Sigma\cap N\partial M_0}$. Using the same arguments used to show that $\sigma$ was positive if $c_D$, $c_T$ were small enough then we see that that $|\Voo^{T\Sigma\cap N\partial M_0}|^2$ is a function of $x, t, u, \np u $ which may be assumed to be strictly positive for sufficiently small $c_D$ and $c_T$, and so the obliqueness condition is satisfied.
\end{proof}

A necessary issue to ensure sufficient regularity at time $t=0$ is that compatibility conditions are satisfied. The $0^\text{th}$ compatibility condition is that the initial data satisfies the boundary conditions and are necessary to avoid ``jumping'' at $t=0$. In the case of \eqref{parabolicsystembundle}, if we wish to have a solution which is twice differentiable in space and once differentiable in time (in fact in $\CPN{2+\a}$, see Appendix \ref{Hoeldersection} for a definition) then we require the \emph{first Dirichlet compatibility condition}, namely that at $t=0$
\begin{equation}
0=\ddt{}\left(u - \ip{u}{\Voo}_0\Voo\right) \label{eq-firstdirichlet}
\end{equation}
where $\ddt{u}$ is determined by the first line of \eqref{parabolicsystembundle}. This becomes an algebraic condition on the parabolic system and the initial data.  For a full definition of compatibility conditions, see \cite[pages 319--320]{LSU}. 

Fortunately, the fact that $M_0$ is Lagrangian and satisfies the Neumann and Dirichlet boundary conditions gives us the first Dirichlet compatibility condition for free:

\begin{lemma}[Dirichlet compatibility conditions]\label{Dirichletcompatibility}
	If $M_0$ and $\Sigma_0$ are Lagrangian and satisfy the boundary conditions of \eqref{parabolicsystembundle}, then the first Dirichlet compatibility condition (\ref{eq-firstdirichlet}) is always satisfied.
\end{lemma}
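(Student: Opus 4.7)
The strategy is to reduce the compatibility condition to a pointwise geometric statement that follows from the hypotheses. Since $u_0 \equiv 0$, at $t = 0$ both $\tilde{\nabla} u$ and $\tilde{\nabla}^2 u$ vanish everywhere on $M_0$, so the interior equation collapses to $\partial_t u|_{t=0} = B(x, 0, 0, 0)$. Because $\nu_0$ is time-independent by property (d) of Proposition \ref{diffeoprop}, condition \eqref{eq-firstdirichlet} reduces to the pointwise statement
\[ B(x, 0, 0, 0) - \langle B(x, 0, 0, 0), \nu_0 \rangle_0\, \nu_0 = 0 \quad \text{on } \partial M_0, \]
i.e.\ that $B(x,0,0,0)$ is parallel to $\nu_0$ at each boundary point.

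Next I would unwind the construction of $B$ from the proof of Proposition \ref{QLsystem}. The PDE $\partial_t u = \hat{G}^{ij}\tilde\nabla^2_{ij}u + B$ is the pullback of $(\partial_t X)^{NM_t} = H^{M_t}$ under $X = Y(u, t)$; at $(u,t)=(0,0)$ this becomes
\[ \bigl( DY|_{(0,0)}\cdot \partial_t u|_{t=0} + \partial_t Y|_{(0,0)} \bigr)^{NM_0} = H^{M_0}, \]
where $DY|_{(0,0)}$ denotes the fibre derivative $N_pM_0 \to T_p\mathcal{Y}$. By property (d), $DY|_{(0,0)}$ sends $\nu_0$ into $T\Sigma_0$. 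Moreover, because $Y$ is built from the Weinstein identifications $\beta_t$ near the boundary, the curve $t\mapsto Y(0,t) = \beta_t^{-1}(\beta_0(p))$ lies in $\Sigma_t$, so $\partial_t Y|_{(0,0)}$ is a vector tangent to $\bigcup_t \Sigma_t$ at $p$; its component normal to $\Sigma_0$ is the LMCF velocity $\widetilde H$ of $\Sigma_0$ at $p$, the $T\Sigma_0$-component being a harmless reparametrisation.

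The compatibility is thus equivalent to showing that, at each boundary point, the vector $H^{M_0} - \widetilde H$ (after projecting onto $NM_0$) lies in the one-dimensional subspace $DY(\operatorname{span}(\nu_0)) \cap NM_0$. I would verify this using $H = J\nabla\theta$ for the Lagrangians $M_0$ and $\Sigma_0$. Decomposing $\nabla\theta_0 = \nabla^{\partial M_0}\theta_0 + (\nabla_\mu\theta_0)\mu$ and analogously for $\widetilde\nabla\widetilde\theta_0$, the Dirichlet angle condition $\widetilde\theta_0 - \theta_0 = \alpha+\tfrac{\pi}{2}$ on $\partial M_0$ forces $\nabla^{\partial M_0}\theta_0 = \nabla^{\partial M_0}\widetilde\theta_0$. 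Combined with the Neumann condition $\cos\alpha\langle\nu,\mu\rangle = \sin\alpha\langle J\nu,\mu\rangle$ and the decomposition of $\mu$ in the basis $\{\tau,\nu,J\nu\}$ from \eqref{eq-mutrig}, this produces the cancellation of the $N\Sigma_0$-part transverse to $\nu_0$, in the same algebraic spirit as the identities in Section \ref{linalg} and the proof of Lemma \ref{Dirichlettime}.

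The main technical obstacle is not the algebra but the geometric identification: keeping track of how $DY|_{(0,0)}$ interacts with the two splittings $T_p\mathcal{Y} = T_pM_0 \oplus N_pM_0 = T_p\Sigma_0 \oplus N_p\Sigma_0$, and extracting $\widetilde H$ as the correct normal component of $\partial_t Y|_{(0,0)}$. Once these identifications are in place, the Lagrangian hypotheses and the two boundary conditions combine to give the required alignment $B(x,0,0,0)\in\operatorname{span}(\nu_0)$ essentially for free, as claimed.
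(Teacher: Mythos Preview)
Your approach is essentially the same as the paper's: reduce to showing $\partial_t u|_{t=0}\in\operatorname{span}(\nu_0)$, relate this via $DY$ to the vector $H-\widetilde H$, and then use that the tangential derivative of $\theta-\widetilde\theta$ along $\partial M_0$ vanishes to kill the $Je_I$-components. The paper's execution of the final step is slightly cleaner than your sketch: rather than unwinding the algebra in the frame $\{\tau,\nu,J\nu\}$, it notes that the orthogonal projection $P$ onto $\operatorname{span}\{Je_1,\dots,Je_{n-1},\nu\}$ annihilates $DY|_{t=0}(W)$ (where $W$ is the component of $\partial_t u|_{t=0}$ orthogonal to $\nu_0$), and then observes that $P$ restricted to $DY(N_pM_0)$ must be a linear isomorphism---otherwise $DY|_{t=0}$ would fail to be injective on $T_pM_0\oplus N_pM_0$---forcing $W=0$ directly. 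Also, in the Lagrangian case the Neumann condition $\cos\alpha\langle\nu,\mu\rangle=\sin\alpha\langle J\nu,\mu\rangle$ and the angle condition $\widetilde\theta-\theta=\alpha+\tfrac\pi2$ are equivalent, so invoking both separately is redundant; the paper uses only the constancy of $\theta-\widetilde\theta$ along $\partial M_0$ (which it calls the ``Neumann boundary condition'').
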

\begin{proof}
	For an arbitrary $p\in\partial M_0$, we need to demonstrate that 
	\[W(p)=\left[\left. \ddt{u}\right|_{t=0}- \ip{\left. \ddt{u}\right|_{t=0}}{\Voo}\Voo\right](p)\]
	is zero. Since $\nu|_{t=0}=DY|_{t=0}(\nu_0)$ (by property \ref{nu0property} in Proposition \ref{diffeoprop}) we have that
	\[DY|_{t=0}(W)=DY|_{t=0}\left(\left. \ddt{u}\right|_{t=0}\right)-\ip{DY|_{t=0}\left(\left. \ddt{u}\right|_{t=0}\right)}{\nu}\nu\]
	By construction of \eqref{parabolicsystembundle} we have that
	\[H|_{t=0} = \left(\left.\pard{Y}{t}\right|_{t=0}+DY|_{t=0}\left(\pard{u}{t}\right)\right)^{NM_0}=\left(\widetilde H|_{t=0} +W+DY|_{t=0}\left(\pard{u}{t}\right)\right)^{NM_0}\]
	for some $W\in T_p\Sigma$. For $e_1, \ldots, e_{n-1}$ an orthonormal basis of $T_p\partial M_0$ we therefore have that
	\[\ip{DY|_{t=0}\left(\pard{u}{t}\right)}{Je_I}=\ip{H|_{t=0}-\widetilde H|_{t=0}}{Je_I}=\ip{\n(\theta-\widetilde \theta)}{e_I}=0\]
	due to the Neumann boundary condition. If $P$ is orthogonal projection on to $\text{span}\{Je_1, \ldots, Je_{n-1}, \nu\}$ then
	\[P(DY|_{t=0}(W))=0\ .\]
	$P$ restricted to $DY(N_pM_0)$ is a linear isomorphism -- otherwise $Y$ cannot be a diffeomorphism as $DY|_{t=0}$ restricted to $T_pM_0$ is the  identity. Therefore, $W=0$.
\end{proof}

In what follows we will require a local version of \eqref{parabolicsystembundle}. Note that in suitable coordinates, the boundary condition splits into $n-1$ Dirichlet conditions and one Neumann condition.

\begin{lemma}[Local coordinates]
	If $u$ is a solution of \eqref{parabolicsystembundle} which is in $\CPNt{2+\a}{c_T}$ (see Appendix \ref{Hoeldersection} for a definition) then at any boundary point $p\in\partial M_0$, there exist local coordinates of $M_0$ on $U\subset \{x\in \bb{R}^n|x^n\geq 0\}$ and a local trivialisation of $NM_0$ such that on $S:=U\cap\{x\in \bb{R}^n|x^n=0\}$, $\Voo = \pard{}{y^n}$ and the above system (\ref{parabolicsystembundle}) may be written as 
	\begin{equation}
	\begin{cases}
	u_t^k - g^{ij}(x,t,u,\np u)\np_{ij}u^k - b(x,t,u,\np u)=0 &\text{on }U\times[0,c_T) \,\, \forall k,\\
	u^I =0, \text{ for }1\leq I \leq n-1 &\text{on } S\times[0,c_T), \\
	\np_nu^ns(x,t,u,\np_\partial u^n)\cos\a - r(x,t,u,\np_n u^I, \np_\partial u^n)=0 &\text{on } S\times[0,c_T),\\
	u^k(\cdot, 0)=0,\text{ for }1\leq k \leq n &
	\end{cases}\label{nearboundary}
	\end{equation}
	where all coefficients are smooth, $g^{ij}$ is positive definite and $s(x,t,u,\np_\partial u^n)>\frac 1 2$ as long as $|u|_{\CPNtn{1}{c_T}}<c_D$ and $\np_n u^I$ indicates dependence on $\np_n u^I$ for all $1\leq I \leq n-1$. 
\end{lemma}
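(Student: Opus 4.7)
The plan is to realise the intrinsic system of Proposition \ref{QLsystem} as a scalar system by choosing adapted boundary coordinates on $M_0$ and an adapted local trivialisation of $NM_0$. First, I fix boundary coordinates $(x^1,\dots,x^n)$ centred at $p$ so that $\partial M_0 = \{x^n = 0\}$ locally and $\mu_0 = -\pard{}{x^n}$ on $S$. For the normal bundle, I pick a smooth local frame $\{e_1,\dots,e_{n-1},\nu_0\}$ over a boundary neighbourhood by fixing, along $S$, any smooth $g_0$-orthogonal complement to $\nu_0$ in $NM_0|_S$ and extending smoothly into the interior (for instance by parallel transport along $x^n$-lines). The induced fibre coordinates $(y^1,\dots,y^n)$ then satisfy $\Voo = \pard{}{y^n}$ on $S$, as required.

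Writing $u = u^k e_k$, the Dirichlet condition $u - \ip{u}{\Voo}_0 \Voo = 0$ is exactly $u^I = 0$ on $S$ for $1 \leq I \leq n-1$. For the bulk equation, expanding $\np$ componentwise in the chosen frame introduces only a smooth connection one-form that contributes to lower-order terms, which I absorb into $b(x,t,u,\np u)$. The principal part is $g^{ij}(x,t,u,\np u)\np_{ij}u^k$ with $g^{ij}$ inherited from $\hatG^{ij}$ of Proposition \ref{QLsystem}, and hence uniformly positive-definite under the smallness hypothesis $|u|_{\CPNtn{1}{c_T}} < c_D$.

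The main computation is the reduction of the Neumann condition. Since $-\mu_0 = \pard{}{x^n}$, writing $\np_{-\mu_0}u = (\np_n u^k) e_k$, the condition $\ip{\np_{-\mu_0} u}{W}_Y = R$ reads
\[(\np_n u^n)\ip{\Voo}{W}_Y + (\np_n u^I)\ip{e_I}{W}_Y = R(x,t,u,\np_\partial u).\]
The key observation is that the Dirichlet condition $u^I \equiv 0$ on $S$ forces $\np_J u^I = 0$ on $S$ for all tangential $J$ with $1 \leq I, J \leq n-1$. Consequently the coefficients $\ip{\Voo}{W}_Y$, $\ip{e_I}{W}_Y$ and $R$ --- a priori smooth functions of $(x,t,u,\np_\partial u)$ --- reduce on $S$ to smooth functions of $(x,t,u,\np_\partial u^n)$ only. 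Defining $s := (\cos\a)^{-1}\ip{\Voo}{W}_Y$ and $r := R - (\np_n u^I)\ip{e_I}{W}_Y$ yields the third line of (\ref{nearboundary}). The obliqueness estimate $\ip{\Voo}{W}_Y \geq \tfrac{1}{2}\cos\a$ from Proposition \ref{QLsystem} then gives $s > 1/2$, completing the verification.

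The hard part is essentially bookkeeping: verifying that the Dirichlet condition collapses exactly the right set of dependencies so that $s$ depends only on $\np_\partial u^n$ (and not on the a priori uncontrolled $\np_\partial u^I$), and that no $\np_n u^n$ enters the remainder $r$. Everything else is a straightforward translation of the tensorial statement into components once the coordinates and frame are fixed.
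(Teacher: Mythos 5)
Your argument matches the paper's own proof almost line for line: adapted boundary coordinates with $\mu_0=-\partial/\partial x^n$, a local frame of $NM_0$ adapted to $\nu_0$, the observation that the Dirichlet condition $u^I=0$ kills $\np_J u^I$ for tangential $J$ (so the Neumann coefficients depend on $\np_\partial u^n$ only), and the obliqueness estimate for $s$. You are in fact slightly more careful than the paper's write-up on one point: you correctly define $s=(\cos\a)^{-1}\ip{\Voo}{W}_Y$ so that the bound $\ip{\Voo}{W}_Y\geq\tfrac12\cos\a$ from Proposition~\ref{QLsystem} yields $s>\tfrac12$ as stated, whereas the paper's proof writes $s=\ip{\nu_0}{W}_Y$ and leaves the $\cos\a$ normalisation implicit.
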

\begin{proof}
	For $q\in \partial M_0$ in a neighbourhood of $p$ take vectors $\widetilde{e_I}$ on $\partial M_0$ so that $\widetilde{e}_1, \ldots, \widetilde{e}_{n-1}, \Voo$ is a $g_0$ orthonormal basis of $N_qM_0$. Clearly we may take local coordinates and a local trivialisation so that on $S$, $\mu_0 = -\pard{}{x^n}$, $\pard{}{y^I}=\widetilde{e}_I$ and $\Voo=\pard{}{y^n}$. The first second and last lines above follow immediately. 
	
	For the Neumann boundary condition we may write $\np_{-\mu_0}u = \np_{n}u^n \nu_0 +\np_{n}u^I \widetilde{e}_I$, and so we see that $s=\ip{\nu_0}{W}_Y$ and $r = -\np_{n}u^I \ip{\widetilde{e}_I}{W}_Y + R$. Finally since $u^I$ is differentiable at the boundary, we have that $\np_Ju^I=0$, and so $s$ and $r$ have the dependences as claimed.
\end{proof}

\subsection{Linearisation}\label{Linearisationsection}
In codimension one, or if equation (\ref{nearboundary}) held on the entirety of $M_0$ (which is equivalent to the normal bundle being trivial) then we would simply be able to apply standard PDE methods, similar to those in \cite[Section 8.3]{Lieberman} to obtain short time existence. However, as we are working with an arbitrary normal bundle and with non-standard boundary conditions, to the best of the authors' knowledge our case is not covered by the literature and so a little more work is required.

We may write
\[\mathcal{P}:\CPN{2+\a} \ra \CPN{\a}\times \CPNb{2+\a}\times C_{\partial, T}^{2+\alpha; \frac{2+\alpha}{2}} ,\]
given by 
\begin{align*}\mathcal{P}u \, &=\, (\mathcal{P}_1 v, \mathcal{P}_2 v,\mathcal{P}_3 v)\\
&= \Big(u_t - \hatG^{ij}(x,t, u,\np u) \np^2_{ij} u - B(x,t, u, \np u)\ ,\\
&\qquad\  u - \ip{\Voo}{u}_Y\Voo\ ,\,\, \ip{\np_{-\mu_0} u }{W(x,t, u, \widetilde \n_I u)} - R(x,t, u, \np_I u )\Big),
\end{align*}
so that a solution of (\ref{parabolicsystembundle}) is given by 
\begin{equation}
\label{QLsysteminitialdata}
\begin{cases}
\mathcal{P}u=0,\\
u(\cdot, 0)=0.
\end{cases}
\end{equation}
We write the Fr\'echet derivative of $P$ at a general $u\in \CPN{2+\a}$ by
\begin{align*}
\mathcal{L}_u&:\CPN{2+\a} \ra \CPN{\a}\times \CPNb{2+\a}\times C_{\partial, T}^{2+\alpha; \frac{2+\alpha}{2}},\\
\mathcal{L}_uv&=(\mathcal{L}^1_u v, \mathcal{D}_uv,\mathcal{N}_uv)
=\Big(v_t - \up{u}a^{ij} \np_{ij} v - \up{u}b(\np v) - \up{u}c(v)\ ,\,\, v - \ip{\Voo}{v}_0\Voo\ ,\,\,\up{u}\beta(\np v )+\up{u}e(v)\Big),
\end{align*}
where, as usual $\mathcal{L}_uv =\frac{d}{dt}\big|_{t=0}\mathcal{P}(u+tv)$ (so in particular, $\mathcal{D}_u = \mathcal{P}_2$). Explicitly, writing $G = G(x,t, z^l, p^i_k)$ and so on as usual (where $p_i^k$ corresponds to $\np_i u^k$), then in coordinates as in \eqref{nearboundary} we have locally 
\[\mathcal{P}_3 u= \np_n u^n s(x,t,u,\np_\partial u^n)\cos \a-r(x,t,u, \n u^I, \n_\partial u^n)\ ,\]
and so 
\begin{equation*}
\begin{matrix}
\up{u}a^{ij} = \hatG^{ij},\qquad\qquad &\up{u}b^{ik}_l=\pard{\hatG^{ab}}{p_i^l}\np^2_{ab} u^k +\pard{B^k}{p_i^l},\\
\up{u}c^k_l=\pard{\hatG^{ab}}{z^l}\np^2_{ab} u^k +\pard{B^k}{z^l},& \up{u}\beta_n^n=s(x,t,u,\np_I u)\cos\a,\\
\up{u}\beta_n^I = \np_n u^n\pard{s}{p_I^n}\cos \a-\pard{r}{p_I^n}, &\up{u}\beta_J^i = -\pard{r}{p_i^J},\\
\up{u}e_l=\np_n u^n\pard{s}{z^l}\cos\a -\pard{r}{z^l},& 
\end{matrix}
\end{equation*}
where all coefficients of $\mathcal{P}$ are evaluated at $(x,t, u,\np u)$ and we may write $\up{u}\beta(\np u) =\up{u}\beta^n_n\np_nu^n+\up{u}\beta_n^I\np_Iu^n+\up{u}\beta_J^i\np_iu^J$ and so on.
In particular, we note that if $|u|, |\np u |<c_D$ and $t<c_T$, the linearisation is uniformly parabolic and oblique. 

We define $|\mathcal{P}|_{C^{k,\a}}$ to be the $C^{k,\a}$-norm on $\mathcal{P}$ where $\mathcal{P}$ is considered as a map acting on $(x,t,u, \np u, \np^2u)$.

\subsection{Newton Iteration and Compatibility Conditions}\label{NewtonSection}

As in \cite{Panagiotis}, \cite{Weidemaier}, we prove short time existence for \eqref{QLsysteminitialdata} by application of the contraction mapping theorem to a mapping determined by the Newton method on Banach spaces. Specifically, we will consider a mapping $S$ which takes suitable functions $u$ to the solution $v$ of
\begin{equation}\mathcal{L}_u(v) = \mathcal{L}_u(u) - \mathcal{P}(u)\ ,\label{Newton}
\end{equation}
where $\mathcal{L}_u$ is the Fr\'echet derivative of $\mathcal{P}$, as above. Clearly, at a fixed point of $S$ then $Su=v=u$ and we have a solution of $\mathcal{P} u=0$. 

We now define the domain of $S$. For $\tau<C_T$, let $\vn$ be a solution of the equation 
\begin{equation}
\begin{cases}
\mathcal{L}^1_0\vn = B(x,t,0,0)& \text{ on } M_0\times[0,\tau)\\
\mathcal{D}_0\vn =0& \text{ on } \partial M_0\times[0,\tau)\\
\mathcal{N}_0\vn = R(x,t,0,0)&\text{ on } \partial M_0\times[0,\tau)\\
\vn(\cdot, 0)=0;&\label{vneq}
\end{cases}
\end{equation}
note this is the linearisation of \eqref{parabolicsystembundle} at $u \equiv 0$. By Proposition \ref{linearsystemexists} in Appendix \ref{app-para}, a solution $\vn\in\CPN{2+\a}$ of \eqref{vneq} always exists (if the $0^{\text{th}}$ and up to the $1^\text{st}$ compatibility conditions are satisfied for $\mathcal{N}$ and $\mathcal{D}$ respectively). We fix $\a\in(0,1)$ and for any $\delta< \frac 1 2 c_D$ and $\tau<c_T$ we define 
\[\Aaa = \left\{v\in \CPNt{2+\a}{\tau}\Big| |v|_\CPNtn{1}\tau\leq\delta, |v-\vn|_{\CPNtn{2+\a}\tau}\leq\Theta, v(\cdot,0)=0 \right\}\ ,\]
which is complete as a subset of $\CPNt{2+\a}{\tau}$. We will show that that, given $\delta$ and $\Theta$, there exists a $\tau$ such that $S$ maps $S:\Aaa\ra \Aaa$, and furthermore $S$ is a contraction mapping.

We rewrite the linear parabolic system given by (\ref{Newton}) as
\begin{equation}
\begin{cases}
\mathcal{L}^1_u v = \mathcal{L}^1_u u - \mathcal{P}^1 u =:\psi_u&\text{ on }M_0\times[0,\tau)\\
v-\ip{v}{\Voo}_0\Voo=0 &\text{ on } \partial M_0\times[0,\tau)\\
\mathcal{N}_u v = \mathcal{N}_u u -\mathcal{P}^3(u)=:\Upsilon_u&\text{ on } \partial M_0\times[0,\tau)\\
v(\cdot, 0)=0. &
\end{cases}\label{NewtonRewrite}
\end{equation} 

Clearly \eqref{NewtonRewrite} satisfies compatibility conditions on Dirichlet condition (the second line above) if $\mathcal{P}$ does for $\mathcal{P}_2$.

\subsection{Proof of Contraction}\label{Contractionsection}

The purpose of this section is to prove the following Proposition. 
\begin{proposition}\label{Scontractionmapping}
	Suppose that (\ref{QLsysteminitialdata}) satisfies compatibility conditions up to the first order on $\mathcal{P}_2$ and to the $0^\text{th}$ order on $\mathcal{P}^3$. Then there exists a $\tau=\tau(|\vn|_{2+\a}, \a,\delta, \Theta, |\mathcal{P}|_{C^{k,\a}})$ such that the mapping $S$ defined above maps $S:\Aaa\ra\Aaa$ is a contraction mapping.
\end{proposition}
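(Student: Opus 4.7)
The plan is to verify in turn that (i) $S$ is well-defined as a map $\Aaa \to \CPNt{2+\a}{\tau}$ via the linear parabolic theory of Proposition \ref{linearsystemexists}, (ii) $S(\Aaa) \subset \Aaa$, and (iii) $S$ is a strict contraction, all by taking $\tau$ sufficiently small. The crucial analytic input, used throughout (ii) and (iii), is the parabolic Hölder interpolation inequality: if $w \in \CPNt{2+\alpha}{\tau}$ satisfies $w(\cdot, 0) = 0$, then $|w|_{\CPNtn{1+\alpha'}{\tau}} \leq C\tau^{(\alpha - \alpha')/2} |w|_{\CPNtn{2+\alpha}{\tau}}$ for any $0 \leq \alpha' < \alpha$. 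The $\tau$-factors produced by this estimate drive both the invariance and the contraction.

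For (i), fix $u \in \Aaa$. The bound $|u|_{\CPNtn{1}{\tau}} \leq \delta < \tfrac{1}{2} c_D$ keeps us inside the regime of Proposition \ref{QLsystem}, so $\up{u}a^{ij}$ is uniformly positive definite and $\ip{\Voo}{W}_Y \geq \tfrac{1}{2}\cos\alpha$; the Hölder norms of all coefficients of $\mathcal{L}_u$ and of the data $\psi_u, \Upsilon_u$ are controlled by $|u|_{\CPNtn{2+\alpha}{\tau}} \leq |\vn|_{\CPNtn{2+\alpha}{\tau}} + \Theta$ via the chain rule and smoothness of the coefficients of $\mathcal{P}$. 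Compatibility at $t = 0$ transfers from $\mathcal{P}$ to \eqref{NewtonRewrite}: the Dirichlet condition at $t = 0$ is trivial since $v(\cdot, 0) = 0$, its first-order compatibility follows from Lemma \ref{Dirichletcompatibility} applied to $\mathcal{P}_2 = \mathcal{D}_u$, and the zeroth-order Neumann compatibility is the assumption on $\mathcal{P}^3$. Hence Proposition \ref{linearsystemexists} produces a unique $Su \in \CPNt{2+\a}{\tau}$ with $(Su)(\cdot, 0) = 0$ and a Schauder estimate with constant uniform over $u \in \Aaa$.

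The key observation for (ii) is that $\vn = S(0)$: indeed \eqref{vneq} coincides with \eqref{NewtonRewrite} at $u = 0$, since $\mathcal{L}_0^1(0) = 0$ and $\mathcal{P}^1(0) = -B(x, t, 0, 0)$, with analogous identities on the boundary. Applying the fundamental theorem of calculus to $s \mapsto \mathcal{P}(su)$ yields $\mathcal{P}(u) - \mathcal{P}(0) = \int_0^1 \mathcal{L}_{su}(u)\, ds$, and subtracting \eqref{vneq} from \eqref{NewtonRewrite} then gives
\[
\mathcal{L}_u(Su - \vn) = \int_0^1 (\mathcal{L}_u - \mathcal{L}_{su})(u)\, ds \; - \; (\mathcal{L}_u - \mathcal{L}_0)(\vn),
\]
with matching boundary identities. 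Each term on the right is a coefficient difference whose $C^\alpha$ norm is $O(|u|_{\CPNtn{1+\alpha'}{\tau}})$, multiplied by a factor bounded in $C^\alpha$ by $|\vn|_{\CPNtn{2+\alpha}{\tau}}$ or $|u|_{\CPNtn{2+\alpha}{\tau}}$. Interpolation converts this into $O(\tau^{(\alpha - \alpha')/2})$, and the Schauder estimate then delivers
\[
|Su - \vn|_{\CPNtn{2+\alpha}{\tau}} \leq C \tau^{(\alpha - \alpha')/2} (|\vn|_{\CPNtn{2+\alpha}{\tau}} + \Theta)^2 \leq \Theta
\]
for $\tau$ small. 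One further application of interpolation to $Su$ (which vanishes at $t = 0$) gives $|Su|_{\CPNtn{1}{\tau}} \leq C \tau^{\alpha/2} |Su|_{\CPNtn{2+\alpha}{\tau}} \leq \delta$, completing (ii).

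For (iii), writing $v_i := S u_i$ and subtracting the equations for $v_1, v_2$ yields, via the analogous trick applied to $s \mapsto \mathcal{P}(s u_1 + (1 - s) u_2)$,
\[
\mathcal{L}_{u_1}(v_1 - v_2) = \int_0^1 (\mathcal{L}_{u_1} - \mathcal{L}_{s u_1 + (1 - s) u_2})(u_1 - u_2)\, ds \; + \; (\mathcal{L}_{u_1} - \mathcal{L}_{u_2})(u_2 - v_2),
\]
with analogous boundary identities. Both right-hand side terms are products of coefficient differences of size $O(|u_1 - u_2|_{\CPNtn{1+\alpha'}{\tau}}) = O(\tau^{(\alpha - \alpha')/2} |u_1 - u_2|_{\CPNtn{2+\alpha}{\tau}})$ with factors bounded in $C^\alpha$ (using $(u_2 - v_2)(\cdot, 0) = 0$). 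The Schauder estimate then gives $|v_1 - v_2|_{\CPNtn{2+\alpha}{\tau}} \leq C \tau^{(\alpha - \alpha')/2} |u_1 - u_2|_{\CPNtn{2+\alpha}{\tau}}$, which is contractive with constant $< \tfrac{1}{2}$ for $\tau$ small. The main technical obstacle throughout is ensuring uniformity in $u \in \Aaa$ of the Schauder constant from Proposition \ref{linearsystemexists}: this reduces to a routine but careful chain-rule computation showing that the Hölder norms of the coefficients of $\mathcal{L}_u$ are bounded by a function of $\delta, \Theta, |\vn|_{\CPNtn{2+\alpha}{\tau}}$ and the $C^{k,\alpha}$-norms of the smooth data in $\mathcal{P}$.
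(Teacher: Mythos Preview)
Your proposal is correct and follows the same overall strategy as the paper: Newton iteration, linear Schauder theory, and interpolation exploiting that everything vanishes at $t=0$. The organizational difference is that you package the mapping and contraction steps through the clean identity $\vn = S(0)$ together with the fundamental-theorem-of-calculus representation
\[
\mathcal{L}_u(Su - \vn) = \int_0^1 (\mathcal{L}_u - \mathcal{L}_{su})(u)\, ds - (\mathcal{L}_u - \mathcal{L}_0)(\vn),
\]
whereas the paper writes out $\psi_u + \mathcal{L}_0^1\vn - \mathcal{L}_u^1\vn - B(x,t,0,0)$ and its Neumann analogue explicitly and then estimates term-by-term in two dedicated lemmas (the ``Mapping Lemma'' and ``Contraction Lemma''). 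Your formulation is tidier and makes the cancellation structure more transparent; the paper's version is more hands-on and makes it easier to track exactly which interpolation exponent appears in each term.

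One small imprecision worth flagging: you say each right-hand term is a coefficient difference of size $O(|u|_{\CPNtn{1+\alpha'}{\tau}})$ times a bounded factor. This is not literally true for all terms, since the lower-order coefficients $\up{u}b$ and $\up{u}c$ of $\mathcal{L}_u$ depend on $\np^2 u$ (via $\partial_p\hatG^{ab}\np^2_{ab}u$), so their differences are only controlled in $C^\alpha$ by $|u|_{\CPNtn{2+\alpha}{\tau}}$. What saves the estimate is that these particular coefficient differences multiply \emph{lower-order} factors ($\np \vn$, $\vn$, $\np(u_2 - v_2)$, etc.), and it is those factors---all vanishing at $t=0$---that supply the power of $\tau$ through interpolation. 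The paper's term-by-term treatment makes this asymmetry explicit; in your write-up you should just be careful to say that in each product \emph{at least one} factor vanishes at $t=0$, rather than always attributing the smallness to the coefficient difference.
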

\begin{proof}
	We need to show that firstly that $S$ maps into $\Aaa$ and secondly that $S$ is a contraction.
	
	Let $v$ be a solution of \eqref{NewtonRewrite}, and observe that $v-\vn$ satisfies
	\begin{equation*}
	\begin{cases}
	\mathcal{L}^1_u (v-\vn) = \psi_u +\mathcal{L}_0^1 \tilde v - \mathcal{L}_u^1 \tilde v-B(x,t,0,0)\\
	\mathcal{D}_u (v-\vn) =0\\
	\mathcal{N}_u (v-\vn) = \Upsilon_u + \mathcal{N}_0 \vn - \mathcal{N}_u \vn - R(x,t,0,0)\\
	(v-\vn)(\cdot,0) = 0,
	\end{cases}
	\end{equation*} 
	where all coefficients of $\mathcal{L}^1_u$ are in $C^{\a;\frac \a 2}$, the coefficients of $\mathcal{N}_u$ are in $C^{1,\a}$ and $\mathcal{D}_u$ is smooth. By applying Schauder estimates (Proposition \ref{linearsystemholder} in Appendix \ref{app-para}) we see that
	\begin{align*} 
	|v-\vn|_\CPNtn{2+\a}\tau&\leq C \left(|\Upsilon_u+\mathcal{N}_0\vn -\mathcal{N}_u\vn - r(x,t,0,0)|_\CPNbtn{1+\a}\tau\right.\\
	&\qquad\qquad\qquad\left.+|\psi_u+\mathcal{L}^1_0\vn -\mathcal{L}^1_u\vn - b(x,t,0,0)|_\CPNtn{\alpha}\tau\right)
	\end{align*}
	where $C=C(|\mathcal{P}|_{C^{2,\a}}, \Theta)$ is uniformly bounded. In Lemma \ref{mappinglemma} below we see that the bracket on the right hand side may be made arbitrarily small by restricting to a sufficiently small time interval, and so by making $\tau$ sufficiently small we may ensure that $|v-\vn|_\CPNtn{2+\a}\tau<\Theta$.
	
	Crude estimates imply that 
	\[|v-\vn|_\CPNtn{0}\tau \leq \Theta\tau\ ,\]
	and so by interpolation we have that
	\[|v-\vn|_\CPNtn{1}\tau \leq C(\alpha)|v-\vn|_\CPNtn{0}\tau^\frac{1+\a}{2+\a}|v-\vn|_\CPNtn{2+\a}\tau^\frac{1}{2+\a}\leq C(\a,\Theta)\tau^\frac{1+\a}{2+\a} \ .\]
	A similar interpolation implies we may restrict $\tau$ so that $|\vn|_\CPNtn{1}\tau<\frac \delta 2$, and so by making $\tau$ sufficiently small,
	\[|v|_\CPNtn{1}\tau\leq |v-\vn|_\CPNtn{1}\tau+|\vn|_\CPNtn{1}\tau<\delta\]
	and so $v\in \Aaa$.
	
	Proving that $S$ is a contraction follows an identical argument. If $Su_1=v_1$ and $Su_2=v_2$, then $v_1-v_2=Su_1-Su_2$ satisfies a linear parabolic equation
	\begin{equation*}
	\begin{cases}
	\mathcal{L}^1_{u_1} (v_1-v_2) = \psi_{u_1} - \psi_{u_2} +(\mathcal{L}^1_{u_2}-\mathcal{L}^1_{u_1})v_2\\
	\mathcal{D}_{u_1} (v_1-v_2) =0\\
	\mathcal{N}_{u_1} (v_1-v_2) = \beta_{u_1} - \beta_{u_2} + (\mathcal{N}_{u_2} - \mathcal{N}_{u_1}) v_2\\
	(v_1-v_2)(\cdot,0) = 0.
	\end{cases}
	\end{equation*}
	Again, the coefficients of this equation are suitably regular and so applying Proposition \ref{linearsystemholder} we see that
	\begin{align*} 
	|Su_1-Su_2|_\CPNtn{2+\a}\tau&=|v_1-v_2|_\CPNtn{2+\a}\tau\\
	&\leq C \left(|\beta_{u_1} - \beta_{u_2} + (\mathcal{N}_{u_2} - \mathcal{N}_{u_1}) v_2|_\CPNbtn{1+\a}\tau \,+\, |\psi_{u_1} - \psi_{u_2} +(\mathcal{L}^1_{u_2}-\mathcal{L}^1_{u_1})v_2|_\CPNtn{\alpha}\tau\right)\ ,
	\end{align*}
	for some $C=C(|\mathcal{P}|_{C^{2,\a}}, \Theta)$. In Lemma \ref{contractionlemma} we see that by making $\tau$ sufficiently small the bracket may be estimated by an arbitrarily small multiple of $|u_1-u_2|_\CPNtn{2+\a}\tau$, and so the contraction property is proven.
\end{proof}

\begin{lemma}[Mapping Lemma]\label{mappinglemma}
	For $u\in \Aaa$, there exists constants $C=C(\a,|\vn|_{2+\a},\Theta, |\mathcal{P}|_{C^{2,\a}})$ and $0<p=p(\a)$ such that
	\[|\psi_u+\mathcal{L}^1_0\vn -\mathcal{L}^1_u\vn +B(x,t,0,0)|_\CPNtn{\alpha}\tau+|\Upsilon_u+\mathcal{N}^3_0\vn -\mathcal{N}^3_u\vn +r(x,t,0,0)|_\CPNtn{1+\a}\tau\leq C\tau^p\ .\]
\end{lemma}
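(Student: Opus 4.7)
The plan is to exploit that $u(\cdot, 0) \equiv 0$ and $\vn(\cdot, 0) \equiv 0$, so that all spatial quantities $u, \np u, \np^2 u, \vn, \np\vn, \np^2\vn$ vanish identically at $t = 0$. The two quantities to be estimated will each be written as a sum of products of two such vanishing functions with smooth uniformly bounded coefficients, and a small power of $\tau$ will then be gained from the elementary bound $|f|_{C^0(\tau)} \leq [f]_{t,\a/2}\tau^{\a/2}$ that holds whenever $f(\cdot, 0) = 0$.

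First I would analyse the interior term. Writing $\psi_u = \mathcal{L}^1_u u - \mathcal{P}^1 u$ explicitly, the top-order contribution $u_t - \hatG^{ij}\np^2_{ij} u$ cancels, leaving
\[\psi_u \, = \, B(x,t,u,\np u) - {}^u b(\np u) - {}^u c(u).\]
Substituting the formulas for ${}^u b$, ${}^u c$ from Section \ref{Linearisationsection} and applying Taylor's theorem with integral remainder to $B(x,t,\cdot,\cdot)$ about $(0,0)$ (including re-expansion of the first derivatives $B_{z^l}, B_{p^l_i}$ at $u$), one checks that
\[\psi_u - B(x,t,0,0) \, = \, \sum_k \Phi_k^{(1)}(x,t,u,\np u,\np^2 u)\, X_k^{(1)}\, Y_k^{(1)},\]
where each factor pair $(X_k^{(1)}, Y_k^{(1)})$ is drawn from $\{u, \np u, \np^2 u\}$ and each coefficient $\Phi_k^{(1)}$ is a smooth function. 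Similarly, expanding the coefficient differences in $(\mathcal{L}^1_0 - \mathcal{L}^1_u)\vn$ via the mean value theorem (and using that each term in ${}^u b - {}^0 b$ and ${}^u c - {}^0 c$ carries a factor of $u$, $\np u$, or $\np^2 u$, since $\np^2 u \equiv 0$ at $u \equiv 0$) gives
\[(\mathcal{L}^1_u - \mathcal{L}^1_0)\vn \, = \, \sum_k \Phi_k^{(2)}(x,t,u,\np u)\, X_k^{(2)}\, Y_k^{(2)},\]
with $X_k^{(2)} \in \{u, \np u, \np^2 u\}$ and $Y_k^{(2)} \in \{\vn, \np\vn, \np^2\vn\}$.

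The next step is a product estimate: for $f, g \in \CPN{\a}$ with $f(\cdot, 0) = g(\cdot, 0) = 0$,
\[|fg|_{\CPNtn{\a}{\tau}} \, \leq \, C\, \tau^{\a/2}\, |f|_{\CPNtn{\a}{\tau}}\, |g|_{\CPNtn{\a}{\tau}},\]
which follows from the standard H\"older product rule together with $|f|_{C^0(\tau)} \leq |f|_{\CPNtn{\a}{\tau}}\tau^{\a/2}$ (and the analogous bound for $g$). Every factor $X_k^{(i)}, Y_k^{(i)}$ lies in $\CPN{\a}$ with norm controlled by $\Theta + |\vn|_{2+\a}$ and vanishes at $t = 0$, so applying the product estimate together with H\"older algebra and the uniform $\CPN{\a}$-bounds on the coefficients $\Phi_k^{(i)}$ (coming from $|\mathcal{P}|_{C^{2,\a}}$, the bound $|u|_{C^1} \leq \delta$, and standard composition estimates for H\"older functions) yields the interior bound with $p = \a/2$.

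The boundary term is handled analogously. Writing $\Upsilon_u = \mathcal{N}_u u - \mathcal{P}^3 u$, a Taylor expansion of $\mathcal{P}^3 u$ in $(u, \np u)$ around $(0,0)$ writes $\Upsilon_u - R(x,t,0,0)$ as a bilinear combination in $(u, \np u)$; similarly $(\mathcal{N}_0 - \mathcal{N}_u)\vn$ decomposes into a bilinear combination with one factor from $\{u, \np u\}$ and one from $\{\vn, \np\vn\}$. The main obstacle is this last step: the target norm $\CPNbt{1+\a}{\tau}$ requires control not only of $fg$ but also of $\np(fg) = (\np f)g + f(\np g)$, so one must verify that $\np f(\cdot, 0) = 0$ for $f$ drawn from $\{u, \vn\}$ (which follows since $u$ and $\vn$ vanish identically at $t=0$, hence so do their spatial derivatives) and track the interplay of spatial and temporal regularity carefully. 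A routine computation extending the product estimate above then shows the same gain $\tau^{\a/2}$ holds in the boundary norm, completing the proof.
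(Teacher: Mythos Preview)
Your proposal is correct and follows essentially the same route as the paper's proof. Both arguments hinge on the observation that every term in the two quantities can be written as a product of factors each of which vanishes at $t=0$ (coming from $u(\cdot,0)=\vn(\cdot,0)=0$), and then a small power of $\tau$ is extracted via the parabolic H\"older product rule together with $|f|_{C^0(\tau)}\le [f]_{t,\a/2}\,\tau^{\a/2}$.

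The only cosmetic difference is packaging: the paper writes out the explicit differences $[{}^u a^{ij}-{}^0 a^{ij}]$, $[{}^u b-{}^0 b]$, $[{}^u c-{}^0 c]$ and, for the boundary $\CPNbt{1+\a}{\tau}$ estimate, computes $\np_K\big(\Upsilon_u-r(x,t,0,0)\big)$ directly, observing the cancellation that comes from the linearisation, and then invokes interpolation inequalities such as $|\np u|_{\CPNtn{\a}\tau}\le C\tau^{1/(1+\a)}$. You instead encode this cancellation by writing $\Upsilon_u-R(x,t,0,0)$ as a genuine bilinear form in $(u,\np u)$ via Taylor expansion, which is equivalent (the linear terms cancel precisely because $\mathcal{N}_u$ is the Fr\'echet derivative of $\mathcal{P}^3$). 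Your abstract product estimate in $\CPNbt{1+\a}{\tau}$ and the paper's explicit differentiation plus interpolation are two presentations of the same mechanism.
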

\begin{proof}
	We calculate that
	\begin{flalign}
	\label{firstterm}\psi^k_u&+(\mathcal L^1_0\vn)^k -(\mathcal L^1_u\vn)^k -B^k(x,t,0,0)\\
	&=\left[B^k(x,t,u,\np u)-B^k(x,t,0,0)\right] -\up{u}b^{ik}_l\np_iu^l-\up{u}c_l^k u^l\nonumber\\
	&\qquad +\left[\up{u}a^{ij}-\up{0}a^{ij}\right]\np^2_{ij}\vn^k+\left[\up{u}b^{ik}_l-\up{0}b^{ik}_l\right]\np_i\vn^l +\left[\up{u}c_l^k-\up{0}c_l^k\right]\vn^l\nonumber
	\end{flalign}
	We prove the claim by showing each of the square brackets may be made sufficiently small and making liberal use of the estimate
	\begin{equation}|f g |_\CPNtn{\a}{\tau} \leq C(\a)(|f|_\CPNtn{0}\tau|g|_\CPNtn{\a}{\tau} +|g|_\CPNtn{0}\tau|f|_\CPNtn{\a}{\tau}) \ .\label{thatstheone}
	\end{equation}
	For example, we calculate that 
	\[\up{u}b^{ki}_l-\up{0}b^{ki}_l = \int_0^1 \frac{d}{d\tau} b^{ki}_l(x,t,\tau u,\tau\np u, \tau\np^2 u)d\tau= ({b_1})^{ki}_l u^k+(b_2)^{kim}_{lr}\np_mu^r+(b_3)^{kims}_{lr}\np^2_{ms}u^r\]
	where $ b_1,b_2,b_3$ are smooth functions. This yields 
	\[|\up{u}b^{ki}_l-\up{0}b^{ki}_l|_\CPNtn{\alpha}{\tau}<C|u|_\CPNtn{2+\a}\tau=C(\Theta, |\vn|_{2+\a})\ .\]
	In fact each of the above terms is a multiple of two terms that are bounded in $\CPN{\a}$ and are zero at $t=0$. This imples, for example, that $|\up{u}b^{ki}_l-\up{0}b^{ki}_l|_\CPNtn{0}\tau<C\tau^\a$, and so \eqref{thatstheone} implies the required bounds.

	We also have that
	\begin{flalign*}
	\Upsilon_u&-r(x,t,0,0)+\mathcal{N}_0\vn -\mathcal{N}_u\vn \\
	&=\up{u}\beta_I^n\n_nu^I +\up{u}\beta_i^J\n_J u^i+\up{u}e_lu^l+\left[r(x,t,u,\np u)-r(x,t,0,0)\right]\\
	&\quad+\left[\up{0}\beta_n^n-\up{u}\beta_n^n\right]\np_n\vn^n+\left[\up{0}\beta^I_n-\up{u}\beta^I_n\right]\np_n\vn^I+\left[\up{0}\beta^I_J-\up{u}\beta^I_J\right]\np_I\vn^J+\left[\up{0}e_k-\up{u}e_k\right]\vn^k\end{flalign*}
	which, using methods as above is clearly bounded in $\CPN{\a}$. When taking a derivative, some cancellation occurs (due to the form of the linearisation), and we have that
	\begin{flalign*}
	\np_K(\Upsilon_u&-r(x,t,0,0))\\
	&=\np_nu^n\cos(\alpha)\left(\pard{s}{p^n_I}\np^2_{KI}u^n+\pard{s}{z^k}\n_Ku^k\right)+\left[\pard{r}{x^K}(x,t,u,\np u)-\pard{r}{x^K}(x,t,0,0)\right]\\
	&\qquad+\frac{d\up{u}\beta_I^n}{dx^K}\n_Iu^n +\frac{d\up{u}\beta_I^J}{dx^K}\n_I^Ju+\frac{d\up{u}e_l}{dx^K}u^l\ .
	\end{flalign*}
	The first term is as above, while the remaining four each contain a factor which (by interpolation) is small in $\CPN{\a}$, for example
	\[|\np u|_\CPNtn{\a}\tau \leq |u|_\CPNtn{0}\tau^\frac{1}{1+\a}|u|^\frac{1+\a}{2+\a}_{\CPNtn{2+\a}\tau}\leq C\tau^\frac 1 {1+\a} .\]

	Finally, we see that
	\begin{flalign*}
	\np_K(\mathcal{N}_0\vn -&\mathcal{N}_u\vn)
	=\left[\up{0}\beta_n^n-\up{u}\beta_n^n\right]\np_{Kn}^2\vn^n\\
	&\qquad+ \left(\left[\pard{\up{0}\beta_n^n}{x^K}-\pard{\up{u}\beta_n^n}{x^K}\right]+\left[\pard{\up{0}\beta_n^n}{z^k}-\pard{\up{u}\beta_n^n}{z^k}\right]\n_Ku^k+ \left[\pard{\up{0}\beta_n^n}{p_i^k}-\pard{\up{u}\beta_n^n}{p_i^k}\right]\np_{Ki}u^k\right)\np_n\vn^n\\
	&\qquad+\text{ similar terms.}
	\end{flalign*}
	and again each of these terms may be dealt with similarly to earlier cases.
\end{proof}

\begin{lemma}[Contraction Lemma]\label{contractionlemma}
	For $u,w, v_2\in\Aaa$, there exists constants $p=p(\a)>0$ and $C=C(\a, |\vn|_{2+\a}, \Theta, |\mathcal{P}|_{C^{2,\a}})$ such that
	\begin{align*}|\Upsilon_{u} - \Upsilon_{w}|_\CPNbtn{1+\a}\tau& + |(\mathcal{N}_{u} - \mathcal{N}_{w}) v_2|_\CPNbtn{1+\a}\tau\\
	&+|\psi_{u} - \psi_{w}|_\CPNtn{\alpha}\tau +|(\mathcal{L}^1_{u}-\mathcal{L}^1_{w})v_2|_\CPNtn{\alpha}\tau<C\tau^p|u-w|_\CPNtn{2+\a}\tau
	\end{align*}
\end{lemma}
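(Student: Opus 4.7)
The plan is to mirror the proof of the Mapping Lemma, now tracking the dependence on $u-w$. Two observations drive the argument: (i) because $u,w\in\Aaa$ satisfy uniform $\CPNtn{1}\tau$ bounds, every coefficient of $\mathcal{P},\mathcal{L}_u^1,\mathcal{N}_u$ is a smooth function of its arguments with uniformly bounded derivatives; and (ii) each of $u-w$, $v_2$, $u$, $w$, and every coefficient difference $\Phi_u-\Phi_w$ vanishes at $t=0$. Combined with the standard parabolic interpolation
\[ |h|_{\CPNtn{\b}\tau}\leq C\tau^{(\gamma-\b)/2}\,|h|_{\CPNtn{\gamma}\tau},\qquad \text{when } h(\cdot,0)=0 \text{ and } 0\le\b\le\gamma, \]
this produces the factor $\tau^p$ in every term.

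The first step is to expand the four differences using the explicit formulas of Section~\ref{Linearisationsection}. For example, since $\up{u}a^{ij}=\hatG^{ij}$,
\[ \psi_u-\psi_w=[B(x,t,u,\n u)-B(x,t,w,\n w)]-[\up{u}b(\n u)-\up{w}b(\n w)]-[\up{u}c(u)-\up{w}c(w)], \]
with an analogous expansion for $\Upsilon_u-\Upsilon_w$, and in $(\mathcal{L}_u^1-\mathcal{L}_w^1)v_2$, $(\mathcal{N}_u-\mathcal{N}_w)v_2$ only pure coefficient differences appear, multiplied against $v_2$ or its derivatives.

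The second step treats every coefficient difference $\Phi_u-\Phi_w$ with $\Phi=\Phi(x,t,u,\n u,\n^2 u)$ via the fundamental theorem of calculus,
\[ \Phi_u-\Phi_w=\int_0^1\tfrac{d}{ds}\Phi\bigl(x,t,su+(1-s)w,s\n u+(1-s)\n w,s\n^2u+(1-s)\n^2w\bigr)\,ds, \]
a smooth bounded coefficient times a linear combination of $u-w$, $\n(u-w)$, $\n^2(u-w)$. This gives
\[ |\Phi_u-\Phi_w|_{\CPNtn{\a}\tau}\leq C(\Theta,|\mathcal{P}|_{C^{2,\a}})\,|u-w|_{\CPNtn{2+\a}\tau}, \]
and $\Phi_u-\Phi_w$ vanishes at $t=0$. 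Every term in the four expansions is therefore a product $fg$ in which both factors vanish at $t=0$: one is a coefficient difference, the other is one of $u,w,\n u,\n w,\n^2u,\n^2w,v_2,\n v_2$. Combining \eqref{thatstheone} with the interpolation above yields the desired bound $C\tau^p\,|u-w|_{\CPNtn{2+\a}\tau}$.

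The boundary norms $|\cdot|_{\CPNbtn{1+\a}\tau}$ require one extra spatial derivative, and I would handle them as in the final paragraphs of the Mapping Lemma: apply $\n_K$, use the product rule, and verify that every resulting cross-term still contains at least one factor vanishing at $t=0$. This structural check is the main obstacle---in particular, one must track that the $\n_K$-derivatives of the coefficient differences remain controlled by $|u-w|_{\CPNtn{2+\a}\tau}$ and still vanish at $t=0$---but once established, the required contraction bound follows by mechanical applications of \eqref{thatstheone} and interpolation.
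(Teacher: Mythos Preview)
Your overall strategy---expand the four expressions, apply the fundamental theorem of calculus to coefficient differences, and extract smallness via parabolic interpolation and \eqref{thatstheone}---is exactly what the paper does. For the interior $C^{\alpha}$ estimates on $\psi_u-\psi_w$ and $(\mathcal{L}^1_u-\mathcal{L}^1_w)v_2$ your sketch is fine, though your claim that every term is a product of a \emph{coefficient difference} times one of $u,w,\n u,\ldots$ is not literally correct: after splitting e.g.\ $\up{u}b(\n u)-\up{w}b(\n w)=(\up{u}b-\up{w}b)\n u+\up{w}b\,\n(u-w)$, the second summand has the \emph{coefficient} $\up{w}b$ (not a difference) as one factor. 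This is harmless for the $C^{\alpha}$ norm because the other factor $\n(u-w)$ vanishes at $t=0$ and can be interpolated.

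The genuine gap is in the $C^{1+\alpha}$ boundary estimate for $\Upsilon_u-\Upsilon_w$. After taking $\n_K$, you will meet terms such as $\up{u}\beta_n^I\,\n_K\n_I(u-w)^n$ and $(\partial_p r)\,\n_K\n(u-w)$, where the first factor is \emph{not} a coefficient difference and the second factor is a top-order derivative of $u-w$ for which no interpolation gain is available (its $C^{\alpha}$ norm is only bounded by $|u-w|_{\CPNtn{2+\alpha}\tau}$, with no $\tau^p$). Your proposed check---that such factors ``still vanish at $t=0$''---does not hold automatically. The paper closes this gap by invoking the $0^{\mathrm{th}}$-order Neumann compatibility condition $\mathcal{P}_3|_{t=0}=r(x,0,0,0)=0$: from this (and the explicit form of the linearisation) one deduces $\up{0}\beta_i^j=0$ for $(i,j)\neq(n,n)$, so that $|\up{u}\beta_n^I|_0\leq C\tau^p$ after all; and for the remaining piece $r(x,t,u,\n u)-r(x,t,w,\n w)$ the paper extracts an explicit factor of $t$ by a double integral in $(t,\tau)$, again using $r(x,0,0,0)=0$. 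You should make this use of compatibility explicit; without it the highest-order boundary term does not acquire the required $\tau^p$.
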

\begin{proof}
	This is almost exactly as in the previous proof. We have that
	\[\psi_u^k = B^k(x,t,u, \n u ) - \up{u}b_l^{ki}\np_iu^l - \up{u}c^k_lu^l\]
	so
	\begin{align*}\psi_u^k - \psi_w^k& = \left[B^k(x,t,u, \n u )-B^k(x,t,w, \n w )\right] \\
	&\qquad+\up{w}b_l^{ki}\left[\np_iw^l-\np_iu^l\right]+\left[\up{w}b_l^{ki}- \up{u}b_l^{ki}\right]\np_iu^l \\
	&\qquad+\up{w}c^k_l\left[w^l-u^l\right]+\left[\up{w}c^k_l- \up{u}c^k_l\right]u^l
	\end{align*}
	All terms may be estimated using similar interpolation methods to in the  previous lemma. For example
	\[ \left|\left[\up{w}b_l^{ki}- \up{u}b_l^{ki}\right]\np_iu^l \right|_\CPNtn{\a}\tau \leq C|w-u|_\CPNtn{2}\tau|u|_\CPNtn{1+\a}\tau+C|w-u|_\CPNtn{2+\a}\tau|u|_\CPNtn{1}\tau\leq C \tau^\frac{1}{2+\a}|w-u|_\CPNtn{2+\a}\tau\ .\]
	We clearly have that 
	\[|\psi_u - \psi_w|_\CPNtn{\a}\tau<C|w-u|_\CPNtn{1+\a}\tau<C\tau^\frac{1}{2+\a}|u-w|_\CPNtn{2+\a}\tau\]
	where we used that $|u(x,t)-w(x,t)|<2t$. Identical methods may be applied to $|(\mathcal{L}^1_{u}-\mathcal{L}^1_{w})v_2|_\CPNtn{\alpha}\tau$. Also,
	\[\Upsilon_u = \up{u}\beta^n_I\n_nu^I + \up{u}\beta^J_i\n_J u^i +\up{u}e_lu^l+r(x,t,u,\n u).\]
	Here we note that, as by assumption the initial data satisfies the Neumann boundary conditions $0=\mathcal{P}_3|_{t=0}=r(x,0,0,0)$, by looking at the equations for $\beta_i^j$ it follows that $\up{0}\beta^i_j=0$ unless $i=j=n$. As a result, for such $i, j$ we have that $|\up{u}\beta_i^j|_0\leq |u|_1\leq \Theta\tau$ and in particular, as $| \beta_i^j|_{1+\a}<C$, $|\beta|_1\leq C\tau^\frac \a{1+\a}$. 
	As a result of this observation we may obtain the relevant $\CPN{1+\a}$ bound for
	\begin{align*}
	\Upsilon_u - \Upsilon_w &= \up{u}\beta^I_n\left[\np_Iu^n-\np_Iw^n\right] +\up{u}\beta^n_J\left[\np_nu^J-\np_nw^J\right]+\left[\up{u}\beta^I_n-\up{w}\beta^I_n\right]\np_Iw^n\\
	&\qquad+\left[\up{u}\beta^n_J-\up{w}\beta^n_J\right]\np_nw^J + \up{u}e_l\left[u^l-w^l\right] +\left[\up{u}e_l-\up{w}e_l\right]w^l\\
	&\qquad+r(x,t,u,\n u)-r(x,t,w,\n w)\ ,
	\end{align*}
	where the final term follows from writing
	\begin{flalign*}r(x,t,&u,\n u)-r(x,t,w,\n w)\\
	&=\int_0^t\int_0^1\pard{}{\beta}\pard{}{\tau} r(x,\beta,\tau u+(1-\tau )w,\tau \np u+(1-\tau )\np w)d\tau d\beta\\
	& = t\left[(r_1)_j^i\np_i(w-u)^j+(r_2)_j(w-u)^j\right]\ .\end{flalign*}
	
	The term
	\[(\mathcal{N}_{u} - \mathcal{N}_{w}) v_2\]
	may be estimated similarly, but this is easier due to estimates that we already have on $v_2$, and so this is left as an exercise to the reader.
\end{proof}

\subsection{Proof of Theorem \ref{STE}}
Before proving Theorem \ref{STE} we collect the conclusions of the previous sections:

\begin{proposition}\label{ExistsOne} Suppose that \eqref{QLsysteminitialdata} satisfies up to $1^\text{st}$ order compatibility conditions on $\mathcal{P}_2$ and $0^\text{th}$ order compatibility conditions on $\mathcal{P}_3$. Then there exists a maximal time $0<T=T(|\mathcal{P}|_{C^{2,\a}}, c_D,c_T)\leq c_T$ such that there exists a unique solution $u\in\CPNt{2+\a}{T}$ of \eqref{QLsysteminitialdata}. This solution is smooth for $t>0$ and if $T<c_T$ then either $|u|_\CPNtn{1}{T}=c_D$ or $|u|_\CPNtn{1+\alpha}{\tau}\ra \infty$ as $\tau \ra T$ for all $\a\in(0,1)$.
\end{proposition}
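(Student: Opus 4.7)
The plan is to combine the contraction mapping from Proposition \ref{Scontractionmapping} with a parabolic bootstrap and a restart argument. First, the assumed compatibility conditions let us apply Proposition \ref{Scontractionmapping} directly, and the Banach fixed point theorem applied to $S$ on the complete metric space $\Aaa$ will produce a unique fixed point $u \in \CPNt{2+\alpha}{\tau}$ solving \eqref{QLsysteminitialdata} on $[0, \tau]$, for a time $\tau = \tau(|\mathcal{P}|_{C^{2,\alpha}}, c_D, c_T) > 0$. Uniqueness within the full space $\CPNt{2+\alpha}{\tau}$, rather than just within $\Aaa$, follows by a connectedness argument: any second solution $\hat u$ starts at the zero section, so $|\hat u|_{\CPNtn{1}{s}} \to 0$ as $s \to 0$ and hence $\hat u \in \Aaa$ on a sufficiently small initial subinterval, where it must equal $u$; the set of times of agreement is then open and closed in $[0, \tau]$.

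Next, I plan to establish smoothness for $t > 0$ by iterating parabolic Schauder estimates. For any $\epsilon > 0$, the restricted solution on $[\epsilon, \tau]$ again solves the quasilinear boundary value problem, now with initial data $u(\cdot, \epsilon)$ of class $C^{2+\alpha}$ satisfying all compatibility conditions of arbitrary order automatically (being the value at $t = \epsilon$ of a solution). Differentiating the local formulation \eqref{nearboundary} in time and in directions tangent to $\partial M_0$ produces linearised problems of the form covered by Proposition \ref{linearsystemholder}, with coefficients whose required regularity has been established at the preceding step; iterating upgrades $u$ to $C^{k+\alpha}$ on $[\epsilon, \tau]$ for every $k$. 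Normal derivatives at the boundary take a little more care, but the splitting in \eqref{nearboundary} into $n - 1$ Dirichlet and one oblique Neumann conditions makes this a standard procedure for parabolic systems.

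For the maximal existence time, let $T \in (0, c_T]$ be the supremum of times for which a $\CPNt{2+\alpha}{T}$ solution exists; uniqueness allows these to be patched into a single solution on $[0, T)$. Suppose for contradiction that $T < c_T$ and that neither blow-up alternative holds, so there are constants $C_1 < c_D$ and $C_2 < \infty$ with $|u|_{\CPNtn{1}{\tau}} \leq C_1$ and $|u|_{\CPNtn{1+\alpha}{\tau}} \leq C_2$ uniformly in $\tau < T$. Viewing \eqref{parabolicsystembundle} as a linear problem in $u$ whose leading coefficients depend on $(x, t, u, \np u)$, the $C^{1+\alpha}$ bound controls these coefficients uniformly in $\CPN{\alpha}$, while the lower-order terms and boundary data are similarly controlled in $\CPN{\alpha}$ and $\CPNb{1+\alpha}$; a single application of Proposition \ref{linearsystemholder} then produces the uniform bound $|u|_{\CPNtn{2+\alpha}{T}} \leq C_3$.

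The restart step is the main obstacle. Given the uniform $\CPNt{2+\alpha}{T}$ bound and the $C^\infty$ regularity for $t > 0$, Arzel\`a--Ascoli yields a limit $u_\star = \lim_{t \to T^-} u(\cdot, t)$ in $C^{2+\alpha'}$ for every $\alpha' < \alpha$, with $u_\star$ itself of class $C^{2+\alpha}$; passing to the limit in \eqref{parabolicsystembundle} shows that $u_\star$ satisfies the boundary conditions. I will then re-apply Proposition \ref{diffeoprop} using $M_T := Y(u_\star, T)$ as the new reference manifold and $\Sigma_{T + t}$ as the new boundary flow, obtaining new structural constants $(c_D', c_T', |\mathcal{P}'|_{C^{2,\alpha}})$ which depend continuously on the $C^{2+\alpha}$ data at time $T$ and are therefore uniformly controlled by $C_1, C_2, C_3$. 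Proposition \ref{Scontractionmapping} applied to this reinitialised problem, whose compatibility conditions at $t = T$ hold by Lemma \ref{Dirichletcompatibility} and the fact that $u_\star$ itself satisfies the boundary conditions, then yields an extension on $[T, T + \delta)$ with $\delta > 0$; concatenating with $u$ contradicts the maximality of $T$.
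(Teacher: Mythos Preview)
Your outline matches the paper's proof in its overall architecture: contraction mapping for short-time existence and uniqueness, Schauder bootstrap for interior smoothness, and a restart at the maximal time under the stated a priori bounds. The one substantive divergence is in the restart step, and there you take a more roundabout route with a small gap.

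The paper does not rebuild the diffeomorphism $Y$ over a new reference manifold $M_T$. Instead it stays on the same bundle $NM_0$: once Schauder estimates give smoothness of $u$ up to and including time $\tau$, it sets $\varphi(\cdot) := u(\cdot,\tau)$ and observes that $\tilde u(x,t) := u(x,t+\tau) - \varphi(x)$ solves a system of exactly the same form as \eqref{QLsysteminitialdata}, with zero initial data and with compatibility conditions of \emph{all} orders satisfied automatically (because the boundary identities $\mathcal{P}_2 u = 0$, $\mathcal{P}_3 u = 0$ hold for every $t<\tau$ and $u$ is smooth at $\tau$, so all their time derivatives vanish there). Proposition \ref{Scontractionmapping} then extends directly, with no need to re-derive structural constants or to transport the extension back to $NM_0$.

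Your geometric restart is workable in principle, but your justification of the first Dirichlet compatibility condition at $t=T$ via Lemma \ref{Dirichletcompatibility} is not legitimate at this level: that lemma requires the reference manifold to be Lagrangian, which is not a hypothesis of Proposition \ref{ExistsOne}. The correct (and simpler) reason compatibility holds at the restart time is the smoothness you already established, exactly as the paper argues. If you replace that appeal and keep the restart on $NM_0$ rather than passing to $M_T$, your proof coincides with the paper's.
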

\begin{proof}
	Short time existence follows from Proposition \ref{Scontractionmapping}, and uniqueness also follows from application of the contraction mapping theorem. Standard Schauder estimates now imply that the solution is smooth for $t>0$. Suppose a solution $u$ exists until time $\tau<c_T$ and there exists an $\a\in(0,1)$, $C<\infty$ so that $|u|_\CPNtn{1+\alpha}{\tau}<C$ and $|u|_\CPNtn{1}{\tau}<c_D$. Schauder estimates imply that the solution is smooth up to time $\tau$, and writing $\varphi(\cdot)=u(\cdot, \tau)$, we see that $\tilde{u}(x,t)=u(x,t-\tau)-\varphi(x)$ satisfies an equation of the same form as (\ref{QLsysteminitialdata}) with compatibility conditions to all orders. Therefore Proposition \ref{Scontractionmapping} implies that the solution may be extended (smoothly) to a later time, implying $\tau$ was not maximal. 
\end{proof}
\begin{proof}[Proof of Theorem \ref{STE}]
	
		Propositions \ref{QLsystem}, Lemma \ref{Dirichletcompatibility} and Proposition \ref{ExistsOne} imply that a solution exists for some positive maximal time $T>0$. 
	
	Suppose that for all the time $t<T<\infty$ that the solution exists there are constants $C_{\II}$, $C_{\II^\Sigma}$ such that
	\begin{enumerate}[label=\alph*)]
		\item  $|\II(x,t)|<C_{\II}$,\label{curvassump}
		\item $|\II^\Sigma(x,t)|<C_{\II^\Sigma}$,\label{bdrycurvassump}
		\item the boundary injectivity radius is uniformly bounded from below, \label{lastone}
	\end{enumerate}
	We see that due to the above assumptions there exists a bounded, compact set $M_T$ such that $M_t$ converges to $M_T$ uniformly as $t\ra T$. Since we have uniform curvature bounds, on the interior of $M_t$ we have standard local curvature estimates via standard methods such as the proof of \cite[Theorem 3.4]{EckerHuiskenInteriorEstimates}, and so we can guarantee that away from $\partial M_T:=\lim_{t\ra T}M_t$, $M_T$ is smooth. Similarly we have that for all $0<\frac{T}{2}<t<T$, $\Sigma_t$ is uniformly smooth.
	
	We must demonstrate the same at the boundary where no suitable local estimates are currently known to the authors. Our concern is that a region of the boundary somehow conspires to have exploding derivatives of curvature as $t\ra T$, which in turn implies that \eqref{parabolicsystembundle} has arbitrarily large coefficients in $C^{2,\a}$ and/or arbitrarily small $c_T, c_D$. To get around this problem, we locally rewrite \eqref{parabolicsystembundle} over a ``neutral'' manifold so that the corresponding system has uniformly bounded coefficients and we may apply local Schauder estimates up to the boundary.
	
	For some $\e$ to be determined (depending only on $C_{\II}$, $C_{\widetilde\II}$), we pick a point $p\in \partial M_{T-\e}$. We now define a small portion of a submanifold $Q$, which is constructed by first choosing $\partial Q\subset\Sigma_{T-\e}$ to be the image of the exponential map of $\Sigma_{T-\e}$ at $p$  applied to $T_p\partial M_{T-\e}$. For every $q\in \partial Q$, we pick a vector field $\mu_Q(q)$ so that $\text{span}\{T_q\partial Q, \mu_Q(q)\}$ is Lagrangian and with Lagrangian angle determined by the boundary condition, and $\mu(p)$ points into $M_{T-\e}$. Finally we define $Q$ by extending $\mu_Q(q)$ by geodesics. Clearly there exists a $\delta=\d(C_{\II}, C_{\widetilde\II})>0$ such that $Q\cap B_\delta(p)$ is uniformly $C^k$ depending only on our uniform bounds on $\Sigma$.
	
	As in Proposition \ref{diffeoprop} we may construct time dependent local diffeomorphisms $Y$ from $NQ\times[0,T_Y)$ to $\CY$ a so that, by again reducing $\delta$ we may locally write \eqref{MCFBC} as in Proposition \ref{QLsystem}, except that now $u$ a time-dependent section of $N(Q\cap B_\delta(p))$ for $t\in[T-\e, T-\e+c_T)$ and in place of the final line of \eqref{parabolicsystembundle} we need to specify initial data $u(\cdot, T-\e)=\varphi(\cdot)$. We note that $c_T$, $c_D$ and the coefficients of the system depend only on $\Sigma$. We of course choose initial data $\varphi$ to parametrise $M_{T-\e}$, where we note that $\varphi(p)=0$ and $\widetilde{\n} \varphi(p)=0$ and so by choosing $\delta$ sufficiently small (depending on $C_{\II}$) we may assume $|\varphi|_1<\frac{1}{2}c_D$. Futhermore using \ref{curvassump} and \eqref{Curvinu}, we have that there exists a constant $C$ depending only on $C_{\II^\Sigma}$ and $C_{\II}$ such that while $|u|_{\Gamma^{1;\frac{1}{2}}_{[T-\e,t]}(Q\cap B_\delta(p))}<c_D$,
	\[|\np^2 u|\leq C\ .\]
	Using \eqref{parabolicsystembundle} we therefore see that while $|u|_{\Gamma^{1;\frac{1}{2}}_{[T-\e,t]}(Q\cap B_\delta(p))}<c_D$ there exists $C_1=C_1(C)$ such that for $t>T-\e$,
	\[|u(\cdot,t)-\varphi(\cdot)|<C_1(t-(T-\e))\ .\]
	and so 
	\[|u-\varphi|_{\Gamma^{1;\frac{1}{2}}_{[T-\e,T-\e+\tau]}(Q\cap B_\delta(p))}<C_2\sqrt{\tau}\ .\]
	where $C_2=C_2(C, C_\varphi)$. We therefore see that there exists a uniform time $\tau=\tau(C_{\II}, C_{\overline\II})$ such that the localised version of \eqref{parabolicsystembundle} on $Q$ is parabolic. Schauder estimates imply that we have uniform estimates on $Q\cap B_{\frac{\delta}{2}}(q)$ to all orders on the solution $u$. As $q$ was arbitrary, we may take $\e = \frac \tau 2$ to obtain smooth estimates on a neighbourhood of $\partial M_T$.
	
	As a result, $M_T$ is a smooth manifold and Proposition \ref{ExistsOne} may now be applied to see that $T$ was not the final time.
	\end{proof}

\begin{appendix}
	\section{{H}\"older Spaces}\label{Hoeldersection}
	Before dealing with the above PDE, we define the function spaces in which we will work. Let $\Omega\subset\bb{R}^n$ be a domain, and define the parabolic domain $\Omega_T = \Omega\times[0,T)$ for some $T>0$. For a chosen $\rho_0>0$, we define H\"older norm for functions on $\Omega$ to be 
	\[|u|_\CE k \a \Omega = \sum_{|\b|\leq k} |D^x_\beta u|_0 +\sum_{|\b|= k} [D^x_\b u]_\CE 0 \a \Omega\ ,\]
	where the sum is over all multi-indices $\beta$, $| - |_0$ is the standard $C^0$ norm, and $[-]_{C^{0,\alpha}}$ is the H\"older seminorm defined by
	\[[u]_\CE 0 \a \Omega :=\underset{\substack{ {\scriptstyle x,y\in\Omega}\\{\scriptstyle |x-y|<\rho_0}}}\sup\frac{|u(x)-u(y)|}{|x-y|^\a}.\]
	Similarly we define parabolic H\"older norm by
	\[|u|_\CP {k+\a} {\Omega_T} = \sum_{2r+|\beta|\leq k} |D_t^rD_\beta^x u|_0 +[u]_\CP{k+\a}{\Omega_T} \ , \]
	where
	\begin{align*}[u]_\CP{k+\a}{\Omega_T} &= \sum_{2r+|\b|=k} \sup_{t\in[0,T]} [D_t^rD_\beta u(\cdot, t)]_\CE 0 \a \Omega 
	+\sum_{0<k+\a-r-|\beta|<2} \sup_{x\in\Omega} [D_t^r D_\beta u(x,\cdot)]_\CE{0}{\frac{\a}{2}}{(0,T)}.
	\end{align*}
	We write $\CP{k+\a}{\ov{\Omega}_T}$ for the space of all functions on $\ov{\Omega}_T$ such that $D^r_tD_\beta u$ is continuously defined on $\ov{\Omega}_T$ for all $2r+|\beta|\leq k$ and $|u|_\CP{k+\a}{\ov{\Omega}_T}$ is bounded. 
	
	For compact $M_0$ with boundary $\partial M_0$, we define $M_{0,T}=M_0\times[0,T)$. Considering a finite cover of coordinate patches $U_i$ with cutoff functions $\chi^i$ on $M_0$, we define the H\"older norm of a function $f$ as the maximum of the H\"older norms of $f\chi^i$ on the coordinate patches. In this way we may define H\"older spaces $\CE{k}{\a}{M_0}$ on $M_0$ and $\CP{k+\a}{\ov{M}_{0,T}}$. 
	
	Let $\Gamma$ be the space of continuous sections of the normal bundle, and define $\Gamma_T$ to be time dependent continuous sections for $t\in [0,T)$. Identically to above, using a covering of $M_0$ by a finite number of simply connected coordinate patches and trivialisations of the normal bundle of $NM_0$ we may define H\"older norms on sections of the normal bundle to be the sum over the norms over the trivialisations (see for \cite[Section 2.2]{Pulemotov} for similar constructions). 
	
	In this way we define the (elliptic) H\"older space of $k+\a$ differentiable sections of the normal bundle of $M_0$, denoted $\CEN{k}{\a}$ with H\"older norm $|-|_\CENn{k}{\a}$. Similarly we define the parabolic H\"older space of time dependent sections which are $k+\a$ times differentiable in space and $\frac{k+\a}{2}$ differentiable in time, which we denote $\CPN{k+\a}$ with norms $|-|_\CPNn{k+\a}$. We will denote by $N\partial M_0$ the pullback bundle of $NM_0$ to $\partial M_0$ by the inclusion mapping. Using the same idea, we denote time dependent sections of $N\partial M_0$ which are $k+\a$ times differentiable in space and $\frac{k+\a}{2}$ differentiable in time by $\CPNb{k+\a}$ with norm $|-|_{\CPNbn{k+\a}}$.
	
	\section{Estimates for Linear Parabolic Systems with a Mixed Boundary Condition}\label{app-para}
	For $u\in\CPN{2+\a}$, we now study the linear parabolic system which we write in coordinates as
	\[(\mathcal{L}u)^k = u_t^k - a^{ij}(x,t)\np_{ij} u^k -b^{ki}_l(x,t) \np_iu^l - c^k_l(x,t)u^l\]
	or as linear mappings as
	\[\mathcal{L}u = u_t - a^{ij}(x,t)\np_{ij} u -b(\np u) - c(u)\]
	with boundary operators 
	\[\mathcal{D}u = u-\ip{\Voo(x,t)}{u}_0\Voo(x,t)\]
	and
	\[\mathcal{N}u =\beta(\np u)+e(u)\]
	where $\beta$ and $e$ are linear mappings, so in coordinates $\beta(\np u)=\beta_k^i(x,t)\np_i u^k$ and $e(u)=e_l(x,t)u^l$. On the above we will assume that $|\Voo|_0=1$, $\partial M_0$ and $\Voo$ are smooth and $a, b, c, \beta, e \in \CP \a {\ov M_t}$ in the sense that in the system of localisations as determined in Appendix \ref{Hoeldersection}, they are bounded in $C^{\a;\frac{\a}{2}}$. We also require that this system is \emph{uniformly parabolic}, that is, for all $\xi\in T_pM_0$, 
	\begin{equation}\l|\xi|^2\leq a^{ij}\xi_i\xi_j\leq \Lambda|\xi|^2\ .\label{ssystemparabolicity}
	\end{equation}
	and that $\mathcal{N}$ satisfies a \emph{uniform obliqueness condition in direction $\Voo$}, that is, there exists a uniform constant $\chi>0$ such that, if $\mu_0$ is the outward unit vector to $M_0$ then 
	\begin{equation}
	\beta(\mu_0\otimes \Voo)=\beta^k_i\mu^i_0{\Voo}_k \geq \chi>0\ .
	\label{ssystemobliqueness}
	\end{equation}
	
	Specifically we will consider the system
	\begin{equation}
	\begin{cases}
	\mathcal{L}u=f&\text{ on } M_{0}\times[0,T)\\
	\mathcal{D}u=0&\text{ on } \partial M_{0}\times[0,T)\\
	\mathcal{N}u=\Phi&\text{ on } \partial M_0\times[0,T)\\
	u(\cdot, 0)=\varphi(\cdot)&
	\end{cases}\label{linearsystem}
	\end{equation}
	We will also assume that the data for (\ref{linearsystem}) satisfies compatibility conditions to various orders, which are determined iteratively, as on \cite[pages 319--320]{LSU}. 
	
	We note that we may choose a finite number of local trivialisations covering $M_0$ such that the base of each trivialisation is an open simply connected coordinate patch $U$ with $\ov{U}\cap\partial M_0=S$ such that either $S=\emptyset$ or $S$ is a simply connected portion of $\partial M_0$. Furthermore we may choose coordinates on these patches so that $S$ is given by $x^n=0$ and $\mu_0 = -\pard{}{x^n}$ and so that over $S$, $\Voo=\frac{\pard{}{y^n}}{|\pard{}{y^n}|_0}$ near the boundary. In these coordinates (\ref{linearsystem}) may be written
	\begin{equation}
	\begin{cases}
	u_t^k = a^{ij}(x,t)D^2_{ij} u^k +b^{ki}_l(x,t) D_iu^l + c^k_l(x,t)u^l+f^k&\text{ on } U_T\\
	u^I=0&\text{ on } NS_T\\
	D_nu^n\beta^n_n(x,t)+\beta^I_n(x,t)D_Iu^n+\beta^i_J(x,t)D_iu^J+e_k(x,t)u^k=\Phi&\text{ on } NS_T\\
	u^k(\cdot, 0)=\varphi^k(\cdot)&
	\end{cases}\label{linearsystemboundary}
	\end{equation}
	where now $\beta_n^n>\chi$. 
		\begin{proposition}\label{linearsystemholder}
		Suppose that the coefficients of $\mathcal{L}$ are in $\CPN{\a}$, the coefficients of $\mathcal{N}$ in $\CPNb{1+\a}$ and the coefficients of $\mathcal{D}$ are in $\CPNb{\a}$. Suppose $\mathcal{L}$ satisfies (\ref{ssystemparabolicity}) and $\mathcal{N}$ satisfies (\ref{ssystemobliqueness}) and  up to the $1^\text{st}$ and the $0^\text{th}$ compatibility conditions are satisfied on $\mathcal{D}$ and $\mathcal{N}$ respectively. Suppose that $f\in \CPN{1+\a}$, $\Phi\in\CPN{1+\a}$, $\varphi\in\CPN{2+\a}$. Then, any solution $u\in\CPN{2+\a}$ to (\ref{linearsystem}) satisfies
		\begin{align*}
		|u|_{{2+\a};T}&\leq C\left(|u|_{0;T}+|f|_{\a;T}+|\varphi|_{2,\a}+|\Phi|_{{1+\a};\partial;T}\right)\ .
		\end{align*}
	\end{proposition}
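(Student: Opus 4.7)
The plan is to localise the estimate via the coordinate atlas and partition-of-unity machinery described immediately before the proposition, thereby reducing to the classical scalar parabolic Schauder estimates for (i) interior Cauchy problems, (ii) Dirichlet boundary problems, and (iii) oblique-derivative boundary problems. The crucial observation enabling this reduction is that the principal part $a^{ij}(x,t)\np_{ij}$ of $\mathcal L$ is scalar in the bundle index $k$, so the system is only coupled through terms of strictly lower order; the same is true at the boundary once the chosen trivialisation aligns $\Voo$ with $\partial_{y^n}$ and $\mu_0$ with $-\partial_{x^n}$, as one sees from \eqref{linearsystemboundary}.

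Fix a finite atlas $\{U_\alpha\}$ of the type preceding the proposition, together with subordinate cutoffs $\chi_\alpha$ and trivialisations of $NM_0|_{U_\alpha}$. Set $w_\alpha := \chi_\alpha u$. On an interior patch, each component $w_\alpha^k$ satisfies a scalar equation $(w_\alpha^k)_t - a^{ij}D^2_{ij} w_\alpha^k = F_\alpha^k$, where $F_\alpha^k$ collects $\chi_\alpha f^k$, the coupling $\chi_\alpha(b^{ki}_lD_iu^l + c^k_l u^l)$, and commutator terms coming from derivatives of $\chi_\alpha$; the standard interior parabolic Schauder estimate then controls $|w_\alpha|_{\CPNtn{2+\a}{T}}$ by $|F_\alpha|_{\CPNtn{\a}{T}}$ and $|\varphi|_{2+\a}$. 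On a boundary patch in the straightened coordinates of \eqref{linearsystemboundary}, the boundary operator decouples: $w_\alpha^I$ ($1 \leq I \leq n-1$) satisfies a scalar parabolic equation with zero Dirichlet data on $\{x^n=0\}$, while $w_\alpha^n$ satisfies a scalar parabolic equation with the oblique condition
\[
\beta^n_n D_n w_\alpha^n \,=\, \chi_\alpha \Phi \,-\, \beta^I_n D_I w_\alpha^n \,-\, \beta^i_J D_i w_\alpha^J \,-\, e_k w_\alpha^k \,+\, (\text{commutators}),
\]
where $\beta^n_n \geq \chi > 0$ by \eqref{ssystemobliqueness}. Applying the scalar Dirichlet Schauder estimate to each $w_\alpha^I$ and the scalar oblique-derivative Schauder estimate (e.g.\ LSU Chapter V or Lieberman §IV.10) to $w_\alpha^n$ yields local control of $|w_\alpha|_{\CPNtn{2+\a}{T}}$ in terms of $|f|_{\CPNtn{\a}{T}}$, $|\Phi|_{\CPNbtn{1+\a}{T}}$, $|\varphi|_{2+\a}$, and the intermediate norm $|u|_{\CPNtn{1+\a}{T}}$.

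Summing over $\alpha$ gives the global bound
\[
|u|_{\CPNtn{2+\a}{T}} \,\leq\, C\!\left(|u|_{\CPNtn{0}{T}} + |u|_{\CPNtn{1+\a}{T}} + |f|_{\CPNtn{\a}{T}} + |\Phi|_{\CPNbtn{1+\a}{T}} + |\varphi|_{2+\a}\right).
\]
The $\CPNtn{1+\a}{T}$ term is then absorbed by the standard parabolic interpolation inequality $|u|_{\CPNtn{1+\a}{T}} \leq \epsilon |u|_{\CPNtn{2+\a}{T}} + C_\epsilon |u|_{\CPNtn{0}{T}}$; choosing $\epsilon$ small and rearranging produces the claimed estimate. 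The compatibility hypotheses are used only to guarantee that the scalar Dirichlet and oblique Schauder estimates apply uniformly up to $t = 0$, so that the $|\varphi|_{2+\a}$ contribution is genuinely the correct initial term.

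The main technical obstacle is verifying that, in the boundary chart, the coupling really is lower-order and that the Neumann component $w_\alpha^n$ is governed by an oblique condition with coefficients of regularity $\CPNbtn{1+\a}{T}$, as required for the scalar Schauder estimate to produce a $\CPNtn{2+\a}{T}$ bound with $\Phi \in \CPNbtn{1+\a}{T}$. This is precisely what the assumed regularity of $\beta, e$ together with the uniform obliqueness $\beta^n_n \geq \chi$ provides; once this is in place, the reduction to scalar theory is routine and the interpolation argument closes the estimate.
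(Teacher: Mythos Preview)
Your approach is essentially the same as the paper's: localise, exploit that the principal part is scalar in the bundle index so the system decouples to leading order, apply scalar Dirichlet Schauder to the components $u^I$ and scalar oblique Schauder to $u^n$, then absorb the intermediate $|u|_{1+\a}$ term by interpolation (the paper cites Ehrling's Lemma). The paper uses nested domains $U''\subset U'\subset U$ rather than cutoffs, and it is slightly more explicit about the order of the two boundary steps: the Dirichlet estimate for $u^I$ on $U'$ must come \emph{first}, because the oblique boundary data for $u^n$ contains $\beta^i_J D_i u^J$, which is in $\CPNb{1+\a}$ only once $u^J\in\CPN{2+\a}$ is already controlled---this sequencing is implicit in your write-up but worth stating.
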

	\begin{proof}
		We work in the coordinates of (\ref{linearsystemboundary}). We take open simply connected $U''\subset U'\subset U\subset M_0$ such that $\partial U \setminus\partial M_0$, $\partial U' \setminus\partial M_0$, $\partial U'' \setminus\partial M_0$ are a positive distance apart. We define $U_T=U\times[0,T)$, $U'_T=U'\times[0,T)$, $U''_T=U''\times[0,T)$. We will denote $(2+\a)$- H\"older norms restricted to these parabolic domains by $|\cdot|_{2+\a; U; T}$, $|\cdot|_{2+\a; U';T}$, $|\cdot|_{2+\a; U'';T}$  respectively (and similar for other norms). Applying local Schauder estimates (\cite[Theorem IV.10.1, page 351-352]{LSU}) for the Dirichlet problem yields
		\[|u^I|_{2+\a;U';T}\leq C(|u|_{1+\a;U;T}+|f|_{\a;U;T}+|\varphi|_{2+\a;U})\ .\]
		Applying Schauder estimates to the Neumann problem given by $u^n$ we have
		\begin{align*}
		|u^n|_{2+\a;U'';T}&\leq C(|u^I|_{2+\a;U';T}+|u|_{1+\a;U';T}+|f|_{\a;U';T}+|\varphi|_{2+\a;U}+|\Phi|_{1+\a;\partial U \cap \partial M,T})\\
		&\leq C(|u|_{1+\a;U;T}+|f|_{\a;U;T}+|\varphi|_{2+\a;U}+|\Phi|_{1+\a;\partial M\cap\partial U;T})\ .
		\end{align*}
		We may get similar estimates on the interior, and patching them together gives
		\begin{align*}
		|u|_{\CPNn{2+\a}}&\leq C\left(|u|_\CPNn{1+\a}+|f|_\CPNn{\a}+|\varphi|_\CENn 2\a+|\Phi|_{\CPNbn{1+\a}}\right)\ .
		\end{align*}
		Ehrling's Lemma now yields the claimed estimate.
	\end{proof}
	
	The following is now a simple application of standard PDE theory.
	\begin{proposition}\label{linearsystemexists}
		Suppose that (\ref{linearsystem}) is as in Proposition \ref{linearsystemholder}. Then there exists a solution $u\in \CPN{2+\a}$ to (\ref{linearsystem}).
	\end{proposition}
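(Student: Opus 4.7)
The approach is the classical method of continuity, with Proposition \ref{linearsystemholder} supplying the required a priori estimate.

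First I would reduce to zero initial data. Choose an extension $\tilde\varphi \in \CPN{2+\a}$ of $\varphi$, adjusting $\partial_t \tilde\varphi(\cdot,0)$ so that the $1^{\text{st}}$-order compatibility data prescribed by $f$, $\Phi$ and $\varphi$ are absorbed. Substituting $w = u - \tilde\varphi$ yields a system of the same form with modified right-hand sides still lying in $\CPN{\a}$ and $\CPNb{1+\a}$, zero initial data, and compatibility conditions of the same order. Thus one may assume $\varphi \equiv 0$ throughout.

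Next I would construct a homotopy $(\mathcal L_\sigma, \mathcal D, \mathcal N_\sigma) := ((1-\sigma)\mathcal L_0 + \sigma \mathcal L,\, \mathcal D,\, (1-\sigma)\mathcal N_0 + \sigma \mathcal N)$ for $\sigma \in [0,1]$, keeping the Dirichlet projector $\mathcal D$ fixed throughout. The model operators $\mathcal L_0, \mathcal N_0$ are chosen so that in the local adapted trivialisations from the proof of Proposition \ref{linearsystemholder} (where $\Voo = \partial/\partial y^n$), the $\sigma=0$ problem decouples into $n-1$ scalar Dirichlet heat equations and one scalar oblique heat equation. The family is designed so that uniform parabolicity (\ref{ssystemparabolicity}) and uniform obliqueness (\ref{ssystemobliqueness}) hold with $\sigma$-independent constants. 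Classical scalar existence (e.g.\ \cite[Chapter IV]{LSU}) combined with a partition-of-unity assembly over the finite trivialising cover yields solvability of the $\sigma = 0$ problem in $\CPN{2+\a}$.

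Then I would apply the method of continuity to the set
\[ A := \{\sigma \in [0,1] : \text{the }\sigma\text{-problem is solvable in }\CPN{2+\a}\text{ for all admissible data}\}. \]
We have $0 \in A$. Closedness of $A$ follows from Proposition \ref{linearsystemholder}: given $\sigma_i \in A$ with $\sigma_i \to \sigma_\infty$ and solutions $u_i$, the estimate gives a uniform $\CPN{2+\a}$-bound, and Arzel\`a--Ascoli extracts a subsequence converging in $\CPN{2+\a-\varepsilon}$ to a limit which solves the $\sigma_\infty$-problem and lies in $\CPN{2+\a}$ by lower semicontinuity of the H\"older seminorms. Openness follows from a Banach fixed-point argument: near $\sigma_0 \in A$, writing $\sigma = \sigma_0 + \varepsilon$ and inverting $(\mathcal L_{\sigma_0}, \mathcal D, \mathcal N_{\sigma_0})$ reduces the $\sigma$-problem to a contraction in $\CPN{2+\a}$ whose contractivity constant is controlled by the a priori estimate times $|\varepsilon|$. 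Hence $A = [0,1]$, giving $1 \in A$.

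The main obstacle is the construction of the $\sigma = 0$ model and the verification that the compatibility conditions are preserved along the homotopy. Since $\Voo$ is a moving subspace of the normal bundle, no global decoupling is available and the model must be built locally in the adapted trivialisations and patched; the Schauder estimate is precisely what makes this patching rigorous and ensures that the resulting operator is invertible in the required H\"older class.
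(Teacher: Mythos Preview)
Your overall strategy matches the paper's: reduce to a decoupled model problem, solve it, then run the method of continuity using the Schauder estimate of Proposition~\ref{linearsystemholder}. Two points, however, are genuine gaps rather than abbreviations.

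First, the Schauder estimate of Proposition~\ref{linearsystemholder} has $|u|_{0;T}$ on the right-hand side. Your closedness and openness arguments both assume that this estimate alone gives a uniform $\CPN{2+\a}$ bound, but it does not: you need an independent $C^0$ estimate. The paper supplies this as Lemma~\ref{simpleC0}, a maximum-principle argument for $|u|_p^2$ where $p$ is a carefully constructed inner product on $NM_0$ adapted to the oblique direction $\tilde\beta$, so that at a boundary maximum the Neumann condition gives a useful sign. Without this (or an equivalent), neither the closedness step nor the contraction in your openness step can be closed.

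Second, your sentence ``classical scalar existence combined with a partition-of-unity assembly \ldots\ yields solvability of the $\sigma=0$ problem'' is not correct as stated. Gluing local solutions by a partition of unity produces only an \emph{approximate} solution; the cutoffs introduce error terms in the PDE. The paper's proof makes this explicit: one patches local solutions to get an approximate inverse, restricts to a short time interval $T<\e$ so that the error operator has small norm, and then obtains genuine left and right inverses by a Neumann series (following \cite[Section~IV.7]{LSU} and \cite[Lemma~2.6]{Pulemotov}). Your final paragraph correctly identifies this as the main obstacle, but the earlier paragraph claims it is already dealt with. Once existence is obtained on $[0,\e)$, the uniform H\"older estimates allow iteration to reach the full interval $[0,T)$.
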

	\begin{proof}[Proof Sketch]
		This follows exactly as in \cite[Lemma 2.6]{Pulemotov} and \cite[Section IV.7]{LSU}
		
		We start by assuming that $b=0$, $c=0$, $e=0$, and $\beta(a\otimes b) = \ip{\mu_0}{a}\ip{\nu_0}{b}\tilde{\beta}(x)$. This implies that in the coordinates as in (\ref{linearsystemboundary}) the system is totally decoupled (and $\beta_n^n$ is the only nonzero component of $\beta$). On any simply connected open local patch $U$ as above, we may therefore locally solve (by imposing extra Dirichlet boundary conditions on $\partial U_T\setminus S_T$ for $1\leq I \leq n-1$ and Neumann boundary conditions of $u^n$). We may then use cutoff functions to get an approximate solution to \eqref{linearsystem} by patching together local solutions using cutoff functions, as in \cite[Lemma 2.6]{Pulemotov} and \cite[Section IV.7]{LSU}. Then, by restricting the time interval to $T<\e$ (where $\e$ depends only on the coefficients of (\ref{linearsystem}) the error between our approximate solution becomes small, and (again, as in \cite[Lemma 2.6]{Pulemotov} and \cite[Section IV.7]{LSU}) this may be used to produce left and right inverses to the linear system, and so demonstrate the existence of a solution of (\ref{linearsystem}) for $t<\e$. 
		
		The H\"older estimates of Proposition \ref{linearsystemholder} and Lemma \ref{simpleC0} below imply that we may now apply the method of continuity to ensure the existence of a solution in the case we do not make the above assumptions on the coefficients of $\mathcal{L}$, $\mathcal{D}$, $\mathcal{N}$.
		
		As uniform H\"older estimates hold, repeatedly applying the above short time existence, we may extend this solution to all of the time interval $[0,T)$. 
	\end{proof}
\begin{lemma}\label{simpleC0}
		Suppose that $\mathcal{L}$, $\mathcal{D}$, $\mathcal{N}$,  $\varphi$, $\Phi$, $f$, are as in Proposition \ref{linearsystemholder} and suppose that $u\in\CPN{2+\a}$ is a solution of \eqref{linearsystem}. Then there exists a constant $C$ depending only on $M_0$, $T$ and the coefficients of the equations  such that
		\[|u(x,\cdot)-\varphi(\cdot)|<C(|f|_{0;T}+|\Phi|_{1;\partial;T})\sqrt{t}\ .\]
	\end{lemma}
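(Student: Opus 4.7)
The plan is to prove this by a direct barrier argument, independent of Proposition \ref{linearsystemholder}, so that the two results can be combined freely in the method of continuity of Proposition \ref{linearsystemexists} without circularity.

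First, I would reduce to zero initial data by setting $v := u - \varphi$, which satisfies $\mathcal{L}v = \tilde f$, $\mathcal{D}v = 0$, $\mathcal{N}v = \tilde\Phi$ and $v(\cdot,0) = 0$, where $\tilde f := f - \mathcal{L}_{\mathrm{sp}}\varphi$ and $\tilde\Phi := \Phi - \mathcal{N}\varphi$ (with $\mathcal{L}_{\mathrm{sp}}$ the spatial part of $\mathcal{L}$). These forcings satisfy $|\tilde f|_0 \leq |f|_0 + C|\varphi|_2$ and $|\tilde\Phi|_1 \leq |\Phi|_1 + C|\varphi|_{2+\alpha}$, and the assumed zeroth-order compatibility on $\mathcal{N}$ gives $\tilde\Phi(\cdot,0) = 0$. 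I would then pass to the boundary coordinates of \eqref{linearsystemboundary}, in which the system decouples into Dirichlet equations for the $n-1$ tangential components $v^I$ and an oblique Neumann equation for the normal component $v^n$. For the Dirichlet components, the standard scalar parabolic maximum principle applied with the linear barrier $\psi_I(x,t) = C|\tilde f|_0\,t$ yields $|v^I| \leq C|\tilde f|_0\, t \leq C|\tilde f|_0 \sqrt{T}\sqrt{t}$.

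For the Neumann component, I would construct a barrier of the form
\[
\Psi(x, t) \, := \, A\,F\,\sqrt{t}\, + \, B\,F\,\bigl( \rho(x) + t \bigr), \qquad F \, := \, |f|_{0;T} + |\Phi|_{1;\partial;T},
\]
where $\rho$ is a smooth nonnegative function on $M_0$ vanishing on $\partial M_0$ with $\nabla_\mu \rho = -1$ at the boundary, and $A,B$ are large constants depending on the coefficients, $T$, and $|\varphi|_{2+\alpha}$. The $\sqrt{t}$ term forces $\partial_t \Psi \to \infty$ as $t \to 0^+$, making $\Psi$ an interior supersolution of the (rearranged) scalar equation for $v^n$, while $\nabla_\mu \Psi|_{\partial M_0} = -BF$ provides enough margin to dominate the right-hand side of the Neumann condition once $B$ is chosen sufficiently large. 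A symmetric subsolution, together with the parabolic maximum principle, then gives $|v^n| \leq \Psi \leq CF\sqrt{t}$, and combining with the Dirichlet estimate yields the claim.

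The main obstacle is the coupling present in the Neumann condition, which in local coordinates reads
\[
\beta^n_n \nabla_n v^n \, + \, \beta^I_n \nabla_I v^n \, + \, \beta^i_J \nabla_i v^J \, + \, e_k v^k \, = \, \tilde\Phi,
\]
so that the equation for $v^n$ is not purely scalar: tangential derivatives of $v^n$ and derivatives of the components $v^J$ also appear. Uniform obliqueness $\beta^n_n \geq \chi > 0$ allows us to divide through, but absorbing the $\nabla_i v^J$ terms requires the already-established Dirichlet bounds on $v^I$, while absorbing the tangential derivatives $\nabla_I v^n$ requires local interior parabolic estimates on $v^n$ that are compatible with the barrier $\Psi$. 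This coupling is also why the final estimate involves $|\Phi|_{1;\partial;T}$ rather than $|\Phi|_{0;\partial;T}$: the $C^{1;\,1/2}$ norm is needed to control the tangential-gradient and $C^{1/2}$-in-time contributions to the effective scalar boundary forcing after rearrangement.
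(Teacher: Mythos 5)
Your proposal takes a genuinely different route from the paper's, and the route has a real gap.

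The paper does not decouple the system into scalar Dirichlet and Neumann problems. Instead it constructs a time-dependent inner product $p$ on $NM_0$, designed so that at the boundary $|u|^2_p = (\beta^n_n)^2 (u^n)^2$ (the Dirichlet conditions kill the tangential components), and so that
\[
\mu_0(|u|^2_p) - u^ku^l\mu_0(p_{kl}) \, = \, -2u^n\,\beta^n_n\,\beta^n_i\,D_n u^i
\]
picks up exactly the combination $\beta^n_i D_n u^i$, which the Neumann condition then rewrites in terms of $\Phi$, $e_n u^n$ and $\beta^I_n D_I u^n$ (the latter vanishing at a boundary maximum of $|u|_p$). The whole point of the construction is that the normal derivatives of the tangential components $D_n u^J$ are never estimated separately --- they enter only through the scalar quantity $\beta^n_i D_n u^i$, which is supplied directly by the boundary condition. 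A global maximum-principle argument on $v = (|u|^2_p + t\,|\Phi|^2_{1;\partial;T})e^{C_\partial\rho}$ then yields the claim.

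In your decoupled approach, the gap is exactly the term $\beta^i_J \nabla_i v^J$ in the Neumann condition for $v^n$. On $\partial M_0$, the Dirichlet condition $v^J = 0$ kills the tangential derivatives $\nabla_I v^J$, so this reduces to $\beta^n_J \nabla_n v^J$: the \emph{normal} derivative of the Dirichlet components at the boundary. You write that this is absorbed using "the already-established Dirichlet bounds on $v^I$," but the linear barrier $\psi_I = C|\tilde f|_0 t$ only gives a $C^0$ estimate $|v^I| \leq C|\tilde f|_0\,t$; it gives no control whatsoever on $\nabla_n v^J$ at the boundary. To bound $\nabla_n v^J$ by a barrier you would need a comparison function that vanishes on the boundary and grows at most linearly in $\rho$, say $K(\rho + ct)$, which forces $|\nabla_n v^J| \lesssim K$ on the boundary --- an $O(1)$ bound, not an $O(\sqrt{t})$ one. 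Feeding this into the Neumann forcing for $v^n$ destroys the $\sqrt{t}$ decay you need. (By contrast, your worry about $\nabla_I v^n$ is less serious: at a first boundary touch point of $v^n - \Psi$ with $\Psi = \Psi(\rho,t)$ these tangential derivatives cancel, so no "local interior parabolic estimate" is required --- which is fortunate, since such an estimate would be circular anyway.) To repair the argument you would either need to couple $v^n$ and $v^J$ into a single quantity, which is precisely what the paper's inner product $p$ does, or find some other mechanism for cancelling $\beta^n_J \nabla_n v^J$ against $\beta^n_n \nabla_n v^n$ rather than estimating them independently.
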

	\begin{proof}
		Due to the assumptions on differentiability, we may (wlog) assume that $\varphi=0$ and look for a suitable bound on $u$. The main technicality here is reducing estimates on $|u|$ to a standard PDE problem.
		
		At the boundary we define the normal vector field $\tilde{\beta} := \beta(\mu_0)$. 
		Suppose that $\beta$ is in  $\CPNb{2+\a}$. We extend $\tilde{\beta}$ at time $t=0$ so that it satisfies compatibility conditions at $t=0$ and then solve the Dirichlet problem
		\[
		\begin{cases}
		\left(\ddt{}+a^{ij} \np^2_{ij}\right)\tilde{\beta}=0 & \text{ on }M_0\times[0,T)\\
		\tilde{\beta}(x,t) = \tilde{\beta}(x,0) & \text{ on }\p M_0\times[0,T)\\
		\end{cases}
		\]
		with this initial data. This gives a solution $\tilde{\beta}\in\CPN{2+\a}$. A priori, $\beta$ is only in $\CPNb{1+\a}$, but importantly in our estimates we will only use that $\tilde{\beta}\in\CPN{1}$, and so by approximation the full lemma will be achieved. We may extend $\nu_0$ to be a smooth normal vector field in a collar region of $\partial M_0$. Due to the $\CPN{1}$ bound we know that by restricting the collar region further, $\ip{\tilde{\beta}}{\nu_0}>\frac{1}{2}\chi$. Therefore, by choosing a suitable cutoff function $\gamma$, there exists a $\Lambda=\Lambda(\chi)$ such that the scalar product $p$ on $NM_0$ defined by
		\[p(X,Y) = \Lambda\left[\ip{X}{Y} - \gamma \ip{X}{\nu_0}\ip{Y}{\nu_0}\right]+\gamma\ip{\tilde{\beta}}{X}\ip{\tilde{\beta}}{Y}\]
		is positive definite. 
		
		Working in coordinates and writing $\ip{u}{v}_p = u^ip_{ij}v^j$ we have that
		\begin{flalign*}
		\left(\ddt{} -a^{ij}D_{ij}\right)|u|_p^2&= - 2D_i u^k a^{ij}D_j u^lp_{kl}+2\ip{b(Du)+c(u)+f}{u}_p\\
		&\qquad -2D_i(p_{kl})a^{ij}D_ju^ku^l + u^ku^l\left(\ddt{} -a^{ij}D_{ij}\right)p_{kl}\\
		&\leq C_\mathcal{L}(|u|_p^2+|f|^2_{0;T})
		\end{flalign*}
		where we used the uniform parabolicity of (\ref{linearsystem}), the fact that $p$ is positive definite and Young's inequality on the last line so that $C_\mathcal{L}$ depends  on the coefficients of $\mathcal{L}$, $p$ and its derivatives. Note however that $C_\mathcal{L}$ does not depend on more than the first space derivatives of $\tilde{\beta}$.
		
		At the boundary, as $u^n$ is the only nonzero component of $u$ we have that
		\[|u|_p^2 = (\beta_n^n)^2(u_n)^2\]
		and 
		\begin{flalign*}
		-\mu_0(|u|^2_p)+u^ku^l\mu_0(p_{kl}) &=D_n|u|^2_p-u^ku^lD_n(p_{kl})=2D_nu^ip_{ij}u^j= 2u^n D_nu^ip_{in}= 2\beta_n^nu^nD_nu^i\beta^n_i \\
		&= 2\beta_n^nu^n(-D_Iu^n \beta^I_n-e_nu^n+\Phi) \ .
		\end{flalign*}
		Assuming that we are at a nonzero boundary maximum of $|u|_p$, we have that $D_Iu^n=0$ and so at such a point,
		\begin{flalign*}
		\mu_0(|u|^2_p) &< C_\partial (|u|^2_p+t|\Phi|^2_{1;\partial;T}) \ ,
		\end{flalign*}
		where $C_\partial$ depends on $e$, $\Phi$, $\chi$ and the first derivative of $p_{ij}$ (here we have used that $|\Phi(x, t) - \Phi(x,0)|=|\Phi(x,t)|<|\Phi|_{1;\partial;T}\sqrt{t}$).
		
		Let $\rho$ be a smoothing of the distance to the boundary function (as in Lemma \ref{rho}) so that $\n \rho = -\mu_0$ at the boundary. We estimate
		\[\left(\ddt{} - a^{ij}D^2_{ij}\right)\rho = - a^{ij}D^2_{ij}\rho <C_\rho, \qquad |\n \rho|<C_{\rho}, \qquad |\rho|<C_\rho\ .\]
		
		We set $v=(|u|^2_p+t|\Phi|^2_{1;\partial;T})e^{C_\partial\rho}$ and note that at a boundary maximum of $v$, we are also at a maximum over $\partial M_0$ of $|u|_p$ and so we have that $\mu_0\left(v\right) < 0$. Therefore, $v$ does not attain it's maximum at the boundary.
		
		At a positive maximum of $v$
		\begin{flalign*}
		\left(\ddt{} - a^{ij}D^2_{ij}\right)v&\leq e^{C_\partial\rho}\left[C_\mathcal{L}(|u|^2_p+|f|^2_{0;T})+|\Phi|^2_{1;\partial;T}+(|u|_p^2+t|\Phi|^2_{1;\partial;T})(C_\partial C_\rho - C_\partial^2\n_i \rho a^{ij}\n_j\rho)\right.\\
		&\qquad\qquad\left. - 2C_\partial a^{ij}D_i |u|^2_p D_j \rho\right]\\
		&\leq C(C_\mathcal{L},C_\partial)\left[v+|f|^2_{0;T}+|\Phi|^2_{1;\partial;T}\right]\ ,
		\end{flalign*}
		where we estimate the last term using the fact that at a maximum $D_i |u|_p^2=-C_\partial (|u|_p^2+t|\Phi|^2_{1;\partial;T})D_i\rho$. As $M_0$ is compact and $\rho<R$, standard maximum principle methods now imply
		\[v\leq C(|f|^2_{0;T}+|\Phi|^2_{1;\partial;T})te^{Ct}\ .\]
	\end{proof}

\end{appendix}

\bibliographystyle{amsplain}
\bibliography{LagBoundary}		

\end{document}